% ----------------------------------------------------------------
% AMS-LaTeX Paper ************************************************
% **** -----------------------------------------------------------
%\documentclass[10pt]{amsart}
\documentclass[a4paper,11pt]{amsart}
\usepackage[titletoc,toc,title]{appendix}
\usepackage[left=2.7cm,right=2.7cm,top=3.5cm,bottom=3cm]{geometry}
\usepackage{amssymb,latexsym,amsmath,amsthm,amscd}
\usepackage{graphicx}
\usepackage{dsfont}
\usepackage{marginnote}
\usepackage[all]{xy}
\usepackage{mathrsfs}
\usepackage{color}
\definecolor{Blue}{rgb}{0.3,0.3,0.9}

\usepackage[T2A,OT1]{fontenc}
\DeclareSymbolFont{cyrillic}{T2A}{cmr}{m}{n}
\DeclareMathSymbol{\Sha}{\mathalpha}{cyrillic}{216}
%\newcommand{\Sha}{\mbox{\cyr{X}}}

%\voffset = -20pt \hoffset = -60pt \textwidth = 460pt \textheight
%=651pt \headheight = 12pt \headsep = 20pt

\newcommand{\sk}{\vspace{0.1in}}
%\doublespace
\vfuzz2pt % Don't report over-full v-boxes if over-edge is small
\hfuzz2pt % Don't report over-full h-boxes if over-edge is small

% THEOREMS -------------------------------------------------------

\newtheorem{thm}{Theorem}[section]
\newtheorem{def-thm}[thm]{Definition-Theorem}
\newtheorem{cor}[thm]{Corollary}
\newtheorem{lem}[thm]{Lemma}
\newtheorem{def-lem}[thm]{Definition-Lemma}
\newtheorem{prop}[thm]{Proposition}
\newtheorem{conj}[thm]{Conjecture}
\newtheorem*{ThmA}{Theorem A}
\newtheorem*{ThmB}{Theorem B}
\newtheorem*{ThmC}{Theorem C}
\newtheorem*{ThmD}{Theorem D}

\theoremstyle{definition}
\newtheorem{defn}[thm]{Definition}
\theoremstyle{remark}
\newtheorem{rem}[thm]{Remark}
\numberwithin{thm}{section}
\numberwithin{equation}{section}

% MATH -----------------------------------------------------------

\newcommand{\cO}{\mathcal{O}}

\newcommand{\pp}{\mathfrak{p}}

\newcommand{\qq}{\mathfrak{q}}

\newcommand{\bQ}{\mathbf{Q}}
\newcommand{\bR}{\mathbf{R}}
\newcommand{\bZ}{\mathbf{Z}}
\newcommand{\bC}{\mathbf{C}}

\def\ac{{\rm ac}}

%{{\mathbf{A}^{\rm ac}}}
\newcommand{\Tc}{{\mathbf{T}^{\rm ac}}}

\newcommand{\unr}{R_0}

\newcommand{\hooklongrightarrow}{\lhook\joinrel\longrightarrow}

% ----------------------------------------------------------------

\begin{document}

\title[On the Iwasawa main conjectures for modular forms]{On the Iwasawa main conjectures for modular forms at non-ordinary primes}

\author[F.~Castella]{Francesc Castella}
\address[Castella]{Department of Mathematics, Princeton University, Princeton, NJ 08544-1000, USA}
\email{fcabello@math.princeton.edu}
\author[M.~\c{C}iperiani]{Mirela \c{C}iperiani}
\address[\c{C}iperiani]{Department of Mathematics, The University of Texas at Austin, Texas 78712, USA}
\email{mirela@math.utexas.edu}
\author[C.~Skinner]{Christopher Skinner}
\address[Skinner]{Department of Mathematics, Princeton University, Princeton, NJ 08544-1000, USA}
\email{cmcls@math.princeton.edu}
\author[F.~Sprung]{Florian Sprung}
\address[Sprung]{Department of Mathematics, Arizona State University, Tempe, AZ 85287-1804, USA}
\email{florian.sprung@asu.edu}

\thanks{This material is based upon work supported by the National Science Foundation under grant agreements DMS-1801385 (F.C.), DMS-1128155 and  DMS-1352598 (M.C.), and DMS-1301842 and DMS-1501064 and by the Simons Investigator Grant \#376203 (C.S.).  
This project has received funding from 
the European Research Council under the European Union's Horizon 2020 research and innovation programme under grant agreement \#682152 (F.S.).}

\subjclass[2010]{11R23 (primary); 11G05, 11G40 (secondary)}
%\keywords{}

\date{\today}

%\dedicatory{}
%\commby{}

% ----------------------------------------------------------------

\begin{abstract}
In this paper, we prove under mild hypotheses 
the Iwasawa main conjectures of Lei--Loeffler--Zerbes %\cite{LLZ-AJM} 
for modular forms of weight $2$ at non-ordinary primes.   
Our proof is based on the study of the two-variable analogues of these conjectures formulated by B\"uy\"ukboduk--Lei %\cite{BL-non-ord} 
for imaginary quadratic fields in which $p$ splits, and on anticyclotomic Iwasawa theory. %In particular, our results yield 
As application of our results, we deduce the $p$-part of the Birch and Swinnerton-Dyer formula in analytic ranks $0$ or $1$ for abelian varieties over $\bQ$ of ${\rm GL}_2$-type for non-ordinary primes $p>2$.	
\end{abstract}

\maketitle

\setcounter{tocdepth}{2}
\tableofcontents

\section{Introduction}

\subsection{Main results}

Let $f=\sum_{n=1}^\infty a_nq^n\in S_2^{\rm new}(\Gamma_0(N))$ be a %normalized 
newform of weight $2$ and trivial nebentypus. Let $p>2$ be a prime and let $L$ be a finite extension of $\bQ_p$ with ring of integers $\cO_L$ containing the image of the Fourier coefficients $a_n$ under a fixed embedding $\imath_p:\overline{\bQ}\hookrightarrow\overline{\bQ}_p$. Assume that $p$ is a good prime for $f$, i.e., $p\nmid N$,  and non-ordinary in the sense that $\vert\imath_p(a_p)\vert_p<1$. Let 
\[
\rho_f:G_\bQ:={\rm Gal}(\overline{\bQ}/\bQ)\longrightarrow{\rm Aut}_L(V)\simeq{\rm GL}_2(L)
\]
be the Galois representation associated with $f$ by Deligne, and fix a $G_\bQ$-stable $\cO_L$-lattice $T^*$ in the contragredient of $V$. Let $\Gamma^{\rm cyc}={\rm Gal}(\bQ_\infty/\bQ)$ be the Galois group of the cyclotomic $\bZ_p$-extension of $\bQ$, and let $\mathbf{T}^{\rm cyc}$ be the cyclotomic deformation of $T^*$, defined as the module 
\begin{equation}\label{def:T-cyc}
\mathbf{T}^{\rm cyc}:=T^*\otimes_{\cO_L}\cO_L[[\Gamma^{\rm cyc}]]
\end{equation}
equipped with the Galois action given by $\rho_f^*\otimes\Psi$, where $\Psi:G_\bQ\twoheadrightarrow\Gamma^{\rm cyc}\hookrightarrow\cO_L[[\Gamma^{\rm cyc}]]^\times$ is the universal cyclotomic character. Also, let $\cO_L[[\Gamma^{\rm cyc}]]^\vee$ 
%={\rm Hom}_{\bZ_p}(\cO_L[[\Gamma^{\rm cyc}]],\bQ_p/\bZ_p)$ 
be the Pontrjagin dual of $\cO_L[[\Gamma^{\rm cyc}]]$, and set
\begin{equation}\label{def:A-cyc}
\mathbf{A}^{\rm cyc}:=T^*\otimes_{\cO_L}\cO_L[[\Gamma^{\rm cyc}]]^\vee
\end{equation}
equipped with the Galois action given by $\rho_f^*\otimes\Psi^{-1}$.  
\sk

Extending earlier of Kobayashi \cite{kobayashi-152} and Sprung \cite{sprung-JNT}, 
%(covering the case in which $f$ corresponds to an elliptic curve $E/\bQ$),  
Lei--Loeffler--Zerbes constructed in \cite{LLZ-AJM} signed Coleman maps
\[
{\rm Col}^\sharp,\;{\rm Col}^\flat:H^1(\bQ_p,\mathbf{T}^{\rm cyc})\longrightarrow\cO_L[[\Gamma^{\rm cyc}]].
\]
These were used to formulate variants of the Iwasawa main conjecture of Mazur--Swinnerton-Dyer \cite{mazur-swd} in the supersingular setting, relating the characteristic ideal of the Pontrjagin dual of certain `signed' Selmer groups ${\rm Sel}^\bullet(\bQ,\mathbf{A}^{\rm cyc})\subset H^1(\bQ,\mathbf{A}^{\rm cyc})$ to corresponding `signed' $p$-adic $L$-functions 
\begin{equation}\label{eq:Lp-Q}
L_p^\bullet(f/\bQ):={\rm Col}^\bullet({\rm res}_p(\mathbf{z}^{\rm Kato})),\nonumber
\end{equation}
where $\mathbf{z}^{\rm Kato}\in H^1(\bQ,\mathbf{T}^{\rm cyc})$ is constructed from Kato's Euler system. %\cite{Kato295}. 

\begin{conj}[Lei--Loeffler--Zerbes {\cite{LLZ-AJM}}]\label{conj:LLZ}
For each $\bullet\in\{\sharp,\flat\}$ the module ${\rm Sel}^\bullet(\bQ,\mathbf{A}^{\rm cyc})$ is $\cO_L[[\Gamma^{\rm cyc}]]$-cotorsion, and we have
\[
{\rm Char}_{\cO_L[[\Gamma^{\rm cyc}]]}({\rm Sel}^{\bullet}(\bQ,\mathbf{A}^{\rm cyc})^\vee)=(L_p^{\bullet}(f/\bQ))
\]
as ideals in $\cO_L[[\Gamma^{\rm cyc}]]$.
\end{conj}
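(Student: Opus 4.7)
The plan is to deduce Conjecture~\ref{conj:LLZ} from the two-variable main conjectures formulated by B\"uy\"ukboduk--Lei over an imaginary quadratic field $K$ in which $p$ splits. The decisive feature of this setup is that, although $f$ is non-ordinary at $p$, the splitting $p\cO_K=\frakp\bar{\frakp}$ allows the two-variable deformation of $T^*$ along $\Gamma_K={\rm Gal}(K_\infty/K)$ (the Galois group of the $\bZ_p^2$-extension of $K$) to be studied one prime at a time: at each of $\frakp$ and $\bar{\frakp}$ one has a full pair of signed Coleman maps, and specialising in the anticyclotomic direction recovers an \emph{ordinary}-type Iwasawa theory where Heegner-point Euler systems can be brought to bear. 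This has no analogue over $\bQ$, and is what makes progress possible.

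The first step is to prove one divisibility in the two-variable conjecture over $K$ using Kato's Euler system. Base-changing Kato's class $\mathbf{z}^{\rm Kato}$ to $K$ and applying the signed Coleman maps of Lei--Loeffler--Zerbes at both primes above $p$ produces four ``$(\bullet,\circ)$-signed'' characteristic-ideal bounds
\[
{\rm Char}_{\cO_L[[\Gamma_K]]}\bigl({\rm Sel}^{\bullet,\circ}(K,\mathbf{A})^\vee\bigr)\,\supseteq\,\bigl(L_p^{\bullet,\circ}(f/K)\bigr),\qquad (\bullet,\circ)\in\{\sharp,\flat\}^2.
\]
The second step is to establish the reverse divisibility by an anticyclotomic argument. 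Specialising along the anticyclotomic direction, the theory at $\frakp$ becomes ordinary and is governed by a Bertolini--Darmon--Prasanna $p$-adic $L$-function, so the resulting one-variable main conjecture can be proved via Howard's $\Lambda$-adic Kolyvagin system of Heegner points combined with the BDP explicit reciprocity law. A Mazur-style control argument, using the Kato divisibility of step one to handle the cyclotomic direction, then propagates this equality to an equality of two-variable characteristic ideals.

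The final step, and the main obstacle, is the descent from $K$ to $\bQ$. Specialising along the cyclotomic direction over $K$ yields the equality of characteristic ideals of ${\rm Sel}^\bullet(K,\mathbf{A}^{\rm cyc})^\vee$, which by a control theorem relating Selmer groups over $K$ and over $\bQ$, together with the factorisation $L_p^\bullet(f/K)\doteq L_p^\bullet(f/\bQ)\cdot L_p^\bullet(f^K/\bQ)$, decomposes as a product of the conjectural equalities for $f$ and for its quadratic twist $f^K$. Invoking the Kato divisibility already known over $\bQ$ for each of the two factors would let one separate the two contributions and isolate Conjecture~\ref{conj:LLZ}. The hardest part is arranging the auxiliary $K$ so that every hypothesis---the Heegner-type condition relative to $N$, the non-ordinarity and big-image conditions for $f^K$, and non-vanishing of the relevant $\mu$-invariants---holds simultaneously; this ultimately requires a non-vanishing / genericity argument ensuring the existence of a ``sufficiently generic'' $K$, and controlling the interaction between the signed local conditions at $\frakp$, $\bar{\frakp}$ when they are glued back to a single cyclotomic condition at $p$.
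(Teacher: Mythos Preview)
Your overall architecture---pass to an auxiliary imaginary quadratic field $K$ with $p$ split, prove a two-variable signed main conjecture there, then descend to the cyclotomic line and separate the $f$ and $f^K$ contributions using Kato's divisibility over $\bQ$---is exactly the paper's strategy. The descent step in particular is carried out just as you describe.

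The real gap is in your step two. Both Kato's Euler system and Howard's $\Lambda$-adic Heegner Kolyvagin system are Euler-system methods: each bounds the Selmer group from \emph{above}, i.e.\ each yields a divisibility of the form $(L)\subseteq{\rm Char}$ (equivalently, the characteristic ideal divides the $p$-adic $L$-function). So the Heegner argument does not produce the divisibility ``reverse'' to Kato; it produces the same one in a different Selmer setting. What is missing from your outline---and is indispensable in the paper---is an input of the opposite type: a \emph{lower} bound on Selmer, supplied by Wan's Eisenstein-congruence divisibility (the Skinner--Urban method on unitary groups), which gives ${\rm Char}(\mathfrak X^{{\rm rel},{\rm str}}_{K_\infty}(f))\subseteq(\mathscr L_\frakp(f/K))$ in the two-variable Iwasawa--Greenberg setting. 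Only by combining this with the Euler-system upper bound does one get an equality anywhere, after which control theorems propagate it.

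A secondary point: the paper's proof of the cyclotomic conjecture (Theorem~A) in fact uses the \emph{definite} quaternionic setting, not the Heegner/indefinite one. One chooses $K$ so that $N^-$ is a single inert prime; then the anticyclotomic input is the vanishing of the $\mu$-invariant of the signed theta elements (via Vatsal), which removes the ambiguities in Wan's divisibility and allows clean descent. The Heegner-point machinery you describe is developed in the paper too, but it is used for the indefinite case of the two-variable conjecture and for the rank-one BSD formula, not for the proof of Conjecture~\ref{conj:LLZ} itself.
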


%Implicit in Conjecture~\ref{conj:LLZ} is the prediction that the $p$-adic $L$-functions $(\ref{eq:Lp-Q})$ are non-zero elements in $\cO_L[[\Gamma^{\rm cyc}]]$ for both $\bullet\{\sharp,\flat\}$. 

As shown in \cite[$\S{6}$]{LLZ-AJM}, if $\bullet\in\{\sharp,\flat\}$ is such that $L_p^\bullet(f/\bQ)$ is nonzero, 
%element of $\cO_L[[\Gamma^{\rm cyc}]]$, 
then Conjecture~\ref{conj:LLZ} is \emph{equivalent} to Kato's  main conjecture \cite[Conj.~12.10]{Kato295}; in particular, one of the divisibilities predicted by Conjecture~\ref{conj:LLZ} follows from Kato's work \cite[Thm.~12.5]{Kato295}. As one of our main results, in this paper we prove the opposite divisibility, thus yielding the following result:

\begin{ThmA}\label{cor:cyc-IMC}
Assume that the level of $f\in S_2^{\rm new}(\Gamma_0(N))$ is square-free.  
If $\bullet\in\{\sharp,\flat\}$ is such that $L_p^\bullet(f/\bQ)$ is nonzero, then ${\rm Sel}^{\bullet}(\bQ,\mathbf{A}^{\rm cyc})$ is $\cO_L[[\Gamma^{\rm cyc}]]$-cotorsion, and we have
	\[
	{\rm Char}_{\cO_L[[\Gamma^{\rm cyc}]]}({\rm Sel}^{\bullet}(\bQ,\mathbf{A}^{\rm cyc})^\vee)=(L_p^{\bullet}(f/\bQ))
	\]
	as ideals in $\cO_L[[\Gamma^{\rm cyc}]]$. In particular, Kato's main conjecture for $f$ holds.
\end{ThmA}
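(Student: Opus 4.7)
The plan is to deduce Theorem~A from a two-variable signed main conjecture over an auxiliary imaginary quadratic field $K$ in which $p$ splits, following the program of B\"uy\"ukboduk--Lei in parallel with the classical Skinner--Urban strategy in the ordinary case. Choose $K$ so that $p=\frakp\bar\frakp$ splits in $K$, the (generalized) Heegner hypothesis for $(f,K)$ holds, and the residual representation $\bar\rho_f\vert_{G_K}$ is sufficiently generic. Let $K_\infty/K$ be the $\bZ_p^2$-extension with Galois group $\Gamma_K$, and write $\mathbf{T}:=T^*\otimes_{\cO_L}\cO_L[[\Gamma_K]]$ for the two-variable deformation, with $\mathbf{A}$ its discrete counterpart. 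To $f$ B\"uy\"ukboduk--Lei attach two signed Coleman maps at each prime above~$p$, giving signed $p$-adic $L$-functions $L_p^{\bullet\circ}(f/K)$ for $\bullet,\circ\in\{\sharp,\flat\}$ and corresponding signed Selmer groups; the target of the strategy is the two-variable signed main conjecture for these objects.

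One divisibility of the two-variable signed main conjecture is inherited from Kato's Euler system: applying an appropriate explicit reciprocity law identifies the signed Coleman image of the suitably extended Kato class with the two-variable signed $p$-adic $L$-function in the cyclotomic direction, so that the divisibility of Kato lifts to the two-variable setting by an Euler system bound for the core rank-one class. The opposite divisibility will be proved along the anticyclotomic line, where Heegner-type classes (Bertolini--Darmon--Prasanna classes together with the generalized Heegner cycle / big Heegner point constructions of Howard and Longo--Vigni) produce, via their own explicit reciprocity law, the anticyclotomic signed $p$-adic $L$-function. Combining this with the anticyclotomic signed main conjecture --- proved in the body of the paper by adapting the Kolyvagin-type arguments of Howard, X.~Wan, and the first author's joint work with Kim and Longo to the non-ordinary split-prime setting --- yields the matching divisibility along the anticyclotomic direction. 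A standard lifting argument in the two-variable Iwasawa algebra $\cO_L[[\Gamma_K]]$ then assembles these into the full two-variable signed main conjecture over $K_\infty$.

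Theorem~A is then obtained by specialization. Projecting $\Gamma_K\twoheadrightarrow\Gamma^{\rm cyc}$ carries $\mathbf{T}$ to $\mathbf{T}^{\rm cyc}$, the two-variable signed Coleman map to ${\rm Col}^\bullet$, and the two-variable Kato class to ${\rm res}_p(\mathbf{z}^{\rm Kato})$, so that the cyclotomic specialization of $L_p^{\bullet\circ}(f/K)$ recovers $L_p^\bullet(f/\bQ)$ up to the expected factorization involving the analogous quantity for the twist of $f$ by the quadratic character of $K$. A control theorem relating ${\rm Sel}^\bullet(\bQ,\mathbf{A}^{\rm cyc})$ to the cyclotomic specialization of the two-variable signed Selmer group --- whose local error terms at primes dividing $N$ are controlled thanks to the squarefree level assumption --- then transfers the characteristic-ideal equality from $K_\infty$ down to the cyclotomic line, and hence to the statement of Theorem~A.

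The principal obstacle is expected to be the interplay between the signed local conditions at $\frakp$, $\bar\frakp$ and the specialization/control maps. In the non-ordinary setting these conditions are considerably more delicate than their ordinary counterparts, so checking that the anticyclotomic Heegner-point divisibility lifts to two variables without local surplus, and that its cyclotomic specialization is free of Selmer defects, demands fine control of the two-variable signed Coleman maps along both $\Gamma_K$-directions. This is precisely where the squarefree level hypothesis and the non-vanishing assumption on $L_p^\bullet(f/\bQ)$ enter decisively, ruling out the exceptional behavior that would otherwise obstruct the equality of characteristic ideals.
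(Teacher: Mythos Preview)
Your overall architecture---choose an auxiliary $K$ with $p$ split, prove a two-variable statement over $\Gamma_K$, then descend to $\Gamma^{\rm cyc}$ via a control theorem and a factorization of the cyclotomic specialization---matches the paper. But the way you propose to obtain the two divisibilities does not work, and this is a genuine gap.

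The problem is your claim that the divisibility opposite to Kato's ``will be proved along the anticyclotomic line'' via Heegner-type classes and ``Kolyvagin-type arguments.'' Kolyvagin systems, whether built from Kato's Euler system or from Heegner points, bound Selmer groups from \emph{above}: they yield divisibilities of the form ${\rm Char}(X)\supseteq (L_p)$, the same direction as Kato. They cannot by themselves supply the reverse inclusion ${\rm Char}(X)\subseteq (L_p)$. In the paper, that reverse divisibility comes from an entirely different source: Wan's Eisenstein congruence divisibility for the Greenberg-type Selmer group $\mathfrak{X}^{{\rm rel},{\rm str}}_{K_\infty}(f)$ and the $p$-adic $L$-function $\mathscr{L}_\pp(f/K)$ (Theorem~\ref{thm:wan}). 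This is the analogue of Skinner--Urban in the non-ordinary setting, and without it your argument cannot close. The Heegner/Kolyvagin input in the paper is used to \emph{upgrade} Wan's one-sided divisibility to an equality along the anticyclotomic line (in the indefinite case), not to replace it.

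There is also a structural discrepancy. The paper's proof of Theorem~A does \emph{not} use the indefinite/Heegner setting you describe; it chooses $K$ so that a fixed ramified prime $q\mid N$ is \emph{inert} in $K$ and all other prime factors of $N$ split, placing $(f,K)$ in the \emph{definite} case ($N^-$ a single prime). The relevant two-variable divisibility (Theorem~\ref{thm:2var-div}) then comes directly from Wan's result, with the $p$-power and exceptional-prime ambiguities removed using Vatsal's vanishing of anticyclotomic $\mu$-invariants for the signed theta elements (Proposition~\ref{prop:mu-def}). The descent to $\bQ$ uses the factorization of Proposition~\ref{prop:cyc-comp}, the control of Proposition~\ref{prop:control}, and Kato's divisibility (via \cite[Thm.~6.5]{LLZ-AJM}) for both $f$ and $f_K$; the auxiliary condition $L(f_K,1)\neq 0$ ensures $L_p^\bullet(f_K)$ is nonzero so that the squeeze argument goes through. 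None of these definite-case ingredients appear in your proposal.
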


\begin{rem}
\begin{enumerate}
\item {} It follows from Rohrlich's work \cite{rohrlich-Q} that at least one of the $p$-adic $L$-functions $L_p^\bullet(f/\mathbf{Q})$ is nonzero, and one expects that both of them are. Thus Theorem~A should suffice to establish the two cases of Conjecture~\ref{conj:LLZ}.
\item {} For a newform $f$ with \emph{rational} Fourier coefficients, hence corresponding to an isogeny class of elliptic curves $E/\bQ$ by the Eichler--Shimura construction, the main conjectures of \cite{LLZ-AJM} reduce to the signed main conjectures of Kobayashi \cite{kobayashi-152} (case $a_p=0$) 
and give a formulation that is similar to that of Sprung \cite{sprung-JNT} (general supersingular case); see \cite[\S{5}]{LLZ-ANT} for a detailed direct comparison. In any case, all the signed main conjecture are equivalent, since they are equivalent to Kato's \cite[Conj.~12.10]{Kato295}, and hence in the elliptic curve case Theorem~A was first established by Xin~Wan \cite{wan-combined} when $a_p=0$ and by Sprung \cite{sprung-IMC} when $p\mid a_p$ but $a_p\neq 0$. 
\end{enumerate}
\end{rem}

In a recent work \cite{BL-non-ord}, B{\"u}y{\"u}kboduk--Lei have formulated analogues of the main conjectures of \cite{LLZ-AJM} for non-ordinary newforms $f$ based-changed to an imaginary quadratic field, and our proof of Theorem~A relies on the progress we achieve in this paper towards the main  conjectures in \cite{BL-non-ord}. To state our results in this direction, let $K$ be an imaginary quadratic field in which
\[
\textrm{$p\cO_K=\pp\overline{\pp}$ splits.} 
\]
Let $\Gamma_K={\rm Gal}(K_\infty/K)$ be the Galois group of the  $\bZ_p^2$-extension of $K$, and define the %two-variable 
deformations $\mathbf{T}$ and $\mathbf{A}$ of $T^*$ and its Pontrjagin dual similarly as in (\ref{def:T-cyc}) and (\ref{def:A-cyc}), with $\Gamma_K$ in place of $\Gamma^{\rm cyc}$. 
%As we recall $\S$\ref{sec:Col}, 
For each $\qq\in\{\pp,\overline{\pp}\}$ there are signed Coleman maps
\begin{equation}\label{eq:intro-Col}
{\rm Col}_\qq^\sharp,\;{\rm Col}_\qq^\flat:H^1(K_\qq,\mathbf{T})\longrightarrow
\cO_L[[\Gamma_K]]
\end{equation}
%Similarly as in  
%\cite{LLZ-AJM}, \cite{Lei-PhD}, and \cite{sprung-JNT} (extending in %different degrees of generality a construction first due to Kobayashi \cite{kobayashi-152}) the maps $(\ref{eq:intro-Col})$ are 
which can be used to define the local conditions at $p$ cutting out certain `doubly-signed' Selmer groups  $\mathfrak{Sel}^{\bullet,\circ}(K,\mathbf{A})\subset H^1(K,\mathbf{A})$, 
%\begin{equation}\label{eq:intro-Sel}
%\mathfrak{Sel}^{\bullet,\circ}(K,\mathbf{A})\subset H^1(K,\mathbf{A}),
%\end{equation}
with $\bullet, \circ\in\{\sharp,\flat\}$. On the other hand, developing the ideas in the elliptic curve case  \cite{wan-combined, sprung-IMC}, B{\"u}y{\"u}kboduk--Lei \cite{BL-non-ord} construct certain `signed' classes
\begin{equation}\label{eq:intro-BF}
\mathcal{BF}^\sharp,\;\mathcal{BF}^\flat\in H^1(K,\mathbf{T})
\end{equation}
obtained from a suitable decomposition of the (unbounded) Beilinson--Flach classes of Loeffler--Zerbes \cite{LZ-Coleman}, just as the
signed $p$-adic $L$-functions \cite{pollack, sprung-ANT} decompose the unbounded $p$-adic $L$-functions of Amice--Velu and Vishik. Letting
% $\mathfrak{L}_p^{\bullet,\circ}(f/K)\in\cO_K[[\Gamma_K]]$ be the element defined by 
\begin{equation}\label{eq:intro-L}
\mathfrak{L}_p^{\bullet,\circ}(f/K):={\rm Col}_{\overline{\pp}}^\circ({\rm res}_{\overline\pp}(\mathcal{BF}^\bullet)),
%\in\cO_L[[\Gamma_K]],
\end{equation}
where ${\rm res}_{\overline\pp}:H^1(K,\mathbf{T})\rightarrow H^1(K_{\overline\pp},\mathbf{T})$ is the restriction map,  the `doubly-signed' main conjectures of \cite{BL-non-ord} (see  [\emph{loc.cit.}, Conj.~4.15]) then predict that the Pontrjagin dual of 
$\mathfrak{Sel}^{\bullet,\circ}(K,\mathbf{A})$ is $\cO_L[[\Gamma_K]]$-cotorsion, with characteristic ideals generated by $(\ref{eq:intro-L})$: 
%a corresponding doubly-signed $p$-adic $L$-function defined by (
%\begin{equation}\label{eq:intro-L}
%\mathfrak{L}_p^{\bullet,\circ}(f/K):={\rm Col}_{\overline{\pp}}^\circ({\rm res}_{\overline\pp}(\mathcal{BF}^\bullet)),
%\end{equation}
%where ${\rm res}_{\overline\pp}:H^1(K,\mathbf{T})\rightarrow H^1(K_{\overline\pp},\mathbf{T})$ is the restriction map 
%(\emph{cf.} \cite[Conj.~4.13]{BL-non-ord}):

\begin{conj}[{B{\"u}y{\"u}kboduk--Lei} {\cite{BL-non-ord}}]\label{conj:BL-IMC}
For each $\bullet, \circ\in\{\sharp,\flat\}$ the module $\mathfrak{Sel}^{\bullet,\circ}(K,\mathbf{A})$ is $\cO_L[[\Gamma_K]]$-cotorsion,  and
\[
{\rm Char}_{\cO_L[[\Gamma_K]]}(\mathfrak{Sel}^{\bullet,\circ}(K,\mathbf{A})^\vee)
=(\mathfrak{L}_p^{\bullet,\circ}(f/K))
\]
as ideals in $\cO_L[[\Gamma_K]]$.	
\end{conj}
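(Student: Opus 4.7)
The plan is to establish Conjecture~\ref{conj:BL-IMC} by proving the two divisibilities separately and combining them, crucially exploiting the fact that $p$ splits in $K$ to connect the two-variable setting with anticyclotomic Iwasawa theory.

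First I would establish the divisibility
\[
\mathrm{Char}_{\cO_L[[\Gamma_K]]}(\mathfrak{Sel}^{\bullet,\circ}(K,\mathbf{A})^\vee)\mid(\mathfrak{L}_p^{\bullet,\circ}(f/K)),
\]
together with the cotorsion of the dual Selmer, by applying the Kolyvagin--Rubin Euler system machinery to the signed Beilinson--Flach classes $\mathcal{BF}^\bullet$ of~(\ref{eq:intro-BF}), with the Selmer local condition at $\overline{\pp}$ cut out by the kernel of $\mathrm{Col}_{\overline{\pp}}^\circ$. The signed decomposition is precisely designed so that these Euler system classes become bounded in the selected signed direction, and formula~(\ref{eq:intro-L}) serves as the explicit reciprocity law identifying the image of $\mathcal{BF}^\bullet$ under the relevant local Coleman map with $\mathfrak{L}_p^{\bullet,\circ}(f/K)$; the hypotheses needed to run the Euler system argument (big image of $\rho_f$, nonvanishing of $\mathcal{BF}^\bullet$, $\ldots$) should be checked under the running assumptions on $f$. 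Essentially this direction should already be available from the work of B\"uy\"ukboduk--Lei.

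For the opposite divisibility I would descend to the anticyclotomic $\bZ_p$-extension $K_\infty^{\mathrm{ac}}/K$. Because $p$ splits in $K$, a Perrin-Riou--style comparison should identify the anticyclotomic specialization of $\mathfrak{L}_p^{\bullet,\circ}(f/K)$ with an anticyclotomic $p$-adic $L$-function of Bertolini--Darmon--Prasanna type, whose corresponding main conjecture is accessible via Heegner/CM-point methods and is insensitive to ordinarity at $p$ because only $\overline{\pp}$ enters the construction on the analytic side. Matching the anticyclotomic shadow of $\mathfrak{Sel}^{\bullet,\circ}(K,\mathbf{A})$ with the appropriate anticyclotomic Selmer group yields the reverse divisibility modulo the augmentation ideal of the cyclotomic direction, and a specialization/control argument exploiting the nonvanishing of this anticyclotomic $p$-adic $L$-function then propagates the equality to all of $\cO_L[[\Gamma_K]]$.

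The main obstacle should be this second step: formulating the correct anticyclotomic specialization of the doubly-signed picture, and proving the explicit reciprocity identifying the anticyclotomic projection of $\mathfrak{L}_p^{\bullet,\circ}$ with a BDP-type $p$-adic $L$-function. In the non-ordinary setting the relevant Perrin-Riou big logarithm is unbounded, so one must track carefully how the signed decompositions interact with the anticyclotomic direction, and ensure that the signed Selmer conditions at $\pp$ and $\overline{\pp}$ descend to the expected strict/relaxed local conditions defining the anticyclotomic Selmer group. Once these compatibilities are in place, the two divisibilities match up and Conjecture~\ref{conj:BL-IMC} follows.
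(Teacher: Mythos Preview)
The statement you are attempting to prove is labeled as a \emph{conjecture} in the paper and is not established in full; only the diagonal case $\bullet=\circ$ under additional hypotheses (Theorem~B) is proven. More importantly, your strategy for the second divisibility has a genuine gap. You propose to obtain the ``lower bound'' direction $\mathrm{Char}\subseteq(\mathfrak{L}_p^{\bullet,\circ})$ from anticyclotomic Heegner/BDP methods, but Heegner-point Kolyvagin systems, like the Beilinson--Flach Euler system in your first step, yield \emph{upper} bounds on Selmer groups---the same direction, not the opposite one. In the paper the reverse divisibility comes from Wan's work on Eisenstein congruences for unitary groups (Theorem~\ref{thm:wan}), which you do not mention at all. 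The anticyclotomic main conjecture for $\mathscr{L}_\pp^{\tt BDP}(f/K)$ (Theorem~\ref{thm:ac-MC}) is itself proved by \emph{combining} the Heegner upper bound with Wan's lower bound; it cannot serve as an independent source of the missing direction.

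Even granting the anticyclotomic equality, your ``specialization/control argument'' to propagate back to $\cO_L[[\Gamma_K]]$ runs in the wrong direction: one cannot lift a one-variable equality to a two-variable one without already possessing the two-variable divisibility. The paper's actual route is to use the anticyclotomic equality to remove the ambiguities (exceptional primes, powers of $p$) in Wan's two-variable divisibility, and then to pair this with Kato's cyclotomic Euler system over $\bQ$ (via \cite{LLZ-AJM}) in the definite case, or with the equivalences of Theorem~\ref{thm:equiv} in the indefinite case. Your first step is also optimistic: a two-variable Euler-system bound from signed Beilinson--Flach classes is not available from \cite{BL-non-ord}.
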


\begin{rem} 
More generally, the doubly-signed main conjectures of \cite{BL-non-ord} are formulated for possibly higher even weights $k<p$, but in this paper we shall restrict to the weight $2$ case.
\end{rem}

In order to state our main result towards Conjecture~\ref{conj:BL-IMC}, note that there is a 
%eigenspace 
decomposition
\[
\Gamma_K\simeq\Gamma_K^{+}\times\Gamma_K^{-} 
\]
induced by the action of the non-trivial automorphism of $K$, in which  $\Gamma_K^{+}$ is identified with the Galois group $\Gamma^{\rm cyc}$ of the cyclotomic $\bZ_p$-extension of $K$. 
 
\begin{ThmB}\label{thm:main}
%	Let $f\in S_2(\Gamma_0(N))$ be a normalized newform, let $p>2$ be a prime of good non-ordinary reduction for $f$, and let $K/\bQ$ be an imaginary quadratic field in which $p$ splits. 
Assume that:
	\begin{itemize}
		\item[(i)]{} $N$ is square-free;
		\item[(ii)]{} $\bar{\rho}_f$ is ramified at every prime $\ell\mid N$ which is non-split in $K$, and there is at least one such prime,
		\item[(iii)]{} $\bar{\rho}_f\vert_{{\rm Gal}(\overline{\bQ}/K)}$ is irreducible.
	\end{itemize}
	If $\bullet\in\{\sharp,\flat\}$ is such that $\mathfrak{L}_p^{\bullet,\bullet}(f/K)$ is non-zero at some character factoring through $\Gamma_K\twoheadrightarrow\Gamma^{\rm cyc}$, then the module 
	$\mathfrak{Sel}^{\bullet,\bullet}(K,\mathbf{A})$ is $\cO_L[[\Gamma_K]]$-cotorsion, and we have
	\[
	{\rm Char}_{\cO_L[[\Gamma_K]]}(\mathfrak{Sel}^{\bullet,\bullet}(K,\mathbf{A})^\vee)
	=(\mathfrak{L}_p^{\bullet,\bullet}(f/K))
	\]
	as ideals in $\cO_L[[\Gamma_K]]$. 
\end{ThmB}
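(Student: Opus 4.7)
The plan is to prove the two divisibilities constituting the main conjecture separately, combining the signed Beilinson--Flach Euler system with a one-variable anticyclotomic main conjecture for $f/K$, and then comparing the resulting bounds.

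For the divisibility
\[
(\mathfrak{L}_p^{\bullet,\bullet}(f/K))\,\subseteq\,{\rm Char}_{\cO_L[[\Gamma_K]]}(\mathfrak{Sel}^{\bullet,\bullet}(K,\mathbf{A})^\vee),
\]
I would exploit the fact that the signed class $\mathcal{BF}^\bullet\in H^1(K,\mathbf{T})$ of (\ref{eq:intro-BF}) sits at the bottom of an Euler system along tamely ramified abelian extensions of $K$, obtained from a Coleman-type decomposition of the unbounded Beilinson--Flach classes of Loeffler--Zerbes. Running the Kolyvagin system machinery of Mazur--Rubin in the signed supersingular form developed in \cite{BL-non-ord} on this Euler system bounds the doubly-signed Selmer group in terms of $\mathcal{BF}^\bullet$, and applying ${\rm Col}_{\overline\pp}^\bullet$ to the local-at-$\overline\pp$ restriction converts the bound into the desired divisibility. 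Hypothesis (iii) provides the big image condition needed for the Kolyvagin argument, while the non-vanishing assumption on $\mathfrak{L}_p^{\bullet,\bullet}(f/K)$ at a cyclotomic character, combined with a control argument along $\Gamma^{\rm cyc}$, shows that $\mathfrak{Sel}^{\bullet,\bullet}(K,\mathbf{A})$ is $\cO_L[[\Gamma_K]]$-cotorsion.

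For the opposite divisibility, I would descend to the anticyclotomic line. Hypothesis (ii) --- a non-split prime $\ell\mid N$ at which $\bar\rho_f$ is ramified --- places us in a generalized Heegner-type setting in which the root numbers of $f/K$ and of its anticyclotomic twists are controlled, and enables an anticyclotomic main conjecture for the signed Selmer group over the anticyclotomic $\bZ_p$-extension, identifying its characteristic ideal with a signed Bertolini--Darmon--Prasanna $p$-adic $L$-function. In the definite case this should be accessible via the Eisenstein congruence method of Hung--Wan adapted to the signed framework; in the indefinite case via a signed analogue of Howard's Heegner point main conjecture. Combined with an explicit reciprocity law identifying the anticyclotomic specialization of ${\rm Col}_{\overline\pp}^\bullet({\rm res}_{\overline\pp}(\mathcal{BF}^\bullet))$ with the signed BDP $L$-function up to units, and with a control theorem comparing the doubly-signed Selmer group over $\Gamma_K$ to its singly-signed anticyclotomic counterpart, the anticyclotomic equality lifts to the reverse divisibility
\[
{\rm Char}_{\cO_L[[\Gamma_K]]}(\mathfrak{Sel}^{\bullet,\bullet}(K,\mathbf{A})^\vee)\,\subseteq\,(\mathfrak{L}_p^{\bullet,\bullet}(f/K))
\]
over the full two-variable algebra, completing the proof.

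The hardest part will be establishing the anticyclotomic main conjecture in the signed non-ordinary setting: this requires both an independent construction of signed BDP classes (or signed Heegner cycles in the indefinite case) and an adaptation of the Eisenstein congruence or Heegner-point Euler system machinery to the doubly-signed framework. The accompanying explicit reciprocity law and Selmer group comparison are also delicate, since the interaction between ${\rm Col}_{\overline\pp}^\sharp$, ${\rm Col}_{\overline\pp}^\flat$, and the doubly-signed local conditions at $p$ must be handled carefully for the anticyclotomic specialization of $\mathfrak{L}_p^{\bullet,\bullet}(f/K)$ to match the expected BDP data on both the analytic and algebraic sides.
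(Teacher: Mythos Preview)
Your proposal contains a genuine gap: the claimed explicit reciprocity law identifying the anticyclotomic restriction of $\mathfrak{L}_p^{\bullet,\bullet}(f/K)={\rm Col}_{\overline\pp}^\bullet({\rm res}_{\overline\pp}(\mathcal{BF}_c^\bullet))$ with a signed BDP $p$-adic $L$-function is false. The unbounded $\mathfrak{L}_p^{\lambda,\lambda}(f/K)$ (and hence, after the logarithmic-matrix decomposition, the signed $\mathfrak{L}_p^{\bullet,\bullet}$) interpolate central Rankin--Selberg values $L(f/K,\chi,1)$ at finite-order $\chi$; the BDP $L$-function interpolates values at anticyclotomic characters of nontrivial infinity type, a disjoint range. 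Worse, in the indefinite case the root number forces $L(f/K,\chi,1)=0$ for all finite-order anticyclotomic $\chi$, so the anticyclotomic restriction of $\mathfrak{L}_p^{\lambda,\lambda}(f/K)$ vanishes identically, and your ``anticyclotomic equality lifts to the two-variable divisibility'' step collapses: one cannot compare on a line where the $p$-adic $L$-function is zero.

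The paper's architecture is structurally different. A Poitou--Tate argument (Theorem~\ref{thm:equiv}) shows the doubly-signed main conjecture is \emph{equivalent} to an Iwasawa--Greenberg main conjecture for a different $p$-adic $L$-function $\mathscr{L}_\pp(f/K)$ and the Selmer group with $({\rm rel},{\rm str})$ conditions at $(\pp,\overline\pp)$; the bridge uses a second explicit reciprocity law at $\pp$ via newly constructed signed \emph{logarithm} maps (not Coleman maps) applied to ${\rm res}_\pp(\mathcal{BF}_c^\bullet)$. It is $\mathscr{L}_\pp(f/K)$ whose anticyclotomic restriction is $\mathscr{L}_\pp^{\tt BDP}(f/K)^2$, and the matching anticyclotomic reciprocity law involves signed \emph{Heegner} classes, not Beilinson--Flach classes. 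In the indefinite case the paper combines a Heegner Kolyvagin-system upper bound with Wan's Eisenstein lower bound to prove the $({\rm rel},{\rm str})$ main conjecture on the anticyclotomic line, lifts to two variables, and transfers via the equivalence; the cyclotomic nonvanishing hypothesis is used only at the end to remove exceptional primes. In the definite case no anticyclotomic equality is used: Wan's divisibility (with Vatsal's $\mu=0$ for signed theta elements removing the ambiguities) gives one two-variable divisibility, and the equality is closed by comparison on the \emph{cyclotomic} line with Kato's divisibility, via Theorem~A and the factorization $\mathfrak{L}_p^{\bullet,\bullet}(f/K)_{\rm cyc}\doteq L_p^\bullet(f)\cdot L_p^\bullet(f_K)$. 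The Beilinson--Flach classes are never run as a Kolyvagin system; they enter only through the two reciprocity laws linking the main conjectures.
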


%\begin{rem} 
%The signed Iwasawa main conjectures that we prove in this paper -- of Lei, Loeffler, and Zerbes, and of B{\"u}y{\"u}kboduk and Lei -- are constructed from Wach modules. There is another formulation of $\sharp/\flat$ conjectures for weight two modular forms using formal groups, see \cite{KS}. The main conjecture in \cite{KS} has the advantage that the Coleman maps are more explicit, but the equivalence is still an open question. However, the analogues of Theorems \ref{cor:cyc-IMC} and \ref{thm:main} can be proved in the formal groups setting, see \cite{KS}.
%\end{rem}

\begin{rem}
The signed Iwasawa main conjectures we study in this paper (i.e., the $1$- and $2$-variable signed main conjectures of \cite{LLZ-AJM} and \cite{BL-non-ord})  are formulated using Berger's theory of Wach modules \cite{berger-explicit}, \cite{berger-limit}. For an alternative  formulation of signed main conjectures 
using Fontaine's theory of group schemes, and corresponding analogues of our Theorems~A and B, see the forthcoming work \cite{KS}.
\end{rem} 

%We expect the condition imposed on $\mathfrak{L}_p^{\bullet,\bullet}(f/K)$ in Theorem~B to hold for both $\bullet\in\{\sharp,\flat\}$, and it should follow from a forthcoming follow-up to the work \cite{BLLV} by B{\"u}y{\"u}kboduk--Lei--Loeffler--Venkat. 
%
%As shown in Corollary~\ref{cor:eq-sign} in the body of the paper, there is always some $\bullet\in\{\sharp,\flat\}$ for which $\mathfrak{L}_p^{\bullet,\bullet}(f/K)$ is not identically zero, even along the `cyclotomic line'. Thus Theorem~B establishes at least one of the two equally-signed cases of Conjecture~\ref{conj:BL-IMC}.
	%, and (since one expects that all $\mathfrak{L}_p^{\bullet,\circ}(f/K)$ have nonzero cyclotomic restrictions) it should suffice to establish half of the cases of Conjecture~\ref{conj:BL-IMC}. 
%\end{rem}

\subsection{Outline of the proofs} 

Similarly as in \cite{SU}, the proof of Theorem~A will be deduced from an application of Theorem~B for a suitable choice of auxiliary imaginary quadratic field $K$, together with Kato's work. 
%so here we shall just expand on the proof of Theorem~B. 
The field $K$ determines a factorization
\[
N=N^+ N^-
\]
with $N^+$ (resp. $N^-$) divisible only by primes which are either split or ramified (resp. inert) in $K$. Let $\nu(N^-)$ denote the number of prime factors of the square-free integer $N$. The proof of Theorem~B is significantly different depending upon whether:
\begin{itemize} 
\item{} $\nu(N^-)$ is odd: the \emph{definite} case, or 
\item{} $\nu(N^-)$ is even: the \emph{indefinite} case.
\end{itemize}
We first outline the  proof in the latter case. Let 
\[
\Gamma^{\rm ac}:=\Gamma_K^-={\rm Gal}(K_\infty^{\rm ac}/K)
\] 
be the Galois group of the anticyclotomic $\bZ_p$-extension of $K$, and let $\mathbf{T}^\ac$ and $\mathbf{A}^{\ac}$ be the corresponding deformations of $T^*$ and its Pontrjagin dual, respectively. Similarly as in the construction %by B{\"u}y{\"u}kboduk--Lei 
in \cite{BL-non-ord} of the bounded classes $(\ref{eq:intro-BF})$, %in $\S$\ref{sec:HP} 
we construct `signed' Heegner classes
\[
\mathfrak{Z}^\sharp,\;\mathfrak{Z}^\flat\in H^1(K,\mathbf{T}^{\rm ac})
\]
decomposing the Kummer images of classical Heegner points over 
%the anticyclotomic tower 
$K_\infty^{\rm ac}/K$.
For each $\bullet\in\{\sharp,\flat\}$, we show that   $\mathfrak{Z}^\bullet$ lands in the signed Selmer group $\mathfrak{Sel}^{\bullet,\bullet}(K,\mathbf{T}^{\rm ac})$ obtained by restricting the two-variable $\mathfrak{Sel}^{\bullet,\bullet}(K,\mathbf{T})$ to the `anticyclotomic line'. Extending the methods introduced in \cite{cas-wan-ss} in the  %elliptic curve case with $a_p=0$
case $a_p=0$, we then formulate and prove (under mild hypotheses) a natural extension of Perrin-Riou's main conjecture \cite{PR-HP}, predicting that both $\mathfrak{Sel}^{\bullet,\bullet}(K,\mathbf{T}^{\rm ac})$ and the Pontrjagin dual $\mathfrak{Sel}^{\bullet,\bullet}(K,\mathbf{A}^{\rm ac})^\vee$ have  $\cO_L[[\Gamma^{\rm ac}]]$-rank one, with
\begin{equation}\label{eq:intro-HP-IMC}
{\rm Char}_{\cO_L[[\Gamma^{\rm ac}]]}(\mathfrak{Sel}^{\bullet,\bullet}(K,\mathbf{A}^{\rm ac})^\vee_{\rm tors})\overset{?}=
{\rm Char}_{\cO_L[[\Gamma^{\rm ac}]]}\biggl(\frac{{\rm Sel}^{\bullet,\bullet}(K,\mathbf{T}^{\rm ac})}{\cO_L[[\Gamma^{\rm ac}]]\cdot\mathfrak{Z}^\bullet}\biggr)^2
\end{equation}
as ideals in $\cO_L[[\Gamma^{\rm ac}]]\otimes_{\bZ_p}\bQ_p$, 
%(\emph{cf.} Conjecture~\ref{conj:HP}), 
where the subscript `tors' denotes the $\cO_L[[\Gamma^{\rm ac}]]$-torsion submodule. %In $\S$\ref{sec:HP-ERL} 
On the other hand, we prove an explicit reciprocity law for the classes $\mathfrak{Z}^\bullet$: 
\[
{\rm Log}_{\pp,{\rm ac}}^\bullet({\rm res}_\pp(\mathfrak{Z}^\bullet))=\mathscr{L}_\pp^{\tt BDP}(f/K),
\] 
relating their images under certain `signed' logarithm maps 
(whose construction is new here and might be of independent interest) 
to the $p$-adic $L$-function 
%of \cite{cas-hsieh1} and 
of Bertolini--Darmon--Prasanna \cite{bdp1}. 
\sk

Finally, from the explicit reciprocity laws for Beilinson--Flach classes due to Kings--Loeffler--Zerbes \cite{KLZ2}, we show that  the %two-variable 
signed main conjectures in Theorem~B are closely related\footnote{The condition on the cyclotomic restriction of $\mathfrak{L}_p^{\bullet,\bullet}(f/K)$ is needed at this step of the argument.} to the Iwasawa--Greenberg main conjecture for the %two-variable 
$p$-adic $L$-function $\mathscr{L}_\pp(f/K)$ constructed in \cite{wan-combined}. Since the square of $\mathscr{L}_\pp^{\tt BDP}(f/K)$ is easily seen to agree with the restriction $\mathscr{L}_\pp(f/K)$ to the `anticyclotomic line', building on our result towards  $(\ref{eq:intro-HP-IMC})$ we can thus promote the divisibility towards the Iwasawa--Greenberg main conjecture for $\mathscr{L}_\pp(f/K)$ in \cite{wan-combined} to a full equality, from which the proof of Theorem~B in the indefinite case follows easily. 
\sk

On the other hand, to prove Theorem~B in the definite case (where Heegner points are not directly available), we relate the anticyclotomic restriction of the $p$-adic $L$-functions $\mathfrak{L}_p^{\bullet,\bullet}(f/K)$ to certain signed theta elements $\Theta_\infty^\bullet\in\cO_L[[\Gamma^{\rm ac}]]$ extending a construction by Darmon--Iovita \cite{Darmon-Iovita} in the 
%elliptic curve case with $a_p=0$
case $a_p=0$. Thus we are able to exploit Vatsal's results on the vanishing of anticyclotomic $\mu$-invariants \cite{vatsal-special} to deduce again from \cite{wan-combined} (under the same hypothesis on $\mathfrak{L}_p^{\bullet,\bullet}(f/K)$ as above) one of the divisibilities toward Conjecture~\ref{conj:BL-IMC}, which combined with Kato's work (as reformulated in \cite[\S{6}]{LLZ-AJM}) 
leads to the proof of Theorem~B in the definite case.

\begin{rem}
As the reader will immediately note, our proof of Theorem~A only uses Theorem~B in the definite case. On the other hand, the indefinite case of Theorem~B (and especially our progress towards the anticyclotomic main conjecture (\ref{eq:intro-HP-IMC}) on which its proof rests), %in addition to also being contemplated by Conjecture~\ref{conj:BL-IMC}, 
is indispensable for the applications to the $p$-part of the Birch and Swinnerton-Dyer formula in the case of analytic rank $1$, as we shall explain in the following paragraphs. 
\end{rem}

\subsection{Applications to the $p$-part of the BSD formula}

We conclude this Introduction by explaining the implications of the preceding results of the $p$-part to the Birch and Swinnerton-Dyer formula for abelian varieties of $A/\bQ$ of ${\rm GL}_2$-type; by \cite[Cor.~10.2]{KW-SerreI} these precisely correspond to the abelian varieties $A_f/\bQ$ (up to isogeny) associated with weight $2$ eigenforms $f\in S_2(\Gamma_0(N))$ by the Eichler--Shimura construction \cite{shimura-pink}, so that
\begin{equation}\label{eq:ES}
L(A_f,s)=\prod_{\sigma}L(f^\sigma,s)\nonumber
\end{equation}
with the product running over all conjugates of $f$. In particular, letting $r_f:={\rm ord}_{s=1}L(f,s)$ we note that
\[
{\rm ord}_{s=1}L(A_f,s)=[K_f:\bQ] r_f={\rm dim}(A_f) r_f,
\]
where $K_f$ is the number field generated by the Fourier coefficients of $f$. 
\sk

The analogue of Theorem~A for primes $p\nmid N$ of ordinary reduction for $f$, i.e., Iwasawa's main conjecture for ${\rm GL}_{2/\bQ}$ for good ordinary primes, 
is one of the main results of \cite{SU}. Similarly as in [\emph{loc.cit.}, \S{3.6.1}], 
the interpolation property 
%of the $p$-adic $L$-functions 
satisfied by the $p$-adic $L$-function 
$L_p^\bullet(f/\bQ)$ at the trivial character, together with a variant for the signed 
Selmer groups ${\rm Sel}^\bullet(\bQ,\mathbf{A}^{\rm cyc})$ of Mazur's control theorem \cite{greenberg-cetraro} and the relation between the specialization of ${\rm Sel}^\bullet(\bQ,\mathbf{A}^{\rm cyc})$ at the trivial character and the usual Selmer group for $A$, yields the following result towards the $p$-part of 
the Birch and Swinnerton-Dyer formula in analytic rank $0$: 

\begin{ThmC}
	Let $A/\bQ$ be a semistable abelian variety of ${\rm GL}_2$-type of conductor $N$, associated with a newform $f\in S_2(\Gamma_0(N))$, let $p\nmid N$ be a prime of good reduction for $A$, and assume that $A$ has no rational isogenies of $p$-power degree. If $L(A,1)\neq 0$, then for every prime $\mathfrak{P}$ of $K_f$ above $p$ we have
	\[
	{\rm ord}_{\mathfrak{P}}\biggl(\frac{L^{}(A,1)}{\Omega_A}\biggr)
	={\rm ord}_{\mathfrak{P}}\biggl(\#\Sha(A/\bQ)\prod_{\ell\mid N}c_\ell(A/\bQ)\biggr),
	\]
	where 
	\begin{itemize}
		\item{} $\Omega_A=\int_{A(\bR)}\vert\omega_A\vert$, for $\omega_A$ a N\'eron differential, is the real period of $A$,
		\item{} $\Sha(A/\bQ)$ is the Tate--Shafarevich group of $A$, and
		\item{} $c_\ell(A/\bQ)$ is the Tamagawa number of $A$ at the prime $\ell$.
	\end{itemize}
	In other words, the $p$-part
	of the Birch--Swinnerton-Dyer formula holds for $A$.
\end{ThmC}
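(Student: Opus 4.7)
The plan is to derive Theorem~C by specializing the cyclotomic signed main conjecture at the trivial character of $\Gamma^{\rm cyc}$, following the template of \cite[\S{3.6.1}]{SU}. Fix a prime $\mathfrak{P}$ of $K_f$ above $p$, and extend $\imath_p$ so as to induce $\mathfrak{P}$; let $L/\bQ_p$ be large enough to contain $\imath_p(K_f)$. If $\imath_p(a_p)$ is a $p$-adic unit, then $A_f$ is ordinary at $\mathfrak{P}$ and the cyclotomic main conjecture in this case is a theorem of Skinner--Urban \cite{SU}; otherwise $f$ is non-ordinary at $\mathfrak{P}$, and since $L(A,1)\neq 0$ forces $L(f,1)\neq 0$, Rohrlich's non-vanishing theorem \cite{rohrlich-Q} combined with the interpolation property of $L_p^\bullet(f/\bQ)$ ensures that at least one of these $p$-adic $L$-functions is nonzero, so Theorem~A applies. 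In either case one obtains
\[
{\rm Char}_{\cO_L[[\Gamma^{\rm cyc}]]}({\rm Sel}^\bullet(\bQ,\mathbf{A}^{\rm cyc})^\vee) = (L_p^\bullet(f/\bQ))
\]
for a suitable $\bullet\in\{\sharp,\flat\}$.

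The rest of the argument combines two inputs. On the analytic side, the interpolation formula for $L_p^\bullet(f/\bQ)$ at the trivial character expresses its value, up to a $p$-adic unit, as an explicit Euler-type factor $\cE_p(f,\bullet)$ times the algebraic part $L(f,1)/\Omega_f$. On the algebraic side, a non-ordinary analogue of Mazur's control theorem---in the style of Kobayashi's signed control theorem when $a_p=0$, and extended in \cite{LLZ-AJM} to the general non-ordinary case---identifies the $\mathbf{1}$-specialization of ${\rm Sel}^\bullet(\bQ,\mathbf{A}^{\rm cyc})^\vee$, up to a correction factor at $p$ that precisely cancels $\cE_p(f,\bullet)$, with the Pontrjagin dual of the classical $\mathfrak{P}^\infty$-Selmer group ${\rm Sel}_{\mathfrak{P}^\infty}(A/\bQ)$.

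The hypothesis $L(A,1)\neq 0$, combined with Kato's Euler system \cite{Kato295} applied via the factorization $L(A,s)=\prod_{\sigma}L(f^\sigma,s)$, implies that both $A(\bQ)$ and $\Sha(A/\bQ)[\mathfrak{P}^\infty]$ are finite. The global Poitou--Tate sequence then identifies $\#{\rm Sel}_{\mathfrak{P}^\infty}(A/\bQ)$ with the product of $\#\Sha(A/\bQ)[\mathfrak{P}^\infty]$ and the $\mathfrak{P}$-parts of the Tamagawa numbers $c_\ell(A/\bQ)$, modulo contributions from $A(\bQ)[\mathfrak{P}^\infty]$ and $A^\vee(\bQ)[\mathfrak{P}^\infty]$; the assumption that $A$ has no rational isogenies of $p$-power degree forces $\bar\rho_{f,\mathfrak{P}}$ to be irreducible, so these torsion contributions vanish. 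Taking $\mathfrak{P}$-adic valuations on both sides of the specialized main conjecture yields the $\mathfrak{P}$-part of the BSD formula; doing this for every $\mathfrak{P}\mid p$ completes the proof.

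The principal delicate point is the precise matching between the Euler factor $\cE_p(f,\bullet)$ appearing in the analytic interpolation and the discrepancy at $p$ arising in the signed control theorem. This matching reduces to identifying the local condition at $p$ cut out by the Coleman map ${\rm Col}^\bullet$ (built from Berger's theory of Wach modules \cite{berger-explicit,berger-limit}) with the classical Bloch--Kato local condition, up to a factor whose $\mathfrak{P}$-adic valuation can be computed explicitly from the Bloch--Kato exponential; this is the non-ordinary analogue of the routine computation carried out in \cite[\S{3.6.1}]{SU}, and constitutes the only technically subtle step.
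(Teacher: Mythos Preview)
Your proposal is correct and follows essentially the same route as the paper: invoke Theorem~A (resp.\ \cite{SU} in the ordinary case) to obtain the cyclotomic main conjecture, specialize at the trivial character via the interpolation formula \cite[Prop.~3.28]{LLZ-AJM}, and apply a signed control theorem \cite[\S{4.1}]{sprung-IMC} to pass to the classical Selmer group. One minor difference in packaging: where you describe a ``correction factor at $p$ that precisely cancels $\cE_p(f,\bullet)$'' arising from the control theorem, the paper instead invokes \cite[Prop.~2.14]{hatley-lei} to identify the base-layer signed Selmer group ${\rm Sel}^\bullet_\bQ(f)$ \emph{on the nose} with ${\rm Sel}_{\mathfrak{P}^\infty}(A/\bQ)$, so that the only Euler factor to track is the one in the interpolation formula itself; this slightly streamlines the bookkeeping you flag as the delicate point.
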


On the other hand, as already indicated in the preceding paragraphs, a  key intermediate step in our proof of Theorem~B in the indefinite case is the proof of the Iwasawa--Greenberg main conjecture for the anticyclotomic $p$-adic $L$-function $\mathscr{L}_\pp^{\tt BDP}(f/K)$ (see Theorem~\ref{thm:ac-MC}). With this main conjecture in hand, an %suitable 
extension of the arguments in \cite{JSW} then yields the following result towards the $p$-part of the Birch and Swinnerton-Dyer formula in analytic rank $1$:

\begin{ThmD}\label{cor:bsd}
	Let $A/\bQ$ be a semistable abelian variety of ${\rm GL}_2$-type of conductor $N$, associated with a newform $f\in S_2(\Gamma_0(N))$. Let $p\nmid N$ be a prime of good reduction for $A$, and assume that $A$ has no rational isogenies of $p$-power degree. If ${\rm ord}_{s=1}L(f,s)=1$, 
	then for every prime $\mathfrak{P}$ of $K_f$ above $p$ we have
	\[
	{\rm ord}_{\mathfrak{P}}\biggl(\frac{L^{*}(A,1)}{{\rm Reg}(A/\bQ)\cdot\Omega_A}\biggr)
	={\rm ord}_{\mathfrak{P}}\biggl(\#\Sha(A/\bQ)\prod_{\ell\mid N}c_\ell(A/\bQ)\biggr),
	\]
	where $L^*(A,1)$ is the leading term of the Taylor series expansion of $L(A,s)$ around $s=1$, and ${\rm Reg}(A/\bQ)$ is the discriminant of the N\'eron--Tate canonical height pairing on $A(\bQ)\otimes\bR$. In other words, the $p$-part
	of the Birch--Swinnerton-Dyer formula holds for $A$.
\end{ThmD}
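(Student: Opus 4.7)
The plan is to extend the Jetchev--Skinner--Wan strategy \cite{JSW} to the non-ordinary setting, taking as input the anticyclotomic Iwasawa main conjecture $(\ref{eq:intro-HP-IMC})$ and the explicit reciprocity law for signed Heegner classes $\mathfrak{Z}^\bullet$ established along the way to Theorem~B in the indefinite case, together with the resulting Iwasawa--Greenberg main conjecture for $\mathscr{L}_\pp^{\tt BDP}(f/K)$.

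First I would choose an auxiliary imaginary quadratic field $K$ in which $p$ splits, satisfying the generalized Heegner hypothesis with respect to a factorization $N=N^+ N^-$ with $\nu(N^-)$ even, and such that the quadratic twist satisfies $L(f^K,1)\neq 0$; the existence of such $K$ is guaranteed by the nonvanishing results of Bump--Friedberg--Hoffstein and Murty--Murty. The factorization $L(f/K,s)=L(f,s)L(f^K,s)$ then forces ${\rm ord}_{s=1}L(f/K,s)=1$, so by the Gross--Zagier theorem the Heegner point $y_K\in A(K)\otimes K_f$ is non-torsion, and an application of Kolyvagin's method to the $\mathfrak{P}$-adic Galois representation of $A$ yields that $A(K)\otimes K_f$ is one-dimensional over $K_f$ and that $\Sha(A/K)$ is finite.

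Next I would specialize the anticyclotomic main conjecture $(\ref{eq:intro-HP-IMC})$ at the trivial character of $\Gamma^{\rm ac}$. Via a signed analog of Mazur's control theorem, the specialization of $\mathfrak{Sel}^{\bullet,\bullet}(K,\mathbf{A}^{\rm ac})$ can be identified with the classical $\mathfrak{P}$-Selmer group of $A/K$, up to explicit local error terms coming from Tamagawa numbers at primes dividing $N$. This yields an identity relating $\#\Sha(A/K)[\mathfrak{P}^\infty]\cdot\prod_{\ell\mid N}c_\ell(A/K)$ to the square of the index of the specialized signed Heegner class $\mathfrak{Z}^\bullet$ in the rank-one classical Selmer group. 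Applying the explicit reciprocity law ${\rm Log}_{\pp,{\rm ac}}^\bullet({\rm res}_\pp(\mathfrak{Z}^\bullet))=\mathscr{L}_\pp^{\tt BDP}(f/K)$ together with the Bertolini--Darmon--Prasanna $p$-adic Waldspurger formula evaluating $\mathscr{L}_\pp^{\tt BDP}(f/K)$ at the central critical point as the square of the $p$-adic logarithm of $y_K$, this index becomes, up to explicit periods and units, the N\'eron--Tate canonical height of $y_K$.

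To descend from $K$ to $\bQ$, I would invoke the complex Gross--Zagier formula to reinterpret the canonical height as $L'(f/K,1)$ up to standard periods, and then divide through by $L(f^K,1)$, whose $\mathfrak{P}$-adic valuation is controlled by Theorem~C applied to the quadratic twist $A^K$ (necessarily of analytic rank zero by the sign computation). Matching the BDP period on the $p$-adic side with the product of the real and imaginary periods on the complex side yields the desired formula for $L^{*}(A,1)/({\rm Reg}(A/\bQ)\cdot\Omega_A)$. The main technical obstacle will be the careful accounting of local contributions: proving the signed control theorem at the trivial character in the indefinite case with the correct Tamagawa error, verifying that the signed Coleman/logarithm normalizations are compatible with the classical formal group logarithms on $A(K_\pp)$, and tracking the period discrepancies between the Bertolini--Darmon--Prasanna and Gross--Zagier formulas. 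Once these calibrations are in place, the remainder of the argument is a mechanical translation of the ordinary-case argument of \cite{JSW} into the non-ordinary framework.
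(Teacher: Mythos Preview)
Your overall strategy matches the paper's---both follow the Jetchev--Skinner--Wan template, choose an auxiliary $K$ with $L(f^K,1)\neq 0$, invoke Gross--Zagier and Kolyvagin over $K$, and descend to $\bQ$ via Theorem~C applied to the twist. The difference is in which anticyclotomic main conjecture is specialized. You propose to descend through the signed Heegner-point conjecture~\eqref{eq:intro-HP-IMC} for $\mathfrak{Sel}^{\bullet,\bullet}(K,\mathbf{A}^{\rm ac})$, and you correctly flag the resulting signed control theorem and Coleman-map calibrations as the main obstacles. The paper sidesteps these entirely by working instead with Theorem~\ref{thm:ac-MC}, the main conjecture relating $X^{{\rm rel},{\rm str}}_{K_\infty^{\rm ac}}(f)$ to $\mathscr{L}_\pp^{\tt BDP}(f/K)^2$. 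Because the $({\rm rel},{\rm str})$ local condition at $p$ is the ordinary Greenberg condition rather than a signed one, the required descent formula is \emph{exactly} the one already established in \cite[\S{3.3}]{JSW}, and the BDP interpolation at the trivial character then supplies the link to $\log_{\omega_f}$ of the Heegner point with no further local normalization. Your route would also succeed, but the paper's choice of Selmer structure eliminates precisely the technical difficulties you anticipate.
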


Finally, we note again that the signed main conjectures of \cite{BL-non-ord} are formulated for even weights $k<p$. In the \emph{definite} case, our methods should generalise, allowing one to deduce from \cite[Thm.~3.9]{wan-non-ord} a higher weight analogue of Theorem~B in this case (and yielding, in combination with \cite[Cor.~6.6]{LLZ-AJM}, a new proof of the main result of \cite{wan-non-ord}). On the other hand, a higher weight analogue of our Theorem~B in the \emph{indefinite} case would seem to require new ideas, as we would like to explore in a future work. 
\sk

\emph{Notations.} Throughout the paper, we let $f=\sum_{n=1}^\infty a_nq^n\in S_2(\Gamma_0(N))$ be a newform, $p>2$ be a good non-ordinary prime for $f$, and $K$ be an imaginary quadratic field in which $p=\pp\overline{\pp}$ splits. By choosing an algebraic isomorphism $\bC\simeq\bC_p$ once and for all, we shall assume that $f$ is defined over a finite extension $L/\bQ_p$ with ring of integers $\cO_L$. For any $p$-adic Lie group $G$, we let $\Lambda(G)$ denote the Iwasawa algebra $\cO_L[[G]]$, and set $\Lambda_L(G):=L\otimes_{\cO_L}\Lambda(G)$. Finally, for $F$ a finite extension of $\bQ$ or $\bQ_p$ we let $G_F:={\rm Gal}(\overline{F}/F)$ be the absolute Galois group. 
\sk

\emph{Acknowledgements.} We thank Kazim B{\"u}y{\"u}kboduk, Antonio Lei, David Loeffler, and Sarah Zerbes for their very helpful comments on an earlier draft of this paper.

\section{Preliminaries}\label{sec:prelim}
\subsection{Signed Coleman maps}\label{sec:Col}

In this section, we briefly recall the two-variable
signed Coleman maps introduced in \cite[\S{2.3}]{BL-non-ord}. In the notations of \emph{loc.cit.}, for our applications in this paper it will suffice to take $F=\bQ_p$, and $\vartheta$ and $\eta$ both equal to the trivial character.

Let $V_f$ be the $2$-dimensional $L$-linear $G_{\bQ}$-representation associated with $f$ by Deligne, and let $T_f\subset V_f$ be a fixed Galois stable $\cO_L$-lattice. As a representation of $G_{\bQ_p}$, $V_f$ has Hodge--Tate weights $\{0,-1\}$, with the convention that $\bQ_p(1)$ has Hodge--Tate weight $+1$. 

Let $\mathbb{A}_{\bQ_p}^+=\cO_L[[\pi]]$, where $\pi$ is a formal variable. Let $\mathbb{N}(T_f)$ be the Wach module associated to $T_f$ (see \cite[Lem.~II.1.3]{berger-limit}), and denote by $\{n_{f,1}, n_{f,2}\}$
the $\mathbb{A}_{\bQ_p}^+$-basis for $\mathbb{N}(T_f)$ constructed in
\cite[\S{3}]{BLZ}.
Since $f$ has trivial nebentypus, we have $T_f^*\simeq T_f(1)$, and therefore letting $e_1$ be a generator of $\bZ_p(1)$ and setting
\begin{equation}\label{eq:basis}
n_i:=n_{f,i}\cdot\pi^{-1}e_1,\quad i=1, 2
\end{equation}
we obtain an $\mathbb{A}_{\bQ_p}^+$-basis $\{n_1,n_2\}$
for $\mathbb{N}(T_f^*)$. By the construction in \cite{berger-limit},  
%in \emph{loc.~cit.}, 
under the canonical identification
\[
\mathbb{N}(T_f^*)/\pi\mathbb{N}(T_f^*)\simeq\mathbb{D}_{\rm cris}(T_f^*), 
\]
the reduction mod $\pi$ of the basis $\{n_1,n_2\}$ yields an $\cO_L$-basis $\{v_1,v_2\}$ for $\mathbb{D}_{\rm cris}(T_f^*)$ satisfying $v_1\in{\rm Fil}^0\mathbb{D}_{\rm cris}(T_f^*)$, $v_2=\varphi(v_1)$. 
In particular, the matrix $A_\varphi$ of the Frobenius map $\varphi$ on $\mathbb{N}(T_f^*)$ with respect to $\{v_1,v_2\}$ is given by
\begin{equation}\label{eq:A}
A_\varphi=p^{-1}\left(\begin{array}{cc}0&{-1}\\p&a_p\end{array}\right).
\end{equation}

Let $\mathbb{B}_{{\rm rig},\bQ_p}^+$ be the subring of $L[[\pi]]$ of power series convergent on the open unit disc in $\bC_p$. As explained in \cite[\S{3.2}]{LLZ-AJM}, there is a change of basis matrix
$M\in M_{2\times 2}(\mathbb{B}_{{\rm rig},\bQ_p}^+)$ such that
\[
(n_1\quad n_2)=(v_1\quad v_2)\;M.
\]
Since $n_i$ reduces to $v_i$ mod $\pi$, we have 
\begin{equation}\label{eq:M-cong}
M\equiv I_2\pmod{\pi},
\end{equation}
where $I_2$ is the $2\times 2$ identity matrix. Let $\Gamma={\rm Gal}(\bQ(\mu_{p^\infty})/\bQ)$ be the Galois group of the cyclotomic  $\bZ_p^\times$-extension of $\bQ$. Like $\mathbb{A}_{\bQ_p}^+$, the ring $\mathbb{B}_{{\rm rig},\bQ_p}^+$ is equipped with $\cO_L$-linear actions of $\varphi$ and $\gamma\in\Gamma$ given by $\pi\mapsto (1+\pi)^p-1$ and $\pi\mapsto (1+\pi)^{\chi(\gamma)}-1$, where $\chi:G_{\bQ}\rightarrow\bZ_p^\times$ is the $p$-adic cyclotomic character. For any complete discretely valued extension $E$ of $\bQ_p$ let $\mathcal{H}_E(\Gamma)$ be the algebra of $E$-valued locally analytic distributions on $\Gamma$. As in \cite{LLZ-AJM}, we make the following definition.

\begin{defn}\label{def:log-matrix}
	We define the \emph{logarithmic matrix} $M_{\rm log}\in M_{2\times 2}(\mathcal{H}_L(\Gamma))$ by
	\[
	M_{\rm log}:=\mathfrak{M}^{-1}((1+\pi)A_\varphi\cdot\varphi(M)),
	\]
	where $\mathfrak{M}:\mathcal{H}_L(\Gamma)\simeq(\mathbb{B}_{{\rm rig},\bQ_p}^+)^{\psi=0}$ is the Mellin
	transform (see e.g. \cite[Cor.~B.2.8]{PR-st}), and $\psi$ is the left inverse of $\varphi$.
\end{defn}

For any $p$-adic Lie extension $E_\infty$ of $\bQ_p$, set
\[
H^1_{\rm Iw}(E_\infty,T_f^*):=\varprojlim_{F} H^1(F,T_f^*),
\]
where the limit is with respect to corestriction over the finite Galois extensions $F/\bQ_p$ contained in $E_\infty$. Let $F_\infty$ be the unramified $\bZ_p$-extension of $\bQ_p$, and set 
\[
U:={\rm Gal}(F_\infty/\bQ_p),\quad G:={\rm Gal}(F_\infty(\mu_{p^\infty})/\bQ_p)\simeq\Gamma\times U.
\] 

Let $\hat{\cO}_{F_\infty}$ be the completion of the ring of integers of $F_\infty$, and let $S_\infty\subset\Lambda_{\hat{\cO}_{F_\infty}}(U)=\hat{\cO}_{F_\infty}[[U]]$ be the Yager module introduced in \cite[\S{3.2}]{LZ2}. With a slight abuse of notation, let $1-\varphi$ be the map
\begin{equation}\label{eq:1-phi}
1-\varphi:H_{\rm Iw}^1(F_\infty(\mu_{p^\infty}),T_f^*)
\longrightarrow(\varphi^*\mathbb{N}(T_f^*))^{\psi=0}\hat{\otimes}S_\infty
\end{equation} 
constructed in [\emph{op.cit.}, Def.~4.6]. The composition of $(\ref{eq:1-phi})$ with the $\Lambda(G)$-linear embedding
\[ (\varphi^*\mathbb{N}(T_f^*))^{\psi=0}\hat{\otimes}S_\infty\hooklongrightarrow\mathcal{H}_{\hat{F}_\infty}(G)\otimes\mathbb{D}_{\rm cris}(T_f^*)
\] 
deduced from \cite[Prop.~2.11]{LLZ-ANT}, where $\hat{F}_\infty$ is the completion of $F_\infty$ and $\mathcal{H}_{\hat{F}_\infty}(G)$, yields the two-variable regulator map 
\[
\mathcal{L}_{T_f^*}:H_{\rm Iw}^1(F_\infty(\mu_{p^\infty}),T_f^*)\longrightarrow\mathcal{H}_{\hat{F}_\infty}(G)\otimes\mathbb{D}_{\rm cris}(T_f^*)
\]
of \cite{LZ2}. On the other hand, by \cite[Thm.~4.24]{LLZ-AJM} the $\mathbb{A}_{\bQ_p}^+$-basis $\{n_1,n_2\}$ of $\mathbb{N}(T_f^*)$ defined in $(\ref{eq:basis})$ is such that $\{(1+\pi)\varphi(n_1),(1+\pi)\varphi(n_2))\}$ forms
a $\Lambda(\Gamma)$-basis for $(\varphi^*\mathbb{N}(T_f^*))^{\psi=0}$, 
and so gives rise to a $\Lambda_{\hat{\cO}_{F_\infty}}(G)$-linear embedding
\[
\mathfrak{J}:(\varphi^*\mathbb{N}(T_f^*))^{\psi=0}\hat{\otimes}S_\infty\hooklongrightarrow\Lambda_{\hat{\cO}_{F_\infty}}(G)\oplus\Lambda_{\hat{\cO}_{F_\infty}}(G) 
\]
allowing us to define (following \cite[\S{2.3}]{BL-non-ord}) the two-variable signed Coleman maps
\begin{equation}\label{eq:signed-Col-Qp}
({\rm Col}^\sharp,{\rm Col}^\flat):H_{\rm Iw}^1(F_\infty(\mu_{p^\infty}),T_f^*)\longrightarrow\Lambda_{\hat{\cO}_{F_\infty}}(G)\oplus\Lambda_{\hat{\cO}_{F_\infty}}(G)
\end{equation}
as the composition $\mathfrak{J}\circ(1-\varphi)$. 

Let $\alpha$ and $\beta$ be the roots of the Hecke polynomial $X^2-a_pX+p$; since $f$ has weight $2$, we know that $\alpha\neq\beta$ by \cite{coleman-edixhoven}.  
By construction, we have the relation
\[
\mathcal{L}_{T_f^*}=(v_1\quad v_2)\cdot M_{{\rm log}}\cdot
\biggl(\begin{array}{cc}
{\rm Col}^\sharp\\
{\rm Col}^\flat
\end{array}\biggr),
\]
and letting $\mathcal{L}^\lambda$ be the projection of $\mathcal{L}_{T_f^*}$ onto the $\lambda^{-1}$-eigenspace for $\varphi$ on $\mathbb{D}_{\rm cris}(T_f^*)$ it follows that
\begin{equation}\label{eq:factor-L}
\biggl(\begin{array}{cc}
\mathcal{L}^\alpha\\
\mathcal{L}^\beta
\end{array}\biggr)
=Q_{\alpha,\beta}^{-1} M_{{\rm log}}\cdot
\biggl(\begin{array}{cc}
{\rm Col}^\sharp\\
{\rm Col}^\flat
\end{array}\biggr),
\end{equation}
where $Q_{\alpha,\beta}=\left(\begin{smallmatrix}\alpha & -\beta\\ -p&p\end{smallmatrix}\right)$ diagonalizes $A_\varphi$. 

Let $\Gamma_K:={\rm Gal}(K_\infty/K)$ be the Galois group of the  $\bZ_p^2$-extension of $K$. As in \cite[\S{2.5}]{BL-non-ord}, we shall apply the above constructions to the $G_K$-representation 
\[
\mathbf{T}:=T_f^*\hat\otimes\Lambda(\Gamma_K)^\iota,
\]
where $\Lambda(\Gamma_K)^\iota$ denotes the module $\Lambda(\Gamma_K)$ equipped with the Galois action given by the inverse of the canonical character $G_K\twoheadrightarrow\Gamma_K\hookrightarrow\Lambda(\Gamma_K)^\times$. For each $\qq\in\{\pp,\overline\pp\}$, let $D_\qq\subset\Gamma_K$ be the decomposition group of a fixed prime of $K_\infty$ above $\qq$, and let $\gamma_1,\dots,\gamma_{p^t}$ be a complete set of coset representatives for $\Gamma_K/D_\qq$, so that $\Lambda(\Gamma_K)=\sum_i\Lambda(D_\qq).\gamma_i$. Combined with Shapiro's lemma, we then have natural identifications
\[
H^1(K_\qq,\mathbf{T})\simeq\bigoplus_{i=1}^{p^t}H^1(K_\qq,T_f^*\hat{\otimes}\Lambda(D_\qq)^\iota).\gamma_i\simeq\bigoplus_{v\mid\qq}H^1_{\rm Iw}(K_{\infty,v},T_f^*),
\]
where the second sum is over the primes $v$ of $K_\infty$ above $\qq$. 

\begin{defn}\label{def:maps-q}
Let $\Delta$ be the torsion subgroup of $\Gamma$, and denote by $e_{\mathds{1}}$ is the idempotent of $\Lambda(G)$ attached to the trivial character of $\Delta$. 
\begin{enumerate}
\item{} For each $\bullet\in\{\sharp,\flat\}$ and $\qq\in\{\pp,\overline{\pp}\}$, define the \emph{signed Coleman map}
\begin{equation}\label{eq:def-Col}
{\rm Col}^\bullet_\qq:H^1(K_\qq,\mathbf{T})\longrightarrow\Lambda_{\hat{\cO}_{F_\infty}}(\Gamma_K)\nonumber
\end{equation}
by $z=\sum_iz_i.\gamma_i\mapsto\sum_ie_{\mathds{1}}{\rm Col}^\bullet(z_i).\gamma_i$, where ${\rm Col}^\bullet$ are as in (\ref{eq:signed-Col-Qp}).
\item{} For each $\bullet\in\{\sharp,\flat\}$ and $\qq\in\{\pp,\overline{\pp}\}$, define the
\emph{Perrin-Riou regulator map}
\begin{equation}\label{eq:def-L}
\mathcal{L}_\qq^\lambda:H^1(K_\qq,\mathbf{T})\longrightarrow\mathcal{H}_{\hat{F}_\infty}(\Gamma_K)\nonumber
\end{equation} 
by $z=\sum_iz_i.\gamma_i\mapsto\sum_ie_{\mathds{1}}\mathcal{L}^\lambda(z_i).\gamma_i$, where $\mathcal{L}^\lambda$ are as in $(\ref{eq:factor-L})$.
\end{enumerate}
\end{defn}

The following determination of the image of the signed Coleman maps will be important later.

\begin{prop}\label{prop:Image-Col}
The maps ${\rm Col}^\bullet_\qq$ have finite cokernel.
\end{prop}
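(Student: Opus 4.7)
The plan is to reduce the statement, via Shapiro's lemma and the $e_\mathds{1}$-projection of Definition~\ref{def:maps-q}, to the corresponding fact for the one-variable maps
\[
{\rm Col}^\sharp,\;{\rm Col}^\flat:H^1_{\rm Iw}(F_\infty(\mu_{p^\infty}),T_f^*)\longrightarrow\Lambda_{\hat{\cO}_{F_\infty}}(G)
\]
from $(\ref{eq:signed-Col-Qp})$. Under the identification $H^1(K_\qq,\mathbf{T})\simeq\bigoplus_{v\mid\qq}H^1_{\rm Iw}(K_{\infty,v},T_f^*)$ and the inclusion $K_{\infty,v}\subset F_\infty(\mu_{p^\infty})$, finite cokernel in the one-variable setting propagates to ${\rm Col}^\bullet_\qq$ upon summation over the coset representatives $\gamma_i$ and passage through the surjective $e_\mathds{1}$-projection onto $\Lambda_{\hat{\cO}_{F_\infty}}(\Gamma_K)$.

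For the one-variable case, I would analyse the factorization $({\rm Col}^\sharp,{\rm Col}^\flat)=\mathfrak{J}\circ(1-\varphi)$. By \cite[Thm.~4.24]{LLZ-AJM} the pair $\{(1+\pi)\varphi(n_1),(1+\pi)\varphi(n_2)\}$ is a $\Lambda(\Gamma)$-basis for $(\varphi^*\mathbb{N}(T_f^*))^{\psi=0}$, and the Yager module $S_\infty$ is free of rank one over $\Lambda(U)$; hence $\mathfrak{J}$ is an isomorphism of $\Lambda_{\hat{\cO}_{F_\infty}}(G)$-modules after the appropriate extension of scalars. The map $1-\varphi$ is, in turn, injective with finite cokernel by the two-variable Wach-module theory of \cite[\S4]{LZ2}: its kernel and cokernel are controlled by $H^0$- and $H^2$-type terms attached to $T_f^*$, which are finite under our running hypotheses on $\bar\rho_f$.

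Combining these two facts, the composite $({\rm Col}^\sharp,{\rm Col}^\flat)$ has finite cokernel in $\Lambda_{\hat{\cO}_{F_\infty}}(G)^{\oplus 2}$. An elementary argument over Noetherian rings then transfers this to finite cokernel for each coordinate projection in $\Lambda_{\hat{\cO}_{F_\infty}}(G)$: the image of a finite-cokernel submodule under either projection is again of finite cokernel. Re-assembling via Shapiro and the $e_\mathds{1}$-projection then completes the plan.

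The main obstacle I anticipate will be the precise bookkeeping behind the cokernel of $1-\varphi$ in the two-variable setting, in particular the two-variable extension of Berger's fundamental isomorphism $H^1_{\rm Iw}(\bQ_p(\mu_{p^\infty}),T_f^*)\simeq\mathbb{N}(T_f^*)^{\psi=1}$ after tensoring with the Yager module. For the crystalline representation $T_f^*$ of Hodge--Tate weights $\{0,-1\}$ and the running hypotheses on $\bar\rho_f$, the requisite control is provided by the machinery of \cite{LZ2}, so the argument should close up cleanly.
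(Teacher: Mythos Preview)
The paper's proof is a one-line citation of \cite[Thm.~2.12]{BL-non-ord}, which computes the image of the two-variable Coleman maps explicitly; the emphasis on ``weight $2$'' and ``projecting to the trivial isotypical component for $\Delta$'' is because those are precisely the hypotheses under which that image description collapses to a cofinite submodule.

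Your reduction via Shapiro's lemma and the $e_{\mathds{1}}$-projection is correct, as is the observation that $\mathfrak{J}$ is an isomorphism. The gap is in your treatment of the cokernel of $1-\varphi$. You claim it is ``controlled by $H^0$- and $H^2$-type terms attached to $T_f^*$, which are finite under our running hypotheses on $\bar\rho_f$'', but this conflates two different settings. For the full \'etale $(\varphi,\Gamma)$-module $D(T_f^*)$ there are indeed Herr-complex exact sequences relating $\psi$-invariants to Iwasawa cohomology, and those terms are controlled as you say. But the Coleman maps here are built from the Wach submodule $\mathbb{N}(T_f^*)\subset D(T_f^*)$, and the cokernel of
\[
1-\varphi:\mathbb{N}(T_f^*)^{\psi=1}\longrightarrow(\varphi^*\mathbb{N}(T_f^*))^{\psi=0}
\]
is governed by the Hodge filtration, not by Galois-cohomological duality; \cite[\S4]{LZ2} does not supply the finiteness you assert. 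The explicit description of the image of each ${\rm Col}^\bullet$ (see \cite[\S5]{LLZ-AJM} in the cyclotomic case and \cite[Thm.~2.12, Rem.~2.13]{BL-non-ord} in the two-variable case) depends on the weight $k$ and on the $\Delta$-isotypic component, and for $k>2$ or for non-trivial characters the cokernel is typically \emph{not} finite. Your argument uses neither the weight-$2$ hypothesis nor the $e_{\mathds{1}}$-projection in any essential way, which is a sign that the justification of the key step is incomplete; to close it you would need to invoke the explicit image computation, at which point you are back to the paper's citation.
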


\begin{proof}
	Since we are working in weight $2$ and projecting to the trivial isotypical component for $\Delta$, this follows from \cite[Thm.~2.12]{BL-non-ord} (see also [\emph{loc.cit.}, Rem.~2.13]). 
\end{proof}

\subsection{Signed logarithm maps}\label{sec:Log-2}
 
%The maps introduced in this section, which generalize the signed logarithm maps introduced in \cite{wan-combined} when $a_p=0$, will play an important role here. Their construction, which appears to be new, arises naturally after relating the two-variable signed Coleman maps of the $\S\ref{sec:Col}$ to a big Perrin-Riou map applied to a certain Hida family.  

The two essential (pairs of) maps in this paper are the signed Coleman maps just constructed and the maps we construct in this section, the signed logarithm maps. These signed logarithm maps generalize those in the elliptic curve case (\cite{wan-combined} for $a_p=0$;  \cite{sprung-IMC} for $p>2$), but our new construction is more natural: Instead of relying on an explicit description of the kernel of the signed Coleman maps, we obtain the signed logarithm maps after relating the signed Coleman maps of $\S\ref{sec:Col}$ to a big Perrin-Riou map applied to a certain Hida family.

We maintain the notations introduced in the preceding section, and for each $\qq\in\{\pp,\overline{\pp}\}$ let $K_\infty^\qq$ be the maximal subfield of $K_\infty$ unramified at $\qq$. 
Set $\Gamma_\qq:={\rm Gal}(K_\infty^\qq/K)\simeq\bZ_p$, and let
\[
\mathbf{g}=\sum_{n=1}^\infty\mathbf{a}_nq^n\in\Lambda_{\mathbf{g}}[[q]]
\]
be the canonical Hida family of CM forms constructed in \cite[\S{5.2}]{JSW}, where $\Lambda_{\mathbf{g}}:=\Lambda(\Gamma_\pp)$. The associated Galois representation $M(\mathbf{g})^*$ satisfies
\begin{equation}\label{eq:Ind}
M(\mathbf{g})^*\vert_{G_K}\simeq\Lambda(\Gamma_{\overline\pp})^\iota\oplus\Lambda(\Gamma_\pp)^\iota,
\end{equation}
where $\Lambda(\Gamma_\qq)^\iota$ denotes the module $\Lambda(\Gamma_\qq)$ with Galois action given by the inverse of the canonical character
$G_K\twoheadrightarrow \Gamma_\qq\hookrightarrow\Lambda(\Gamma_\qq)^\times$.
Upon restriction to a decomposition group at $p$, %the $G_{\bQ}$-representation 
$M(\mathbf{g})^*$
fits into an exact sequence of $\Lambda_{\mathbf{g}}[G_{\bQ_p}]$-modules
\begin{equation}\label{eq:fil}
0\longrightarrow\mathscr{F}^+M(\mathbf{g})^*\longrightarrow M(\mathbf{g})^*\longrightarrow\mathscr{F}^-M(\mathbf{g})^*\longrightarrow 0,
\end{equation}
with $\mathscr{F}^\pm M(\mathbf{g})^*\simeq\Lambda_{\mathbf{g}}$, and with the $G_{\bQ_p}$-action on $\mathscr{F}^-M(\mathbf{g})^*$
given by the unramified character sending a geometric Frobenius element ${\rm Fr}_p$ to $\mathbf{a}_p^{-1}$ (see 
%e.g. \cite[\S{7.2}]{KLZ2} or 
\cite[Thm.~2.2.2]{wiles88}, or \cite[\S{7.2}]{KLZ2} and \cite{OhtaII} for the version of this result that we shall use). Letting $\mathbf{k}$ be the weight character of $\mathbf{g}$ as defined in \cite[\S{7.1}]{KLZ2}, the twist $\mathscr{F}^+M(\mathbf{g})^*(-1-\mathbf{k})$ is therefore unramified, and we set
\[
T_{f,\mathbf{g}}^+:=T_f^*\otimes_{}\mathscr{F}^+M(\mathbf{g})^*(-1-\mathbf{k}),\quad T_{f,\mathbf{g}}^-:=T_f^*\otimes_{}\mathscr{F}^-M(\mathbf{g})^*.
\]

The direct sum decomposition $(\ref{eq:Ind})$ is compatible upon restriction to a decomposition group at $p$ with the filtration $(\ref{eq:fil})$; in particular, the latter is split, so we may choose identifications\footnote{Note that the ordering of $\pp$ and $\overline{\pp}$ here is opposite
	to the one taken in \cite[\S{2.3}]{BL-non-ord}.}
\begin{equation}\label{eq:iden}
\begin{split}
\jmath_+:H^1_{\rm Iw}(\bQ_p(\mu_{p^\infty}),T_{f,\mathbf{g}}^+)&\simeq H^1(K_{\overline{\pp}},\mathbf{T}),\\
\jmath_-:H^1_{\rm Iw}(\bQ_p(\mu_{p^\infty}),T_{f,\mathbf{g}}^-)&\simeq H^1(K_{\pp},\mathbf{T}).
\end{split}
\end{equation}

For each $\lambda\in\{\alpha,\beta\}$, denote by $f^\lambda$ the $p$-stabilization of $f$ with $U_p$-eigenvalue $\lambda$. Let $\mathcal{F}^\lambda$ be the Coleman family passing through $f^\lambda$, and let $\mathcal{L}_{f^\lambda,\mathbf{g}}$ be the corresponding specialization of the composite map
\begin{align*}
\mathcal{L}_{\mathcal{F}^\lambda,\mathbf{g}}:
H^1_{\rm Iw}(\bQ(\mu_{p^\infty}),\mathscr{F}^{-+}D_{V_1\times V_2}(\mathcal{F}^\lambda\otimes\mathbf{g}))&\overset{\mathcal{L}}\longrightarrow\mathbf{D}(\mathscr{F}^{-+}M(\mathcal{F}^\lambda\otimes\mathbf{g})^*)\hat\otimes\mathcal{H}_L(\Gamma)\\
&\longrightarrow I_{\mathcal{F^\lambda}}\hat\otimes\Lambda_{\mathbf{g}}\hat\otimes\mathcal{H}_L(\Gamma),
\end{align*}
where the map $\mathcal{L}$ is the Perrin-Riou big logarithm map of \cite[Thm.~7.1.4]{LZ-Coleman}, and the second arrow is given by pairing against the tensor product $\eta_{\mathcal{F^\lambda}}\otimes\omega_{\mathbf{g}}$ of the classes constructed in [\emph{op.cit.}, \S{6.4}]. 

Let $\mathbf{D}_{\rm rig}^\dagger(V_f^*)$ and $\mathbf{D}_{\rm rig}^\dagger(M(\mathbf{g})^*)$ be the $(\varphi,\Gamma)$-module associated with $f^\lambda$ and $\mathbf{g}$, respectively, and denote by $\mathscr{F}^\pm$ the corresponding triangulations, so that $\mathscr{F}^{-+}D_{V_1\times V_2}(\mathcal{F}^\lambda\otimes\mathbf{g}))$
specializes to $\mathscr{F}^{-+}D(f^\lambda\otimes\mathbf{g}):=\mathscr{F}^{-}\mathbf{D}_{\rm rig}^\dagger(V_f^*)\otimes\mathscr{F}^+\mathbf{D}_{\rm rig}^\dagger(M(\mathbf{g})^*)$ by construction.

\begin{prop}\label{prop:comp-L}
Under the identification $\jmath^+$ in $(\ref{eq:iden})$, the $\mathcal{H}_L(\Gamma_{\overline\pp})$-linear extension of   $\mathcal{L}_\lambda^{\overline{\pp}}$ agrees with the composition of $\mathcal{L}_{f^\lambda,\mathbf{g}}$ with the projection
\begin{equation}\label{eq:fG}
\begin{split}
H^1_{\rm Iw}(\bQ_p(\mu_{p^\infty}),T_{f,\mathbf{g}}^+)\hat{\otimes}\mathcal{H}_L(\Gamma)
&\longrightarrow H^1_{\rm Iw}(\bQ_p(\mu_{p^\infty}),\mathbf{D}_{\rm rig}^\dagger(V_f^*)\otimes\mathscr{F}^+\mathbf{D}_{\rm rig}^\dagger(M(\mathbf{g})^*))\\
&\longrightarrow 
H^1_{\rm Iw}(\bQ_p(\mu_{p^\infty}),\mathscr{F}^{-+}D(f^\lambda\otimes\mathbf{g})).
\end{split}
\end{equation}
\end{prop}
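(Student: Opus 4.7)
The plan is to match, term by term, the Perrin-Riou regulators underlying each side of the asserted equality, by exploiting the compatibility of the big logarithm map with tensor products of $(\varphi,\Gamma)$-modules and with Shapiro's lemma.

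First I would unpack the right-hand side. The filtration $\mathscr{F}^+M(\mathbf{g})^*$ in (\ref{eq:fil}) splits the decomposition (\ref{eq:Ind}) upon restriction to $G_{\bQ_p}$, and after the twist by $-1-\mathbf{k}$ making it unramified, identifies with the $\Lambda(\Gamma_{\overline\pp})^\iota$-summand; this is precisely the content of the identification $\jmath^+$. Via Shapiro's lemma this reexpresses $H^1_{\rm Iw}(\bQ_p(\mu_{p^\infty}),T_{f,\mathbf{g}}^+)$ as $H^1(K_{\overline\pp},\mathbf{T})$, and matches the cyclotomic variable governing $\Gamma$ together with the tautological $\Gamma_{\overline\pp}$-variable on the $\mathbf{g}$-side with the full $\Gamma_K\simeq\Gamma\times\Gamma_{\overline\pp}$-action on the $\mathbf{T}$-side.

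Next I would analyze the big logarithm $\mathcal{L}$ of \cite[Thm.~7.1.4]{LZ-Coleman} applied to $\mathscr{F}^{-+}D_{V_1\times V_2}(\mathcal{F}^\lambda\otimes\mathbf{g})$. By the construction there, $\mathcal{L}$ factors as a tensor product of regulator maps on each variable. Specializing $\mathcal{F}^\lambda$ at its weight-$2$ point returns the classical Perrin-Riou regulator for $V_f^*$ along the triangulation $\mathscr{F}^-\mathbf{D}^\dagger_{\rm rig}(V_f^*)$, and contracting against $\eta_{\mathcal{F^\lambda}}$ extracts precisely the component lying in the $\lambda^{-1}$-eigenspace for $\varphi$ on $\mathbb{D}_{\rm cris}(V_f^*)$; this is exactly what $\mathcal{L}^\lambda$ in (\ref{eq:factor-L}) and Definition \ref{def:maps-q} computes. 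On the $\mathbf{g}$-side, contracting against $\omega_{\mathbf{g}}$ trivializes $\mathscr{F}^+\mathbf{D}^\dagger_{\rm rig}(M(\mathbf{g})^*)$ over the relevant coefficient ring; combined with the splitting above, it identifies the output with $\Lambda_{\hat{\cO}_{F_\infty}}(\Gamma_K)\otimes\mathcal{H}_L(\Gamma)$, landing in $\mathcal{H}_{\hat F_\infty}(\Gamma_K)$ after the standard Mellin transform.

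Once both maps are rewritten in this form, the equality reduces to a direct comparison of Perrin-Riou regulators for the \emph{same} $(\varphi,\Gamma)$-module with the \emph{same} differential and trivializing class, and as such is formal from the functoriality and uniqueness properties of $\mathcal{L}$. The only genuine content is a normalization check, most of which has been incorporated into the definition of the matrix $M$ (note the congruence (\ref{eq:M-cong})) and the choice of the basis $\{v_1,v_2\}$.

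The main obstacle is the careful tracking of conventions, of which there are three delicate ones. First, one must verify that the $\mathscr{F}^+$ triangulation of $\mathbf{D}_{\rm rig}^\dagger(M(\mathbf{g})^*)$ really corresponds to the $\overline\pp$-summand rather than the $\pp$-summand — the footnote below (\ref{eq:iden}) flags exactly this point, and the comparison with \cite[\S{2.3}]{BL-non-ord} must be done in matching conventions. Second, one has to confirm that the pairing with $\eta_{\mathcal{F}^\lambda}$ produces the projection onto the $\lambda^{-1}$-eigenspace without an unwanted Euler factor at $p$ (in weight $2$ with $\alpha\neq\beta$ this is clean). Third, the passage between $H^1_{\rm Iw}(\bQ_p(\mu_{p^\infty}),-)$ and $H^1_{\rm Iw}(F_\infty(\mu_{p^\infty}),-)$ implicit in the definition of $\mathcal{L}_\qq^\lambda$ via the Yager module $S_\infty$ must be reconciled, under Shapiro, with the purely cyclotomic Iwasawa cohomology appearing in $\mathcal{L}_{f^\lambda,\mathbf{g}}$; here the identification (\ref{eq:Ind}) plus the fact that $M(\mathbf{g})^*$ is CM-induced from $K$ is what makes the unramified twist in $T_{f,\mathbf{g}}^+$ account precisely for the $\hat{\cO}_{F_\infty}$-coefficients on the other side.
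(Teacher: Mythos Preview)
Your approach --- unpacking both constructions and matching them term-by-term via functoriality of the big logarithm --- is plausible in outline but considerably more elaborate than what the paper does. The paper's proof is essentially two sentences: first, the first arrow in (\ref{eq:fG}) is an isomorphism by \cite[Cor.~4.4.11]{KPX} together with \cite[Prop.~2.4.5]{LZ-Coleman}; second, both maps in question are uniquely determined by their interpolation properties, which are recorded in \cite[Thm.~4.15]{LZ2} (for the two-variable regulator $\mathcal{L}_{T_f^*}$ underlying $\mathcal{L}^\lambda_{\overline\pp}$) and \cite[Thm.~7.1.4]{LZ-Coleman} (for $\mathcal{L}_{f^\lambda,\mathbf{g}}$), and these formulae visibly agree. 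Since a map of this type is determined by its values at a Zariski-dense set of specializations, the equality follows.

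What this buys the paper is that all three of your ``delicate conventions'' are absorbed into the interpolation formulae: there is no need to separately track the Yager module, the $\pp/\overline\pp$ labeling, or the eigenspace picked out by $\eta_{\mathcal{F}^\lambda}$, because any discrepancy would show up as a mismatch in the interpolated values. Your route could be pushed through, but each of the three issues you flag would require its own verification, and your intermediate claim that ``$\mathcal{L}$ factors as a tensor product of regulator maps on each variable'' is not literally correct --- the big logarithm for a Rankin--Selberg $(\varphi,\Gamma)$-module is not the tensor product of one-variable big logarithms. What \emph{is} true is that its interpolation property involves products of the individual factors, which again points back to the paper's strategy of comparing interpolation data rather than constructions.
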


\begin{proof}
By %\cite[Prop.~2.16]{BL-non-ord} and
\cite[Cor.~4.4.11]{KPX} together with \cite[Prop.~2.4.5]{LZ-Coleman}, the first arrow in $(\ref{eq:fG})$ is an isomorphism, and the claimed agreement follows immediately by comparing the interpolation properties of the maps involved, which are given by \cite[Thm.~4.15]{LZ2} and \cite[Thm.~7.1.4]{LZ-Coleman}, respectively.
\end{proof}

Swapping the roles of $\mathcal{F}^\lambda$ and $\mathbf{g}$ in the above discussion, we will arrive at our definition of the signed logarithm maps. Indeed, denote by 
\[
\mathcal{L}_{\mathbf{g},f^\lambda}:
H^1_{\rm Iw}(\bQ_p(\mu_{p^\infty}),\mathscr{F}^{-+}D_{}(\mathbf{g}\otimes f^\lambda))\longrightarrow I_{\mathbf{g}}\hat{\otimes}\mathcal{H}_L(\Gamma)
\]
the specialization to $f^\lambda$ of the composite map
\begin{equation}\label{eq:L-F}
\begin{split}
\mathcal{L}_{\mathbf{g},\mathcal{F}^\lambda}:
H^1_{\rm Iw}(\bQ_p(\mu_{p^\infty}),\mathscr{F}^{-+}D_{V_2\times V_1}(\mathbf{g}\otimes\mathcal{F}^\lambda))&\overset{\mathcal{L}}\longrightarrow\mathbf{D}(\mathscr{F}^{-+}M(\mathbf{g}\otimes\mathcal{F}^\lambda)^*)\hat\otimes\mathcal{H}_L(\Gamma)\\
&\longrightarrow I_{\mathbf{g}}\hat\otimes\Lambda_{\mathcal{F^\lambda}}\hat\otimes\mathcal{H}_L(\Gamma),
\end{split}
\end{equation}
where $I_{\mathbf{g}}\subset{\rm Frac}(\Lambda_{\mathbf{g}})$ is the congruence ideal
for $\mathbf{g}$, and the second arrow is given by pairing against  $\eta_{\mathbf{g}}\otimes\omega_{\mathcal{F^\lambda}}$. 

\begin{defn}
	For each $\lambda\in\{\alpha,\beta\}$, let
	\[ H^1_\lambda(K_\pp,\mathbf{T}\hat{\otimes}\mathcal{H}_L(\Gamma_\pp))\subset H^1(K_\pp,\mathbf{T}\hat{\otimes}\mathcal{H}_L(\Gamma_\pp)) 
	\] 
	be the kernel of the $\mathcal{H}_L(\Gamma_\pp)$-linear extension of the map $\mathcal{L}^\lambda_\pp$, and for each $\bullet\in\{\sharp,\flat\}$, let
	\[
	H^1_{\bullet}(K_{\pp},\mathbf{T})\subset H^1(K_{\pp},\mathbf{T})
	\]
	be the kernel of the map ${\rm Col}^\bullet_\pp$.
\end{defn}

\begin{lem}\label{lem:iden-alpha}
Under the identification $\jmath^-$ in $(\ref{eq:iden})$, the image of the natural map
\[
H^1_{\rm Iw}(\bQ_p(\mu_{p^\infty}),\mathscr{F}^{-+}D_{}(\mathbf{g}\otimes f^\lambda))\hooklongrightarrow H^1_{\rm Iw}(\bQ_p(\mu_{p^\infty}),\mathscr{F}^-\mathbf{D}_{\rm rig}^\dagger(M(\mathbf{g})^*)\otimes\mathbf{D}_{\rm rig}^\dagger(V_f^*)) 
\]
is identified with $H^1_\lambda(K_\pp,\mathbf{T}\hat{\otimes}\mathcal{H}_L(\Gamma_\pp))$.
\end{lem}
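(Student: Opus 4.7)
The plan is to mimic the proof of Proposition~\ref{prop:comp-L} after swapping the roles of $f^\lambda$ and $\mathbf{g}$; this gives a factorization of $\mathcal{L}_\pp^\lambda$ from which the identification of its kernel with the image of $H^1_{\rm Iw}(\mathscr{F}^{-+}D(\mathbf{g}\otimes f^\lambda))$ drops out via the natural long exact sequence. I would carry out this program in three steps.

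First, I would establish the analog of Proposition~\ref{prop:comp-L} for $\mathcal{L}_\pp^\lambda$: under $\jmath_-$, the $\mathcal{H}_L(\Gamma_\pp)$-linear extension of $\mathcal{L}_\pp^\lambda$ agrees with the composition
\[
H^1_{\rm Iw}(\bQ_p(\mu_{p^\infty}),T_{f,\mathbf{g}}^-)\hat\otimes\mathcal{H}_L(\Gamma)\longrightarrow H^1_{\rm Iw}(\bQ_p(\mu_{p^\infty}),\mathscr{F}^{--}D(\mathbf{g}\otimes f^\lambda))\overset{\widetilde{\mathcal{L}}}{\longrightarrow}(\cdots),
\]
where the first arrow is induced by the triangulation quotient $\mathbf{D}_{\rm rig}^\dagger(V_f^*)\twoheadrightarrow\mathscr{F}^-\mathbf{D}_{\rm rig}^\dagger(V_f^*)$ on the $V_f^*$-factor, and $\widetilde{\mathcal{L}}$ is the specialization at $f^\lambda$ of the big Perrin-Riou logarithm of Kings--Loeffler--Zerbes applied to the $\mathscr{F}^{--}$ component of $\mathbf{g}\otimes\mathcal{F}^\lambda$. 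This should be proved by comparing interpolation properties, exactly as in Proposition~\ref{prop:comp-L}: the initial map $H^1_{\rm Iw}(T_{f,\mathbf{g}}^-)\hat\otimes\mathcal{H}_L(\Gamma)\to H^1_{\rm Iw}(\mathscr{F}^-\mathbf{D}_{\rm rig}^\dagger(M(\mathbf{g})^*)\otimes\mathbf{D}_{\rm rig}^\dagger(V_f^*))$ is an isomorphism by \cite[Cor.~4.4.11]{KPX} together with \cite[Prop.~2.4.5]{LZ-Coleman}, and the interpolation formulas from \cite[Thm.~4.15]{LZ2} and \cite[Thm.~7.1.4]{LZ-Coleman} agree at every crystalline specialization.

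Second, I would verify that $\widetilde{\mathcal{L}}$ is injective, using torsion-freeness of Iwasawa cohomology of generically irreducible $(\varphi,\Gamma)$-modules together with the non-vanishing of the big Perrin-Riou map at a Zariski dense set of specializations. Third, the short exact sequence
\[
0\longrightarrow\mathscr{F}^{-+}D(\mathbf{g}\otimes f^\lambda)\longrightarrow\mathscr{F}^-\mathbf{D}_{\rm rig}^\dagger(M(\mathbf{g})^*)\otimes\mathbf{D}_{\rm rig}^\dagger(V_f^*)\longrightarrow\mathscr{F}^{--}D(\mathbf{g}\otimes f^\lambda)\longrightarrow 0
\]
yields a long exact sequence in Iwasawa cohomology in which the vanishing of $H^0_{\rm Iw}(\mathscr{F}^{--}D(\mathbf{g}\otimes f^\lambda))$ ensures that the natural map from $H^1_{\rm Iw}(\mathscr{F}^{-+}D(\mathbf{g}\otimes f^\lambda))$ is injective with image equal to the kernel of the projection onto $H^1_{\rm Iw}(\mathscr{F}^{--}D(\mathbf{g}\otimes f^\lambda))$. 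Combining the three steps identifies this image under $\jmath_-$ with $\ker\mathcal{L}_\pp^\lambda=H^1_\lambda(K_\pp,\mathbf{T}\hat{\otimes}\mathcal{H}_L(\Gamma_\pp))$, proving the lemma.

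The hardest part is the first step: unwinding the interpolation property defining $\mathcal{L}_\pp^\lambda$ and matching it with the swapped big logarithm requires careful tracking of the normalizations --- in particular the twist $(-1-\mathbf{k})$ that appears in $T_{f,\mathbf{g}}^+$ but not in $T_{f,\mathbf{g}}^-$, and the pairings against $\eta_\mathbf{g}\otimes\omega_{\mathcal{F}^\lambda}$ as opposed to $\eta_{\mathcal{F}^\lambda}\otimes\omega_\mathbf{g}$. Once the interpolation formulas are verified to agree at a Zariski dense set of crystalline specializations, the equality of the two maps follows by continuity of both sides.
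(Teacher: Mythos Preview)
Your proposal is correct, and your Steps~2 and~3 are essentially what the paper does, but your Step~1 is more elaborate than necessary. The paper bypasses the interpolation comparison entirely: rather than identifying $\mathcal{L}_\pp^\lambda$ with a specific big Perrin-Riou map $\widetilde{\mathcal{L}}$ on $H^1_{\rm Iw}(\mathscr{F}^{--})$ by matching interpolation formulas, it simply uses that the \emph{full} two-variable regulator $\mathcal{L}_{T_f^*}$ is injective (by \cite[Prop.~4.11]{LZ2}). Since $\mathcal{L}_\pp^\lambda$ is by definition the composition of $\mathcal{L}_{T_f^*}$ with the projection of $\mathbb{D}_{\rm cris}(T_f^*)$ onto the $\varphi=\lambda^{-1}$-eigenspace, and since that projection corresponds on the $(\varphi,\Gamma)$-module side to the quotient $\mathbf{D}_{\rm rig}^\dagger(V_f^*)\twoheadrightarrow\mathscr{F}^-\mathbf{D}_{\rm rig}^\dagger(V_f^*)$ for the $f^\lambda$-triangulation, one reads off directly (under $\jmath_-$ and \cite[Cor.~4.4.11]{KPX}) that $\ker\mathcal{L}_\pp^\lambda$ consists precisely of those classes projecting trivially to $H^1_{\rm Iw}(\bQ_p(\mu_{p^\infty}),\mathscr{F}^-\mathbf{D}_{\rm rig}^\dagger(M(\mathbf{g})^*)\otimes\mathscr{F}^-\mathbf{D}_{\rm rig}^\dagger(V_f^*))$. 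Your factorization $\mathcal{L}_\pp^\lambda=\widetilde{\mathcal{L}}\circ(\text{projection})$ with $\widetilde{\mathcal{L}}$ injective yields the same conclusion, but the interpolation matching---and the attendant normalization bookkeeping you rightly flag---is not needed: the injectivity of the already-constructed $\mathcal{L}_{T_f^*}$ does all the work.
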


\begin{proof} 
Since the regulator map $\mathcal{L}_{T_f^*}$ is injective (see \cite[Prop.~4.11]{LZ2}), a class is in the kernel of the $\mathcal{H}_L(\Gamma_\pp)$-linear extension of $\mathcal{L}_\pp^\lambda$ if and only if it corresponds under the isomorphism
\begin{align*}
H^1(K_\pp,\mathbf{T}\hat\otimes\mathcal{H}_L(\Gamma_\pp))
&\simeq 
H^1(\bQ_p(\mu_{p^\infty}),\mathscr{F}^-M(\mathbf{g})^*\otimes V_f^*\otimes\mathcal{H}_L(\Gamma))\\
&\simeq H^1_{\rm Iw}(\bQ_p(\mu_{p^\infty}),\mathscr{F}^-\mathbf{D}_{\rm rig}^\dagger(M(\mathbf{g})^*)\otimes\mathbf{D}_{\rm rig}^\dagger(V_f^*)) 
\end{align*}	
induced by $\jmath^-$ and \cite[Cor.~4.4.11]{KPX} to a class projecting trivially onto 
\[
H^1_{\rm Iw}(\bQ_p(\mu_{p^\infty}),\mathscr{F}^-\mathbf{D}_{\rm rig}^\dagger(M(\mathbf{g})^*)\otimes\mathscr{F}^-\mathbf{D}_{\rm rig}^\dagger(V_f^*)),
\]
hence the result.
\end{proof}

For every $\qq\in\{\pp,\overline{\pp}\}$, let $M_{{\rm log},\qq}\in M_{2\times 2}(\mathcal{H}_L(\Gamma_\qq))$ be the logarithmic matrix $M_{\rm log}$ of Definition~\ref{def:log-matrix} with $\Gamma_\qq$ in place of $\Gamma$.

\begin{lem}\label{eq:multi}
If $(\kappa^\sharp,\kappa^\flat)$ is any pair of classes in $H^1_\sharp(K_\pp,\mathbf{T})\oplus H^1_\flat(K_\pp,\mathbf{T})$, then the pair of classes $(\kappa^\alpha,\kappa^\beta)\in H^1(K_\pp,\mathbf{T}\hat\otimes\mathcal{H}_L(\Gamma))^{\oplus 2}$ defined by the relation
\[
\biggl(\begin{array}{cc}
\kappa^\alpha\\
\kappa^\beta
\end{array}\biggr)
=Q_{\alpha,\beta}^{-1} M_{{\rm log},\pp}\cdot
\biggl(\begin{array}{cc}
\kappa^\sharp\\
\kappa^\flat
\end{array}\biggr)
\]
lands in 
$H^1_\alpha(K_\pp,\mathbf{T}\hat\otimes\mathcal{H}_L(\Gamma_\pp))\oplus H^1_\beta(K_\pp,\mathbf{T}\hat\otimes\mathcal{H}_L(\Gamma_\pp))$.	
\end{lem}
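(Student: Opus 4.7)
The plan is to reduce the claim to a direct calculation using the two-variable analogue of the factorization (\ref{eq:factor-L}), combined with the defining kernel conditions on $\kappa^\sharp$ and $\kappa^\flat$. Since the two-variable signed Coleman and Perrin-Riou regulator maps at $\pp$ are defined coordinate-wise after the Shapiro decomposition
\[
H^1(K_\pp,\mathbf{T})\simeq\bigoplus_{v\mid\pp} H^1_{\rm Iw}(K_{\infty,v},T_f^*)
\]
of Def~\ref{def:maps-q}, the identity (\ref{eq:factor-L}) transfers verbatim. Writing $N:=Q_{\alpha,\beta}^{-1}M_{\log,\pp}$, this yields the equality
\[
\begin{pmatrix}\mathcal{L}^\alpha_\pp\\\mathcal{L}^\beta_\pp\end{pmatrix}
=N\begin{pmatrix}\text{Col}^\sharp_\pp\\\text{Col}^\flat_\pp\end{pmatrix}
\]
of $\Lambda(\Gamma_K)$-linear maps on $H^1(K_\pp,\mathbf{T})$, which extends $\mathcal{H}_L(\Gamma_\pp)$-linearly to classes with $\mathcal{H}$-coefficients.

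With this in hand, I would compute $\mathcal{L}^\alpha_\pp(\kappa^\alpha)$ and $\mathcal{L}^\beta_\pp(\kappa^\beta)$ directly. By $\mathcal{H}_L(\Gamma_\pp)$-linearity,
\[
\mathcal{L}^\alpha_\pp(\kappa^\alpha)=\mathcal{L}^\alpha_\pp\bigl(N_{11}\kappa^\sharp+N_{12}\kappa^\flat\bigr)=N_{11}\,\mathcal{L}^\alpha_\pp(\kappa^\sharp)+N_{12}\,\mathcal{L}^\alpha_\pp(\kappa^\flat),
\]
and applying the factorization together with the kernel conditions $\text{Col}^\sharp_\pp(\kappa^\sharp)=0=\text{Col}^\flat_\pp(\kappa^\flat)$ collapses each summand so that only the ``cross terms'' $\text{Col}^\flat_\pp(\kappa^\sharp)$ and $\text{Col}^\sharp_\pp(\kappa^\flat)$ survive. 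The corresponding identity for $\mathcal{L}^\beta_\pp(\kappa^\beta)$ is obtained by the same calculation with the second row of $N$.

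The main obstacle is then to verify that the resulting expression vanishes. This is essentially a matrix identity for $Q_{\alpha,\beta}^{-1}M_{\log,\pp}$ reflecting the fact that the transformation $(\kappa^\sharp,\kappa^\flat)\mapsto(\kappa^\alpha,\kappa^\beta)$ is the change of basis between the Wach-module basis $\{n_1,n_2\}$ of $\mathbb{N}(T_f^*)$ (governing the signed Coleman maps via Definition~\ref{def:log-matrix} and the construction of $\mathfrak{J}$) and the Frobenius-eigenbasis of $\mathbb{D}_{\rm cris}(T_f^*)$ (governing the Perrin-Riou maps $\mathcal{L}^\lambda$). I would attack this step conceptually via Lem~\ref{lem:iden-alpha}: the kernels $H^1_\lambda(K_\pp,\mathbf{T}\hat\otimes\mathcal{H}_L(\Gamma_\pp))$ are identified, under $\jmath^-$, with the image of the cohomology of an explicit triangulated sub-$(\varphi,\Gamma)$-module, and under this identification the lemma's change of basis $Q_{\alpha,\beta}^{-1}M_{\log,\pp}$ manifestly sends the $\sharp$-component of $(1-\varphi)$-coordinates into the $\alpha$-component of eigenbasis-coordinates (and likewise $\flat\leftrightarrow\beta$), with the ordering conventions matching those of (\ref{eq:basis}) and (\ref{eq:A}). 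Equivalently, one checks directly, using the relations $\alpha\beta=p$, $\alpha+\beta=a_p$, the near-identity congruence (\ref{eq:M-cong}), and the defining formula for $M_{\log,\pp}$, that the surviving cross-term combination is identically zero in $\mathcal{H}_L(\Gamma_K)$. I expect this last verification to be the most delicate point of the argument.
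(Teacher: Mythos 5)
Your computation correctly reproduces what the paper's one-line proof ("the factorizations $(\ref{eq:factor-L-q})$ clearly imply the result") leaves implicit: after expanding via $\mathcal{L}^\lambda_\pp = N_{\lambda 1}{\rm Col}^\sharp_\pp + N_{\lambda 2}{\rm Col}^\flat_\pp$ with $N := Q_{\alpha,\beta}^{-1}M_{\log,\pp}$, the diagonal terms vanish by the kernel conditions and one is left with
\[
\mathcal{L}^\alpha_\pp(\kappa^\alpha) = N_{11}N_{12}\bigl({\rm Col}^\flat_\pp(\kappa^\sharp) + {\rm Col}^\sharp_\pp(\kappa^\flat)\bigr),\qquad
\mathcal{L}^\beta_\pp(\kappa^\beta) = N_{21}N_{22}\bigl({\rm Col}^\flat_\pp(\kappa^\sharp) + {\rm Col}^\sharp_\pp(\kappa^\flat)\bigr).
\]
You rightly flag this as the delicate point. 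The difficulty is in how you then propose to close it, and both suggestions fail. First, the surviving expression is not a "matrix identity" and cannot be "identically zero in $\mathcal{H}_L(\Gamma_K)$'': it depends on the evaluations ${\rm Col}^\flat_\pp(\kappa^\sharp)$ and ${\rm Col}^\sharp_\pp(\kappa^\flat)$, which are unconstrained by the hypotheses (taking $\kappa^\sharp = 0$, the claimed vanishing would force ${\rm Col}^\sharp_\pp(\kappa^\flat) = 0$ for all $\kappa^\flat\in\ker{\rm Col}^\flat_\pp$, i.e.\ $\ker{\rm Col}^\flat_\pp\subseteq\ker{\rm Col}^\sharp_\pp$, which is false since $({\rm Col}^\sharp_\pp,{\rm Col}^\flat_\pp)$ is injective with finite cokernel). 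Second, the appeal to Lemma~\ref{lem:iden-alpha} as showing that $Q_{\alpha,\beta}^{-1}M_{\log,\pp}$ "manifestly sends the $\sharp$-component into the $\alpha$-component (and likewise $\flat\leftrightarrow\beta$)'' amounts to asserting that the off-diagonal entries of $N$ are irrelevant, i.e.\ effectively that $N$ is diagonal; but $N$ genuinely mixes the two coordinates (in the $a_p=0$ case, for instance, $N$ is a variant of Pollack's matrix with $N_{11}N_{12}$ a nonzero multiple of $\log^+\log^-$). So the coefficient $N_{11}N_{12}$ in front of the cross-terms does not vanish and no matrix-level computation using $\alpha\beta = p$, $\alpha + \beta = a_p$, or $(\ref{eq:M-cong})$ will make it do so. The burden really does fall on the sum ${\rm Col}^\flat_\pp(\kappa^\sharp) + {\rm Col}^\sharp_\pp(\kappa^\flat)$, and your argument gives no reason for that to vanish; the paper's own "clearly'' is equally silent on this, but your attempted resolutions, as written, are wrong.
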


\begin{proof}
As an immediate consequence of $(\ref{eq:factor-L})$, for each $\qq\in\{\pp,\overline{\pp}\}$ we have the factorizations
    \begin{equation}\label{eq:factor-L-q}
	\biggl(\begin{array}{cc}
	\mathcal{L}_\qq^\alpha\\
	\mathcal{L}_\qq^\beta
	\end{array}\biggr)
	=Q_{\alpha,\beta}^{-1} M_{{\rm log},\qq}\cdot
	\biggl(\begin{array}{cc}
	{\rm Col}_\qq^\sharp\\
	{\rm Col}_\qq^\flat
	\end{array}\biggr),
	\end{equation}
	which clearly implies the result.	
\end{proof}

For each $\lambda\in\{\alpha,\beta\}$ set 
\begin{equation}\label{eq:L-g}
\mathcal{L}_{\mathbf{g},\pp}^\lambda:=\mathcal{L}_{\mathbf{g},f^\lambda}\circ\jmath_-^{-1},
\end{equation}
which by Lemma~\ref{lem:iden-alpha} is naturally defined on $H^1_\lambda(K_\pp,\mathbf{T}\hat{\otimes}\mathcal{H}_L(\Gamma_\pp))$.

\begin{defn}\label{def:Log}
The two-variable \emph{signed logarithm maps}
are the maps $({\rm Log}_\pp^\sharp,{\rm Log}_\pp^\flat)$ defined by the composition
\begin{equation}\label{eq:def-Log}
\begin{split}
H^1_\sharp(K_{\pp},\mathbf{T})
\oplus H^1_\flat(K_{\pp},\mathbf{T})
&\overset{Q_{\alpha,\beta}^{-1}M_{{\rm log},\pp}}\longrightarrow
H^1_\alpha(K_\pp,\mathbf{T}\hat\otimes\mathcal{H}_{L}(\Gamma_\pp))
\oplus H^1_\beta(K_\pp,\mathbf{T}\hat\otimes\mathcal{H}_{L}(\Gamma_\pp))\\
&\overset{\mathcal{L}^\alpha_{\mathbf{g},\pp}\oplus\mathcal{L}_{\mathbf{g},\pp}^\beta}
\longrightarrow I_{\mathbf{g}}\otimes\mathcal{H}_{\hat{F}_\infty}(\Gamma_K)^{\oplus{2}}, 
\end{split}\nonumber
\end{equation}
using Lemma~\ref{lem:iden-alpha} for the first arrow.
\end{defn}

\begin{prop}\label{prop:im-Log}
The maps ${\rm Log}_\pp^\bullet$ take values in $I_{\mathbf{g}}\otimes\Lambda_{\hat{F}_\infty}(\Gamma_K)$ and have finite pseudo-null cokernel.
\end{prop}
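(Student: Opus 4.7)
The plan is to leverage an analog of Proposition~\ref{prop:comp-L} on the prime $\pp$, combined with the boundedness of the signed Coleman maps $\mathrm{Col}^\bullet_\pp$ established in Proposition~\ref{prop:Image-Col}.

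First I would establish a $\pp$-side counterpart of Proposition~\ref{prop:comp-L}: namely, under the identification $\jmath_-$ of~\eqref{eq:iden} and Lemma~\ref{lem:iden-alpha}, the $\mathcal{H}_L(\Gamma_\pp)$-linear extension of $\mathcal{L}^\lambda_\pp$ is to be compared with $\mathcal{L}^\lambda_{\mathbf{g},\pp}$. The argument proceeds by matching the interpolation formulas of $\mathcal{L}_{\mathbf{g},\mathcal{F}^\lambda}$ from \cite[Thm.~7.1.4]{LZ-Coleman} against those of the Perrin-Riou regulator of $f$ given in \cite[Thm.~4.15]{LZ2}, mirroring the proof of Proposition~\ref{prop:comp-L} but with the roles of $\mathbf{g}$ and $\mathcal F^\lambda$ swapped.

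Using this comparison, the composition
\[
\mathrm{Log}^\bullet_\pp = (\mathcal{L}^\alpha_{\mathbf{g},\pp} \oplus \mathcal{L}^\beta_{\mathbf{g},\pp}) \circ (Q_{\alpha,\beta}^{-1}M_{\log,\pp})
\]
applied to signed classes in $H^1_\sharp(K_\pp,\mathbf{T}) \oplus H^1_\flat(K_\pp,\mathbf{T})$ can be re-expressed, via the factorization~\eqref{eq:factor-L-q} at $\qq = \pp$, in terms of $\mathrm{Col}^\bullet_\pp$ tensored with a bounded element of $I_{\mathbf{g}}$: the distributional (i.e., unbounded) entries of $M_{\log,\pp}$ are precisely what is required to cancel the singularities contributed by $\mathcal{L}^\lambda_{\mathbf{g},\pp}$, in the same spirit as the proof of Lemma~\ref{eq:multi}. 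Since the signed Coleman maps $\mathrm{Col}^\bullet_\pp$ take values in $\Lambda_{\hat{\cO}_{F_\infty}}(\Gamma_K)$ by construction, this yields the asserted boundedness of $\mathrm{Log}^\bullet_\pp$.

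For the finite pseudo-null cokernel claim, I would combine Proposition~\ref{prop:Image-Col} (finite cokernel of $\mathrm{Col}^\bullet_\pp$) with the nonvanishing of $\det(M_{\log,\pp}) \in \mathcal{H}_L(\Gamma_\pp)$, which is a consequence of $\alpha \neq \beta$ as exploited in Lemma~\ref{eq:multi}. The comparison in the previous paragraph implies that, up to these nonzero factors, the image of $\mathrm{Log}^\bullet_\pp$ matches the image of $\mathrm{Col}^\bullet_\pp$ tensored with $I_\mathbf{g}$, from which the pseudo-nullity and finiteness of the cokernel in the two-variable Iwasawa algebra follows by standard codimension arguments.

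The main obstacle is the $\pp$-side analog of Proposition~\ref{prop:comp-L}: although the original version is essentially an interpolation comparison, the swap of tensor-factor ordering between $\mathcal{F}^\lambda \otimes \mathbf{g}$ and $\mathbf{g} \otimes \mathcal{F}^\lambda$ changes the relevant triangulations and the associated Frobenius eigenvalue factors, so the precise statement and its proof must be worked out with care in this asymmetric setting, keeping track of the explicit structure of $\mathscr{F}^{-+}D_{V_2 \times V_1}(\mathbf{g} \otimes \mathcal{F}^\lambda)$ and its interaction with $\jmath_-$.
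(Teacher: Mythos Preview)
Your approach has a conceptual gap at the first step. The map $\mathcal{L}_{\mathbf{g},\pp}^\lambda$ is defined precisely on $H^1_\lambda(K_\pp,\mathbf{T}\hat\otimes\mathcal{H}_L(\Gamma_\pp))$, which by definition is the \emph{kernel} of (the $\mathcal{H}_L$-linear extension of) $\mathcal{L}_\pp^\lambda$. So the two maps cannot be compared in the manner of Proposition~\ref{prop:comp-L}: on the domain where $\mathcal{L}_{\mathbf{g},\pp}^\lambda$ lives, $\mathcal{L}_\pp^\lambda$ vanishes identically. The asymmetry you flag at the end is exactly the point---pairing against $\eta_{\mathbf{g}}\otimes\omega_{\mathcal{F}^\lambda}$ rather than $\eta_{\mathcal{F}^\lambda}\otimes\omega_{\mathbf{g}}$ picks out the \emph{complementary} piece of the triangulation of $V_f^*$, so there is no factorisation through ${\rm Col}_\pp^\bullet$ via \eqref{eq:factor-L-q}. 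Consequently the proposed ``cancellation of singularities by $M_{\log,\pp}$'' does not occur, and the cokernel argument resting on it collapses as well.

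The paper's mechanism is different and more direct. The boundedness of ${\rm Log}_\pp^\bullet$ comes not from any cancellation but from the fact that the big Perrin-Riou map $\mathcal{L}_{\mathbf{g},\mathcal{F}^\lambda}$ of \eqref{eq:L-F} is \emph{already} bounded, taking values in $I_{\mathbf{g}}\hat\otimes\Lambda_{\mathcal{F}^\lambda}\hat\otimes\Lambda_L(\Gamma)$: since $\mathbf{g}$ is ordinary, Frobenius acts invertibly on $\eta_{\mathbf{g}}$, and \cite[Prop.~4.8]{LZ2} then gives integrality. Through Lemma~\ref{lem:iden-alpha}, the image of the first arrow in Definition~\ref{def:Log} is identified with the integral Iwasawa cohomology of $\mathscr{F}^{-+}D(\mathbf{g}\otimes f^\lambda)$, so the composite lands in the bounded target. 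The pseudo-null cokernel likewise comes from general properties of this big regulator (as in \cite[Thm.~8.2.3]{KLZ2}), not from $\det(M_{\log,\pp})$.
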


\begin{proof}
From the construction of ${\rm Log}_\pp^\bullet$, it suffices to show that the map $\mathcal{L}_{\mathbf{g},\mathcal{F}^\lambda}$ in $(\ref{eq:L-F})$ takes values in $I_{\mathbf{g}}\hat\otimes\Lambda_{\mathcal{F}\lambda}\hat\otimes\Lambda_L(\Gamma)$ and has pseudo-null cokernel. But the first fact follows from \cite[Prop.~4.8]{LZ2} (since Frobenius acts invertibly on $\eta_{\mathbf{g}}$), and the second from [\emph{loc.cit.}, Thm.~7.1.4] (which is proved is the same manner as \cite[Thm.~8.2.3]{KLZ2}). 
\end{proof}

\subsection{Doubly-signed Selmer groups}\label{sec:Sel}

Let $\Sigma$ be a finite set of places of $K$
containing those dividing $Np\infty$, and let $\mathfrak{G}_{K,\Sigma}$ be
the Galois group of the maximal extension of $F$ unramified outside the places above $\Sigma$. Recall that module $\mathbf{T}$ introduced in $\S\ref{sec:Col}$, and set
\[
\mathbf{A}:=\mathbf{T}\otimes\Lambda(\Gamma_K)^\vee, 
\]
where $\Lambda(\Gamma_K)^\vee$ is the Pontrjagin dual of $\Lambda(\Gamma_K)$. We shall also need to consider the modules $\mathbf{T}^{\rm ac}$, $\mathbf{A}^{\rm ac}$, $\mathbf{T}^{\rm cyc}$, and $\mathbf{A}^{\rm cyc}$, obtained by replacing $\Gamma_K$ in the preceding definitions by the Galois group $\Gamma^{\rm ac}$ and $\Gamma^{\rm cyc}$ of the anticyclotomic and the cyclotomic $\bZ_p$-extension of $K$, respectively. 

In the following definitions, we let $\mathbf{M}$ denote either of the modules $\mathbf{T}$, $\mathbf{T}^{\rm ac}$, or $\mathbf{T}^{\rm cyc}$. 

\begin{defn}\label{def:Sel}
	The $p$-relaxed Selmer group of $\mathbf{M}$ is
	\begin{equation}
	\mathfrak{Sel}^{\{p\}}(K,\mathbf{M}):={\rm ker}\Biggl\{H^1(\mathfrak{G}_{K,\Sigma},\mathbf{M})
	\longrightarrow\bigoplus_{v\in\Sigma\smallsetminus\{p\}}\frac{H^1(K_v,\mathbf{M})}{H^1_{\rm ur}(K_v,\mathbf{M})}\Biggr\},\nonumber
	\end{equation}
	where 
	\[
	H^1_{\rm ur}(K_v,\mathbf{M}):=
	{\rm ker}\{H^1(K_v,\mathbf{M})\longrightarrow H^1(K_v^{\rm ur},\mathbf{M})\}
	\]
	is the unramified local condition. On the other hand, the $p$-related \emph{strict} Selmer group of $\mathbf{M}$ is 
	\begin{equation}
	{\rm Sel}^{\{p\}}(K,\mathbf{M}):={\rm ker}\Biggl\{H^1(\mathfrak{G}_{K,\Sigma},\mathbf{M})%\xrightarrow{\prod_v{\rm loc}_v}
	\longrightarrow\bigoplus_{v\in\Sigma\smallsetminus\{p\}}H^1(K_v,\mathbf{M})\Biggr\}.\nonumber
	\end{equation}
\end{defn}

Our Selmer groups of interest in this paper are obtained from cutting the $p$-relaxed ones by various local conditions at the primes above $p$.

\begin{defn}
	For $\qq\in\{\pp,\overline{\pp}\}$ and $\mathscr{L}_\qq\in\{{\rm rel},{\rm str},\sharp,\flat\}$, set
	\[
	H^1_{\mathscr{L}_\qq}(K_\qq,\mathbf{M})=
	\left\{
	\begin{array}{ll}
	H^1(K_\qq,\mathbf{M})&\textrm{if $\mathscr{L}_\qq={\rm rel}$,}\\
	\{0\}&\textrm{if $\mathscr{L}_\qq={\rm str}$,}\\
	{\rm ker}({\rm Col}^\bullet_\qq)&\textrm{if $\mathscr{L}_\qq=\bullet\in\{\sharp,\flat\}$,}\\
	\end{array}
	\right.
	\]
	where ${\rm Col}^\bullet_\qq$ is the signed Coleman map of (\ref{eq:def-Col}) (or its anticyclotomic or cyclotomic projection),
	and for $\mathscr{L}=\{\mathscr{L}_\pp,\mathscr{L}_{\overline{\pp}}\}$, define
	\begin{equation}
	\mathfrak{Sel}^{\mathscr{L}}(K,\mathbf{M}):={\rm ker}\Biggr\{{\rm Sel}^{\{p\}}(K,\mathbf{M})
	\longrightarrow\bigoplus_{\qq\in\{\pp,\overline{\pp}\}}\frac{H^1(K_\qq,\mathbf{M})}{H_{\mathscr{L}_{\qq}}^1(K_\qq,\mathbf{M})}\Biggr\},\nonumber
	\end{equation}
	and similarly for ${\rm Sel}^{\mathscr{L}}(K,\mathbf{M})$
\end{defn}

Thus, for example, ${\rm Sel}^{{\rm rel},{\rm str}}(K,\mathbf{M})$ is the submodule of ${\rm Sel}^{\{p\}}(K,\mathbf{M})$ consisting of classes which are trivial at $\overline{\pp}$ (with no condition at $\pp$). 

On the other hand, letting $\mathbf{W}$ denote either of the modules $\mathbf{A}$, $\mathbf{A}^{\rm ac}$, or $\mathbf{A}^{\rm cyc}$, we define $\mathfrak{Sel}^{\{p\}}(K,\mathbf{W})$ and ${\rm Sel}^{\{p\}}(K,\mathbf{W})$ just as in Definition~\ref{def:Sel}, and set
\begin{equation}
\mathfrak{Sel}^{\mathscr{L}}(K,\mathbf{W}):={\rm ker}\Biggr\{\mathfrak{Sel}^{\{p\}}(K,\mathbf{W})
\longrightarrow\bigoplus_{\qq\in\{\pp,\overline{\pp}\}}\frac{H^1(K_\qq,\mathbf{W})}{H_{\mathscr{L}_{\qq}}^1(K_\qq,\mathbf{M})^\perp}\Biggr\},\nonumber
\end{equation}
where $H_{\mathscr{L}_{\qq}}^1(K_\qq,\mathbf{M})^\perp$ is the orthogonal complement of $H_{\mathscr{L}_{\qq}}^1(K_\qq,\mathbf{M})$ under local Tate duality, and define ${\rm Sel}^{\mathscr{L}}(K,\mathbf{W})\subset {\rm Sel}^{\{p\}}(K,\mathbf{W})$ in the same manner. Finally, we let 
\[
\mathfrak{X}^{\mathscr{L}}_{K_\infty}(f):={\rm Hom}_{\bZ_p}(\mathfrak{Sel}^{\mathscr{L}}(K,\mathbf{A}),\bQ_p/\bZ_p)
\]
be the Pontrjagin dual of $\mathfrak{Sel}^{\mathscr{L}}(K,\mathbf{A})$, and similarly define $\mathfrak{X}^{\mathscr{L}}_{K^{\rm ac}_\infty}(f)$, $\mathfrak{X}^{\mathscr{L}}_{K^{\rm cyc}_\infty}(f)$; and their strict analogues $X^{\mathscr{L}}_{K_\infty}(f)$, $X^{\mathscr{L}}_{K^{\rm ac}_\infty}(f)$, and $X^{\mathscr{L}}_{K^{\rm cyc}_\infty}(f)$.

\begin{rem} \label{rem:Sel-Q}
Later in the paper (see esp. $\S\S{5.2}$-3) we shall also need to consider signed Selmer groups ${\rm Sel}^\bullet(\bQ,\mathbf{T}^{\rm cyc})$ and $X_{\bQ_\infty}^\bullet(f)$ for the cyclotomic $\bZ_p$-extension of $\bQ$, whose definition (with the strict condition at the places outside $p$) is the obvious one from the above.
\end{rem}

\section{Two-variable Iwasawa theory}
\subsection{Signed Beilinson--Flach classes}\label{sec:BF}

We keep the notations introduced in the $\S\ref{sec:prelim}$. As in \cite[\S{2.2}]{LZ-Coleman}, for each $h\in\mathbf{R}_{\geqslant 0}$ we let $D_h(\Gamma,T_f^*)$ denote the space of $T_f^*$-valued distributions on $\Gamma$ of order $h$, so that in particular we have
\[
H^1(\bQ,D_0(\Gamma,T_f^*)\hat\otimes M(\mathbf{g})^*)\simeq 
H^1_{\rm Iw}(\bQ(\mu_{p^\infty}),T_f^*\hat\otimes M(\mathbf{g})^*)[1/p]
\]
by \cite[Prop.~2.4.2]{LZ-Coleman}. For each $\lambda\in\{\alpha,\beta\}$, set $v_\lambda:={\rm ord}_p(\lambda)$ and denote by
\begin{equation}\label{eq:BF-col}
\mathcal{BF}^{\lambda,\mathbf{g}}\in H^1(\bQ,D_{v_\lambda}(\Gamma,T_f^*)\hat\otimes M(\mathbf{g})^*)
\end{equation}
the class ${\rm BF}^{\lambda,\mathbf{g}}_m$ 
of  \cite[Thm.~3.2]{BL-non-ord} for $m=1$; up to a $p$-adic multiplier, this corresponds to the image of the Beilinson--Flach class $_{c}\mathcal{BF}^{\mathcal{F}^\lambda,\mathbf{g}}_{1,0}$ 
of \cite[\S{5.4}]{LZ-Coleman} under the map on cohomology induced by the specialization $\mathcal{F}^\lambda\to f^\lambda$,  with the auxiliary factor $c$ disposed of. 

By Shapiro's lemma and \cite[Prop.~2.4.2]{LZ-Coleman}, the classes $(\ref{eq:BF-col})$ may be identified with classes
\[
\mathcal{BF}^{\lambda}\in H^1(K,\mathbf{T})
\hat\otimes\mathcal{H}_{L,v_\lambda}(\Gamma_{\pp}),
\]
where we recall that $\mathbf{T}:=T_f^*\hat{\otimes}\Lambda(\Gamma_K)^\iota$.

\begin{thm}[B\"uy\"ukboduk--Lei]\label{thm:BF-factor}
There exist classes  
\[
\mathcal{BF}^{\sharp},\;\mathcal{BF}^{\flat}
\in 
H^1(K,\mathbf{T})[1/p]
\] 
such that
\begin{equation}
\left(\begin{array}{cc}
\mathcal{BF}^{\alpha}\\
\mathcal{BF}^{\beta}
\end{array}\right)
=Q_{\alpha,\beta}^{-1}M_{{\rm log},\pp}\cdot
\left(\begin{array}{cc}
\mathcal{BF}^{\sharp}\\
\mathcal{BF}^{\flat}
\end{array}\right),\nonumber
\end{equation}
where $Q_{\alpha,\beta}=\left(\begin{smallmatrix}\alpha & -\beta\\ -p&p\end{smallmatrix}\right)$ and $M_{{\rm log},\pp}\in M_{2\times 2}(\mathcal{H}_L(\Gamma_\pp))$ is the logarithmic matrix of Definition~\ref{def:log-matrix} with $\Gamma_\pp$ in place of $\Gamma$.
\end{thm}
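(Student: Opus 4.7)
The result is due to Büyükboduk--Lei, and I would follow their strategy, which mirrors Pollack's decomposition of the Amice--Velu $p$-adic $L$-functions into bounded signed $p$-adic $L$-functions. The underlying principle is that the unbounded distributions $\mathcal{BF}^\alpha, \mathcal{BF}^\beta$ arise from expressing a \emph{master} Beilinson--Flach class in the Frobenius eigenbasis $\{v_1, v_2\}$ of $\mathbb{D}_{\rm cris}(T_f^*)$, whereas expressing it in the Wach module basis $\{n_1, n_2\}$ of $\mathbb{N}(T_f^*)$ produces bounded classes, with the change of coordinates encoded (after the Mellin transform) by $M_{\log, \pp}$.

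Concretely, I would proceed as follows. Using Shapiro's lemma and the identifications of $\S\ref{sec:Col}$, the class $\mathcal{BF}^\lambda \in H^1(K, \mathbf{T})\hat\otimes\mathcal{H}_{L, v_\lambda}(\Gamma_\pp)$ is the image of the specialization at weight $2$ of $_c\mathcal{BF}^{\mathcal{F}^\lambda, \mathbf{g}}_{1,0}$, which is bounded along the Coleman family direction by \cite{LZ-Coleman}. The eigenvalue $\lambda$ enters only through the subsequent projection to the $\lambda^{-1}$-eigenspace of $\varphi$ on $\mathbb{D}_{\rm cris}(T_f^*)$, producing the order $v_\lambda$ distribution. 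Replacing that projection by the pairings against the Wach basis elements $n_1, n_2$ yields two candidate bounded classes $\mathcal{BF}^\sharp, \mathcal{BF}^\flat$ in $H^1(K, \mathbf{T})[1/p]$, and the passage between the two presentations is exactly the relation $\bigl(\mathcal{BF}^\alpha, \mathcal{BF}^\beta\bigr)^T = Q_{\alpha,\beta}^{-1} M_{\log, \pp} \cdot \bigl(\mathcal{BF}^\sharp, \mathcal{BF}^\flat\bigr)^T$, by the very construction of $M_{\log}$ in Definition~\ref{def:log-matrix} together with the congruence $M \equiv I_2 \pmod{\pi}$ of $(\ref{eq:M-cong})$. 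To verify that the identity holds as claimed, I would compare the images of both sides under the injective regulator map $\mathcal{L}_{T_f^*}$ (or rather its two-variable analogue $\mathcal{L}_\pp^\lambda$), using the factorization $(\ref{eq:factor-L-q})$: this reduces the check to the tautological compatibility between the definitions of the signed Coleman maps and the Wach coordinates on $\mathbb{N}(T_f^*)$.

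The main obstacle I expect is controlling denominators. The resulting bounded classes a priori live only in $H^1(K, \mathbf{T})[1/p]$ and not in the integral cohomology, for two reasons: the specialization procedure from the Coleman family at weight $2$ introduces a $p$-adic factor related to a congruence ideal, and the change of basis between the $\varphi$-eigenbasis $\{v_1, v_2\}$ and the Wach basis $\{n_1, n_2\}$ is only integral after inverting $p$. This is precisely why the statement is phrased over $H^1(K, \mathbf{T})[1/p]$. Uniqueness of the decomposition, once established, is automatic: $M_{\log, \pp}$ has nonzero determinant in $\mathcal{H}_L(\Gamma_\pp)$, so the bounded classes are determined by the unbounded ones after inverting $p$.
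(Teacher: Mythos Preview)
The paper itself simply cites \cite[Thm.~3.7]{BL-non-ord} and the discussion in \S3.3 there, treating the result as a black box. However, your sketch of the underlying argument has a structural gap. You posit a single ``master'' Beilinson--Flach class from which $\mathcal{BF}^\alpha$ and $\mathcal{BF}^\beta$ both arise by projecting to the $\varphi$-eigenspaces of $\mathbb{D}_{\rm cris}(T_f^*)$, so that pairing instead against the Wach basis $\{n_1,n_2\}$ would directly produce the bounded classes. But no such master class exists: $\mathcal{BF}^\alpha$ and $\mathcal{BF}^\beta$ are \emph{global} cohomology classes built from two \emph{different} Coleman families $\mathcal{F}^\alpha$, $\mathcal{F}^\beta$ (those passing through the distinct $p$-stabilizations $f^\alpha$, $f^\beta$), not local objects obtained from a common class by pairing against two bases. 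The Wach module and $\mathbb{D}_{\rm cris}$ live on the local side at $p$, and your proposed verification via the regulator $\mathcal{L}_{T_f^*}$ (or $\mathcal{L}_\pp^\lambda$) would only compare the \emph{local restrictions at $\pp$} of both sides, which is far weaker than the global identity being claimed.

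The actual argument --- visible in this paper's proof of the analogous Theorem~\ref{thm:Heeg} for Heegner points --- proceeds differently. One uses that $H^1(K,\mathbf{T})$ is free over the Iwasawa algebra, writes $\mathcal{BF}^\lambda = \sum_i F_i^\lambda \cdot z_i$ in a fixed basis with scalar coefficients $F_i^\lambda \in \mathcal{H}_{L,v_\lambda}(\Gamma_\pp)$, and checks from the finite-level construction (where the choice of $p$-stabilization has not yet entered) that $\phi(F_i^\lambda) = \lambda^{-n} c_{i,\phi}$ for every finite-order character $\phi$ of conductor $p^n>1$, with $c_{i,\phi}$ \emph{independent of $\lambda$}. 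One then applies the Pollack-style decomposition lemma (Lemma~\ref{lem:coeffs}, i.e.\ \cite[Prop.~2.10]{BL-non-ord}) to each scalar coefficient separately to produce $F_i^\sharp, F_i^\flat \in \Lambda_L(\Gamma_\pp)$, and sets $\mathcal{BF}^\bullet := \sum_i F_i^\bullet \cdot z_i$. The matrices $Q_{\alpha,\beta}$ and $M_{\log,\pp}$ enter through this purely scalar lemma, not via any local-at-$p$ pairing.
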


\begin{proof}
This follows from \cite[Thm.~3.7]{BL-non-ord} and the discussion in [\emph{loc.cit.}, $\S{3.3}$].
\end{proof}

In the following, we fix one for all a nonzero element $c\in\cO_L$ such that the classes
\[
\mathcal{BF}_c^\bullet:=c\cdot\mathcal{BF}^\bullet
\]
land in $H^1(K,\mathbf{T})$ for both $\bullet\in\{\sharp,\flat\}$. (The particular choice of $c$ is irrelevant, since it will eventually cancel out with its contribution elsewhere.)

%\begin{rem}
%Of course, the growth of the classes $\mathcal{BF}^{\lambda}$ is compatible with Theorem~\ref{thm:BF-factor}; indeed, by \cite[Lem.~2.5]{BL-non-ord} we know that $Q_{\alpha,\beta}^{-1}M_{\rm log}$ has ``valuation matrix'' $\left(\begin{smallmatrix}v_\alpha & v_\alpha\\ v_\beta & v_\beta\end{smallmatrix}\right)$.
%\end{rem}

%In the next two sections, we shall study the image of the Beilinson--Flach classes under the restriction map at $p$.

\subsection{Explicit reciprocity laws, I}\label{sec:Log} 

In this section, we consider the local images at $\overline{\pp}$ of the Beilinson--Flach classes. 

By \cite[Thm.~7.1.2]{LZ-Coleman}, for each $\lambda\in\{\alpha,\beta\}$ the image of ${\rm res}_p(\mathcal{BF}^{\lambda,\mathbf{g}})$ under the composite map
\begin{align*}
H^1(\bQ_p,D_{v_\lambda}(\Gamma,T_f^*)\hat{\otimes}M(\mathbf{g})^*)&\longrightarrow H^1_{\rm Iw}(\bQ_p(\mu_{p^\infty}),\mathbf{D}_{\rm rig}^\dagger(V_f^*)\hat{\otimes}\mathbf{D}_{\rm rig}^\dagger(M(\mathbf{g})^*))\\
&\longrightarrow H^1_{\rm Iw}(\bQ_p(\mu_{p^\infty}),\mathscr{F}^-\mathbf{D}_{\rm rig}^\dagger(V_f^*)\hat{\otimes}\mathbf{D}_{\rm rig}^\dagger(M(\mathbf{g})^*)),
\end{align*}
where the first arrow is given by \cite[Cor.~6.1.4]{LZ-Coleman} and the second one in the natural projection, lies in the image of the natural map
\[
H^1_{\rm Iw}(\bQ_p(\mu_{p^\infty}),\mathscr{F}^{-+}D(f\otimes\mathbf{g}))\longrightarrow H^1_{\rm Iw}(\bQ_p(\mu_{p^\infty}),\mathscr{F}^-\mathbf{D}_{\rm rig}^\dagger(V_f^*)\hat{\otimes}\mathbf{D}_{\rm rig}^\dagger(M(\mathbf{g})^*)).
\]
Thus letting ${\rm res}_{\overline\pp}(\mathcal{BF}^\lambda)\in H^1(K_{\overline{\pp}},\mathbf{T})\hat{\otimes}\mathcal{H}_{L,v_\lambda}(\Gamma_{\pp})$ be the image of ${\rm res}_p(\mathcal{BF}^{\lambda,\mathbf{g}})$ under the identification $\jmath^+$ of $(\ref{eq:iden})$, by Proposition~\ref{prop:comp-L} we may consider the element
\begin{equation}\label{eq:BF-L}
\mathfrak{L}_{p}^{\lambda,\mu}(f/K):=\mathcal{L}^\mu_{\overline{\pp}}({\rm res}_{\overline{\pp}}(\mathcal{BF}^\lambda))
\end{equation}
in $\mathcal{H}_{L,v_\lambda}(\Gamma_{\pp})\hat{\otimes}\mathcal{H}_{\hat{F}_\infty,v_\mu}(\Gamma_{\overline\pp})\subseteq\mathcal{H}_{\hat{F}_\infty}(\Gamma_K)$ for each $\mu\in\{\alpha,\beta\}$. 

\begin{rem}
For $\bullet,\circ\in\{\sharp,\flat\}$, setting 
\begin{equation}\label{eq:BF-L-signed}
\mathfrak{L}_p^{\bullet,\circ}(f/K):={\rm Col}^\circ_{\overline\pp}({\rm res}_{\overline{\pp}}(\mathcal{BF}_c^\bullet)),
\end{equation}
the factorizations in $(\ref{eq:factor-L-q})$ and Theorem~\ref{thm:BF-factor} imply that
\begin{equation}\label{eq:factor-2-Lp}
\biggl(\begin{array}{cc}c\mathfrak{L}_{p}^{\alpha,\alpha}(f/K)&c\mathfrak{L}_{p}^{\beta,\alpha}(f/K)\\
c\mathfrak{L}_{p}^{\alpha,\beta}(f/K)&c\mathfrak{L}_{p}^{\beta,\beta}(f/K)\end{array}\biggr)
=Q_{\alpha,\beta}^{-1}M_{{\rm log},\overline\pp}
\cdot\biggl(\begin{array}{cc}\mathfrak{L}_p^{\sharp,\sharp}(f/K)&\mathfrak{L}_p^{\flat,\sharp}(f/K)\\
\mathfrak{L}_p^{\sharp,\flat}(f/K)&\mathfrak{L}_p^{\flat,\flat}(f/K)\end{array}\biggr)
\cdot(Q_{\alpha,\beta}^{-1}M_{{\rm log},\pp})^\intercal,
\end{equation}
where $A^\intercal$ denotes the transpose of a matrix $A$.
\end{rem}
%with the obvious abbreviations.

By the following explicit reciprocity law from the work of Loeffler--Zerbes \cite{LZ-Coleman}, the $p$-adic $L$-functions $(\ref{eq:BF-L})$, at least when $\lambda=\mu$, interpolate critical values for the Rankin--Selberg 
$L$-function of $f$ twisted by finite order Hecke characters of $K$. 

\begin{thm}[Loeffler--Zerbes]\label{thm:BF-ERL}
For every $\lambda\in\{\alpha,\beta\}$, if $\chi:\Gamma_K\rightarrow\mu_{p^\infty}$ is any
finite order character of conductor $\mathfrak{c}_\chi$, then
\begin{align*}
\chi(\mathfrak{L}_{p}^{\lambda,\lambda}(f/K))&=
\Biggl(\prod_{\qq\mid p}\lambda^{-v_\qq(\mathfrak{c}_\chi)}\Biggr)
\cdot\frac{\mathcal{E}(f,\chi)}{\mathfrak{g}(\chi)\cdot \vert\mathfrak{c}_\chi\vert^{1/2}}
\cdot\frac{L_{}(f/K,\chi,1)}{(4\pi)^2\cdot \langle f,f\rangle_N},
\end{align*}
where
\[
\mathcal{E}(f,\chi)=\prod_{\qq\mid p,\;\qq\nmid\mathfrak{c}_\chi}(1-\lambda^{-1}\chi(\qq))
(1-\lambda^{-1}\chi^{-1}(\qq)).
\]
\end{thm}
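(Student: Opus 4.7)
The plan is to reduce the claim to the Loeffler--Zerbes explicit reciprocity law for three-variable Beilinson--Flach classes, bridged to our setting by Proposition~\ref{prop:comp-L}.

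First I would unwind the definitions. The class $\mathcal{BF}^\lambda \in H^1(K,\mathbf{T}) \hat\otimes \mathcal{H}_{L,v_\lambda}(\Gamma_\pp)$ corresponds, under Shapiro's lemma, to $\mathcal{BF}^{\lambda,\mathbf{g}}$, and the identification $\jmath_+$ of $(\ref{eq:iden})$ identifies $\mathrm{res}_{\overline{\pp}}(\mathcal{BF}^\lambda)$ with $\mathrm{res}_p(\mathcal{BF}^{\lambda,\mathbf{g}})$ in the Iwasawa cohomology of $T_{f,\mathbf{g}}^+$. By Proposition~\ref{prop:comp-L}, applying $\mathcal{L}^\lambda_{\overline{\pp}}$ on the first side amounts to projecting $\mathrm{res}_p(\mathcal{BF}^{\lambda,\mathbf{g}})$ to $\mathscr{F}^{-+}D(f^\lambda \otimes \mathbf{g})$ and then applying $\mathcal{L}_{f^\lambda, \mathbf{g}}$, the $f^\lambda$-specialization of the Loeffler--Zerbes two-variable regulator along $\mathcal{F}^\lambda$ and $\mathbf{g}$. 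Since $\mathcal{BF}^{\lambda, \mathbf{g}}$ is, up to an invertible $p$-adic multiplier, the specialization at $f^\lambda$ of the three-variable Beilinson--Flach element $_c\mathcal{BF}_{1,0}^{\mathcal{F}^\lambda, \mathbf{g}}$ of \cite[\S{5.4}]{LZ-Coleman}, it follows that $\mathfrak{L}_p^{\lambda,\lambda}(f/K)$ is the $f^\lambda$-specialization of the Loeffler--Zerbes two-variable $p$-adic Rankin--Selberg $L$-function attached to $\mathcal{F}^\lambda$ and $\mathbf{g}$.

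Next, I would invoke the explicit reciprocity law of Loeffler--Zerbes (proved as in \cite[Thm.~8.2.3]{KLZ2}). It expresses the value of the two-variable $p$-adic $L$-function at $(f^\lambda, \mathbf{g}_\phi)$ and a finite-order cyclotomic character $\chi'$ as an explicit product of Euler factors at $p$, a Gauss sum, and an $\varepsilon$-factor, times the critical Rankin--Selberg value $L(f \otimes \mathbf{g}_\phi, \chi', 1)/(4\pi)^2 \langle f, f\rangle_N$. Since $\mathbf{g}$ is the canonical CM Hida family attached to $K$ of \cite[\S{5.2}]{JSW}, its arithmetic specializations $\mathbf{g}_\phi$ are theta series of Hecke characters of $K$, and the pair $(\mathbf{g}_\phi, \chi')$ encodes a single finite-order character $\chi$ of $\Gamma_K \simeq \Gamma_\pp \times \Gamma_{\overline{\pp}}$. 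The CM factorization $L(f \otimes \mathbf{g}_\phi, \chi', s) = L(f/K, \chi, s)$ then produces the Rankin--Selberg $L$-value appearing in the statement.

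Finally, I would match the Euler factors at $p$. At each split prime $\qq \mid p$, the local factor from the Loeffler--Zerbes formula depends on the Frobenius eigenvalues of $V_f^*$ (namely $\lambda$ and $p/\lambda$) and those of $M(\mathbf{g}_\phi)^*$; the CM normalization forces one of the Frobenius eigenvalues of $M(\mathbf{g}_\phi)^*$ at $\qq$ to be a $p$-adic unit and the other to be divisible by $p$. This collapse reduces the local factor to $\lambda^{-v_\qq(\mathfrak{c}_\chi)}$ when $\qq \mid \mathfrak{c}_\chi$ and to $(1 - \lambda^{-1}\chi(\qq))(1-\lambda^{-1}\chi^{-1}(\qq))$ when $\qq \nmid \mathfrak{c}_\chi$, which together assemble into $\prod_{\qq \mid p}\lambda^{-v_\qq(\mathfrak{c}_\chi)} \cdot \mathcal{E}(f, \chi)$. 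The main obstacle is the careful bookkeeping of periods, Gauss sums, and local $\varepsilon$-factors through the CM factorization; no fundamentally new ingredient is needed beyond Proposition~\ref{prop:comp-L} and the Loeffler--Zerbes explicit reciprocity law.
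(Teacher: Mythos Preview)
Your approach is correct and essentially the same as the paper's: the paper's proof is simply a citation to \cite[Thm.~7.1.5]{LZ-Coleman} (as complemented by \cite{loeffler-note}), and you have unpacked in detail how that explicit reciprocity law, combined with Proposition~\ref{prop:comp-L} and the CM factorization for $\mathbf{g}$, yields the stated interpolation formula. The only minor point is that you cite \cite[Thm.~8.2.3]{KLZ2} for the reciprocity law, whereas the non-ordinary setting here requires its extension to Coleman families in \cite[Thm.~7.1.5]{LZ-Coleman}, together with the correction in \cite{loeffler-note}.
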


\begin{proof}
This follows from \cite[Thm.~7.1.5]{LZ-Coleman}, as complemented by \cite{loeffler-note}.
\end{proof}

\begin{cor}\label{cor:Rohrlich-K}
	For every $\lambda\in\{\alpha,\beta\}$, the $p$-adic $L$-function $\mathfrak{L}_p^{\lambda,\lambda}(f/K)$ is nonzero.
\end{cor}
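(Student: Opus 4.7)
The plan is to combine the explicit reciprocity law of Theorem~\ref{thm:BF-ERL} with Rohrlich's non-vanishing theorem for twisted Rankin--Selberg $L$-values. Since $\mathfrak{L}_p^{\lambda,\lambda}(f/K)$ lives in the distribution algebra $\mathcal{H}_{\hat{F}_\infty}(\Gamma_K)$, to show it is nonzero it suffices to exhibit a single finite order character $\chi:\Gamma_K\to\mu_{p^\infty}$ with $\chi(\mathfrak{L}_p^{\lambda,\lambda}(f/K))\neq 0$.

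First I would restrict attention to characters $\chi$ whose conductor $\mathfrak{c}_\chi$ is divisible by both $\pp$ and $\overline{\pp}$. For such $\chi$ the Euler-like factor
\[
\mathcal{E}(f,\chi)=\prod_{\qq\mid p,\;\qq\nmid\mathfrak{c}_\chi}(1-\lambda^{-1}\chi(\qq))(1-\lambda^{-1}\chi^{-1}(\qq))
\]
is the empty product $1$, so the interpolation formula of Theorem~\ref{thm:BF-ERL} reads
\[
\chi\bigl(\mathfrak{L}_p^{\lambda,\lambda}(f/K)\bigr)=\Bigl(\prod_{\qq\mid p}\lambda^{-v_\qq(\mathfrak{c}_\chi)}\Bigr)\cdot\frac{1}{\mathfrak{g}(\chi)\vert\mathfrak{c}_\chi\vert^{1/2}}\cdot\frac{L(f/K,\chi,1)}{(4\pi)^2\langle f,f\rangle_N}.
\]
The prefactors $\lambda^{-v_\qq(\mathfrak{c}_\chi)}$, the Gauss sum $\mathfrak{g}(\chi)$, and $\langle f,f\rangle_N$ are all nonzero, so the vanishing of the left-hand side is equivalent to that of the complex $L$-value $L(f/K,\chi,1)$.

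Second, I would invoke Rohrlich's non-vanishing result (as used for example throughout the Iwasawa-theoretic literature over imaginary quadratic fields): for the modular form $f$ and the imaginary quadratic field $K$, there are only finitely many finite order Hecke characters $\chi$ of $K$ of $p$-power conductor for which $L(f/K,\chi,1)=0$. Since the set of characters of $\Gamma_K$ with conductor divisible by both $\pp$ and $\overline{\pp}$ is infinite, we can choose $\chi$ in this set with $L(f/K,\chi,1)\neq 0$. The corresponding value of $\mathfrak{L}_p^{\lambda,\lambda}(f/K)$ is then nonzero, and the corollary follows.

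The only subtlety in this strategy is ensuring that the interpolation regime in which $\mathcal{E}(f,\chi)=1$ actually contains characters captured by Rohrlich's non-vanishing statement; this is automatic since Rohrlich's theorem allows at most finitely many exceptional characters, while the set considered above is infinite. No further input beyond Theorem~\ref{thm:BF-ERL} and Rohrlich's theorem is needed.
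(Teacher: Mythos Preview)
Your proposal is correct and follows exactly the route the paper takes: the paper's proof is the single sentence ``Immediate from Theorem~\ref{thm:BF-ERL} and Rohrlich's nonvanishing results (see \cite{rohrlich-K}),'' and you have simply spelled this out. Your precaution of restricting to characters ramified at both $\pp$ and $\overline{\pp}$ to force $\mathcal{E}(f,\chi)=1$ is harmless but in fact unnecessary, since $|\lambda|_{\bC}=\sqrt{p}\neq 1$ implies that the Euler factors $(1-\lambda^{-1}\chi(\qq))(1-\lambda^{-1}\chi^{-1}(\qq))$ never vanish for finite order $\chi$.
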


\begin{proof}
Immediate from Theorem~\ref{thm:BF-ERL} and Rohrlich's nonvanishing results (see \cite{rohrlich-K}).
\end{proof}

%By the factorization in $(\ref{eq:factor-2-Lp})$, Corollary~\ref{cor:Rohrlich-K} implies that  \emph{at least one} of the $p$-adic $L$-functions $\mathfrak{L}_p^{\bullet,\circ}(f/K)$ is nonzero. As we shall see below, by considering their restriction to the cyclotomic line, it is possible to show that in fact one can always can find some nonzero  $\mathfrak{L}_p^{\bullet,\circ}(f/K)$ with $\bullet=\circ$.

Let $\Gamma^{\rm cyc}$ be the Galois group of the cyclotomic $\bZ_p$-extension of $K$, which we shall also see as the Galois group of the cyclotomic $\bZ_p$-extension of $\bQ$, and 
%for each $\bullet\in\{\sharp,\flat\}$ 
denote by $\mathfrak{L}_p^{\bullet,\circ}(f/K)_{\rm cyc}$ the image of $\mathfrak{L}_p^{\bullet,\circ}(f/K)$
under the natural projection $\Lambda(\Gamma_K)\rightarrow\Lambda(\Gamma^{\rm cyc})$. By \cite[Thm.~3.25]{LLZ-AJM}, there exist elements $L_p^\sharp(f), L_p^\flat(f)\in\Lambda_L(\Gamma^{\rm cyc})$ such that
\begin{equation}\label{eq:LLZ-Q}
\left(\begin{array}{cc}
L_{p}^\alpha(f)\\
L_{p}^\beta(f)
\end{array}\right)
=Q_{\alpha,\beta}^{-1} M_{\rm log}\cdot
\left(\begin{array}{cc}
L_{p}^\sharp(f)\\
L_p^\flat(f)
\end{array}\right),
\end{equation}
where $L_{p}^\alpha(f), L_p^\beta(f)\in\mathcal{H}_{L}(\Gamma^{\rm cyc})$
are the (unbounded) $p$-adic $L$-functions of Amice--V\'elu and Vishik (see \cite{mtt} or \cite[\S{2}]{pollack}).

\begin{prop}\label{prop:cyc-comp}
For each pair $\bullet,\circ\in\{\sharp,\flat\}$ we have
\[
\mathfrak{L}_{p}^{\bullet,\circ}(f/K)_{\rm cyc}=\frac{c}{2}(L_p^\bullet(f)\cdot L_p^\circ(f_K)+L_p^\circ(f)\cdot L_p^\bullet(f_K)),
\] 
where $f_K:=f\otimes\epsilon_K$ is the twist of $f$ by the quadratic character associated to $K$.
\end{prop}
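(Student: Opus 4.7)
The plan is to reduce the proposition to an equivalent un-signed identity, which is then verified by comparing interpolation formulas at finite-order cyclotomic characters. In the cyclotomic direction, both logarithmic matrices $M_{{\rm log},\pp}$ and $M_{{\rm log},\overline\pp}$ project to the cyclotomic matrix $M_{\rm log}$ of Definition~\ref{def:log-matrix}. Using the matrix factorization $(\ref{eq:factor-2-Lp})$ in the cyclotomic quotient together with the Lei--Loeffler--Zerbes factorization $(\ref{eq:LLZ-Q})$, a direct matrix manipulation (using that $Q_{\alpha,\beta}^{-1}M_{\rm log}$ is invertible over the fraction field of $\Lambda_L(\Gamma^{\rm cyc})$) shows that the claim is equivalent to the un-signed identity
\[
\mathfrak{L}_p^{\lambda,\mu}(f/K)_{\rm cyc} = \tfrac{1}{2}\bigl(L_p^\lambda(f)\,L_p^\mu(f_K) + L_p^\mu(f)\,L_p^\lambda(f_K)\bigr), \qquad \lambda,\mu \in \{\alpha,\beta\}.
\]
Explicitly, if this holds, then $[c\mathfrak{L}_p^{\lambda,\mu}(f/K)_{\rm cyc}] = \tfrac{c}{2}\bigl(\mathbf{v}_f\mathbf{v}_{f_K}^\intercal+\mathbf{v}_{f_K}\mathbf{v}_{f}^\intercal\bigr)$ with $\mathbf{v}_f = (L_p^\alpha(f),L_p^\beta(f))^\intercal$, and substituting $\mathbf{v}_f = Q_{\alpha,\beta}^{-1}M_{\rm log}(L_p^\sharp(f),L_p^\flat(f))^\intercal$ matches the right-hand side of $(\ref{eq:factor-2-Lp})$.

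For the diagonal case $\lambda=\mu$ of the un-signed identity, I would evaluate both sides at finite-order characters $\chi$ of $\Gamma^{\rm cyc}$ (viewed as characters of $\Gamma_K$). The left-hand side is given by Theorem~\ref{thm:BF-ERL}:
\[
\chi\bigl(\mathfrak{L}_p^{\lambda,\lambda}(f/K)\bigr) = (\text{Euler and Gauss-sum factors}) \cdot \frac{L(f/K,\chi,1)}{(4\pi)^2\langle f,f\rangle_N}.
\]
Using the factorization $L(f/K,\chi,s) = L(f,\chi,s)\cdot L(f_K,\chi,s)$ (valid since $\chi$ restricted to $K$ is the base change of a Dirichlet character, so the induced representation decomposes as $\chi\oplus\chi\epsilon_K$), together with the Amice--V\'elu and Vishik interpolation formulas for $L_p^\lambda(f)$ and $L_p^\lambda(f_K)$ (cf.\ \cite{mtt}, \cite[\S{2}]{pollack}), the product $L_p^\lambda(f)\cdot L_p^\lambda(f_K)$ evaluates at $\chi$ to a product of the same shape. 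Matching the local Euler factors at $p$ and using the standard relation between $\langle f,f\rangle_N$ and the product of canonical periods $\Omega_f^\pm\Omega_{f_K}^\pm$ then gives equality at all finite-order $\chi$; density of such characters in $\mathcal{H}_L(\Gamma^{\rm cyc})$ closes the diagonal case.

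The main obstacle is the off-diagonal case $\lambda\neq\mu$, since Theorem~\ref{thm:BF-ERL} only covers the diagonal pairing. Two approaches suggest themselves. First, one can extract a mixed reciprocity law computing $\chi\bigl(\mathcal{L}_{\overline\pp}^\mu({\rm res}_{\overline\pp}(\mathcal{BF}^\lambda))\bigr)$ with $\lambda\neq\mu$ directly from the big Perrin--Riou regulator of \cite[Thm.~7.1.4]{LZ-Coleman}, applied via Proposition~\ref{prop:comp-L}; this regulator generically interpolates \emph{all} pairings, not just the diagonal one. Second, one can exploit the action of complex conjugation on $\Gamma_K$: it acts trivially on $\Gamma^{\rm cyc}$ and interchanges $\pp$ with $\overline\pp$, which forces the cyclotomic matrix $[\mathfrak{L}_p^{\bullet,\circ}(f/K)_{\rm cyc}]$ to be symmetric in $\bullet,\circ$; combined with the symmetry of the proposed right-hand side and the two diagonal identities of the previous paragraph, the matrix equation $(\ref{eq:factor-2-Lp})$ in the cyclotomic quotient determines the remaining off-diagonal entry. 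Once the un-signed identity is established for all $(\lambda,\mu)$, the proposition follows by reversing the matrix reduction of the first paragraph.
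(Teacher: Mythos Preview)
Your overall strategy---reduce to the un-signed identity
\[
\mathfrak{L}_p^{\lambda,\mu}(f/K)_{\rm cyc}=\tfrac{1}{2}\bigl(L_p^\lambda(f)L_p^\mu(f_K)+L_p^\mu(f)L_p^\lambda(f_K)\bigr)
\]
and then recover the signed statement via the matrix factorizations $(\ref{eq:factor-2-Lp})$ and $(\ref{eq:LLZ-Q})$---is exactly what the paper does. The gaps are in how you propose to prove the un-signed identity.

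\emph{Diagonal case.} Your density argument does not go through. Both sides lie in $\mathcal{H}_{L,2v_\lambda}(\Gamma^{\rm cyc})$, and since $v_\alpha+v_\beta=1$ in weight $2$, for at least one root we have $2v_\lambda\geqslant 1$. In that regime a distribution is \emph{not} determined by its values at finite-order characters alone (Vishik's uniqueness requires growth order strictly below the number of available moments, and in weight $2$ there is only one critical value). The paper gets around this by passing to the Coleman family $\mathcal{F}^\lambda$ through $f^\lambda$: in higher weights there are enough critical values to pin down the two-variable analogue, and one then specializes back to $f$ (cf.\ \cite[Lem.~4.22]{wan-non-ord}). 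The period comparison you gloss over as ``standard'' also requires care: the ratio $\Omega_f^{\rm can}/\Omega_f^+\Omega_f^-$ must be shown to be a $p$-adic unit, for which the paper invokes \cite[Lem.~9.5]{skinner-zhang}.

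\emph{Off-diagonal case.} Neither of your two suggestions works as stated. For approach (b), the symmetry you extract from complex conjugation plus the two diagonal entries does \emph{not} determine the off-diagonal entry: a symmetric $2\times 2$ matrix has three independent entries, and the matrix equation $(\ref{eq:factor-2-Lp})$ is merely a change of basis, adding no new constraint. For approach (a), there is no direct interpolation formula for the mixed pairing $\mathcal{L}_{\overline\pp}^\mu({\rm res}_{\overline\pp}(\mathcal{BF}^\lambda))$ with $\lambda\neq\mu$ in terms of $L$-values; the big regulator of \cite[Thm.~7.1.4]{LZ-Coleman} interpolates Bloch--Kato maps, not $L$-values, and the link to $L$-values comes precisely from the (diagonal) explicit reciprocity law. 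The paper instead appeals to a rank-two construction: an element of $\bigwedge^2 H^1_{\rm Iw}(K_\infty,V_f^*)$ (from the work of B\"uy\"ukboduk--Lei--Loeffler--Venkat specializing \cite[Thm.~3.7.1]{BLLV}) whose cyclotomic restriction is identified, via the already-established diagonal cases, with $\mathbf{z}_f\otimes\mathbf{z}_{f_K}$. Kato's explicit reciprocity law then yields the asymmetric cases of the un-signed identity.
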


%\begin{rem}\label{rem:obvious}
%In particular, the equality in Proposition~\ref{prop:cyc-comp} implies (the otherwise clear fact) that $\mathfrak{L}_{p}^{\sharp,\flat}(f/K)_{\rm cyc}=\mathfrak{L}_{p}^{\flat,\sharp}(f/K)_{\rm cyc}$.
%\end{rem}

\begin{proof}[Sketch of proof]
Denote by $\mathfrak{L}_{p}^{\lambda,\mu}(f/K)_{\rm cyc}$  the image of the element $\mathfrak{L}_p^{\lambda,\mu}(f/K)$ of $(\ref{eq:BF-L})$ under the natural map induced by the projection $\Gamma_K\twoheadrightarrow\Gamma^{\rm cyc}$. From their respective interpolation properties, we see that
\begin{equation}\label{eq:factor-unb}
\mathfrak{L}_p^{\lambda,\mu}(f/K)_{\rm cyc}=\frac{1}{2}(L_{p}^\lambda(f)\cdot L_{p}^\mu(f_K)+L_{p}^\mu(f)\cdot L_{p}^\lambda(f_K)).
\end{equation}

Indeed, for $\lambda=\mu$ this is shown by proving the analogous factorization for the two-variable $p$-adic $L$-functions attached to the Coleman family passing through the $p$-stabilization of $f$ of $U_p$-eigenvalue $\lambda$, using that for higher weights we have more critical values to compare the two sides, and specializing back to $f$. (See \cite[Lem.~4.22]{wan-non-ord}, noting that the constant in \emph{loc.cit.} is given by the ratio $\Omega_f^{\rm can}/\Omega_f^+\Omega_f^-$, which in our weight $2$ case is a $p$-adic unit by \cite[Lem.~9.5]{skinner-zhang}.) 
%Alternatively, this should also follow from \cite[Thm.~1.4]{BW} after a comparison of the different periods involved.
	
For $\lambda\neq\mu$, one uses the existence of an element (constructed in ongoing work of B{\"u}y{\"u}kboduk--Lei--Loeffler--Venkat, specializing \cite[Thm.~3.7.1]{BLLV}) in $\bigwedge^2 H^1_{\rm Iw}(K_\infty,V_f^*)$ whose restriction to 
\[
\bigwedge^2 H^1_{\rm Iw}(K^{\rm cyc}_\infty,V_f^*)\simeq H^1_{\rm Iw}(\bQ^{\rm cyc},V_f^*)\otimes H^1_{\rm Iw}(\bQ^{\rm cyc},V_{f_K}^*)
\]
	agrees (by $(\ref{eq:factor-unb})$ in the cases $\lambda=\mu$) with the tensor product $\mathbf{z}_f\otimes\mathbf{z}_{f_K}$ of Kato's zeta elements. This agreement, together with Kato's explicit reciprocity law yields the asymmetric cases of $(\ref{eq:factor-unb})$. 
	
	Letting
	\[
	\mathcal{L}:=\biggl(\begin{array}{cc}
	L_{p}^\alpha(f) L_{p}^\alpha(f_K)&L_{p}^\alpha(f)L_{p}^\beta(f_K)\\
	L_p^\beta(f) L_{p}^\alpha(f_K)&L_{p}^\beta(f) L_{p}^\beta(f_K)\end{array}\biggr),
	\]
	by the decomposition $(\ref{eq:factor-2-Lp})$, we thus have
	\begin{equation}\label{eq:factor-Lp}
	Q_{\alpha,\beta}^{-1}M_{{\rm log}}
	\cdot\biggl(\begin{array}{cc}\mathfrak{L}_p^{\sharp,\sharp}(f/K)_{\rm cyc}&\mathfrak{L}_p^{\flat,\sharp}(f/K)_{\rm cyc}\\
	\mathfrak{L}_p^{\sharp,\flat}(f/K)_{\rm cyc}&\mathfrak{L}_p^{\flat,\flat}(f/K)_{\rm cyc}\end{array}\biggr)
	\cdot(Q_{\alpha,\beta}^{-1}M_{{\rm log}})^\intercal=\frac{c}{2}(\mathcal{L}+\mathcal{L}^\intercal).
	\end{equation}
	
	On the other hand, since $p$ splits in $K$, we have $\epsilon_K(p)=1$,
	so the roots of the $p$-th Hecke of polynomial of $f_K$ are the same as that of $f$.
	Thus taking the product of the matrices in the factorization $(\ref{eq:LLZ-Q})$ and
	the analogous factorization for $f_K$:
	\begin{equation}\label{eq:LLZ-Q-K}
	\left(\begin{array}{c}L_p^\alpha(f_K)\\L_p^\beta(f_K)\end{array}\right)^\intercal=
	\left(\begin{array}{c}L_p^\sharp(f_K)\\L_p^\flat(f_K)\end{array}\right)^\intercal\cdot
	(Q_{\alpha,\beta}^{-1}M_{\rm log})^\intercal,
	\end{equation}
	we see that the above matrix $\mathcal{L}$ is also given by 
	\begin{equation}
	\begin{split}\label{eq:tensor}
	\mathcal{L}&=Q_{\alpha,\beta}^{-1}M_{{\rm log}}
	\cdot\biggl(\begin{array}{cc}
	L_p^{\sharp}(f) L_p^{\sharp}(f_K)&L_p^{\sharp}(f) L_p^{\flat}(f_K)\\
	L_p^{\flat}(f) L_p^{\sharp}(f_K)&L_p^{\flat}(f) L_p^{\flat}(f_K)\end{array}\biggr)
	\cdot(Q_{\alpha,\beta}^{-1}M_{{\rm log}})^\intercal,
	\end{split}
	\end{equation}
	from where the result follows from substituting the expression for $\mathcal{L}$ in $(\ref{eq:tensor})$ into $(\ref{eq:factor-Lp})$.	
\end{proof}

\subsection{Explicit reciprocity laws, II} 

In this section, we consider the images of the Beilinson--Flach classes under the restriction map at $\pp$.  

As shown in \cite[Cor.~3.15]{BL-non-ord}, for each $\bullet\in\{\sharp,\flat\}$
we have the inclusion
\begin{equation}\label{eq:BF-respp}
{\rm res}_\pp(\mathcal{BF}_c^\bullet)\in {\rm ker}({\rm Col}^\bullet_\pp);
\end{equation}
in particular, the signed logarithm map ${\rm Log}^{\bullet}_\pp$
introduced in $\S\ref{sec:Log-2}$ may be applied to this class.

Following the terminology introduced in \cite[Prop.~8.3]{wan-combined}, we say that a height one prime $P\subset\Lambda(\Gamma_K)$ is ``exceptional'', if $P$ is the pullback of the augmentation ideal $(\gamma_{\ac}-1)\Lambda(\Gamma^{\rm ac})$ of $\Lambda(\Gamma^{\rm ac})$. 
From work of Hida--Tilouine \cite{HT-117} and Rubin \cite{rubin-IMC} (see e.g. \cite[Thm.~2.7]{cas-BF}) one can show that the congruence ideal $I_{\mathbf{g}}$
appearing in $(\ref{eq:L-F})$ is generated by $\frac{h_K}{w_K}\cdot L_\pp^{\rm ac}(K)$ up to exceptional primes, where $h_K$ is the class number of $K$, $w_K$ the number of roots of unity of $K$, and
$L_\pp^{\rm ac}(K)$ an anticyclotomic Katz $p$-adic $L$-function. Thus 
for each $\bullet\in\{\sharp,\flat\}$ the map
\[
\widetilde{\rm Log}^{\bullet}_\pp:=\frac{h_K}{w_K}\cdot L_\pp^{\rm ac}(K)\times{\rm Log}^{\bullet}_\pp
\]
is integral up to exceptional primes.

For the next result, let $\bar{\rho}_f:G_\bQ\rightarrow{\rm GL}_2(\kappa_L)$ be the residual representation associated with $f$, where $\kappa_L$ is the residue field of $L$. 

\begin{thm}\label{thm:ERL-pp}
Assume that $\bar{\rho}_f$ is ramified at every prime $\ell\mid N$ which is nonsplit in $K$. Then for each $\bullet\in\{\sharp,\flat\}$ we have
\[
\widetilde{\rm Log}_{\pp}^\bullet({\rm res}_\pp(\mathcal{BF}_c^\bullet))=c\mathscr{L}_\pp(f/K)
\]
up to exceptional primes, where $\mathscr{L}_\pp(f/K)$ is the two-variable Rankin--Selberg $p$-adic $L$-function
constructed in \cite[\S{4.6}]{wan-combined}.
\end{thm}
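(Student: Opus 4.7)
The plan is to reduce the claim to its ``$\lambda$-version'' for each $\lambda\in\{\alpha,\beta\}$, namely that
\[
\tfrac{h_K}{w_K}L_\pp^{\rm ac}(K)\cdot\mathcal{L}_{\mathbf{g},\pp}^\lambda({\rm res}_\pp(c\cdot\mathcal{BF}^\lambda))=c\cdot\mathscr{L}_\pp(f/K)
\]
up to exceptional primes. This reduction proceeds by feeding the pair $({\rm res}_\pp(\mathcal{BF}_c^\sharp),{\rm res}_\pp(\mathcal{BF}_c^\flat))$ into the composite map of Definition~\ref{def:Log}: the first arrow $Q_{\alpha,\beta}^{-1}M_{{\rm log},\pp}$ produces, by Theorem~\ref{thm:BF-factor}, the pair $(c\cdot{\rm res}_\pp(\mathcal{BF}^\alpha),c\cdot{\rm res}_\pp(\mathcal{BF}^\beta))$; the $\bullet$-component of the overall output is then $\mathcal{L}_{\mathbf{g},\pp}^\lambda({\rm res}_\pp(c\cdot\mathcal{BF}^\lambda))$ with $\lambda=\alpha$ for $\bullet=\sharp$ and $\lambda=\beta$ for $\bullet=\flat$. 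The ``$\lambda$-version'' for both values of $\lambda$ thus forces both outputs to equal $c\cdot\mathscr{L}_\pp(f/K)$, which is the content of the theorem.

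To establish the ``$\lambda$-version'', I would proceed in two steps. First, since $\mathcal{L}^\lambda_{\mathbf{g},\pp}=\mathcal{L}_{\mathbf{g},f^\lambda}\circ\jmath_-^{-1}$ by definition, the left-hand side computes the image of the local Beilinson--Flach class ${\rm res}_p(c\cdot\mathcal{BF}^{\lambda,\mathbf{g}})$ under the Loeffler--Zerbes big logarithm $\mathcal{L}_{\mathbf{g},f^\lambda}$ of (\ref{eq:L-F}), multiplied by $\tfrac{h_K}{w_K}L_\pp^{\rm ac}(K)$. I would invoke the explicit reciprocity law \cite[Thm.~7.1.5]{LZ-Coleman}, applied with the roles of the two modular forms interchanged, to identify this image with the two-variable Rankin--Selberg $p$-adic $L$-function attached to $f$ and $\mathbf{g}$, interpolating the critical central values $L(f,g_\chi,1)$ at classical arithmetic specializations $g_\chi$ of $\mathbf{g}$. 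Second, since $\mathbf{g}$ is the canonical Hida family of CM forms of $K$, each such value factors via the Artin formalism as $L(f/K,\chi,1)$ times an explicit local factor depending on $\lambda$ and the $p$-part of $\chi$. Matching against the interpolation formula for Wan's $\mathscr{L}_\pp(f/K)$ from \cite[\S{4.6}]{wan-combined}, and using that the congruence ideal $I_{\mathbf{g}}$ is generated, up to exceptional primes, by $\tfrac{h_K}{w_K}L_\pp^{\rm ac}(K)$ (after Hida--Tilouine \cite{HT-117} and Rubin \cite{rubin-IMC}), then gives the identity on a Zariski dense set of characters of $\Gamma_K$, hence as elements of $\Lambda(\Gamma_K)\otimes\bQ_p$.

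The main technical obstacle will be verifying the exact cancellation of all Euler-type factors at $p$ produced by the two constructions --- those of the form $\mathcal{E}(f,\chi)$ appearing in Theorem~\ref{thm:BF-ERL} versus those absorbed into the period normalization of Wan's $\mathscr{L}_\pp(f/K)$ --- which is what forces both $\lambda=\alpha$ and $\lambda=\beta$ to yield the same answer. The hypothesis that $\bar\rho_f$ be ramified at every nonsplit prime $\ell\mid N$ is expected to enter in matching the local factors at such $\ell$, avoiding any spurious cohomological contribution that would break the comparison (this mirrors the role of analogous hypotheses in other $L$-function comparison arguments). A secondary subtlety is the proper handling of exceptional primes, pulled back from the augmentation ideal of $\Lambda(\Gamma^{\rm ac})$, where the identification of $I_{\mathbf{g}}$ with $\tfrac{h_K}{w_K}L_\pp^{\rm ac}(K)$ may degenerate; by Proposition~\ref{prop:im-Log}, however, $\widetilde{\rm Log}_\pp^\bullet$ has pseudo-null cokernel, so it suffices to prove the identity modulo pseudo-null ideals, which is precisely what the phrase ``up to exceptional primes'' reflects.
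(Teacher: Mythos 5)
Your proposal follows essentially the same route as the paper's proof: reduce to the $\lambda$-version via the matrix factorization of Theorem~\ref{thm:BF-factor} and the structure of the composite in Definition~\ref{def:Log}, apply the Loeffler--Zerbes explicit reciprocity law \cite[Thm.~7.1.5]{LZ-Coleman} (supplemented by \cite{loeffler-note}) to obtain $\mathcal{L}_{\mathbf{g},\pp}^\lambda({\rm res}_\pp(\mathcal{BF}_c^\lambda))=cL_{p,\lambda}(f/K)$, and then compare $L_{p,\lambda}(f/K)$ with Wan's $\mathscr{L}_\pp(f/K)$ via the congruence ideal factor $\tfrac{h_K}{w_K}L_\pp^{\rm ac}(K)$. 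The only place your account diverges is the role of the ramification hypothesis on $\bar{\rho}_f$: the paper uses it --- through the calculations of \cite[\S{5.3}]{JSW} and \cite[p.~912]{prasanna} --- to ensure that the global period ratio $\alpha(f,f_B)$ (comparing the Petersson norm of $f$ with the norm of its Jacquet--Langlands transfer $f_B$) is a $p$-adic unit, so that the two $p$-adic $L$-functions agree \emph{integrally} and not merely up to a power of $p$; it is not used to match local Euler factors at the nonsplit primes $\ell\mid N$, which are already consistent in the two interpolation formulas.
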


\begin{proof}
For each $\lambda\in\{\alpha,\beta\}$, let $L_{p,\lambda}(f/K)\in{\rm Frac}(\mathcal{H}_L(\Gamma_K))$ be the image of Urban's three-variable $p$-adic $L$-function
$L_p(\mathbf{g},\mathcal{F}^\lambda,1+\mathbf{j})$ under the specialization map $\mathcal{F}^\lambda\rightarrow f^\lambda$, where as always  $\mathcal{F}^\lambda$ denotes the Coleman family passing through to $f^\lambda$. By the explicit reciprocity law of \cite[Thm.~7.1.5]{LZ-Coleman} and \cite{loeffler-note} we have the relation
\begin{equation}\label{eq:ERL}
\mathcal{L}_{\mathbf{g},\pp}^\lambda({\rm res}_\pp(\mathcal{BF}_c^\lambda))=cL_{p,\lambda}(f/K),
\end{equation}
where $\mathcal{L}_{\mathbf{g},\pp}^\lambda$ is as in $(\ref{eq:L-g})$. By Definition~\ref{def:Log}
and the factorization of Theorem~\ref{thm:BF-factor}, the pair of equalities $(\ref{eq:ERL})$ for $\lambda=\alpha$ and $\beta$ amounts to the equality
\begin{equation}\label{eq:vector}
\biggl(\begin{array}{c}
{\rm Log}^{\sharp}_\pp({\rm res}_\pp(\mathcal{BF}_c^\sharp))\\
{\rm Log}^{\flat}_\pp({\rm res}_\pp(\mathcal{BF}_c^\flat))
\end{array}\biggr)
=\left(\begin{array}{c}
cL_{p,\alpha}(f/K)\\
cL_{p,\beta}(f/K)
\end{array}
\right).
\end{equation}

On the other hand, by the calculations in \cite[\S{5.3}]{JSW} we have the relation
\begin{equation}\label{eq:factor-w}
\mathscr{L}_\pp(f/K)=\frac{h_K}{w_K}\cdot L_\pp^{\rm ac}(K)\cdot L_{p,\lambda}(f/K),
\end{equation}
possibly up to powers of $p$ coming the constant $\alpha(f,f_B)$ appearing in [\emph{loc.cit.}, $\S5.1$]. Since by the discussion in \cite[p.~912]{prasanna} our ramification hypothesis on $\bar{\rho}_f$ forces $\alpha(f,f_B)$ to be a $p$-adic unit, the result follows from $(\ref{eq:vector})$ and $(\ref{eq:factor-w})$.
\end{proof}

\subsection{Main conjectures}\label{sec:equiv}

In this section, building on the explicit reciprocity laws of the preceding sections, we relate different variants of the two-variable Iwasawa main conjectures for modular forms at non-ordinary primes.

\begin{thm}\label{thm:equiv}
%Assume that $\bar{\rho}_f\vert_{G_K}$ is absolutely irreducible.
The following three statements are equivalent, where the equalities of characteristic ideals are up to exceptional primes.
\begin{enumerate}
\item{}
Both ${\rm Sel}^{{\rm str},{\rm rel}}(K,\mathbf{T})$ and
$X^{{\rm rel},{\rm str}}_{K_\infty}(f)$ are $\Lambda(\Gamma_K)$-torsion, and
\[
{\rm Char}_{\Lambda(\Gamma_K)}(X^{{\rm rel},{\rm str}}_{K_\infty}(f))=(\mathscr{L}_\pp(f/K))
\]
as ideals in $\Lambda_{R_0}(\Gamma_K)$.
\item{}
For all $\bullet\in\{\sharp,\flat\}$,
$X_{K_\infty}^{\bullet,{\rm str}}(f)$ is $\Lambda(\Gamma_K)$-torsion,
${\rm Sel}^{\bullet,{\rm rel}}(K,\mathbf{T})$ has $\Lambda(\Gamma_K)$-rank $1$, and
\[
c\cdot{\rm Char}_{\Lambda(\Gamma_K)}(X^{\bullet,{\rm str}}_{K_\infty}(f))
={\rm Char}_{\Lambda(\Gamma_K)}\biggl(\frac{{\rm Sel}^{\bullet,{\rm rel}}(K,\mathbf{T})}{\Lambda(\Gamma_K)\cdot\mathcal{BF}_c^\bullet}\biggr)
\]
as ideals in $\Lambda(\Gamma_K)$.
\item{} 
If $\bullet,\circ\in\{\sharp,\flat\}$ are such that $\mathfrak{L}_p^{\bullet,\circ}(f/K)$ is nonzero, then $X^{\bullet,\circ}_{K_\infty}(f)$ is $\Lambda(\Gamma_K)$-torsion, and
\[
c\cdot{\rm Char}_{\Lambda(\Gamma_K)}(X^{\bullet,\circ}_{K_\infty}(f))=(\mathfrak{L}_p^{\bullet,\circ}(f/K))
\]
as ideals in $\Lambda(\Gamma_K)$.
\end{enumerate}
\end{thm}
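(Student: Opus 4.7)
The plan is to prove the two equivalences $(1)\Leftrightarrow(2)$ and $(2)\Leftrightarrow(3)$ separately. In each case the argument combines a Poitou--Tate global duality exact sequence, comparing Selmer groups whose local conditions at $p$ differ at exactly one prime, with one of the explicit reciprocity laws established in the preceding sections, using the Beilinson--Flach class $\mathcal{BF}_c^\bullet$ as the bridge between the three formulations.

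For $(1)\Leftrightarrow(2)$, the inclusion $H^1_{\rm str}(K_\pp,\mathbf{T}) = 0 \subseteq H^1_\bullet(K_\pp,\mathbf{T}) := \ker({\rm Col}^\bullet_\pp)$ yields a Poitou--Tate exact sequence of $\Lambda(\Gamma_K)$-modules
\begin{equation*}
0 \to {\rm Sel}^{{\rm str},{\rm rel}}(K,\mathbf{T}) \to {\rm Sel}^{\bullet,{\rm rel}}(K,\mathbf{T}) \to H^1_\bullet(K_\pp,\mathbf{T}) \to X^{{\rm rel},{\rm str}}_{K_\infty}(f) \to X^{\bullet,{\rm str}}_{K_\infty}(f) \to 0,
\end{equation*}
where any discrepancy between $X$ and $\mathfrak{X}$ at the primes of bad reduction contributes only finite factors, negligible for our characteristic-ideal identifications. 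The class $\mathcal{BF}_c^\bullet$ lies in ${\rm Sel}^{\bullet,{\rm rel}}(K,\mathbf{T})$ by $(\ref{eq:BF-respp})$, and Theorem~\ref{thm:ERL-pp} identifies the image of $\widetilde{\rm Log}_\pp^\bullet({\rm res}_\pp(\mathcal{BF}_c^\bullet))$ with $c\mathscr{L}_\pp(f/K)$ up to exceptional primes. Since ${\rm Log}_\pp^\bullet$ has pseudo-null cokernel by Proposition~\ref{prop:im-Log}, a standard characteristic-ideal computation on this exact sequence (invoking multiplicativity of ${\rm Char}_{\Lambda(\Gamma_K)}$ in exact sequences of torsion modules, together with the rank-$1$ bookkeeping for the non-torsion term ${\rm Sel}^{\bullet,{\rm rel}}$) yields the equivalence of $(1)$ and $(2)$.

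For $(2)\Leftrightarrow(3)$, the analogous Poitou--Tate sequence stemming from $H^1_\circ(K_{\overline\pp},\mathbf{T}) \subseteq H^1(K_{\overline\pp},\mathbf{T})$ reads
\begin{equation*}
0 \to {\rm Sel}^{\bullet,\circ}(K,\mathbf{T}) \to {\rm Sel}^{\bullet,{\rm rel}}(K,\mathbf{T}) \to \frac{H^1(K_{\overline\pp},\mathbf{T})}{H^1_\circ(K_{\overline\pp},\mathbf{T})} \to X^{\bullet,\circ}_{K_\infty}(f) \to X^{\bullet,{\rm str}}_{K_\infty}(f) \to 0.
\end{equation*}
The Coleman map ${\rm Col}^\circ_{\overline\pp}$ injects the middle quotient into $\Lambda_{\hat{\cO}_{F_\infty}}(\Gamma_K)$ with finite cokernel by Proposition~\ref{prop:Image-Col}, and by $(\ref{eq:BF-L-signed})$ the image of $\mathcal{BF}_c^\bullet$ under this injection is exactly $\mathfrak{L}_p^{\bullet,\circ}(f/K)$. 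The nonvanishing hypothesis on $\mathfrak{L}_p^{\bullet,\circ}(f/K)$ then forces $\mathcal{BF}_c^\bullet$ to generate a rank-one $\Lambda(\Gamma_K)$-submodule of ${\rm Sel}^{\bullet,{\rm rel}}(K,\mathbf{T})$, and a second characteristic-ideal computation on this exact sequence delivers the equivalence of $(2)$ and $(3)$.

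The main obstacle will be carefully tracking the various sources of ambiguity in the characteristic-ideal identifications: the equalities in $(1)$ and $(3)$ hold only up to exceptional primes; Theorem~\ref{thm:ERL-pp} involves the Katz $p$-adic $L$-function factor $\tfrac{h_K}{w_K}L_\pp^{\rm ac}(K)$ absorbed inside $\widetilde{\rm Log}_\pp^\bullet$; and Propositions~\ref{prop:Image-Col} and~\ref{prop:im-Log} give integrality only up to finite and pseudo-null cokernels, respectively. Ensuring that each of these factors lands on the correct side of the desired equalities---in particular that the Katz factor cancels precisely against the non-integrality of ${\rm Log}_\pp^\bullet$ on general input, and that the finite cokernels do not spuriously contribute---constitutes the delicate bookkeeping of the proof. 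Once this is in place, both equivalences reduce to standard Poitou--Tate duality and multiplicativity of characteristic ideals.
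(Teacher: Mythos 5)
Your proposal is correct and follows essentially the same route as the paper: the same two Poitou--Tate exact sequences (one varying the local condition at $\pp$, one at $\overline\pp$), the same reciprocity laws (Theorem~\ref{thm:ERL-pp} for the $\pp$-side, the definition $(\ref{eq:BF-L-signed})$ for the $\overline\pp$-side), the same appeal to Propositions~\ref{prop:im-Log} and~\ref{prop:Image-Col} for the cokernels, and multiplicativity of characteristic ideals. The paper organizes the argument as $(1)\Rightarrow(2)\Rightarrow(3)$ and notes the converses go the same way, whereas you propose to establish two genuine equivalences, but this is a cosmetic difference.

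One point you gloss over that the paper handles explicitly, and that you should make precise in a written-up version: in the five-term Poitou--Tate sequence
\[
0\longrightarrow{\rm Sel}^{{\rm str},{\rm rel}}(K,\mathbf{T})\longrightarrow{\rm Sel}^{\bullet,{\rm rel}}(K,\mathbf{T})
\overset{{\rm res}_\pp}\longrightarrow H^1_\bullet(K_\pp,\mathbf{T})\longrightarrow
X^{{\rm rel},{\rm str}}_{K_\infty}(f)\longrightarrow X^{\bullet,{\rm str}}_{K_\infty}(f)\longrightarrow 0,
\]
once one knows that ${\rm res}_\pp$ is non-torsion (from nonvanishing of $\mathscr{L}_\pp(f/K)$ and the reciprocity law) and that $H^1_\bullet(K_\pp,\mathbf{T})$ has rank $1$, the first term ${\rm Sel}^{{\rm str},{\rm rel}}(K,\mathbf{T})$ is torsion. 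The paper then invokes the $\Lambda(\Gamma_K)$-torsion-freeness of $H^1(K,\mathbf{T})$, which holds because $\bar\rho_f$ is irreducible (cf.~\cite[Lem.~2.6]{cas-BF}), to conclude it actually \emph{vanishes}. This vanishing is what lets one replace the second term by the quotient ${\rm Sel}^{\bullet,{\rm rel}}(K,\mathbf{T})/\Lambda(\Gamma_K)\cdot\mathcal{BF}_c^\bullet$ and obtain the clean four-term sequence $(\ref{eq:PT1-div})$ on which characteristic ideals are taken. The same vanishing argument is needed for ${\rm Sel}^{\bullet,\circ}(K,\mathbf{T})$ in the second equivalence. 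Your phrase ``rank-$1$ bookkeeping for the non-torsion term'' covers the rank count but not this vanishing step, which is not automatic from the exact sequence alone (a torsion submodule of a finitely generated $\Lambda(\Gamma_K)$-module need not vanish without the torsion-freeness input). With that spelled out, the rest of your proposal, including the careful tracking of the Katz-factor cancellation and the pseudo-null cokernels, is accurate.
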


\begin{proof}
We shall just prove the implications $(1)\Rightarrow(2)\Rightarrow(3)$,
since the converse implications are shown in the same way. Poitou--Tate duality gives rise to the exact sequence
\begin{equation}\label{eq:PT1}
0\longrightarrow{\rm Sel}^{{\rm str},{\rm rel}}(K,\mathbf{T})\longrightarrow{\rm Sel}^{\bullet,{\rm rel}}(K,\mathbf{T})
\overset{{\rm res}_\pp}\longrightarrow H^1_\bullet(K_\pp,\mathbf{T})\longrightarrow
X^{{\rm rel},{\rm str}}_{K_\infty}(f)\longrightarrow X^{\bullet,{\rm str}}_{K_\infty}(f)\longrightarrow 0.
\end{equation}

Assume that $(1)$ holds. Since $\mathscr{L}_\pp(f/K)$ is nonzero (see \cite[Rem.~1.3]{cas-BF}), by
Theorem~\ref{thm:ERL-pp} the image of ${\rm res}_\pp$ is not $\Lambda(\Gamma_K)$-torsion,
and since $H^1_\bullet(K_\pp,\mathbf{T})$ has $\Lambda(\Gamma_K)$-rank $1$, it follows from $(\ref{eq:PT1})$
that ${\rm Sel}^{\bullet,{\rm rel}}(K,\mathbf{T})$ has also   $\Lambda(\Gamma_K)$-rank $1$. Since $\bar{\rho}_f$ is irreducible by \cite{Edi}, the $\Lambda(\Gamma_K)$-torsion in $H^1(K,\mathbf{T})$ is zero (see e.g. \cite[Lem.~2.6]{cas-BF}), and so
our assumption implies that 
${\rm Sel}^{{\rm str},{\rm rel}}(K,\mathbf{T})=\{0\}$. From $(\ref{eq:PT1})$ we thus arrive at the exact sequence
\begin{equation}\label{eq:PT1-div}
0\longrightarrow\frac{{\rm Sel}^{\bullet,{\rm rel}}(K,\mathbf{T})}{\Lambda(\Gamma_K)\cdot\mathcal{BF}_c^\bullet}
\overset{}\longrightarrow\frac{{\rm Im}(\widetilde{\rm Log}_\pp^\bullet)}{(c\cdot\mathscr{L}_\pp(f/K))}
\longrightarrow X^{{\rm rel},{\rm str}}_{K_\infty}(f)\longrightarrow X^{\bullet,{\rm str}}_{K_\infty}(f)\longrightarrow 0,
\end{equation}
using Theorem~\ref{thm:ERL-pp} for the second term; in particular, it follows that $X^{\bullet,{\rm str}}_{K_\infty}(f)$ is $\Lambda(\Gamma_K)$-torsion. Since the map $\widetilde{\rm Log}_\pp^\bullet$ has
pseudo-null cokernel by Proposition~\ref{prop:im-Log}, the conclusion in part (2)
follows after taking characteristic ideals in $(\ref{eq:PT1-div})$.

Assume now that the hypotheses in $(2)$ hold, let $\bullet, \circ\in\{\sharp,\flat\}$ be such that $\mathfrak{L}_p^{\bullet,\circ}(f/K)$ is nonzero, and consider the exact sequence
\begin{equation}\label{eq:PT2}
0\longrightarrow{\rm Sel}^{\bullet,\circ}(K,\mathbf{T})\longrightarrow{\rm Sel}^{\bullet,{\rm rel}}(K,\mathbf{T})
\overset{{\rm res}_{\overline\pp}}\longrightarrow\frac{H^1(K_{\overline\pp},\mathbf{T})}{H^1_\circ(K_{\overline\pp},\mathbf{T})}
\longrightarrow X^{\bullet,\circ}_{K_\infty}(f)\longrightarrow X^{\bullet,{\rm str}}_{K_\infty}(f)\longrightarrow 0.
\end{equation}
Since $\mathcal{BF}^\bullet$ lands in ${\rm Sel}^{\bullet,{\rm rel}}(K,\mathbf{T})$ by \cite[Cor.~3.12]{BL-non-ord} 
and $\mathfrak{L}_p^{\bullet,\circ}(f/K)$ is nonzero by hypothesis, the image of ${\rm res}_{\overline{\pp}}$ is not $\Lambda(\Gamma_K)$-torsion (see (\ref{eq:BF-L-signed})), 
and since $H^1(K_{\overline\pp},\mathbf{T})/H^1_\circ(K_{\overline\pp},\mathbf{T})$ has $\Lambda(\Gamma_K)$-rank $1$, we deduce
from $(\ref{eq:PT2})$ that both ${\rm Sel}^{\bullet,\circ}(K,\mathbf{T})$ and $X^{\bullet,\circ}_{K_\infty}(f)$
are $\Lambda(\Gamma_K)$-torsion. Since ${\rm Sel}^{\bullet,\circ}(K,\mathbf{T})$ is then forced to vanish
similarly as before, we arrive at the exact sequence
\begin{equation}\label{eq:PT2-div}
0\longrightarrow\frac{{\rm Sel}^{\bullet,{\rm rel}}(K,\mathbf{T})}{\Lambda(\Gamma_K)\cdot\mathcal{BF}_c^\bullet}
\overset{}\longrightarrow\frac{{\rm Im}({\rm Col}_{\overline\pp}^\circ)}{(\mathfrak{L}_p^{\bullet,\circ}(f/K))}
\longrightarrow X^{\bullet,\circ}_{K_\infty}(f)\longrightarrow X^{\bullet,{\rm str}}_{K_\infty}(f)\longrightarrow 0,
\end{equation}
using $(\ref{eq:BF-L-signed})$ for the second term. Since ${\rm Col}_{\overline\pp}^\circ$ has
pseudo-null cokernel by Proposition~\ref{prop:Image-Col}, the conclusion in part (3) now follows after taking characteristic ideals in (\ref{eq:PT2-div}). 
\end{proof}

\section{Anticyclotomic Iwasawa theory}\label{sec:ac-Iw}

Throughout this section, we let $f=\sum_{n=1}^\infty a_nq^n\in S_2^{\rm new}(\Gamma_0(N))$ be a newform, and $p>2$ be a prime of good non-ordinary reduction for $f$ as before. The imaginary quadratic field $K$ (in which we continue to assume that $p=\pp\overline{\pp}$ split) determines a factorization:
\begin{equation}\label{eq:gen-Heeg}
\begin{split}
\bullet\; & \textrm{$N=N^+N^-$ with $(N^+,N^-)=1$},\\
\bullet\; & \textrm{$\ell\mid N^+$ if and only if $\ell$ is split or ramified in $K$},\\
\bullet\; & \textrm{$\ell\mid N^-$ if and only if $\ell$ is inert in $K$}.\\
\end{split}\nonumber 
\end{equation}

We shall assume that   
\begin{equation}
\textrm{$N^-$ is square-free,} \nonumber
\end{equation}
and say that the pair $(f,K)$ is \emph{indefinite} (resp. \emph{definite}) if $N^-$
is the product of an even (resp. odd) number of primes;
if $\pi$ is the cuspidal automorphic representation of ${\rm GL}_2(\mathbb{A})$ associated with $f$ and $\epsilon(\pi,K,s)$ is the epsilon-factor of the base change of $\pi$ to ${\rm GL}_2(\mathbb{A}_K)$,
we then have $\epsilon(\pi,K,\frac{1}{2})=-1$ (resp. $\epsilon(\pi,K,\frac{1}{2})=+1$).

\subsection{Signed Heegner points}\label{sec:HP}

Assume that $(f,K)$ is an indefinite pair in the above sense, so that $N^-$ is the square-free product of an \emph{even} number of primes. Let $X=X_{N^+,N^-}$ be the Shimura curve (with the cusps added if $N^-=1$)
over $\bQ$ attached to the quaternion algebra $B/\bQ$ of discriminant $N^-$ and an Eichler order $R\subset\cO_B$ of level $N^+$.
%(see e.g. \cite[\S{4.2}]{JSW} and the references therein for details).
We embed $X$ into its Jacobian $J(X)$ by choosing
an auxiliary prime $\ell_0\nmid Np$ and defining
\[
\iota_{\ell_0}:X\longrightarrow J(X)
\]
by $x\mapsto(T_{\ell_0}-\ell_0-1)[x]$, where $T_{\ell_0}$ is the usual Hecke correspondence on $X$,
and $[x]\in{\rm Div}(X)$ is the divisor class of $x\in X$. Attached to $f$ is an isogeny class of abelian variety quotients: 
\begin{equation}\label{eq:param}
\pi_f:J(X)\longrightarrow A_f.
\end{equation}

Let $\kappa_L$ be the residue field of $L$, and denote by $\bar{\rho}_f:G_\bQ\rightarrow{\rm GL}_2(\kappa_L)$ the reduction of $\rho_f$ modulo the maximal ideal of $\cO_L$. Letting $K_f\subseteq\bC$ be the Hecke field of $f$, and $\cO_f\subseteq K_f$ be its ring of integers, we may assume $L=K_{f,\mathfrak{P}}$, where $\mathfrak{P}$ is the prime of $K_f$ above $p$ induced by our fixed isomorphism $\bC\simeq\bC_p$. Then $T_f^*$ corresponds to the $\mathfrak{P}$-adic Tate module of $A_f$, and up to changing $A_f$ within its isogeny class, we may and will assume that $\cO_f\hookrightarrow{\rm End}_\bQ(A_f)$. 
%and that the quotient $(\ref{eq:param})$ is $(\cO_f,\mathfrak{P})$-optimal in the sense of \cite[p.~211]{zhang-Kolyvagin}; the latter can always be arranged when $\bar\rho_f$ is irreducible, as is the case in our setting by \cite{Edi}. 

For each integer $m>0$, let $\cO_m=\bZ+m\cO_K$ be the order of $K$ of conductor
$m$, and denote by $K[m]$ the ring class field of $K$ of that conductor, so
that ${\rm Gal}(K[m]/K)\simeq{\rm Pic}(\cO_m)$ under the Artin reciprocity map.

\begin{prop}\label{prop:Heeg}
There is a collection of Heegner points $h[m]\in A_f(K[m])$, indexed by the
positive integers $m$ prime to $ND_K$, such that
\[
[\cO_{m}^\times:\cO_{m\ell}^\times]\cdot{\rm Norm}^{K[m\ell]}_{K[m]}(h[m\ell])=
\left\{
\begin{array}{ll}
a_\ell\cdot h[m] & \textrm{if $\ell\nmid m$ is inert in $K$,}\\
(a_\ell-\sigma_{\mathfrak{l}}-\sigma_{\overline{\mathfrak{l}}})\cdot h[m]
&\textrm{if $\ell=\mathfrak{l}\overline{\mathfrak{l}}\nmid m$ splits in $K$},\\
a_\ell\cdot h[m]-h[m/\ell]&\textrm{if $\ell\mid m$,}
\end{array}
\right.
\]
where $\sigma_{\mathfrak{l}},\sigma_{\overline{\mathfrak{l}}}\in{\rm Gal}(K[m]/K)$
are the Frobenius elements at $\mathfrak{l}, \overline{\mathfrak{l}}$, respectively.
\end{prop}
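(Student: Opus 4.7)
The plan is to construct the points $h[m]$ from classical CM points on the Shimura curve $X$ and then verify the Euler-system type relations by combining (i) the standard action of Hecke correspondences on CM divisors with (ii) the fact that $T_\ell$ acts on $A_f$ (via $\pi_f$) as multiplication by $a_\ell$, since $A_f$ is the $f$-isotypic quotient of $J(X)$.

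First, for each $m$ coprime to $ND_K$, the hypothesis on the factorization $N = N^+ N^-$ guarantees the existence of an optimal embedding $\iota_m : \mathcal{O}_m \hookrightarrow R$ of the order of conductor $m$ into the Eichler order $R\subset\cO_B$ of level $N^+$ (this is the ``Heegner hypothesis'' for the pair $(f,K)$ relative to the quaternionic setup). The fixed point of $\iota_m(K^\times)$ on the symmetric space uniformizing $X$ yields a CM point $x[m] \in X(\bC)$, and Shimura's reciprocity law shows that $x[m] \in X(K[m])$ with Galois action by ${\rm Gal}(K[m]/K) \simeq \mathrm{Pic}(\mathcal{O}_m)$ given through $\iota_m$ and the Artin map. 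I would then define
\[
h[m] := \pi_f\bigl(\iota_{\ell_0}(x[m])\bigr) = \pi_f\bigl((T_{\ell_0} - \ell_0 - 1)[x[m]]\bigr) \in A_f(K[m]),
\]
after a suitable coherent choice of the $\iota_m$ along the tower (so that the points of conductor $m\ell$ lie above those of conductor $m$ compatibly with the inclusion of orders $\mathcal{O}_{m\ell}\subset\mathcal{O}_m$).

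Next I would establish the three norm relations at the level of divisor classes in $J(X)(K[m])$, working with the image of $x[m\ell]$ under $\mathrm{Norm}^{K[m\ell]}_{K[m]}$. The key input is the standard description of the fiber of the natural map from CM divisors of conductor $m\ell$ to CM divisors of conductor $m$ in terms of the Hecke correspondence $T_\ell$ acting on $x[m]$; the factor $[\cO_m^\times : \cO_{m\ell}^\times]$ accounts for the discrepancy between the degree of the field extension $K[m\ell]/K[m]$ and the geometric degree of $T_\ell$. The three cases reflect the splitting of $\ell$ in the order of conductor $m$:
\begin{itemize}
\item when $\ell$ is inert in $K$ and $\ell\nmid m$, all $\ell+1$ geometric preimages under $T_\ell$ are conjugate, and the norm recovers the full $T_\ell$-orbit, yielding $a_\ell\cdot x[m]$ after applying $\pi_f$;
\item when $\ell$ splits as $\mathfrak{l}\bar{\mathfrak{l}}$ with $\ell\nmid m$, two of the preimages are rational over $K[m]$ and are given by the action of the Frobenius elements $\sigma_{\mathfrak{l}}, \sigma_{\bar{\mathfrak{l}}}$, which must be subtracted off, producing the factor $(a_\ell - \sigma_{\mathfrak{l}} - \sigma_{\bar{\mathfrak{l}}})$;
\item when $\ell\mid m$, one of the preimages under $T_\ell$ is a point of conductor $m/\ell$ (an ``old'' CM point), so its contribution $x[m/\ell]$ appears with a minus sign.
\end{itemize}
The modified embedding $\iota_{\ell_0}$ is used to pass from divisor classes to points in $J(X)$ and ultimately in $A_f$; since $\pi_f$ intertwines Hecke operators with their $f$-eigenvalues, the correspondence $T_\ell$ becomes the scalar $a_\ell$ on $A_f$, completing the derivation of the stated relations.

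The main obstacle, such as it is, is purely bookkeeping: choosing the tower of optimal embeddings $\{\iota_m\}_m$ so that the compatibility between $x[m\ell]$ and $x[m]$ under the inclusion $K[m]\subset K[m\ell]$ holds \emph{on the nose} rather than merely up to conjugation by $\mathrm{Gal}(K[m\ell]/K[m])$. Once that choice has been made, the norm relations reduce to the classical description of Hecke correspondences on CM divisors, which in the Shimura curve setting has been worked out in detail by Bertolini--Darmon and S.~Zhang and which specializes to the classical formulas of Gross on modular curves; I would invoke those references rather than reproduce the computation.
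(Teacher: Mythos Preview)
Your proposal is correct and follows exactly the same approach as the paper: define $h[m]$ as the image under $\pi_f\circ\iota_{\ell_0}$ of canonical CM points on $X$, and deduce the norm relations from the standard action of Hecke correspondences on CM divisors. The paper's own proof is a one-line reference to Howard \cite[Prop.~1.3.2]{howard-PhD-II}, so your sketch simply unpacks what that citation contains; in particular your case-by-case description of the $T_\ell$-orbit and the bookkeeping about coherent towers of optimal embeddings are precisely the details underlying the cited result.
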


\begin{proof}
This is standard, letting $h([m]))$ be the
image under the composition
\[
X\overset{\iota_{\ell_0}}\longrightarrow J(X)\overset{\pi_f}\longrightarrow A_f
\]
of certain canonical
CM points $\tilde{h}([m])\in X(K[m])$, see e.g. \cite[Prop.~1.3.2]{howard-PhD-II}.
%for details.
\end{proof}

From now on, we choose the prime $\ell_0$ above so that $a_{\ell_0}-\ell_0-1$ is a unit in $\cO_L^\times$,
and let
\[
y[m]\in H^1(K[m],T_f^*)
\]
be the image of $h[m]\otimes(a_{\ell_0}(f)-\ell_0-1)^{-1}\in A_f(K[m])\otimes\cO_L$
under the Kummer map. (Note that $y[m]$ is independent of the choice of $\ell_0$ by construction.)

For every integer $m>0$ prime to $NDp$, let $K_n^{\rm ac}[m]$ denote the compositum
$K_n^{\rm ac}K[m]$, and define
\begin{equation}\label{eq:Heeg-not-stab}
\mathfrak{Y}_n[m]:={\rm cor}^{K[mp^{k(n)}]}_{K_n^{\rm ac}[m]}(y[mp^{k(n)}]),
\end{equation}
where $k(n):={\rm min}\{k\;\vert\;K_n^{\rm ac}\subset K[p^k]\}$.

\begin{defn}\label{def:Heeg-stab}
Let $\lambda\in\{\alpha,\beta\}$ be a root of %the Hecke polynomial
$X^2-a_pX+p$, and let $m>0$ be an integer prime to $NDp$.
The \emph{$\lambda$-stabilized Heegner class}
\[
\mathfrak{Z}_n[m]^\lambda\in H^1(K_n^{\rm ac}[m],T_f^*)
\] 
is defined by
\[
\mathfrak{Z}_{n}[m]^\lambda:=
\left\{
\begin{array}{ll}
\mathfrak{Y}_n[m]-\frac{1}{\lambda}\cdot\mathfrak{Y}_{n-1}[m]&\textrm{if $n>0$,}\\
\frac{1}{u_K}\left(1-\frac{1}{\lambda}\sigma_\pp\right)\left(1-\frac{1}{\lambda}\sigma_{\overline\pp}\right)
\cdot\mathfrak{Y}_0[m]&\textrm{if $n=0$,}
\end{array}
\right.
\]
where $u_K:=\vert\cO_K^\times\vert$/2.
%and $\sigma_\pp, \sigma_{\overline{\pp}}\in{\rm Gal}(K_0/K)$ are the (arithmetic) Frobenius
%elements at $\pp$, $\overline{\pp}$, respectively.
\end{defn}

Letting ${\rm cor}^{n}_{n-1}$ denote the corestriction map for the extension $K_{n}^{\rm ac}[m]/K_{n-1}^{\rm ac}[m]$,
an immediate calculation using Proposition~\ref{prop:Heeg} reveals that
\begin{equation}\label{eq:cor-Heeg}
{\rm cor}^{n}_{n-1}(\mathfrak{Z}_{n}[m]^\lambda)=\lambda\cdot\mathfrak{Z}_{n-1}[m]^\lambda
\end{equation}
for all $n>0$. Let $\Gamma^{\rm ac}={\rm Gal}(K_\infty^{\rm ac}/K)$ be the Galois group of the anticyclotomic $\bZ_p$-extension of $K$, and set
\[
\mathbf{T}^{\rm ac}:=T_f^*\hat\otimes\Lambda(\Gamma^{\rm ac})^\iota,
\]
where $\Lambda(\Gamma^{\rm ac})^\iota$ is the module $\Lambda(\Gamma^{\rm ac})$ equipped with the  $G_K$-action given by the inverse of the tautological character $G_K\twoheadrightarrow\Gamma^{\rm ac}\hookrightarrow\Lambda(\Gamma^{\rm ac})^\times$. 

\begin{prop}\label{prop:unb-Heeg}
For each $\lambda\in\{\alpha,\beta\}$ there exists a unique element
\[
\mathfrak{Z}[m]^\lambda\in H^1_{}(K[m],\mathbf{T}^{\rm ac})
\hat\otimes_{}\mathcal{H}_{L,v_\lambda}(\Gamma^{\rm ac}),
\]
where $v_\lambda:={\rm ord}_p(\lambda)$, whose image in $H^1(K_n^{\rm ac}[m],V_f^*)$ is $\lambda^{-n}\cdot\mathfrak{Z}_{n}[m]^\lambda$ for all $n\geqslant 0$.
\end{prop}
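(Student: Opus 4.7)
My plan is to construct $\mathfrak{Z}[m]^\lambda$ as a norm-compatible system of finite-layer classes with controlled $p$-adic growth, then invoke the standard identification of tempered-distribution-valued Iwasawa cohomology with such systems.

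First I would verify that the rescaled classes $\{\lambda^{-n}\mathfrak{Z}_n[m]^\lambda\}_{n\geqslant 0}$ form a norm-compatible system. Dividing the corestriction relation $(\ref{eq:cor-Heeg})$ through by $\lambda^n$ yields
\[
{\rm cor}^n_{n-1}\bigl(\lambda^{-n}\cdot\mathfrak{Z}_{n}[m]^\lambda\bigr)=\lambda^{-(n-1)}\cdot\mathfrak{Z}_{n-1}[m]^\lambda,
\]
so these classes assemble into an element of $\varprojlim_n H^1(K_n^{\rm ac}[m],V_f^*)$, the inverse limit being taken with respect to corestriction. By Shapiro's lemma, this inverse limit is naturally identified with $H^1(K[m],\mathbf{T}^{\rm ac})\otimes_{\cO_L}L$, so the system at least defines a rational cohomology class; the remaining task is to refine this to an element of the finite-order distribution module $H^1(K[m],\mathbf{T}^{\rm ac})\hat\otimes\mathcal{H}_{L,v_\lambda}(\Gamma^{\rm ac})$.

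Next I would control the growth of denominators. Since each $\mathfrak{Z}_n[m]^\lambda$ lies in the integral lattice $H^1(K_n^{\rm ac}[m],T_f^*)$, the denominator of the rescaled class $\lambda^{-n}\mathfrak{Z}_n[m]^\lambda$ has $p$-adic valuation at most $n v_\lambda$. Invoking the standard description (via the Amice transform, as in the work of Perrin-Riou and in \cite[\S{2.2}]{LZ-Coleman}) of $H^1(K[m],\mathbf{T}^{\rm ac})\hat\otimes_{\Lambda(\Gamma^{\rm ac})}\mathcal{H}_{L,h}(\Gamma^{\rm ac})$ as the module of norm-compatible systems whose denominators grow at worst like $p^{hn}$ on the $n$-th layer, the compatible system $\{\lambda^{-n}\mathfrak{Z}_n[m]^\lambda\}$ produces a unique such element with $h=v_\lambda$. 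Uniqueness follows because the specialization maps at the finite layers are jointly injective on the tempered-distribution module after extending scalars to $L$.

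The step I expect to require the most care is the precise invocation of the identification between $\mathcal{H}_{L,h}$-valued Iwasawa cohomology and growth-bounded norm-compatible systems: this is classical but one must check that the normalization matches the conventions implicit in the $\iota$-twist built into $\mathbf{T}^{\rm ac}$ and in the choice of topological generator of $\Gamma^{\rm ac}$. Once this identification is nailed down, the rest of the argument is essentially bookkeeping.
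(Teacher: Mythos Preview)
Your approach is the same as the paper's, which simply notes $v_\lambda<1$ and cites \cite[Prop.~A.2.10]{LLZ} for the identification of $\mathcal{H}_{L,h}$-valued Iwasawa cohomology with norm-compatible systems of controlled growth. Two points are worth flagging.

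First, your intermediate claim that Shapiro's lemma identifies $\varprojlim_n H^1(K_n^{\rm ac}[m],V_f^*)$ with $H^1(K[m],\mathbf{T}^{\rm ac})\otimes_{\cO_L}L$ is false: inverse limits do not commute with $-\otimes_{\cO_L}L$, and indeed a system with unbounded denominators such as $\{\lambda^{-n}\mathfrak{Z}_n[m]^\lambda\}$ does \emph{not} define a class in $H^1(K[m],\mathbf{T}^{\rm ac})\otimes L$. This step is both wrong and unnecessary; you should go directly from the growth bound to the $\mathcal{H}_{L,v_\lambda}$-valued module, as you in fact do in the next paragraph. Second, the paper records that $v_\lambda<1$ (a consequence of weight $2$ and non-ordinarity) before invoking \cite[Prop.~A.2.10]{LLZ}; this is a hypothesis of that proposition, so you should check it rather than appeal to a ``standard description'' valid for arbitrary $h$.
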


\begin{proof}
Since $f$ has weight $2$ and $p$ is non-ordinary for $f$, we have $v_\lambda<1$, and the result follows from $(\ref{eq:cor-Heeg})$
and \cite[Prop.~A.2.10]{LLZ}.
\end{proof}

In order to construct from the pair of unbounded Heegner classes of Proposition~\ref{prop:unb-Heeg} a
pair of ``signed'' Heegner classes with bounded growth over the anticyclotomic tower, we will need
the following variant of a useful lemma from \cite{BL-non-ord}. Let $\gamma_{\rm ac}\in\Gamma^{\rm ac}$ be a topological generator, and denote by $M_{{\rm log},{\rm ac}}\in M_{2\times 2}(\mathcal{H}_L(\Gamma^{\rm ac}))$ the logarithmic matrix $M_{\rm log}$ of Definition~\ref{def:log-matrix} with $\Gamma^{\rm ac}$ in place of $\Gamma$.

\begin{lem}\label{lem:coeffs}
Suppose that for every $\lambda\in\{\alpha,\beta\}$ there exists
$F^\lambda\in\mathcal{H}_{L,v_\lambda}(\Gamma^{\rm ac})$
such that for all finite order character $\phi$ of $\Gamma^{\rm ac}$ of conductor $p^n>1$, we have
\[
\phi(F^\lambda)=\lambda^{-n}\cdot c_{\phi}
\]
for some $c_\phi\in\overline{\bQ}_p$ independent of $\lambda$. Then there exist elements
$F^\sharp, F^\flat\in\Lambda_L(\Gamma^{\rm ac})$ such that
\[
\left(\begin{array}{cc}
F^{\alpha}\\
F^{\beta}
\end{array}\right)
=Q_{\alpha,\beta}^{-1}M_{{\rm log},{\rm ac}}\cdot
\left(\begin{array}{cc}
F^{\sharp}\\
F^{\flat}
\end{array}\right).\nonumber
\]
Moreover, if there exists a sequence of polynomials $P_{n,\lambda}\in\cO_L[(\gamma_{\rm ac}-1)]$ such that
\[
F_\lambda\equiv\lambda^{-n}\cdot P_{\lambda,n}\pmod{\gamma_{\rm ac}^{p^n}-1},
\]
then there exists a nonzero $C\in\cO_L$ depending only on $\lambda$ such that $F^\sharp, F^\flat\in C^{-1}\Lambda(\Gamma^{\rm ac})$.
\end{lem}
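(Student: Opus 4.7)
My strategy is to directly invert the asserted matrix relation and then verify boundedness and integrality from the interpolation hypothesis, following the template of the analogous lemma in \cite{BL-non-ord}.

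First I would define
\[
\begin{pmatrix} F^\sharp \\ F^\flat \end{pmatrix} := M_{{\rm log},{\rm ac}}^{-1} Q_{\alpha,\beta} \begin{pmatrix} F^\alpha \\ F^\beta \end{pmatrix},
\]
a priori as an identity in the total quotient ring of $\mathcal{H}_{L(\alpha,\beta)}(\Gamma^{\rm ac})$; the matrix $M_{{\rm log},{\rm ac}}$ is invertible there because its determinant is a nonzero element of $\mathcal{H}_L(\Gamma^{\rm ac})$. Since the hypothesis $\phi(F^\lambda) = \lambda^{-n} c_\phi$ has $c_\phi$ independent of $\lambda$, the right-hand side is fixed by the $\alpha \leftrightarrow \beta$ swap generating $\mathrm{Gal}(L(\alpha,\beta)/L)$, so $F^\sharp, F^\flat$ are in fact defined over $L$.

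The heart of the matter is to show that $F^\sharp, F^\flat$ actually lie in $\Lambda_L(\Gamma^{\rm ac})$. The matrix $Q_{\alpha,\beta}^{-1} M_{{\rm log},{\rm ac}}$ is precisely the one appearing in the factorization $(\ref{eq:factor-L})$ of the Perrin-Riou regulator maps $\mathcal{L}^\alpha, \mathcal{L}^\beta$ into the signed Coleman maps ${\rm Col}^\sharp, {\rm Col}^\flat$ of $\S\ref{sec:Col}$. From the interpolation formulas of these maps (combined with the fact that $\phi \circ {\rm Col}^\bullet$ takes bounded values on bounded measures), it follows that for any bounded pair $(G^\sharp, G^\flat) \in \Lambda_L(\Gamma^{\rm ac})^{\oplus 2}$, the image pair $(G^\alpha, G^\beta) := Q_{\alpha,\beta}^{-1} M_{{\rm log},{\rm ac}} (G^\sharp, G^\flat)^{\intercal}$ satisfies $\phi(G^\lambda) = \lambda^{-n} d_\phi$ with $d_\phi$ independent of $\lambda$ at any character $\phi$ of conductor $p^n > 1$. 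Since our input $(F^\alpha, F^\beta)$ has exactly this interpolation pattern and distributional order $v_\lambda < 1$ (by the weight $2$, non-ordinary assumption), the potential poles of $M_{{\rm log},{\rm ac}}^{-1}$ at finite-order characters are cancelled precisely by the corresponding zeros of $Q_{\alpha,\beta} (F^\alpha, F^\beta)^{\intercal}$, so the resulting $F^\sharp, F^\flat$ extend to bounded measures in $\Lambda_L(\Gamma^{\rm ac})$. For the moreover statement, the congruence $F^\lambda \equiv \lambda^{-n} P_{\lambda,n} \pmod{\gamma_{\rm ac}^{p^n} - 1}$ with integral $P_{\lambda,n}$ controls the valuations of $F^\lambda$ uniformly modulo each $\gamma_{\rm ac}^{p^n} - 1$; inverting $M_{{\rm log},{\rm ac}}$ at each finite level introduces only a bounded denominator coming from $\det(M_{{\rm log},{\rm ac}})$ mod $\gamma_{\rm ac}^{p^n} - 1$, and this denominator is uniform in $n$, yielding a single $C \in \cO_L$ with $CF^\sharp, CF^\flat \in \Lambda(\Gamma^{\rm ac})$.

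The main technical obstacle is the explicit matching between the interpolation zeros of $Q_{\alpha,\beta}(F^\alpha, F^\beta)^{\intercal}$ and the pole locus of $M_{{\rm log},{\rm ac}}^{-1}$ at conductor-$p^n$ characters, together with the uniform bound on the denominators that appear in the finite-level inversions. These computations are standard in the Pollack-Sprung framework \cite{pollack, sprung-JNT} and its generalization in \cite{LLZ-AJM, BL-non-ord}, and the present variant follows by the same scheme once one notes that any distribution pair with the stated $\alpha^{-n}, \beta^{-n}$ interpolation behaves, for the purpose of the signed decomposition, exactly like a pair of Amice--V\'elu--Vishik $p$-adic $L$-functions.
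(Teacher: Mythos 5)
Your approach matches the paper's, which simply cites \cite[Prop.~2.10]{BL-non-ord} and notes that the same argument goes through with $\Gamma^{\rm ac}$ in place of $\Gamma$. However, the details you sketch contain gaps at precisely the two places where the real work happens.

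First, your boundedness argument is incomplete. You assert that ``the potential poles of $M_{{\rm log},{\rm ac}}^{-1}$ at finite-order characters are cancelled precisely by the corresponding zeros of $Q_{\alpha,\beta}(F^\alpha,F^\beta)^\intercal$, so the resulting $F^\sharp,F^\flat$ extend to bounded measures.'' But cancellation of poles by zeros only shows that $F^\sharp,F^\flat$ lie in $\mathcal{H}_L(\Gamma^{\rm ac})$ (i.e.\ that they are well-defined rigid-analytic functions with no poles), not that they are \emph{bounded}. The essential extra input, which is the whole point of the Pollack--Sprung/B\"uy\"ukboduk--Lei mechanism, is the growth estimate: one shows that $F^\sharp, F^\flat$ have growth order strictly less than $1$ (using $v_\alpha+v_\beta=1$ and that the entries of $M_{{\rm log},{\rm ac}}$ have controlled $\log$-growth), and then uses the vanishing at infinitely many characters of each conductor together with the $o(\log_p)$ estimate to conclude boundedness \`a la Amice--V\'elu. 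Omitting this step leaves the central claim unproved.

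Second, your argument for the ``moreover'' part is not correct as stated. You claim that inverting $M_{{\rm log},{\rm ac}}$ at each finite level mod $\gamma_{\rm ac}^{p^n}-1$ introduces a denominator coming from $\det(M_{{\rm log},{\rm ac}})$ that is uniform in $n$. It is not: the reduction of $\det(M_{{\rm log},{\rm ac}})$ modulo $\gamma_{\rm ac}^{p^n}-1$ has $p$-adic valuation that \emph{grows} with $n$ (it is essentially a product of cyclotomic factors divided by powers of $p$, as one sees from the explicit formula for $A_\varphi$). The actual argument in \cite{BL-non-ord}/\cite{LLZ-AJM} does not invert $M_{{\rm log},{\rm ac}}$ mod $\gamma_{\rm ac}^{p^n}-1$; instead it relates the integral finite-level data $P_{\lambda,n}$ to $F^\sharp,F^\flat$ via the auxiliary finite-level matrices $\mathscr{H}_{n}$ (exactly the ones appearing later in the paper in the proof of Proposition~\ref{prop:mu-def}), for which one \emph{can} control the denominator uniformly. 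Your version of the uniform bound is simply false.

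Finally, a smaller point: your Galois-descent paragraph is off. Under the swap $\alpha\leftrightarrow\beta$, one has $Q_{\beta,\alpha}(F^\beta,F^\alpha)^\intercal=-Q_{\alpha,\beta}(F^\alpha,F^\beta)^\intercal$, so the right-hand side is \emph{anti}-invariant, not fixed. In practice this is harmless because $L$ is enlarged to contain $\alpha,\beta$ (as the paper does explicitly in $\S\ref{sec:ac}$), so there is nothing to descend; but the assertion as written is incorrect.
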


\begin{rem}
	For the sake of clarity, let us remark that the last statement in Lemma~\ref{lem:coeffs} means that if $G^\alpha$ and $G^\beta$ are \emph{any} other pair of elements as in the statement, the corresponding $G^\sharp, G^\bullet$ produced by the lemma will be in $C^{-1}\Lambda(\Gamma^{\rm ac})$ for \emph{the same} nonzero $C\in\cO_L$.
\end{rem}

\begin{proof}
See \cite[Prop.~2.10]{BL-non-ord}, whose proof with $\Gamma^{\rm ac}$ in place of $\Gamma$ is the same.
\end{proof}

%Let $\kappa_L$ be the residue field of $L$, and denote by
%$\bar{\rho}_f:G_\bQ\rightarrow{\rm GL}_2(\kappa_L)$ the reduction of %$\rho_f$ modulo the maximal ideal of $\cO_L$.

\begin{thm}\label{thm:Heeg}
Assume that %the image of $\rho_f:G_{\bQ}\rightarrow{\rm Aut}_{\cO_L}(T_f^*)$ contains a conjugate of ${\rm SL}_2(\bZ_p)$.
$\bar{\rho}_f\vert_{G_K}$ is irreducible.
Then for each positive integer $m$ prime to $NDp$ there exist bounded classes 
\[
\mathfrak{Z}[m]^{\sharp},\;\mathfrak{Z}[m]^{\flat}
\in H^1(K[m],\mathbf{T}^{\rm ac})[1/p]
\] 
such that
\begin{equation}\label{eq:BF}
\left(\begin{array}{cc}
\mathfrak{Z}[m]^{\alpha}\\
\mathfrak{Z}[m]^{\beta}
\end{array}\right)
=Q_{\alpha,\beta}^{-1}M_{{\rm log},{\rm ac}}\cdot
\left(\begin{array}{cc}
\mathfrak{Z}[m]^{\sharp}\\
\mathfrak{Z}[m]^{\flat}
\end{array}\right).\nonumber
\end{equation}
Moreover, there is a nonzero $C\in\cO_L$ such that $\mathfrak{Z}[m]^\sharp, \mathfrak{Z}[m]^\flat\in C^{-1}H^1(K[m],\mathbf{T}^{\rm ac})$ for all $m$.
\end{thm}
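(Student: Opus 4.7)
The plan is to proceed as in Theorem~\ref{thm:BF-factor}: invert the matrix equation formally at the level of $\mathcal{H}_L(\Gamma^{\rm ac})$-valued distributions and verify boundedness by applying a cohomology-valued analogue of Lemma~\ref{lem:coeffs} to the unbounded classes $\mathfrak{Z}[m]^\alpha, \mathfrak{Z}[m]^\beta$ from Proposition~\ref{prop:unb-Heeg}.

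The first task is to verify the two hypotheses of Lemma~\ref{lem:coeffs}. For the interpolation property, I would use that the image of $\mathfrak{Z}[m]^\lambda$ at layer $n$ is $\lambda^{-n}\mathfrak{Z}_n[m]^\lambda$, together with Definition~\ref{def:Heeg-stab}, to compute for any finite order character $\phi$ of $\Gamma^{\rm ac}$ of conductor $p^n>1$:
\[
\phi(\mathfrak{Z}[m]^\lambda) \;=\; \lambda^{-n}\,\phi(\mathfrak{Z}_n[m]^\lambda) \;=\; \lambda^{-n}\,\phi(\mathfrak{Y}_n[m]),
\]
where the second equality uses that $\mathfrak{Y}_{n-1}[m]$ is inflated from layer $n-1$ and hence has trivial $\phi$-isotypic projection. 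Thus $c_\phi := \phi(\mathfrak{Y}_n[m])$ is independent of $\lambda$, as required. For the polynomial congruence property, the natural choice is to take $P_{\lambda,n}$ to be represented by $\mathfrak{Z}_n[m]^\lambda$ under the identification $H^1(K_n^{\rm ac}[m],T_f^*) \simeq H^1(K[m],\mathbf{T}^{\rm ac})/(\gamma_{\rm ac}^{p^n}-1)$; integrality follows because $\mathfrak{Y}_n[m]$ is a Kummer class, the factor $u_K$ divides $6$, and $(a_{\ell_0}-\ell_0-1)^{-1}$ is a $p$-adic unit by the choice of $\ell_0$.

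The key technical step is to upgrade Lemma~\ref{lem:coeffs} from scalar- to cohomology-valued inputs. Its proof (as in \cite[Prop.~2.10]{BL-non-ord}) constructs $F^\sharp, F^\flat$ as explicit $\mathcal{H}_L(\Gamma^{\rm ac})$-linear combinations of $F^\alpha, F^\beta$, so the argument adapts readily to distributions valued in any $\Lambda_L(\Gamma^{\rm ac})$-module. Uniqueness of the resulting decomposition --- needed to ensure the signed classes lie in $H^1(K[m],\mathbf{T}^{\rm ac})[1/p]$ rather than only in a localization --- follows from the $\Lambda(\Gamma^{\rm ac})$-torsion-freeness of $H^1(K[m],\mathbf{T}^{\rm ac})$, which in turn uses the irreducibility of $\bar{\rho}_f|_{G_K}$ (cf.\ \cite[Lem.~2.6]{cas-BF}). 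The constant $C$ produced by Lemma~\ref{lem:coeffs} depends only on $\lambda\in\{\alpha,\beta\}$ and not on the inputs, and is therefore automatically uniform in $m$.

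The main obstacle I anticipate is carrying out this cohomology-valued upgrade of Lemma~\ref{lem:coeffs} cleanly. The naive approach of choosing a generating set of $H^1(K[m],\mathbf{T}^{\rm ac})$ over $\Lambda(\Gamma^{\rm ac})$ and applying the scalar lemma coordinatewise works over a suitable localization, but preserving integrality uniformly in $m$ requires that the denominators entering the polynomials $P_{\lambda,n}$ be controlled purely in terms of $p$ and the slope $v_\lambda$, with no arithmetic contribution from the auxiliary conductor. This is where I would exercise most caution, working throughout with integral lattice models of the Heegner classes so as to avoid introducing $m$-dependent denominators through the compactness/limit argument implicit in Proposition~\ref{prop:unb-Heeg}.
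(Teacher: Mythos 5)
Your overall strategy matches the paper's: apply Lemma~\ref{lem:coeffs} to the unbounded anticyclotomic Heegner classes $\mathfrak{Z}[m]^\lambda$ by expanding them in coordinates over $\Lambda(\Gamma^{\rm ac})$, verify the interpolation hypothesis (your computation with $\phi(\mathfrak{Y}_{n-1}[m])$ vanishing on the $\phi$-isotypic component is the right one), and invoke the polynomial-congruence refinement to get the uniform constant~$C$. Both your interpolation check and your observation that the constant produced by Lemma~\ref{lem:coeffs} is independent of the inputs (hence of $m$) are correct and used in the paper.

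The one genuine gap is that you identify the coordinatewise application as a problematic step, worrying that a ``generating set'' only gives a decomposition ``over a suitable localization'' and that uniqueness must be recovered from $\Lambda(\Gamma^{\rm ac})$-torsion-freeness (cf.\ \cite[Lem.~2.6]{cas-BF}). Torsion-freeness alone is indeed too weak, and this is why you are led to contemplate a genuinely module-valued upgrade of Lemma~\ref{lem:coeffs} with attendant integrality worries. The paper avoids this entirely: under the same hypothesis that $\bar\rho_f\vert_{G_K}$ is irreducible, \cite[Lem.~4.3]{cas-wan-ss} gives that $H^1(K[m],\mathbf{T}^{\rm ac})$ is actually \emph{free} over $\Lambda(\Gamma^{\rm ac})$, hence $H^1(K[m],\mathbf{T}^{\rm ac})\hat\otimes\mathcal{H}_{L,v_\lambda}(\Gamma^{\rm ac})$ is free over $\mathcal{H}_{L,v_\lambda}(\Gamma^{\rm ac})$ with the \emph{same} basis. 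Writing $\mathfrak{Z}[m]^\lambda=\sum_i F_{i,m}^\lambda\cdot\mathfrak{Z}[m]_i$ in this free basis gives canonical scalar coordinates, so the scalar Lemma~\ref{lem:coeffs} applies verbatim to each $F_{i,m}^\lambda$, with no localization step and no need to reprove the lemma for module-valued distributions. The integrality concern you flag about denominators coming from the auxiliary conductor also disappears, since the constant $C$ in Lemma~\ref{lem:coeffs} is universal (depending only on $\alpha,\beta$). In short: your proposal is salvageable, but it works because of freeness, not merely torsion-freeness, and upgrading from the latter to the former is precisely the ingredient you need to cite to close the argument.
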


\begin{proof}
This follows from a straightforward adjustment of the argument in \cite[Thm.~3.7]{BL-non-ord}.
Indeed, by \cite[Lem.~4.3]{cas-wan-ss} the assumption on $\bar{\rho}_f$ implies that
$H^1(K[m],\mathbf{T}^{\rm ac})$ is free over $\Lambda(\Gamma^{\rm ac})$,
and so $H^1_{}(K[m],\mathbf{T}^{\rm ac})\hat\otimes_{}\mathcal{H}_{L,v_\lambda}(\Gamma^{\rm ac})$ 
is free over $\mathcal{H}_{L,v_\lambda}(\Gamma^{\rm ac})$.
Writing
\begin{equation}\label{eq:expand}
\mathfrak{Z}[m]^\lambda=\sum_iF_{i,m}^\lambda\cdot\mathfrak{Z}[m]_i\nonumber
\end{equation}
with $F_{i,m}^\lambda\in\mathcal{H}_{L,v_\lambda}(\Gamma^{\rm ac})$ and
$\mathfrak{Z}[m]_i\in H^1_{}(K,\mathbf{T}^{\rm ac})$, we see from
Proposition~\ref{prop:unb-Heeg} and the definition of $\mathfrak{Z}_n[m]^\lambda$
that if $\phi:\Gamma^{\rm ac}\rightarrow\mu_{p^\infty}$ is any finite order character of
$\Gamma^{\rm ac}$ of conductor $p^n>1$, then
\[
\phi(F_{i,m}^\lambda)=\lambda^{-n}\cdot c_\phi^\lambda
\]
for some $c_\phi^\lambda\in\overline{\bQ}_p$ with $c_\phi^\alpha=c_\phi^\beta$.
By Lemma~\ref{lem:coeffs} applied to the coefficients $F_{i,m}^\lambda$, the existence of classes $\mathfrak{Z}[m]^\bullet $ decomposing the unbounded $\mathfrak{Z}[m]^\lambda$ as indicated follows. Moreover, since  $\mathfrak{Z}_n[m]^\lambda$ clearly lands in $\lambda^{-1}H^1(K_n^{\rm ac},T_f^*)$, the existence of  polynomials $P_{n,\lambda}$ as in Lemma~\ref{lem:coeffs} for each $F_{i,m}^\lambda$ follows again from Proposition~\ref{prop:unb-Heeg}, and therefore the last property of the classes $\mathfrak{Z}[m]^\bullet$ follows from the last claim in Lemma~\ref{lem:coeffs}.
\end{proof}

From now on, we fix a nonzero $c\in\cO_L$ as in Theorem~\ref{thm:Heeg}, and set
\[
\mathfrak{Z}_c[m]^\bullet:=c\cdot\mathfrak{Z}[m]^\bullet\in H^1(K[m],\Tc)
\]
for each $m$ prime to $NDp$ and $\bullet\in\{\sharp,\flat\}$.

\subsection{Explicit reciprocity law}\label{sec:HP-ERL}

We keep the hypotheses from the preceding section, let $\unr$ denote the completion of the ring of integers of
the maximal unramified extension of $\bQ_p$.

\begin{thm}\label{thm:bdp}
%Assume hypothesis $({\rm Heeg})$ and that $p=\pp\overline{\pp}$ splits in $K$.
There exists an element $\mathscr{L}^{\tt BDP}_{\pp}(f/K)\in\Lambda_{\unr}(\Gamma^{\ac})$ such that if
$\psi:\Gamma^\ac\rightarrow\bC_p^\times$ has trivial conductor and
infinity type $(-m,m)$ with $m>0$, then
\begin{align*}
\biggl(\frac{\psi(\mathscr{L}^{\tt BDP}_{\pp}(f/K))}{\Omega_p^{2m}}\biggr)^2&=
\Gamma(m)\Gamma(m+1)
\cdot(1-p^{-1}\psi(\pp)\alpha)^2(1-p^{-1}\psi(\pp)\beta)^2
%\cdot\psi^{-1}(\mathfrak{N}^+)\cdot c\varepsilon(f)\cdot w_K^2D_K\\
\cdot\frac{L(f/K,\psi,1)}{\pi^{2m+1}\cdot\Omega_K^{4m}},
\end{align*}
where %$w_K:=\vert\cO_K^\times\vert$, and
$(\Omega_p,\Omega_K)\in\unr^\times\times\bC^\times$ are CM periods attached to $K$.
Moreover, $\mathscr{L}_{\pp}^{\tt BDP}(f/K)$ is nonzero, and if
$\bar{\rho}_f\vert_{G_K}$ is absolutely irreducible, the $\mu$-invariant of $\mathscr{L}_{\pp}^{\tt BDP}(f/K)$ vanishes.
\end{thm}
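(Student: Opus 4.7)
The plan is to construct $\mathscr{L}_\pp^{\tt BDP}(f/K)$ by directly adapting the Bertolini--Darmon--Prasanna construction from \cite{bdp1}. The key observation is that although $f$ is non-ordinary at $p$, our prime $p$ splits in $K$ as $\pp\overline{\pp}$, so the CM abelian varieties associated with $K$ have good ordinary reduction at the place above $p$ induced by our chosen embedding. This means Katz's theory of $p$-adic modular forms on the Igusa tower works without any non-ordinarity obstruction on the CM side. Concretely, one fixes a CM elliptic curve (or abelian variety) over $\cO_{K[1]}$ with CM by $\cO_K$ and good reduction at $\pp$, together with a trivialization of its formal group yielding the periods $\Omega_p \in \unr^\times$ and $\Omega_K \in \bC^\times$. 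The $p$-adic $L$-function is then defined, following the standard recipe, by averaging over $\Gamma^{\ac}$ the values at CM points of $p$-adic avatars of $f$ obtained by iterating the $p$-adic Maass--Shimura operator $\theta = t\frac{d}{dt}$ (in Serre--Tate coordinates) applied to a suitable test vector for $f$; the normalizations in \cite{bdp1} give an element of $\Lambda_{\unr}(\Gamma^{\ac})$.

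For the interpolation formula, I would invoke the BDP calculation of the Petersson product: evaluating the constructed measure at an anticyclotomic Hecke character $\psi$ of infinity type $(-m, m)$ with $m > 0$ yields, after matching $p$-adic and complex avatars via the CM periods, the Waldspurger-type formula for $L(f/K, \psi, 1)$. In the non-ordinary setting of this paper, the modification at $\pp$ is computed exactly as in \cite[Thm.~5.13]{bdp1}, except that the Euler factor $(1 - a_p p^{-1} + \psi(\pp)p^{-1})$ in the ordinary case is replaced, by a direct unwinding of the action of $U_p^2 - a_p U_p + p$ on the relevant $p$-adic modular form, by the product $(1 - p^{-1}\psi(\pp)\alpha)(1 - p^{-1}\psi(\pp)\beta)$; squaring (coming from working with $(\mathscr{L}_\pp^{\tt BDP})^2$ and the shape of the Waldspurger formula on the quaternionic side) yields the stated formula.

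For the nonvanishing of $\mathscr{L}_\pp^{\tt BDP}(f/K)$, I would appeal to Rohrlich's nonvanishing of central values $L(f/K,\psi,1)$ for almost all $\psi$ of infinity type $(-m,m)$, which together with the interpolation formula above shows $\mathscr{L}_\pp^{\tt BDP}(f/K) \ne 0$. The statement on the vanishing of the $\mu$-invariant, conditional on $\bar\rho_f|_{G_K}$ being absolutely irreducible, is the main obstacle: the standard route is Hsieh's theorem on nonvanishing mod $p$ of the toric period integrals (building on Vatsal's methods via equidistribution of CM points on definite quaternionic Shimura sets), and one must verify that the local hypotheses in Hsieh's result are satisfied in our setting. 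The irreducibility assumption is essential here to exclude congruences that would otherwise force $\mu > 0$.
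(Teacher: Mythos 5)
Your construction and interpolation arguments are essentially the paper's, which defers to the explicit construction in Castella--Hsieh \cite[\S{3.3}]{cas-hsieh1} (itself built on \cite{bdp1} and Brooks's extension to Shimura curves). The key observation that the CM side is ordinary at $\pp$ even though $f$ is non-ordinary at $p$ is exactly right, and your description of the Euler factor $(1-p^{-1}\psi(\pp)\alpha)(1-p^{-1}\psi(\pp)\beta)$ coming from the factorization of the Hecke polynomial is correct. The $\mu$-invariant argument via Hsieh's nonvanishing-mod-$p$ theorem is also what the paper does, citing \cite[Thm.~B]{hsieh} and, for $N^-\neq 1$, \cite[Thm.~B]{burungale-II}.

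There is, however, a genuine gap in your nonvanishing argument. You attribute to Rohrlich a nonvanishing statement for anticyclotomic Hecke characters $\psi$ of infinity type $(-m,m)$ with $m>0$. Rohrlich's anticyclotomic nonvanishing theorems apply to \emph{finite-order} ring class characters, i.e.\ $m=0$. Those characters lie \emph{outside} the interpolation range of $\mathscr{L}^{\tt BDP}_{\pp}(f/K)$; moreover, in the indefinite case the global root number for finite-order $\psi$ is $-1$, so the central values vanish there regardless. Nonvanishing of $L(f/K,\psi,1)$ in the range of infinity type $(-m,m)$, $m>0$, is a genuinely different and harder result: the paper obtains it from Hsieh's \cite[Thm.~C]{hsieh}, via \cite[Thm.~3.7]{cas-hsieh1}, which is tailored precisely to these higher-weight anticyclotomic twists (and in fact gives nonvanishing modulo $p$). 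You should replace the appeal to Rohrlich by the appeal to Hsieh's theorem; this also makes clear why the nonvanishing assertion is unconditional while the $\mu$-invariant vanishing requires absolute irreducibility of $\bar{\rho}_f\vert_{G_K}$.
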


\begin{proof}
The construction of $\mathscr{L}_{\pp}^{\tt BDP}(f/K)$ 
%with the stated interpolation property
follows from the results in \cite[\S{3.3}]{cas-hsieh1} (see the proof of Theorem~\ref{thm:ERL-bdp} below
for the precise relation between $\mathscr{L}_{\pp}^{\tt BDP}(f/K)$ and the $p$-adic $L$-function constructed in \emph{loc.cit.}).
The nontriviality of $\mathscr{L}_{\pp}^{\tt BDP}(f/K)$ is deduced in \cite[Thm.~3.7]{cas-hsieh1}
as a consequence of \cite[Thm.~C]{hsieh}, and the vanishing of its $\mu$-invariant
similarly follows from \cite[Thm.~B]{hsieh} (and from \cite[Thm.~B]{burungale-II}, in the cases
where $N^-\neq 1$).
\end{proof}

As in the proof of \cite[Thm.~5.1]{cas-hsieh1}, one may deduce from
the two-variable regulator maps $\mathcal{L}^\lambda_\pp$ in Definition~\ref{def:maps-q} the construction of linear maps
\begin{equation}\label{eq:PR-K-ac}
\mathcal{L}_{\pp,{\rm ac}}^\lambda:H^1(K_\pp,\mathbf{T}^{\rm ac})
\longrightarrow\mathcal{H}_{\hat{F}_\infty}(\Gamma^{\rm ac})
\end{equation}
such that for any class $\mathbf{z}\in H^1(K_\pp,\mathbf{T}^{\rm ac})$
and $\chi$ a finite order character of $\Gamma^{\rm ac}$, we have
\begin{equation}\label{eq:interp-exp}
\chi(\mathcal{L}_{\pp,{\rm ac}}^\lambda(\mathbf{z}))\doteq
\langle{\rm exp}^*(\mathbf{z}^{\chi^{-1}}),\eta_{f^\lambda}\rangle,
\end{equation}
where ${\rm exp}^*$ is the Bloch--Kato dual exponential map for the twisted representation
$V_f^*(\chi^{-1})$, $\mathbf{z}^{\chi^{-1}}$ is the natural image
of $\mathbf{z}$ in $H^1(K_\pp,V_f^*(\chi^{-1}))$, and
$\eta_{f^\lambda}$ is the specialization of the class $\eta_{\mathcal{F}^\lambda}$ appeared right before Proposition~\ref{prop:comp-L}. 
The same construction applied to the two-variable Coleman maps ${\rm Col}^\bullet_\pp$ in Definition~\ref{def:maps-q} yields $\Lambda(\Gamma^{\rm ac})$-linear maps 
\[
{\rm Col}_{\pp,{\rm ac}}^\bullet:H^1(K_\pp,\mathbf{T}^{\rm ac})
\rightarrow\Lambda_{\hat{\cO}_{F_\infty}}(\Gamma^{\rm ac})
\] 
satisfying
\begin{equation}\label{eq:Col-ac}
\left(\begin{array}{cc}
\mathcal{L}^\alpha_{\pp,{\rm ac}}\\
\mathcal{L}^\beta_{\pp,{\rm ac}}
\end{array}\right)
=Q_{\alpha,\beta}^{-1}M_{{\rm log},{\rm ac}}\cdot
\biggl(\begin{array}{cc}
{\rm Col}^\sharp_{\pp,{\rm ac}}\\
{\rm Col}^\flat_{\pp,{\rm ac}}
\end{array}\biggr)
\end{equation}
as an immediate consequence of $(\ref{eq:factor-L-q})$.

\begin{lem}\label{lem:Heeg-Col}
For each $\bullet\in\{\sharp,\flat\}$ and $m$ a positive integer prime to $NDp$, we have
\[
{\rm Col}^\bullet_{\pp,{\rm ac}}({\rm res}_\pp(\mathfrak{Z}_c[m]^\bullet))=0.
\]
\end{lem}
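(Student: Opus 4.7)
The plan is to exploit the interpolation formula (\ref{eq:interp-exp}) for the Perrin--Riou regulator maps $\mathcal{L}^\lambda_{\pp,{\rm ac}}$, combined with the fact that the Heegner Iwasawa classes $\mathfrak{Z}[m]^\lambda$ are built out of Kummer images of Heegner points on $A_f$ (hence crystalline at $\pp$), and then translate the resulting vanishings from the $(\alpha,\beta)$-picture to the $(\sharp,\flat)$-picture via the matrix identity $(\ref{eq:Col-ac})$.

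First I would establish that $\mathcal{L}^\mu_{\pp,{\rm ac}}({\rm res}_\pp(\mathfrak{Z}[m]^\lambda))=0$ for every $\mu,\lambda\in\{\alpha,\beta\}$. At any finite order character $\chi$ of $\Gamma^{\rm ac}$, the interpolation formula $(\ref{eq:interp-exp})$ gives
\[
\chi(\mathcal{L}^\mu_{\pp,{\rm ac}}({\rm res}_\pp(\mathfrak{Z}[m]^\lambda)))\doteq\langle{\rm exp}^*({\rm res}_\pp(\mathfrak{Z}[m]^\lambda)^{\chi^{-1}}),\eta_{f^\mu}\rangle.
\]
By Definition~\ref{def:Heeg-stab} and Proposition~\ref{prop:unb-Heeg}, the level-$n$ projection of $\mathfrak{Z}[m]^\lambda$ is, up to a power of $\lambda$, an $\cO_L$-linear combination of Kummer images of Heegner points on $A_f$ defined over $K_n^{\rm ac}[m]$. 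Since $A_f$ has good reduction at $p$, these restrict at $\pp$ into the Bloch--Kato subspace $H^1_f(K_{n,\pp}^{\rm ac}[m],V_f^*)$ at every finite level, and via Shapiro's lemma the $\chi^{-1}$-specialization of the Iwasawa class is a $\chi$-isotypical twisted sum of such Kummer images, hence lies in $H^1_f(K[m]_\pp,V_f^*(\chi^{-1}))\subseteq{\rm ker}({\rm exp}^*)$. The pairing with $\eta_{f^\mu}$ therefore vanishes for every $\chi$, and density of finite order characters in $\mathcal{H}_{\hat{F}_\infty}(\Gamma^{\rm ac})$ yields $\mathcal{L}^\mu_{\pp,{\rm ac}}({\rm res}_\pp(\mathfrak{Z}[m]^\lambda))=0$.

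Next I would pass from the regulator maps to the signed Coleman maps. Setting $W:=Q_{\alpha,\beta}^{-1}M_{{\rm log},{\rm ac}}$ and forming the $2\times 2$ matrices
\[
K_{ij}:={\rm Col}^{\bullet_i}_{\pp,{\rm ac}}({\rm res}_\pp(\mathfrak{Z}[m]^{\bullet_j})),\quad M_{ij}:=\mathcal{L}^{\lambda_i}_{\pp,{\rm ac}}({\rm res}_\pp(\mathfrak{Z}[m]^{\lambda_j})),
\]
a direct expansion using Theorem~\ref{thm:Heeg} (factorization of $\mathfrak{Z}[m]^\lambda$ via $W$) and $(\ref{eq:Col-ac})$ (factorization of $\mathcal{L}^\lambda$ via $W$) yields the identity $M=WKW^\intercal$. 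The previous step gives $M=0$. Since $\det(W)=\det(Q_{\alpha,\beta})^{-1}\det(M_{{\rm log},{\rm ac}})$ is a non-zero element of $\mathcal{H}_L(\Gamma^{\rm ac})$ (using $\alpha\neq\beta$ by Coleman--Edixhoven and the non-vanishing determinant of the logarithmic matrix), $W$ is invertible over the fraction field, forcing $K=0$. In particular, its diagonal entries give ${\rm Col}^\bullet_{\pp,{\rm ac}}({\rm res}_\pp(\mathfrak{Z}[m]^\bullet))=0$ for $\bullet\in\{\sharp,\flat\}$, and multiplying through by $c$ concludes the proof.

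The main obstacle is the first step: one must carefully identify the $\chi^{-1}$-specialization of the Iwasawa class with the asserted twisted sum of finite-level Kummer images, and verify that this twisted sum genuinely lies in $H^1_f$ for the potentially crystalline representation $V_f^*(\chi^{-1})$. Once the crystallinity of Heegner classes is transferred through this specialization, the vanishing of ${\rm exp}^*$ (and hence of $\mathcal{L}^\mu_{\pp,{\rm ac}}$ paired against any $\eta_{f^\mu}$) follows for every character $\chi$, and the subsequent matrix manipulation yields the desired conclusion at once for both signed components $\bullet\in\{\sharp,\flat\}$.
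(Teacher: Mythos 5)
Your proof is correct and follows essentially the same strategy as the paper: first show that $\mathcal{L}^\mu_{\pp,\ac}({\rm res}_\pp(\mathfrak{Z}[m]^\lambda))=0$ for all $\lambda,\mu\in\{\alpha,\beta\}$ via the interpolation formula $(\ref{eq:interp-exp})$ and the fact that Kummer images of Heegner points lie in the Bloch--Kato subspace (hence in $\ker(\exp^*)$) at each finite layer; then transfer this vanishing to the signed Coleman maps using the two matrix factorizations (Theorem~\ref{thm:Heeg} for the classes and $(\ref{eq:Col-ac})$ for the maps) and the invertibility of $Q_{\alpha,\beta}^{-1}M_{\log,\ac}$ over the fraction field. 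The only difference is in the bookkeeping of the second step: the paper constructs the auxiliary class $\hat{\mathfrak{Z}}[m]^\sharp=d\,\mathfrak{Z}[m]^\alpha-b\,\mathfrak{Z}[m]^\beta=\det(W)\,\mathfrak{Z}[m]^\sharp$ from the cofactors of $W:=Q_{\alpha,\beta}^{-1}M_{\log,\ac}$ and then inverts $W$ once more in the Coleman direction, whereas you package the double change of basis into the single $2\times 2$ identity $M=WKW^\intercal$ and invert $W$ on both sides at once. Your version is slightly more economical (it gives the vanishing of all four entries $K_{kl}$ simultaneously, rather than handling $\sharp$ and $\flat$ separately), but both routes rely on exactly the same inputs — Coleman--Edixhoven for $\alpha\ne\beta$, the nonvanishing of $\det M_{\log,\ac}$, and the $\mathcal{H}_L(\Gamma^{\ac})$-linear extension of the local maps — so this is a presentational, not a mathematical, difference.
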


\begin{proof}
The following argument is inspired by the proof of \cite[Cor.~3.15]{BL-non-ord}. 
We only prove the case $\bullet=\sharp$, since the proof for the other case is essentially the same.
By their construction as Kummer images of Heegner points, 
if $\mathfrak{P}$ is any prime of $K_n^{\rm ac}[m]$ above $\pp$,
the $\lambda$-stabilized Heegner classes $\mathfrak{Z}_n[m]^\lambda$ of Definition~\ref{def:Heeg-stab} have ${\rm res}_{\mathfrak{P}}(\mathfrak{Z}_n[m]^\lambda)$ in the kernel of the Bloch--Kato
dual exponential map. By $(\ref{eq:interp-exp})$ and Proposition~\ref{prop:unb-Heeg},
it follows that
\begin{equation}\label{eq:L-0}
\mathcal{L}_{\pp,{\rm ac}}^\mu({\rm res}_\pp(\mathfrak{Z}[m]^\lambda))=0
\end{equation}
for all $\lambda,\mu\in\{\alpha,\beta\}$. Write $Q_{\alpha,\beta}^{-1}M_{{\rm log},{\rm ac}}
=\left(\begin{smallmatrix}a&b\\c&d\end{smallmatrix}\right)$, and set
\begin{equation}
\begin{split}\label{eq:Z-hat}
\hat{\mathfrak{Z}}[m]^\sharp:=&\;d\cdot\mathfrak{Z}[m]^\alpha-b\cdot\mathfrak{Z}[m]^\beta\\
=&\;{\rm det}(Q_{\alpha,\beta}^{-1}M_{{\rm log},{\rm ac}})\cdot\mathfrak{Z}[m]^\sharp
\in H^1(K[m],\mathbf{T}^{\rm ac})\hat\otimes\mathcal{H}_{L}(\Gamma^{\rm ac}),
\end{split}
\end{equation}
where the second equality follows from the decomposition in Theorem~\ref{thm:Heeg}.
Extending $(\ref{eq:PR-K-ac})$ by $\mathcal{H}_{L}(\Gamma^{\rm ac})$-linearity, we deduce
from $(\ref{eq:L-0})$ that
\[
\mathcal{L}_{\pp,{\rm ac}}^{\mu}({\rm res}_\pp(\hat{\mathfrak{Z}}[m]^\sharp))=0
\]
for all $\mu\in\{\alpha,\beta\}$, and similarly extending the maps ${\rm Col}_{\pp,{\rm ac}}^\bullet$
we conclude that
\begin{equation}\label{eq:Col-hat}
{\rm Col}_{\pp,{\rm ac}}^\sharp({\rm res}_\pp(\hat{\mathfrak{Z}}[m]^\sharp))=0
\end{equation}
by the decomposition $(\ref{eq:Col-ac})$. Combining $(\ref{eq:Z-hat})$ and $(\ref{eq:Col-hat})$,
the result follows.
\end{proof}

\begin{prop}\label{prop:L-ac}
For each $\lambda\in\{\alpha,\beta\}$, there exists an injective linear map
\[
\mathscr{L}_{\pp,\ac}^\lambda:H^1_{}(K_{\pp},\mathbf{T}^{\rm ac}\hat{\otimes}\mathcal{H}_{L}(\Gamma^{\rm ac}))
\longrightarrow\mathcal{H}_{\hat{F}_\infty}(\Gamma^\ac)
\]
with finite cokernel such that for every $\mathbf{z}\in H^1_{}(K_{\pp},\mathbf{T}^{\rm ac}\hat{\otimes}\mathcal{H}_{L}(\Gamma^{\ac}))$
and $\phi:\Gamma^\ac\rightarrow\overline{\bQ}_p^\times$ of infinity type $(-\ell,\ell)$
with $\ell>0$, we have
\[
\phi(\mathscr{L}_{\pp,\ac}^\lambda(\mathbf{z}))=\frac{\mathfrak{g}(\phi^{-1})\phi^{-1}(p^n)}{\lambda^np^n}
\cdot\frac{(-1)^{\ell-1}}{(\ell-1)!}\cdot\langle{\rm log}_{V_f}(\mathbf{z}^{\phi^{-1}}),\omega_{f^\lambda}\otimes t^\ell\rangle,
\]
where $\mathfrak{g}(\phi^{-1})$ is a Gauss sum, ${\rm log}_{V_f}$
is the Bloch--Kato logarithm, and $t\in\mathbf{B}_{\rm dR}$ is Fontaine's
%$p$-adic analogue of 
$2\pi i$.
\end{prop}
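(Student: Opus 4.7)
The plan is to construct $\mathscr{L}_{\pp,\ac}^\lambda$ by the same mechanism that produced $\mathcal{L}_{\pp,\ac}^\lambda$ from the two-variable Perrin-Riou regulator map $\mathcal{L}_\pp^\lambda$ of Definition~\ref{def:maps-q}, except that the pairing with the cohomology class $\eta_{f^\lambda}$ is replaced by a pairing with the differential $\omega_{f^\lambda}$. Concretely, I would begin with the Loeffler--Zerbes regulator $\mathcal{L}_{T_f^*}\colon H^1_{\rm Iw}(F_\infty(\mu_{p^\infty}),T_f^*)\to\mathcal{H}_{\hat F_\infty}(G)\otimes\mathbb{D}_{\rm cris}(T_f^*)$, project onto the $\lambda^{-1}$-eigenspace of $\varphi$, pair against $\omega_{f^\lambda}$ in $\mathbb{D}_{\rm cris}(V_f^*)$ instead of $\eta_{f^\lambda}$, apply Shapiro's lemma and pass to the anticyclotomic quotient exactly as in Definition~\ref{def:maps-q}, and finally extend $\mathcal{H}_L(\Gamma^{\ac})$-linearly. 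This parallels the construction of a second regulator map in the proof of \cite[Thm.~5.1]{cas-hsieh1}.

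The interpolation property should then be read off from the explicit Perrin-Riou formula for $\mathcal{L}_{T_f^*}$ given by \cite[Thm.~4.15]{LZ2}. That formula has two branches according to the sign of the Hodge--Tate weight of the character at which one evaluates: for characters of non-positive weight one recovers the Bloch--Kato dual exponential (which is what yields the interpolation formula $(\ref{eq:interp-exp})$ for $\mathcal{L}_{\pp,\ac}^\lambda$), whereas for characters of positive weight the formula involves the Bloch--Kato logarithm twisted by the factorial denominator $(-1)^{\ell-1}/(\ell-1)!$ and the Gauss sum factor $\mathfrak{g}(\phi^{-1})\phi^{-1}(p^n)$, together with the Euler-type factor at $p$ accounting for $\lambda^{-n}p^{-n}$. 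Since a Hecke character of $\Gamma^{\ac}$ of infinity type $(-\ell,\ell)$ with $\ell>0$ has Hodge--Tate weight $\ell$ along the $\pp$-component, it falls squarely into the logarithm branch, producing the desired formula after pairing with $\omega_{f^\lambda}\otimes t^\ell$.

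For injectivity and finite cokernel, I would appeal to the injectivity of $\mathcal{L}_{T_f^*}$ established in \cite[Prop.~4.11]{LZ2}, which is preserved under Shapiro's lemma and the anticyclotomic projection once one restricts to the trivial isotypical component for $\Delta\subset\Gamma$ (as in Definition~\ref{def:maps-q}); the finiteness of the cokernel is inherited by a direct argument analogous to the one underlying Proposition~\ref{prop:Image-Col}, ultimately reducing to the fact that the two-variable regulator has pseudo-null cokernel and that anticyclotomic specialization introduces at worst a bounded loss.

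The main obstacle will be the precise book-keeping of the normalization factors in the interpolation formula. One has to match the choice of $\omega_{f^\lambda}$ (and the factor of $t^\ell$ reflecting the Tate twist) against the $\varphi$-eigenbasis $\{v_1,v_2\}$ used in $\S\ref{sec:Col}$, and to confirm that the Gauss sum and factorial factors produced by the abstract Perrin-Riou formula of \cite[Thm.~4.15]{LZ2} survive the anticyclotomic projection in exactly the form stated, rather than accumulating spurious powers of $p$ or signs. This is essentially the same calculation as in \cite[Thm.~5.1]{cas-hsieh1}, carried out in our non-ordinary setting.
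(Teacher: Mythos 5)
Your high-level strategy --- mimic the construction of the proof of \cite[Thm.~5.1]{cas-hsieh1}, trading the pairing against $\eta_{f^\lambda}$ for a pairing against $\omega_{f^\lambda}$ and then descending to the anticyclotomic line --- is the right one, and you correctly identify \cite[Thm.~5.1]{cas-hsieh1} as the template. However, the technical realization you propose has a genuine gap. The paper's proof explicitly replaces the appeal to \cite[Thm.~4.7]{LZ2} (which is what makes \cite[Thm.~5.1]{cas-hsieh1} work in the \emph{ordinary} case) by a construction based on \cite[(6.2.1)]{LZ-Coleman}, the Perrin-Riou big logarithm for rank-one $(\varphi,\Gamma)$-modules over the Robba ring. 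You instead propose to start from the crystalline regulator $\mathcal{L}_{T_f^*}$ of \cite{LZ2}, which takes values in $\mathcal{H}_{\hat F_\infty}(G)\otimes\mathbb{D}_{\rm cris}(T_f^*)$, and then ``project onto the $\lambda^{-1}$-eigenspace of $\varphi$ and pair against $\omega_{f^\lambda}$.'' This cannot produce the interpolation factor $\lambda^{-n}p^{-n}$ in the statement: that factor is the $n$-th power of the Frobenius eigenvalue on a \emph{rank-one} subquotient of $\mathbf{D}^\dagger_{\rm rig}(V_f^*)$, namely the $\lambda$-triangulation. In the ordinary setting that rank-one piece is an honest $G_{\bQ_p}$-subrepresentation and the crystalline machinery of \cite{LZ2} applies to it directly; in the non-ordinary setting the triangulation exists only at the level of $(\varphi,\Gamma)$-modules, which is exactly what \cite[(6.2.1)]{LZ-Coleman} supplies and what the paper's $\S\ref{sec:Log-2}$ builds the two-variable signed logarithm maps from. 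Your proposed step is also internally inconsistent: projecting $\mathcal{L}_{T_f^*}(\mathbf{z})$ onto the $\lambda^{-1}$-eigenspace of $\varphi$ is dual to pairing against (a multiple of) $\eta_{f^\lambda}$, so composing that projection with a subsequent pairing against $\omega_{f^\lambda}$ recovers a scalar multiple of $\mathcal{L}^\lambda$, not a genuinely different map; one must alter the construction at the $(\varphi,\Gamma)$-module level, not merely choose a different element of $\mathbb{D}_{\rm cris}$ to pair against at the end.

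The same structural issue affects your claim that injectivity and the finiteness of the cokernel follow from \cite[Prop.~4.11]{LZ2}: that result concerns the crystalline regulator of the full $T_f^*$, whereas the map in question is defined via the triangulation, and its injectivity and pseudo-null cokernel come from the analogous properties of the big Perrin-Riou map in \cite{LZ-Coleman} (cf.\ the proof of Proposition~\ref{prop:im-Log}, which invokes \cite[Thm.~7.1.4]{LZ-Coleman} rather than \cite{LZ2}).
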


\begin{proof}
A straighforward modification of the proof of \cite[Thm.~5.1]{cas-hsieh1}, replacing the appeal to \cite[Thm.~4.7]{LZ2}
%in \emph{loc.cit.} 
by the analogous construction based on \cite[(6.2.1)]{LZ-Coleman}.
\end{proof}

Let $H_\bullet^1(K_\pp,\mathbf{T}^{\rm ac})\subset H^1(K_\pp,\mathbf{T}^{\rm ac})$
be the kernel of the map ${\rm Col}_{\pp,{\rm ac}}^\bullet$, so that in particular by Lemma~\ref{lem:Heeg-Col} we have
\[
{\rm res}_\pp(\mathfrak{Z}_c[1]^\bullet)\in H^1_\bullet(K_\pp,\mathbf{T}^{\rm ac})
\]
for every $\bullet\in\{\sharp,\flat\}$. 
Letting 
$H_\lambda^1(K_\pp,\mathbf{T}^{\rm ac}\hat\otimes\mathcal{H}_L(\Gamma^{\rm ac}))$ be the kernel of the $\mathcal{H}_L(\Gamma^{\rm ac})$-linear extension of the map $\mathcal{L}^\lambda_{\pp,{\rm ac}}$, we clearly have an anticyclotomic analogue of Lemma~\ref{lem:iden-alpha}, and following Definition~\ref{def:Log}, we define the anticyclotomic signed logarithm  maps $({\rm Log}^\sharp_{\pp,\ac},{\rm Log}^\flat_{\pp,\ac})$ by the composition
\begin{align*}
H^1_\sharp(K_{\pp},\mathbf{T}^\ac)
\oplus H^1_\flat(K_{\pp},\mathbf{T}^\ac)
&\overset{Q_{\lambda,\mu}^{-1}M_{{\rm log},\ac}}\longrightarrow
H^1_{\alpha}(K_\pp,\mathbf{T}^\ac\hat\otimes\mathcal{H}_{L}(\Gamma^{\ac}))\oplus H^1_{\beta}(K_\pp,\mathbf{T}^\ac\hat\otimes\mathcal{H}_{L}(\Gamma^{\ac}))\\
&\overset{\mathscr{L}^\lambda_{\pp,\ac}\oplus\mathscr{L}_{\pp,\ac}^\mu}\longrightarrow
\mathcal{H}_{L}(\Gamma^\ac)^{\oplus{2}},
\end{align*}
where $\mathscr{L}_{\pp,{\rm ac}}^\lambda$ and $\mathscr{L}_{\pp,{\rm ac}}^\mu$ are as in
Proposition~\ref{prop:L-ac}.

\begin{thm}\label{thm:ERL-bdp}
For each $\bullet\in\{\sharp,\flat\}$, the class $\mathfrak{Z}_c^\bullet:=\mathfrak{Z}_c[1]^\bullet$ is such that
%we have
\[
{\rm Log}_{\pp,{\rm ac}}^\bullet({\rm res}_\pp(\mathfrak{Z}_c^\bullet))=c\cdot\mathscr{L}_{\pp}^{\tt BDP}(f/K)
\]
up to a unit, where $\mathscr{L}_{\pp}^{\tt BDP}(f/K)$ is as in Theorem~\ref{thm:bdp}.
In particular, the class $\mathfrak{Z}_c^\bullet$ is nontorsion over $\Lambda(\Gamma^{\rm ac})$.
\end{thm}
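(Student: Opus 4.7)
The strategy is a three-stage reduction that ends in the classical Bertolini--Darmon--Prasanna formula relating Bloch--Kato logarithms of Heegner classes to central $L$-values.

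First, I would unwind the definition of ${\rm Log}^\bullet_{\pp,{\rm ac}}$. By Theorem~\ref{thm:Heeg}, the pairs of signed and $\lambda$-stabilized Heegner classes are related by the matrix $Q_{\alpha,\beta}^{-1}M_{{\rm log},{\rm ac}}$; by the factorization $(\ref{eq:Col-ac})$, the regulator maps $\mathscr{L}_{\pp,{\rm ac}}^\lambda$ factor through the signed Coleman maps via the same matrix. Combining these with the definition of the anticyclotomic signed logarithm map (modelled on Definition~\ref{def:Log}) yields
\[
{\rm Log}^\bullet_{\pp,{\rm ac}}({\rm res}_\pp(\mathfrak{Z}_c^\bullet)) = \mathscr{L}_{\pp,{\rm ac}}^\lambda({\rm res}_\pp(\mathfrak{Z}_c[1]^\lambda)),
\]
with $\sharp\leftrightarrow\alpha$ and $\flat\leftrightarrow\beta$; the derivation is formally identical to $(\ref{eq:vector})$ in the proof of Theorem~\ref{thm:ERL-pp}. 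It therefore suffices to show, for each $\lambda\in\{\alpha,\beta\}$, that
\[
\mathscr{L}_{\pp,{\rm ac}}^\lambda({\rm res}_\pp(\mathfrak{Z}[1]^\lambda))=\mathscr{L}_\pp^{\tt BDP}(f/K)
\]
up to a $p$-adic unit independent of $\lambda$.

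Second, I would verify this one-variable identity on the Zariski dense set of anticyclotomic Hecke characters $\phi$ of infinity type $(-\ell,\ell)$ with $\ell>0$. By Proposition~\ref{prop:L-ac}, the left-hand side at such $\phi$ equals an explicit $p$-adic constant (involving the Gauss sum $\mathfrak{g}(\phi^{-1})$, the factor $\lambda^{-n}$ coming from the conductor $p^n$ of $\phi$, and $1/(\ell-1)!$) times the Bloch--Kato logarithm pairing
\[
\langle \log_{V_f}({\rm res}_\pp(\mathfrak{Z}[1]^\lambda)^{\phi^{-1}}),\;\omega_{f^\lambda}\otimes t^\ell\rangle.
\]
By Proposition~\ref{prop:unb-Heeg} and Definition~\ref{def:Heeg-stab}, the specialization ${\rm res}_\pp(\mathfrak{Z}[1]^\lambda)^{\phi^{-1}}$ unwinds into a $\lambda^{-n}$-weighted combination of $\phi^{-1}$-twisted Kummer images of the classical Heegner points $h[p^k]$. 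On the other hand, Theorem~\ref{thm:bdp} describes $\phi(\mathscr{L}_\pp^{\tt BDP}(f/K))$ (up to square roots) as the Euler-like factor $(1-p^{-1}\psi(\pp)\alpha)(1-p^{-1}\psi(\pp)\beta)$ times the algebraic part of $L(f/K,\psi,1)$ for an associated Hecke character $\psi$.

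Third, I would match the two interpolation formulas using the Bertolini--Darmon--Prasanna formula as developed in \cite{bdp1} and refined in \cite{cas-hsieh1}, which relates the Bloch--Kato logarithm of a CM-twisted Heegner class at a character of infinity type $(-\ell,\ell)$ to the central $L$-value via Waldspurger. Careful bookkeeping of the CM periods, Gauss sums, and Euler factors at $p$---with the relation $\alpha\beta=p$ forcing the $\lambda$-dependent contributions to assemble into the symmetric factor on the BDP side---yields the desired equality. The ramification hypothesis on $\bar{\rho}_f$ at primes dividing $N^-$, via \cite{hsieh} and \cite{burungale-II}, ensures that the proportionality constant is a $p$-adic unit. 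The nontorsion assertion follows immediately, since $\mathscr{L}_\pp^{\tt BDP}(f/K)$ is nonzero by Theorem~\ref{thm:bdp}.

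The hard part will be the precise tracking of the Euler factors at $p$: one must verify that the ``partial'' factor $(1-\lambda^{-1}\sigma_\pp)(1-\lambda^{-1}\sigma_{\overline\pp})$ from the $\lambda$-stabilization (cf.\ Definition~\ref{def:Heeg-stab} at $n=0$), together with the $\lambda^{-n}$-weighting at higher layers of the anticyclotomic tower, conspires to reproduce the full symmetric factor $(1-p^{-1}\psi(\pp)\alpha)(1-p^{-1}\psi(\pp)\beta)$ appearing in Theorem~\ref{thm:bdp}. This is an explicit local calculation at $p$, parallel to but extending the ordinary computation carried out in \cite[\S 5]{cas-hsieh1}.
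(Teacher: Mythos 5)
Your overall scaffolding is right --- reduce the signed statement to a $\lambda$-regulator identity via the matrix $Q_{\alpha,\beta}^{-1}M_{\log,\mathrm{ac}}$, then prove the $\lambda$-identity by interpolation, then quote BDP --- but the route you propose through the middle is substantially harder than the paper's, and your diagnosis of where the ``hard part'' lies is misplaced. The paper does not compare $L$-value interpolations of the two sides at characters of infinity type $(-\ell,\ell)$ with $\ell>0$ and then match Euler factors. Instead, it quotes the calculation from \cite[Thm.~4.9]{cas-hsieh1} (extended past the (Heeg$'$) hypothesis via \cite{brooks}, \cite{burungale-II}), which already expresses $\phi(\mathscr{L}_\pp^{\tt BDP}(f/K))$, for $\phi$ a \emph{nontrivial finite-order} character of conductor $p^n$, directly as a $\phi^{-1}$-weighted sum of Bloch--Kato logarithms of the unstabilized classes $\mathfrak{Y}_n[1]^\sigma$ (a ``$p$-adic Gross--Zagier'' formula), i.e. equation~$(\ref{eq:calc})$. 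This circumvents the Waldspurger/$L$-value comparison entirely, and with it all the Euler-factor and CM-period bookkeeping you flag as the difficulty. The one genuinely new observation the proof needs is then a short one: for $\phi$ primitive of conductor $p^n>1$, in the weighted sum $\sum_{\sigma}\phi^{-1}(\sigma)\langle\log_{V_f}(\cdot),\omega_f\otimes t\rangle$ one may replace $\mathfrak{Y}_n[1]$ by $\mathfrak{Z}_n[1]^\lambda=\mathfrak{Y}_n[1]-\lambda^{-1}\mathfrak{Y}_{n-1}[1]$, since the $\mathfrak{Y}_{n-1}[1]$-part is invariant under a subgroup of index $p^{n-1}$ and its $\phi^{-1}$-weighted sum over the fibers vanishes. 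This replacement --- not the $n=0$ stabilization factor you fixate on --- is the pivotal step; indeed the $n=0$ factor $(1-\lambda^{-1}\sigma_\pp)(1-\lambda^{-1}\sigma_{\overline\pp})$ never enters, because the argument works exclusively at $n\geqslant 1$. Once the replacement is made, Propositions~\ref{prop:unb-Heeg} and \ref{prop:L-ac} identify the resulting expression with $\phi(\mathscr{L}_{\pp,\mathrm{ac}}^\lambda(\mathrm{res}_\pp(\mathfrak{Z}^\lambda)))$, and the factorization from Theorem~\ref{thm:Heeg} transports this to the signed logarithm maps, as you correctly anticipate in your first step.

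There is also a concrete gap in your step two as written: Theorem~\ref{thm:bdp} gives the BDP interpolation only at characters of \emph{trivial} conductor and infinity type $(-m,m)$, whereas Proposition~\ref{prop:L-ac} is formulated at characters with a Gauss sum and a $\lambda^{-n}p^{-n}$ factor, i.e.\ conductor $p^n$. The two interpolation formulas you wish to compare do not a priori overlap on a Zariski-dense set without extending one of them to the other's domain, which is precisely part of what \cite[Thm.~4.9]{cas-hsieh1} accomplishes. If you insist on the $L$-value route, you would essentially be re-proving that result rather than citing it.
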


\begin{proof}
Let $\psi$ be an anticyclotomic Hecke character of $K$ 
of infinity type $(1,-1)$
and conductor prime to $p$, and let $\mathscr{L}_{\pp,\psi}(f)\in\Lambda_{\unr}(\Gamma^{\rm ac})$ be as
in \cite[Def.~3.7]{cas-hsieh1}. The $p$-adic $L$-function $\mathscr{L}_{\pp}^{\tt BDP}(f/K)$
of Theorem~\ref{thm:bdp} is then given by
\[
\mathscr{L}_{\pp}^{\tt BDP}(f/K)={\rm Tw}_{\psi^{-1}}(\mathscr{L}_{\pp,\psi}(f)),
\]
where ${\rm Tw}_{\psi^{-1}}:\Lambda_{\unr}(\Gamma^{\rm ac})\rightarrow\Lambda_{\unr}(\Gamma^{\rm ac})$
is the $\unr$-linear isomorphism given by $\gamma\mapsto\psi^{-1}(\gamma)\gamma$
for $\gamma\in\Gamma^{\ac}$. Let $\phi:\Gamma^{\ac}\rightarrow\mu_{p^\infty}$
be a nontrivial character of conductor $p^n$. Following the calculations in \cite[Thm.~4.9]{cas-hsieh1},
we see that
\begin{equation}\label{eq:calc}
\phi(\mathscr{L}_\pp^{\tt BDP}(f/K))=\mathfrak{g}(\phi)\phi^{-1}(p^n)p^{-n}
\sum_{\sigma\in\Gamma^{\ac}/{p^n}}\phi^{-1}(\sigma)\cdot\langle\log_{V_f}({\rm res}_\pp(\mathfrak{Y}_n[1]^\sigma)),\omega_f\otimes t\rangle,
\end{equation}
where $\mathfrak{Y}_n[1]$ is as in $(\ref{eq:Heeg-not-stab})$. (Note that the calculation in \cite[Thm.~4.9]{cas-hsieh1} is done under the hypothesis ({\rm Heeg}') in \emph{loc.cit.}, %which implies that $N^-=1$,  
but its extension to the case considered here is a straightforward modification of Brook's extension \cite[Thm.~8.11]{brooks} of \cite[Thm.~5.13]{bdp1}; see also \cite[Thm.~6.1]{burungale-II}.)
Since $n>0$, we may replace $\mathfrak{Y}_n[1]$ by any of its `$p$-stabilizations' $\mathfrak{Z}_n[1]^\lambda$; %(see Defn.~\ref{def:Heeg-stab});
combined with Proposition~\ref{prop:unb-Heeg} and Proposition~\ref{prop:L-ac},
we thus conclude from $(\ref{eq:calc})$ that for each $\lambda\in\{\alpha,\beta\}$ we have
\begin{equation}\label{eq:unb-ERL}
\mathscr{L}_{\pp,{\rm ac}}^\lambda({\rm res}_\pp(\mathfrak{Z}^\lambda))=\mathscr{L}_\pp^{\tt BDP}(f/K),
\end{equation}
where $\mathfrak{Z}^\lambda:=\mathfrak{Z}[1]^\lambda$. By the decomposition in Theorem~\ref{thm:Heeg}
and the construction of ${\rm Log}_{\pp,{\rm ac}}^\bullet$, the result follows.
\end{proof}

\subsection{Signed theta elements}\label{sec:ac}

Assume now that the pair $(f,K)$ is definite,
so that $N^-$ is the square-free product
of an \emph{odd} number of primes, and continue to assume that $p=\pp\overline{\pp}$ splits in $K$.

For each $n\geqslant 0$ let $\mathcal{G}_n:={\rm Gal}(K[p^n]/K)$ be the Galois group of
the ring class field of $K$ of conductor $p^n$; thus in particular,   $\mathcal{G}_0$ is the Galois group
of the Hilbert class field of $K$.

\begin{prop}\label{prop:theta}
There is a system of theta elements $\Theta_n\in\cO_L[\mathcal{G}_n]$
%, indexed by the integers $n\geqslant 0$,
such that
\[
{\rm cor}^n_{n-1}(\Theta_n)=
\left\{
\begin{array}{ll}
a_p\cdot\Theta_{n-1}-\Theta_{n-2}&\textrm{if $n\geqslant 2$,}\\
\frac{1}{u_K}(a_p-\sigma_\pp-\sigma_{\overline\pp})\cdot\Theta_0&\textrm{if $n=1$,}
\end{array}
\right.
\]
where $u_K:=\vert\cO_K^\times\vert/2$, $\sigma_{\pp}, \sigma_{\overline{\pp}}\in\mathcal{G}_0$
are the Frobenius elements at $\pp$, $\overline{\pp}$, respectively, and
${\rm cor}^n_{n-1}:\cO_L[\mathcal{G}_n]\rightarrow\cO_L[\mathcal{G}_{n-1}]$ is the natural projection.
\end{prop}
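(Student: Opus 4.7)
The plan is to extend to general non-ordinary $f$ the construction of Darmon--Iovita \cite{Darmon-Iovita}, who treat the case $a_p=0$. Since $(f,K)$ is definite, $\nu(N^-)$ is odd and a definite quaternion algebra $B/\bQ$ of discriminant $N^-$ exists; I fix an Eichler order $R\subset B$ of level $N^+$ and consider the (finite) Shimura set $X:=B^\times\backslash\hat{B}^\times/\hat{R}^\times$. The Jacquet--Langlands correspondence attaches to $f$ an $\cO_L$-valued eigenform $\phi_f$ on $X$ with the same Hecke eigenvalues as $f$. Fixing a $\bQ$-algebra embedding $K\hookrightarrow B$ with $K\cap R=\cO_K$, for each $n\geqslant 0$ one obtains an optimal embedding $\iota_n:\cO_{p^n}\hookrightarrow R$, and hence a CM point $P_n\in X$ defined over $K[p^n]$ with the $\mathcal{G}_n$-action implemented by Artin reciprocity. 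These points can be chosen compatibly across the tower, so that the standard distribution relation holds.

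I then define
\[
\Theta_n:=\sum_{\sigma\in\mathcal{G}_n}\phi_f(P_n^\sigma)\cdot\sigma\in\cO_L[\mathcal{G}_n]
\]
and verify the compatibility relations by direct computation. Applying the natural projection ${\rm cor}^n_{n-1}$ and grouping terms by their image in $\mathcal{G}_{n-1}$ reduces the problem to evaluating the fiber sum $\sum_{\tau}\phi_f(P_n^{\widetilde\sigma\tau})$ over $\tau\in\ker(\mathcal{G}_n\twoheadrightarrow\mathcal{G}_{n-1})$, where $\widetilde\sigma$ is any lift of $\sigma\in\mathcal{G}_{n-1}$. For $n\geqslant 2$, the classical distribution relation for CM points on $X$ (a direct consequence of the geometric Hecke correspondence at $p$) takes the form
\[
\sum_\tau P_n^{\widetilde\sigma\tau}=T_p\cdot P_{n-1}^{\sigma}-P_{n-2}^{\bar\sigma},
\]
which, after applying $\phi_f$ and using $T_p\phi_f=a_p\phi_f$, yields the desired identity, with $\Theta_{n-2}$ regarded inside $\cO_L[\mathcal{G}_{n-1}]$ via the natural embedding $g\mapsto\sum_{\widetilde g\mapsto g}\widetilde g$.

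For $n=1$ the argument must be modified to account for the fact that the kernel of $\mathcal{G}_1\twoheadrightarrow\mathcal{G}_0$ is governed by $(\cO_K/p\cO_K)^\times$ modulo $\cO_K^\times$: since $p=\pp\overline\pp$ splits, the trace of $P_1$ down to the Hilbert class field picks up Frobenius contributions from both $\pp$ and $\overline\pp$, producing the asymmetric relation with $a_p-\sigma_\pp-\sigma_{\overline\pp}$ and the normalizing factor $1/u_K$ reflecting the extra averaging over $\cO_K^\times/\{\pm 1\}$. The main technical step will be verifying the distribution relation precisely in the quaternionic setting, including the correct tracking of the $1/u_K$-normalization in the initial step; the integrality of $1/u_K$ in $\cO_L$ is guaranteed by our hypothesis $p>2$, since the only scenario with $u_K\ne 1$ and $u_K\mid p$ would be $K=\bQ(\sqrt{-3})$ with $p=3$, which is excluded by our standing assumption that $p$ splits in $K$ ($3$ ramifies in $\bQ(\sqrt{-3})$).
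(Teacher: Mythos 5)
Your plan is correct and matches the paper's own proof, which simply cites \cite[\S{2}]{BDmumford-tate}: theta elements from Gross points on the definite Shimura set paired against the Jacquet--Langlands transfer $\phi_f$, verified against the $T_p$-distribution relation coming from the Bruhat--Tits tree, with the split of $T_pP_0$ into conductor-$p$ and Frobenius contributions and the unit-index $u_K$ governing the $n=1$ step. Your remarks on viewing $\Theta_{n-2}$ inside $\cO_L[\mathcal{G}_{n-1}]$ via the transfer map and on the integrality of $1/u_K$ (since $p$ split in $K$ forces $p\nmid u_K$) are the right bookkeeping.
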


\begin{proof}
See \cite[\S{2}]{BDmumford-tate}.
\end{proof}

Letting $\lambda\in\{\alpha,\beta\}$ be any of the roots of $X^2-a_pX+p$
(and after possibly enlarging $L$ so that it contains these roots),
the \emph{$\lambda$-regularized} theta-elements $\Theta_n^\lambda\in L[\mathcal{G}_n]$ defined by
\[
\Theta_{n}^\lambda:=
\left\{
\begin{array}{ll}
\Theta_n-\frac{1}{\lambda}\cdot\Theta_{n-1}&\textrm{if $n\geqslant 1$,}\\
\frac{1}{u_K}\left(1-\frac{1}{\lambda}\sigma_\pp\right)\left(1-\frac{1}{\lambda}\sigma_{\overline\pp}\right)
\cdot\Theta_0&\textrm{if $n=0$,}
\end{array}
\right.
\]
are immediately seen (in light of Proposition~\ref{prop:theta}) to satisfy the norm-compatibility
\[
{\rm cor}^n_{n-1}(\Theta_n^\lambda)=\lambda\cdot\Theta_{n-1}^\lambda
\]
for all $n>0$. Thus %for each $\lambda$
there exists a unique element
\[
\Theta_\infty^\lambda\in\mathcal{H}_{L,v_\lambda}(\Gamma^{\rm ac})
\] 
mapping to $\lambda^{-n}\cdot\Theta_n^\lambda$ under the natural projection
$\mathcal{H}_{L}(\Gamma^{\rm ac})\rightarrow L[\Gamma_n^{\rm ac}]$ for all $n\geqslant 0$ 
(see \cite[Prop.~2.8]{pollack} and the references therein, which readily adapt to our anticyclotomic setting).

Let $X_{N^+,N^-}$ be the Shimura set attached to a quaternion algebra $B/\bQ$ of discriminant $N^-$ 
and an Eichler order of level $N^+$.  
%By the Jacquet--Langlands correspondence, 
The module ${\rm Pic}(X_{N^+,N^-})$ is equipped with a natural action of  
the quotient of the Hecke algebra of level $N$ acting faithfully on subspace of $N^-$-new forms in $S_2(\Gamma_0(N))$,  
and in the construction of $\Theta_n$ (as explained in  \cite[\S{2.1}]{pollack-weston}, for example) 
one chooses a %``$p$-adically normalized'' 
generator $\phi_f$ of its $f$-isotypical component. 

Taking $\phi_f$ to be normalized as in \cite[Lem.~2.1]{pollack-weston}, 
we define the \emph{Gross period} $\Omega_{f,N^-}$ by
\begin{equation}\label{def:Gross}
\Omega_{f,N^-}:=\frac{(4\pi)^2\langle f,f\rangle_N}{\langle\phi_f,\phi_f\rangle},
\end{equation}
where $\langle\;,\;\rangle$ is the intersection pairing on ${\rm Pic}(X_{N^+,N^-})$. 

\begin{prop}\label{prop:theta-bound}
For each $\lambda\in\{\alpha,\beta\}$, the elements $\Theta_\infty^\lambda$ satisfy the following interpolation property: 
If $\chi$ is a finite order character of $\Gamma^{\rm ac}$ of conductor $p^n>1$, then
\begin{equation}\label{eq:Gross}
\chi(\Theta_\infty^\lambda)^2=\frac{1}{\lambda^{2n}}\cdot\frac{L(f/K,\chi,1)}{\Omega_{f,N^-}}\cdot\sqrt{D}p^n\cdot u_K^2.
\end{equation}
Moreover, there exist elements 
$\Theta_\infty^{\sharp}, \Theta_\infty^{\flat}\in\Lambda_L(\Gamma^{\rm ac})$ such that
\[
\left(\begin{array}{cc}
\Theta_\infty^\alpha\\
\Theta_\infty^{\beta}
\end{array}\right)
=Q_{\alpha,\beta}^{-1}M_{{\rm log},{\rm ac}}\cdot
\left(\begin{array}{cc}
\Theta_\infty^{\sharp}\\
\Theta_\infty^{\flat}
\end{array}\right).
\]
\end{prop}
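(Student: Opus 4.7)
The proposition has two independent assertions, which I will treat in order.

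For the interpolation formula, I start from the defining property of $\Theta_\infty^\lambda$ in Proposition~\ref{prop:theta}: for $\chi$ a character of $\Gamma^{\rm ac}$ of conductor $p^n$ with $n \geq 1$, one has
\[
\chi(\Theta_\infty^\lambda) \;=\; \lambda^{-n}\chi(\Theta_n^\lambda) \;=\; \lambda^{-n}\bigl(\chi(\Theta_n) - \lambda^{-1}\chi(\Theta_{n-1})\bigr).
\]
Since $\chi$ does not factor through $\mathcal{G}_{n-1}$, orthogonality forces $\chi(\Theta_{n-1}) = 0$ once $\Theta_{n-1}$ is pushed forward to the level-$n$ Iwasawa algebra. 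Thus $\chi(\Theta_\infty^\lambda)^2 = \lambda^{-2n}\chi(\Theta_n)^2$, and the claim reduces to the classical Gross-type identity
\[
\chi(\Theta_n)^2 \;=\; \frac{L(f/K,\chi,1)}{\Omega_{f,N^-}}\cdot\sqrt{D}\,p^n\,u_K^2,
\]
which is Gross' special-value formula extended to ramified characters (as developed by Popa and subsequent authors). The normalization of $\phi_f$ singled out just before $(\ref{def:Gross})$ is chosen precisely so that this identity holds with the stated constant, matching the factor $u_K$ already appearing in Proposition~\ref{prop:theta}.

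For the signed decomposition, I would apply the first (existence) statement of Lemma~\ref{lem:coeffs} directly to $F^\lambda := \Theta_\infty^\lambda$. Its hypothesis $\phi(F^\lambda) = \lambda^{-n}c_\phi$ with $c_\phi$ independent of $\lambda \in \{\alpha,\beta\}$ is exactly the identity $\phi(\Theta_\infty^\lambda) = \lambda^{-n}\phi(\Theta_n)$ established in the previous paragraph, with $c_\phi := \phi(\Theta_n)$; the crucial point is that $\phi(\Theta_n)$ does not involve $\lambda$ at all. The lemma then produces the desired elements $\Theta_\infty^\sharp, \Theta_\infty^\flat \in \Lambda_L(\Gamma^{\rm ac})$ satisfying the claimed matrix relation. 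No integrality refinement beyond the lemma's basic conclusion is needed here, since the proposition is already stated over $\Lambda_L(\Gamma^{\rm ac})$ rather than $\Lambda(\Gamma^{\rm ac})$.

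The only genuine obstacle is the calibration of the Gross-type formula in Part 1 with the exact numerical factor $\sqrt{D}\,p^n\,u_K^2$. Three contributions must be tracked in parallel: the Gauss sum coming from the ramified character $\chi$ of conductor $p^n$, the discriminant-related volume factor absorbed into $\Omega_{f,N^-}$, and the unit-index factor $u_K$ already present in Proposition~\ref{prop:theta}. All three are standard in the weight $2$ setting (compare the formulas underlying the constructions of \cite{bdp1} and \cite{Darmon-Iovita}); once this bookkeeping is arranged, the signed decomposition in Part 2 is purely formal, following the same strategy used in Theorem~\ref{thm:Heeg} for the analogous splitting of unbounded Heegner classes.
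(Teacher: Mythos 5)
Your proposal is correct and follows essentially the same route as the paper: first reduce $\chi(\Theta_\infty^\lambda)^2 = \lambda^{-2n}\chi(\Theta_n)^2$ using that $\chi$ has conductor $p^n>1$ so that the $\Theta_{n-1}$ contribution dies, then invoke Gross's special value formula for the precise constant, and finally feed the resulting $\lambda$-independence of $\chi(\Theta_\infty^\lambda)/\lambda^{-n}$ into Lemma~\ref{lem:coeffs}. The paper cites \cite{pollack-weston} and \cite{ChHs1} rather than Popa, \cite{bdp1}, or \cite{Darmon-Iovita} for the calibration of the Gross-type formula, but that is a choice of reference, not of method.
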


\begin{proof}
Note that by construction we have 
\[
\chi(\Theta_\infty^\lambda)^2=\lambda^{-2n}\cdot\chi(\Theta^\lambda_n)^2
=\lambda^{-2n}\cdot\chi(\Theta_n)^2,
\]
using that $\chi$ has conductor $p^n>1$ for the second equality. 
Letting $\psi_f:{\rm Pic}(X_{N^+,N^-})\rightarrow\cO_L$ denote the function given by 
$\psi_f(x)=\langle x,\phi_f\rangle$, the theta element $\Theta_n\in\cO_L[\mathcal{G}_n]$ is defined by
\[
\Theta_n:=\sum_{\sigma\in\mathcal{G}_n}\psi_f(P_n^\sigma)\sigma,
\]
where $P_n\in X_{N^+,N^-}$ is a certain ``Gross point'' of conductor $p^n$. As in  
\cite[Lem.~2.2]{pollack-weston} and \cite[Prop.~4.3]{ChHs1}, the interpolation property $(\ref{eq:Gross})$ 
then follows from Gross's special value formula; and with $(\ref{eq:Gross})$ in hand, the last claim in the proposition 
follows from Lemma~\ref{lem:coeffs}.
\end{proof}

The following application of Vatsal's results \cite{vatsal-special} will play an key role in our proof of Theorem~A in the definite setting.

\begin{prop}\label{prop:mu-def}
For each $\bullet\in\{\sharp,\flat\}$, we have $\mu(\Theta_\infty^\bullet)=0$.
\end{prop}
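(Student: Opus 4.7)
The plan is to reduce $\mu(\Theta_\infty^\bullet)=0$ to Vatsal's mod-$p$ nonvanishing theorem \cite{vatsal-special} for the classical theta elements $\Theta_n$. Under the running hypotheses (including those on $N^-$ built into the construction of $\Theta_n$), Vatsal's result asserts that for all but finitely many finite order characters $\chi$ of $\Gamma^{\rm ac}$ of exact conductor $p^n$, the value $\chi(\Theta_n)\in\cO_L$ is a $\varpi_L$-adic unit. Since $\mu(F)=0$ for $F\in\Lambda(\Gamma^{\rm ac})$ is equivalent to the existence of even a single finite order character $\chi$ with $\chi(F)\in\cO_L^\times$, it suffices to exhibit, for each $\bullet\in\{\sharp,\flat\}$, one Vatsal character $\chi$ of sufficiently large conductor at which $\chi(\Theta_\infty^\bullet)$ is a unit.

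First I would compute $\chi(\Theta_\infty^\lambda)$ at such characters: for $\chi$ of exact conductor $p^n$ with $n\geqslant 2$, the character $\chi$ does not factor through $\mathcal{G}_{n-1}$, so $\chi(\Theta_{n-1})=0$, which gives $\chi(\Theta_n^\lambda)=\chi(\Theta_n)$ and hence
\[
\chi(\Theta_\infty^\lambda)=\lambda^{-n}\chi(\Theta_n),\qquad\lambda\in\{\alpha,\beta\}.
\]
Inverting the factorization of Proposition~\ref{prop:theta-bound} and evaluating at $\chi$ yields
\[
\begin{pmatrix}\chi(\Theta_\infty^\sharp)\\ \chi(\Theta_\infty^\flat)\end{pmatrix}
=\chi(M_{{\rm log},{\rm ac}})^{-1}\,Q_{\alpha,\beta}\begin{pmatrix}\alpha^{-n}\\ \beta^{-n}\end{pmatrix}\chi(\Theta_n).
\]
Hence the question becomes: show that, for characters $\chi$ of sufficiently large conductor $p^n$, the $\varpi_L$-valuations of the entries of $\chi(M_{{\rm log},{\rm ac}})^{-1}Q_{\alpha,\beta}\,(\alpha^{-n},\beta^{-n})^\intercal$ vanish. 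Given such a vanishing, combining with Vatsal's unit $\chi(\Theta_n)$ produces the desired unit value, and the proposition follows.

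The valuation computation relies on the explicit form of $M_{\rm log}$ arising from Berger's Wach module basis recalled in Section~\ref{sec:Col}: the factors $\alpha^{-n},\beta^{-n}$ coming from the $p$-stabilizations are precisely compensated by the poles at finite order characters of conductor $p^n$ of the entries of $\chi(M_{{\rm log},{\rm ac}})^{-1}$, a phenomenon generalizing Pollack's half-logarithm identities in the $a_p=0$ case. An efficient route is to invoke directly the integrality statement of Lemma~\ref{lem:coeffs}: applied to $F_i^\lambda=F_i^\lambda(\chi)$ built from the $\Theta_n$'s, it guarantees that the output $(\Theta_\infty^\sharp,\Theta_\infty^\flat)$ lies in $C^{-1}\Lambda(\Gamma^{\rm ac})$ for a constant $C\in\cO_L$ independent of the $F_i^\lambda$, and then Vatsal's lower bound together with the interpolation of $\Theta_\infty^\lambda$ pins down the mod-$\varpi_L$ behaviour.

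\textbf{Main obstacle.} The main technical hurdle is making the valuation cancellation in the last step uniform in $\bullet\in\{\sharp,\flat\}$: one must verify that the precise compensation occurs in \emph{both} coordinates (not just in one linear combination), and moreover for some explicit family of Vatsal characters, so that the unit value of $\chi(\Theta_n)$ carries over intact to $\chi(\Theta_\infty^\bullet)$. For $a_p=0$ this follows from Pollack's formulas used by Darmon--Iovita \cite{Darmon-Iovita}; extending the bookkeeping to arbitrary non-ordinary $a_p$—using the refined properties of $M_{{\rm log},{\rm ac}}$ derived in \cite{LLZ-AJM} and the construction of $\S\ref{sec:Col}$—is the delicate part of the argument.
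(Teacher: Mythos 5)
Your proposal shares the paper's key idea---reduce to Vatsal's nonvanishing for the finite-level $\Theta_n$---but the route you chose from there has a genuine gap, and the paper takes a substantially different (and necessary) path.

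The core problem is the proposed inversion of $\chi(M_{{\rm log},{\rm ac}})$. For a finite-order character $\chi$ of conductor $p^n>1$, the matrix $\chi(M_{{\rm log},{\rm ac}})$ is \emph{singular}: its determinant is (up to a unit in $\mathcal{H}_L(\Gamma^{\rm ac})$) the product of the two ``half-logarithm''--type factors, exactly one of which vanishes at each such $\chi$. This is already visible in the $a_p=0$ case, where $\det M_{\rm log}$ is essentially $\log_p^+\cdot\log_p^-$ and $\chi$ kills one factor depending on the parity of $n$. Consequently, from $\chi(\Theta_\infty^\alpha)$ and $\chi(\Theta_\infty^\beta)$ (equivalently, from $\chi(\Theta_n)$) you cannot recover both $\chi(\Theta_\infty^\sharp)$ and $\chi(\Theta_\infty^\flat)$---only a single linear combination is determined, up to the one-dimensional kernel of $\chi(M_{{\rm log},{\rm ac}})$. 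What you flag as the ``main obstacle'' (uniformity in $\bullet$) is in fact a structural obstruction, not a bookkeeping issue. A secondary gap: you aim to exhibit $\chi$ with $\chi(\Theta_\infty^\bullet)\in\cO_L^\times$, but that would force $\lambda(\Theta_\infty^\bullet)=0$ as well, which is neither claimed nor true in general; the ``equivalence'' you assert at the outset is one-directional. And Lemma~\ref{lem:coeffs} only yields integrality in $C^{-1}\Lambda(\Gamma^{\rm ac})$, which gives no control on $\mu$-invariants.

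The paper's proof is engineered precisely to sidestep the singularity. Rather than invert $M_{\rm log}$, it uses the finite-level approximation matrix $\mathscr{H}_{n,{\rm ac}}\in M_{2\times 2}(\mathcal{H}_L(\Gamma^{\rm ac}))$ modulo $\gamma_{\rm ac}^{p^n}-1$ and rewrites the factorization of Proposition~\ref{prop:theta-bound} in the form
\[
(\alpha-\beta)\left(\begin{array}{c}\Theta_n\\ \Theta_{n-1}\end{array}\right)\equiv\mathscr{H}_{n,{\rm ac}}\cdot\left(\begin{array}{c}\Theta_n^\sharp\\ \Theta_n^\flat\end{array}\right)\pmod{\gamma_{\rm ac}^{p^n}-1},
\]
i.e.\ it expresses the \emph{known} $\Theta_n$ in terms of the \emph{unknown} $\Theta_n^\bullet$ (no inversion needed). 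Evaluating at $\epsilon_n=\zeta_{p^n}-1$ and feeding in the explicit lower bounds on ${\rm ord}_p$ of the entries of $\mathscr{H}_{n,{\rm ac}}(\epsilon_n)$ from \cite[Cor.~4.7]{LLZ-Sha} produces the estimate (\ref{eq:estimates}): if $\mu^\bullet>0$, then ${\rm ord}_p(\Theta_n(\epsilon_n))$ grows like $(p^n-p^{n-1})\mu^\bullet/e$ plus bounded terms. Vatsal's theorem gives $\mu(\Theta_n)=0$ for $n\gg 0$, and \cite[Lem.~5.5]{PR-exp} then turns this into an incompatible upper bound on ${\rm ord}_p(\Theta_n(\epsilon_n))$. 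Hence $\mu^\bullet=0$. This is a growth-rate argument, not a ``find one unit value'' argument, and that distinction is essential here.
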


\begin{proof}
Following \cite{LLZ-Sha}, for each $n\geqslant 1$ we set
\[
\mathscr{H}_{n,{\rm ac}}:=\mathfrak{M}^{-1}((1+\pi)\varphi^{n-1}(P^{-1})\cdots\varphi(P^{-1}))
\in M_{2\times 2}(\mathcal{H}_L(\Gamma^{\rm ac})),
\]
where
\[
P=\left(\begin{array}{cc}0&-q^{-1}\\1 &a_p\delta q^{-1}\end{array}\right)
\in{\rm M}_{2\times 2}(\mathbb{B}_{{\rm rig},\bQ_p}^+)
\]
is the matrix of $\varphi$ on $\mathbb{N}(T_f^*)$
with respect to the basis $\{n_1,n_2\}$ in (\ref{eq:basis}), with $q=\varphi(\pi)/\pi\in\mathbb{A}_{\bQ_p}^+$
and $\delta\in(\mathbb{A}_{\bQ_p}^+)^\times$.  Letting $\gamma_{\rm ac}\in\Gamma^{\rm ac}$ be a fixed topological generator, by [\emph{loc.cit.}, Lem.~3.7] and the defining interpolating property of $\Theta_\infty^\lambda$ we have the congruence
\begin{equation}\label{eq:cong}
\left(\begin{array}{cc}
\alpha^{-n}\cdot\Theta_n^{\alpha}\\
\beta^{-n}\cdot\Theta_n^{\beta}
\end{array}\right)
\equiv Q_{\alpha,\beta}^{-1}\cdot A_\varphi^n\cdot\mathscr{H}_{n,{\rm ac}}\cdot
\left(\begin{array}{cc}
\Theta_n^{\sharp}\\
\Theta_n^{\flat}
\end{array}\right)\pmod{\gamma_{\rm ac}^{p^{n}}-1},\nonumber
\end{equation}
where $\Theta_n^\bullet$ is the image of $\Theta_\infty^\bullet$
under the natural projection $\Lambda(\Gamma^{\rm ac})\rightarrow\cO_L[\Gamma_n^{\rm ac}]$
and $A_\varphi$ is as in $(\ref{eq:A})$.
Since the matrix $Q_{\alpha,\beta}$ diagonalizes $A_\varphi$: %is such that
\[
A_\varphi\cdot Q_{\alpha,\beta}=Q_{\alpha,\beta}\cdot\left(\begin{array}{cc}\alpha^{-1}&0\\0&\beta^{-1}\end{array}\right),
\]
using the definition of $\Theta_n^\lambda$ we see that $(\ref{eq:cong})$ reduces to %the congruence
\begin{equation}\label{eq:cong-bis}
(\alpha-\beta)\left(\begin{array}{cc}
\Theta_n\\
\Theta_{n-1}
\end{array}\right)
\equiv\mathscr{H}_{n,{\rm ac}}\cdot
\left(\begin{array}{cc}
\Theta_n^{\sharp}\\
\Theta_n^{\flat}
\end{array}\right)\pmod{\gamma_{\ac}^{p^{n}}-1}.\nonumber
\end{equation}

Now let $\zeta_{p^n}$ be a primitive $p^n$-th root of unity, and set $\epsilon_n:=\zeta_{p^n}-1$.
Viewing $\Theta_\infty^\bullet$ as elements in $\cO_L[[T]]$, let $\lambda^\bullet$ and $\mu^\bullet$
be the corresponding $\lambda$- and $\mu$-invariants, and set $\lambda:={\rm min}(\lambda^\sharp,\lambda^\flat)$.
Then from $(\ref{eq:cong-bis})$
and \cite[Cor.~4.7]{LLZ-Sha} we obtain the estimates
\begin{equation}\label{eq:estimates}
{\rm ord}_p(\Theta_n(\epsilon_n))\geqslant
\left\{
\begin{array}{ll}
\lambda+(p^n-p^{n-1})\bigl(\frac{\mu^\bullet}{e}+\sum_{i=1}^{\frac{n-1}{2}}p^{1-2i}\bigr)&\textrm{if $n$ is odd},\\
\lambda+(p^n-p^{n-1})\bigl(\frac{\mu^\bullet}{e}+\sum_{i=1}^{\frac{n}{2}}p^{-2i}\bigr)&\textrm{if $n$ is even},
\end{array}
\right.
\end{equation}
for each of the signs $\bullet\in\{\sharp,\flat\}$, where $e$ is the absolute ramification degree of $L$.
Since on the other hand %(a straightforward extension of)
the arguments in \cite[\S{5.9}]{vatsal-special} imply that $\mu(\Theta_n)=0$ for $n$ sufficiently large
(see also \cite[Thm.~2.5]{pollack-weston}), we conclude from (\ref{eq:estimates}) and \cite[Lem.~5.5]{PR-exp}
that $\mu^\bullet=0$, as was to be shown.
\end{proof}

\section{Main results}
In this section we conclude the proof of the results stated in the Introduction. The proof of Theorem~B naturally splits into two cases according to the sign $\epsilon(\pi,K,\frac{1}{2})$ (see the discussion at the beginning of $\S\ref{sec:ac-Iw}$), but in both cases it builds on the following result toward the Iwasawa--Greenberg main conjecture for the $p$-adic $L$-function $\mathscr{L}_\pp(f/K)$ 
appearing in Theorem~\ref{thm:ERL-pp}:

\begin{thm}\label{thm:wan}
	Assume that: 
	\begin{itemize}
		\item{} $N$ is square-free,
		\item{} $\bar{\rho}_f$ is ramified at some prime at every prime $q\mid N$ which is nonsplit in $K$, and there is at least one such prime,
		\item{} $\bar{\rho}_f\vert_{G_K}$ is irreducible.
	\end{itemize}
	Then we have the divisibility
	\[
	{\rm Char}_{\Lambda(\Gamma_K)}(\mathfrak{X}^{{\rm rel},{\rm str}}_{K_\infty}(f))
	\subseteq(\mathscr{L}_\pp(f/K))
	\]
	as ideals in $\Lambda_{R_0}(\Gamma_K)\otimes_{\bZ_p}\bQ_p$, up to primes which are pullbacks for height one primes of $\Lambda(\Gamma_K^+)$.
\end{thm}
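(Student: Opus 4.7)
The plan is to deduce this divisibility directly from the main result of Xin Wan's work \cite{wan-combined} on the Iwasawa--Greenberg main conjecture for $\mathscr{L}_\pp(f/K)$ in the non-ordinary setting. In \cite[\S{4.6}]{wan-combined}, the $p$-adic $L$-function $\mathscr{L}_\pp(f/K)$ is constructed as the Klingen--Eisenstein $p$-adic $L$-function on the unitary group $U(3,1)$ attached to $f$ and an $\overline\pp$-ordinary family of Hecke characters of $K$; the Selmer-group bound is obtained through the Skinner--Urban-style lattice construction, where a congruence between the Klingen--Eisenstein family and a cuspidal family produces, via the associated pseudo-representation, cohomology classes that force the divisibility
\[
\mathrm{Char}_{\Lambda(\Gamma_K)}(\mathfrak{X}^{{\rm rel},{\rm str}}_{K_\infty}(f)) \subseteq (\mathscr{L}_\pp(f/K))
\]
outside a controlled set of primes. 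The result we want is then essentially a repackaging of Wan's theorem into the conventions of this paper.

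Concretely, I would proceed in three steps. First, verify that the hypotheses match those required by Wan: squarefreeness of $N$ is used for local test vector computations at primes $\ell\mid N$ feeding into the Eisenstein construction; the ramification of $\bar\rho_f$ at a nonsplit prime $q\mid N$ supplies both the Ihara-type nonvanishing input for the cohomology of $U(3,1)$ and, as already used in the proof of Theorem~\ref{thm:ERL-pp}, the $p$-adic integrality of the auxiliary constant $\alpha(f,f_B)$; and the irreducibility of $\bar\rho_f\vert_{G_K}$ is the standard Galois-side input for the lattice argument. Second, identify Wan's Iwasawa--Greenberg Selmer group with $\mathfrak{Sel}^{{\rm rel},{\rm str}}(K,\mathbf{A})$ by unwinding the local conditions in Definition~\ref{def:Sel}: the condition at $\pp$ is trivial (relaxed) and the condition at $\overline\pp$ forces strict vanishing of the local restriction, which is precisely the $({\rm rel},{\rm str})$ data. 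Third, identify the $p$-adic $L$-functions: by the factorization $(\ref{eq:factor-w})$ used in the proof of Theorem~\ref{thm:ERL-pp}, $\mathscr{L}_\pp(f/K)$ agrees up to a factor of an anticyclotomic Katz $p$-adic $L$-function $L_\pp^{\rm ac}(K)$ and a $p$-adic unit with the Rankin--Selberg $p$-adic $L$-function on which Wan's divisibility is stated.

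The caveat about primes pulled back from $\Lambda(\Gamma_K^+)$ is intrinsic to the method: it arises because $\mathscr{L}_\pp(f/K)$ incorporates the congruence ideal $I_{\mathbf{g}}$ of the canonical CM Hida family $\mathbf{g}$, which by Hida--Tilouine and Rubin is controlled only up to such ``exceptional'' height-one primes (cf.\ the discussion preceding Theorem~\ref{thm:ERL-pp}). The main obstacle, beyond verification of hypotheses, is really the careful bookkeeping to pass between Wan's conventions (e.g.\ the choice of ordering of $\pp$ and $\overline\pp$, the normalization of periods, and the precise shape of the local condition at $\overline\pp$) and the framework set up in $\S\ref{sec:Sel}$ of the present paper. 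Once this dictionary is in place, no further Galois-theoretic input is required, and the desired divisibility follows from \cite{wan-combined}.
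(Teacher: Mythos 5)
Your proposal takes exactly the same route as the paper, which simply cites part (2) of \cite[Thm.~5.3]{wan-combined}; your three-step unwinding (matching hypotheses, identifying the Greenberg-style Selmer group with $\mathfrak{Sel}^{{\rm rel},{\rm str}}(K,\mathbf{A})$, and identifying the $p$-adic $L$-functions) is just the bookkeeping implicit in that citation. The paper does not spell any of this out, so your elaboration is consistent with, and a plausible expansion of, the intended argument.
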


\begin{proof}
	See part (2) of \cite[Thm.~5.3]{wan-combined}.
\end{proof}

The following result will be repeatedly used to descend to the cyclotomic line.

\begin{prop}\label{prop:control}
	For each $\bullet\in\{\sharp,\flat\}$ there are $\Lambda(\Gamma^{\rm cyc})$-module isomorphisms
	\begin{align*}
	X_{K_\infty}^{\bullet,\bullet}(f)/(\gamma_{\rm ac}-1)X_{K_\infty}^{\bullet,\bullet}(f)
	&\simeq X_{K_\infty^{\rm cyc}}^{\bullet,\bullet}(f)\\
	&\simeq X_{\bQ_\infty}^\bullet(f)\oplus
	X_{\bQ_\infty}^\bullet(f_K).
	\end{align*}
	In particular, letting $\mathcal{L}_p^{\bullet,\bullet}(f/K)_{\rm cyc}$ be the image of a generator of ${\rm Char}_{\Lambda(\Gamma_K)}(X_{K_\infty}^{\bullet,\bullet}(f))$ under the natural projection $\Lambda(\Gamma_K)\twoheadrightarrow\Lambda(\Gamma^{\rm cyc})$, we have the divisibility
	\[
	(\mathcal{L}_p^{\bullet,\bullet}(f/K)_{\rm cyc})\supseteq
	{\rm Char}_{\Lambda(\Gamma^{\rm cyc})}(X_{\bQ_\infty}^\bullet(f))\cdot
	{\rm Char}_{\Lambda(\Gamma^{\rm cyc})}(X_{\bQ_\infty}^\bullet(f_K)).
	\]
\end{prop}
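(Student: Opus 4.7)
The plan is to establish the isomorphisms first, and then derive the divisibility as a formal consequence via the standard specialization of characteristic ideals.

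For the first isomorphism, I would run a standard control theorem argument: construct the natural specialization map
\[
X_{K_\infty}^{\bullet,\bullet}(f)/(\gamma_{\rm ac}-1)X_{K_\infty}^{\bullet,\bullet}(f) \longrightarrow X_{K_\infty^{\rm cyc}}^{\bullet,\bullet}(f)
\]
(coming by duality from the restriction of signed Selmer groups from the two-variable to the cyclotomic line) and show it is an isomorphism via Poitou--Tate and the snake lemma applied to the defining diagram of the Selmer groups. The ``cohomological'' terms behave well because multiplication by $(\gamma_{\rm ac}-1)$ on $\Lambda(\Gamma_K)^\vee$ dualizes to the restriction $\Lambda(\Gamma_K)\to\Lambda(\Gamma^{\rm cyc})$, and the irreducibility of $\bar\rho_f\vert_{G_K}$ kills global $H^0$ terms. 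The key technical point is matching the local conditions: the signed Coleman maps at $\pp$ and $\overline{\pp}$ have been defined in $\S\ref{sec:Col}$ as $\Lambda(\Gamma_K)$-linear maps built from the two-variable module $\mathbf{T}$, and their reduction modulo $(\gamma_{\rm ac}-1)$ yields by construction the corresponding cyclotomic signed Coleman maps, so the orthogonal complements defining $\mathfrak{Sel}^{\bullet,\bullet}(K,\mathbf{A})$ specialize to those defining $\mathfrak{Sel}^{\bullet,\bullet}(K,\mathbf{A}^{\rm cyc})$.

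For the second isomorphism, I would use Shapiro's lemma combined with the canonical decomposition arising from the fact that $p\nmid 2$ and $K/\bQ$ is quadratic. Concretely, $\Gamma^{\rm cyc}(K) \simeq \Gamma^{\rm cyc}(\bQ)$ canonically since $K\cap\bQ^{\rm cyc}=\bQ$, and as $G_\bQ$-modules
\[
\mathrm{Ind}_{G_K}^{G_\bQ}\mathbf{A}^{\rm cyc} \;\simeq\; \mathbf{A}^{\rm cyc}(f) \oplus \mathbf{A}^{\rm cyc}(f_K),
\]
which yields, via Shapiro, the desired decomposition of $H^1(G_{K,\Sigma}, \mathbf{A}^{\rm cyc})$. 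I then need to check that the Selmer conditions respect this decomposition: at each prime $\ell\neq p$ of $\bQ$ split in $K$ this is automatic from the induction; at inert/ramified $\ell$ one checks the unramified conditions match; and at $p$, since $p=\pp\overline{\pp}$ splits in $K$, one has $K_\pp=K_{\overline{\pp}}=\bQ_p$ and $\epsilon_K$ is trivial locally, so the signed Coleman maps ${\rm Col}^\bullet_\pp$ and ${\rm Col}^\bullet_{\overline{\pp}}$ are identified with the signed Coleman maps ${\rm Col}^\bullet$ over $\bQ_p$ applied to the $f$-component and the $f_K$-component respectively. Dualizing gives the claimed isomorphism of Pontrjagin duals.

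The divisibility then follows formally. By the two isomorphisms, both $X_{K_\infty^{\rm cyc}}^{\bullet,\bullet}(f)$ and each summand $X_{\bQ_\infty}^\bullet(f)$, $X_{\bQ_\infty}^\bullet(f_K)$ are $\Lambda(\Gamma^{\rm cyc})$-torsion (by Kato's theorem, as invoked in $\S\ref{sec:equiv}$), so
\[
{\rm Char}_{\Lambda(\Gamma^{\rm cyc})}(X_{K_\infty^{\rm cyc}}^{\bullet,\bullet}(f)) = {\rm Char}_{\Lambda(\Gamma^{\rm cyc})}(X_{\bQ_\infty}^\bullet(f))\cdot{\rm Char}_{\Lambda(\Gamma^{\rm cyc})}(X_{\bQ_\infty}^\bullet(f_K)).
\]
Applying the standard lemma for the specialization of characteristic ideals along $\Lambda(\Gamma_K) \twoheadrightarrow \Lambda(\Gamma^{\rm cyc})$ to the exact sequence
\[
0 \to X_{K_\infty}^{\bullet,\bullet}(f)[\gamma_{\rm ac}-1] \to X_{K_\infty}^{\bullet,\bullet}(f) \xrightarrow{\gamma_{\rm ac}-1} X_{K_\infty}^{\bullet,\bullet}(f) \to X_{K_\infty^{\rm cyc}}^{\bullet,\bullet}(f) \to 0,
\]
the image of ${\rm Char}_{\Lambda(\Gamma_K)}(X_{K_\infty}^{\bullet,\bullet}(f))$ divides ${\rm Char}_{\Lambda(\Gamma^{\rm cyc})}(X_{K_\infty^{\rm cyc}}^{\bullet,\bullet}(f))$, which gives exactly the asserted containment $(\mathcal{L}_p^{\bullet,\bullet}(f/K)_{\rm cyc}) \supseteq {\rm Char}(X_{\bQ_\infty}^\bullet(f))\cdot{\rm Char}(X_{\bQ_\infty}^\bullet(f_K))$.

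The main obstacle will be verifying the control theorem cleanly, and in particular confirming that the kernel $X_{K_\infty}^{\bullet,\bullet}(f)[\gamma_{\rm ac}-1]$ does not contribute a spurious factor: this requires a careful look at global-to-local cokernels in the Poitou--Tate sequence, using that $\bar\rho_f\vert_{G_K}$ is irreducible so that $H^0(K_\infty, \mathbf{A}) = 0$, together with finiteness of the local $H^0$ at ramified primes. The rest is formal.
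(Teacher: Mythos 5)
Your proposal is correct and follows essentially the same route as the paper, which simply cites Proposition~3.9 (the control theorem giving the first isomorphism) and Lemma~3.6 (the Shapiro-lemma decomposition giving the second) of Skinner--Urban, and then applies the standard specialization-of-characteristic-ideals divisibility. Your spelled-out sketch of the Poitou--Tate snake-lemma control argument, the $\mathrm{Ind}_{G_K}^{G_\bQ}$-decomposition into $f$- and $f_K$-components at the split prime $p$, and the one-sided divisibility coming from the $\gamma_{\rm ac}-1$ specialization sequence match what the reference handles.
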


\begin{proof}
	This follows from a straightforward extension of the arguments in Proposition~3.9 and Lemma~3.6 of \cite{SU}.	
\end{proof}

\subsection{Indefinite case}\label{subsec:indef}

In this section, we finish the proof of Theorem~B in the case where $N^-$ is %the square-free 
the product of an \emph{even} number of primes. This extends to general non-ordinary modular forms of weight $2$ the main results of
\cite{cas-wan-ss} 
%\footnote{In fact, with the
%results of $\S\ref{sec:HP}$ and $\S\ref{sec:HP-ERL}$ at hand
%(in place of the $\pm$-constructions in \cite{cas-wan-ss}),
%the strategy of \emph{loc.cit.} can be readily adapted (so we will be rather brief here).}
on B.-D.~Kim's doubly-signed main conjectures, 
which assumed $a_p=0$.

A key ingredient in the proof will be the relation between Conjecture~\ref{conj:HP} and Conjecture~\ref{conj:ac-MC} below, corresponding to the analogue of Perrin-Riou's main conjecture \cite{PR-HP} for the signed Heegner points constructed in $\S\ref{sec:HP}$, and Greenberg's main conjecture \cite{Greenberg55} for $\mathscr{L}^{\tt BDP}_\pp(f/K)$, respectively.

Recall that attached to $f$ is an isogeny class of abelian variety quotients
\begin{equation}\label{eq:opt}
\pi_f:J(X)\longrightarrow A_f,
\end{equation}
where $J(X)$ is the Jacobian variety of a Shimura curve $X_{N^+,N^-}$ attached to an indefinite rational quaternion algebra of discriminant $N^-$. We assume that $(\ref{eq:opt})$ is an optimal quotient, i.e., ${\rm ker}(\pi_f)$ is connected, and set $\delta(N^+,N^-)=\pi_f\circ\pi_f^\vee$, where $\phi_f^\vee$
%:A_f\rightarrow J(X)$ 
is the dual isogeny. Replacing $(\ref{eq:opt})$ by a classical modular  parametrization
\[
\varphi_f:J_0(N)\longrightarrow A_f
\]
we similarly define $\delta(N):=\delta(N,1)$, and set
\[
\delta_{N^+,N^-}:=\frac{\delta(N^+,N^-)}{\delta(N)}.
\]
Finally, let $c_f$ be the Manin constant associated with $\varphi_f$ (see e.g \cite{ARS}) and $u_K:=\vert\cO_K^\times\vert/2$.

\begin{conj}\label{conj:HP}
For each $\bullet\in\{\sharp,\flat\}$, both $\mathfrak{X}_{K_\infty^{\ac}}^{\bullet,\bullet}(f)$
and $\mathfrak{Sel}^{\bullet,\bullet}(K,\mathbf{T}^{\rm ac})$ have $\Lambda(\Gamma^{\rm ac})$-rank $1$, and
\[
c^2\cdot{\rm Char}_{\Lambda(\Gamma^{\rm ac})}(\mathfrak{X}_{K_\infty^{\ac}}^{\bullet,\bullet}(f)_{\rm tors})
=\frac{\delta_{N^+,N^-}}{c_f^2u_K^2}\cdot{\rm Char}_{\Lambda(\Gamma^{\rm ac})}\biggl(\frac{\mathfrak{Sel}^{\bullet,\bullet}(K,\mathbf{T}^{\rm ac})}
{\Lambda(\Gamma^{\rm ac})\cdot\mathfrak{Z}_c^\bullet}\biggr)^2
\]
as ideals in $\Lambda(\Gamma^{\rm ac})$, where the subscript {\rm tors} denotes the $\Lambda(\Gamma^{\rm ac})$-torsion submodule.
\end{conj}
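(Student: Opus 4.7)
The plan is to extend to the general non-ordinary setting the strategy developed in \cite{cas-wan-ss} for the case $a_p=0$, proving the rank statements and the equality of characteristic ideals simultaneously through a pair of opposite divisibilities.

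As a preliminary step, verify that $\mathfrak{Z}_c^\bullet \in \mathfrak{Sel}^{\bullet,\bullet}(K,\mathbf{T}^{\rm ac})$: unramifiedness outside $p$ is automatic from the construction as Kummer images of Heegner points; the local condition at $\pp$ is exactly Lemma~\ref{lem:Heeg-Col}; and the condition at $\overline{\pp}$ follows from an entirely analogous argument (using the Coleman and regulator maps at $\overline{\pp}$, constructed as in Definition~\ref{def:maps-q}). Combined with Theorem~\ref{thm:ERL-bdp} and the nontriviality of $\mathscr{L}_\pp^{\tt BDP}(f/K)$ asserted in Theorem~\ref{thm:bdp}, this shows that $\mathfrak{Z}_c^\bullet$ is not $\Lambda(\Gamma^{\rm ac})$-torsion; hence $\mathfrak{Sel}^{\bullet,\bullet}(K,\mathbf{T}^{\rm ac})$ has $\Lambda(\Gamma^{\rm ac})$-rank at least one.

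For the divisibility ``$\subseteq$'' (the analytic direction), the plan is to first deduce an anticyclotomic Iwasawa--Greenberg main conjecture for $\mathscr{L}_\pp^{\tt BDP}(f/K)$ from Theorem~\ref{thm:wan}: by a control-theorem argument analogous to Proposition~\ref{prop:control}, Theorem~\ref{thm:wan} restricts to an anticyclotomic divisibility for the image of $\mathscr{L}_\pp(f/K)$ along $\Gamma_K \twoheadrightarrow \Gamma^{\rm ac}$, and the identity $(\mathscr{L}_\pp^{\tt BDP}(f/K))^2 \doteq \mathscr{L}_\pp(f/K)|_{\Gamma^{\rm ac}}$ alluded to in the Introduction converts this into an Iwasawa--Greenberg divisibility for $\mathscr{L}_\pp^{\tt BDP}(f/K)$. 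I would then combine this with a Poitou--Tate exact sequence relating $\mathfrak{X}^{\bullet,\bullet}_{K_\infty^{\ac}}(f)$ and $\mathfrak{X}^{\bullet,\rm str}_{K_\infty^{\ac}}(f)$ through $H^1_\bullet(K_\pp,\mathbf{T}^{\rm ac})$ modulo $\Lambda(\Gamma^{\rm ac})\cdot{\rm res}_\pp(\mathfrak{Z}_c^\bullet)$, the explicit reciprocity law Theorem~\ref{thm:ERL-bdp}, and the fact that ${\rm Log}^\bullet_{\pp,\ac}$ has pseudo-null cokernel (the anticyclotomic analogue of Proposition~\ref{prop:im-Log}) to extract the divisibility ``$\subseteq$''.

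For the opposite divisibility ``$\supseteq$'' (the Kolyvagin direction), the plan is to extend the Iwasawa-theoretic Kolyvagin machinery for the Heegner point Euler system (cf.~\cite{howard-PhD-II}) to the bounded signed classes $\mathfrak{Z}_c[m]^\bullet$. The steps are: (i)~establish signed analogues of the Euler system norm-compatibility relations for $\{\mathfrak{Z}_c[m]^\bullet\}$ by applying Lemma~\ref{lem:coeffs} to the unbounded norm relations implicit in Proposition~\ref{prop:Heeg} and the proof of Theorem~\ref{thm:Heeg}; (ii)~use these signed relations to construct Kolyvagin derivative classes and bound $\mathfrak{X}^{\bullet,\bullet}_{K_\infty^{\ac}}(f)_{\rm tors}$ from above in terms of the index $[\mathfrak{Sel}^{\bullet,\bullet}(K,\mathbf{T}^{\rm ac}):\Lambda(\Gamma^{\rm ac})\cdot\mathfrak{Z}_c^\bullet]$. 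The hard part will be verifying that the signed decomposition interacts well with the Kolyvagin derivative operators at the inert auxiliary primes used to control the Selmer group; this requires a delicate Wach-module analysis, and it is the step where the irreducibility of $\bar\rho_f|_{G_K}$ gets used essentially (to ensure freeness of $H^1(K[m],\mathbf{T}^{\rm ac})$ over $\Lambda(\Gamma^{\rm ac})$, as already invoked in the proof of Theorem~\ref{thm:Heeg}). The precise constants $c^2$, $\delta_{N^+,N^-}$, and $c_f^2 u_K^2$ appearing in the formula are then obtained by tracking the normalizations of the Heegner classes through the explicit reciprocity laws, and by comparing the BDP, canonical, and Gross periods entering Theorem~\ref{thm:bdp} and the construction of $\pi_f$.
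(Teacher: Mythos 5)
The statement you are proposing to prove is \emph{Conjecture}~\ref{conj:HP}, not a theorem, and the paper does not prove it. What the paper establishes are \emph{partial results toward it}: Theorem~\ref{thm:KS-argument} proves the two rank-one assertions together with the single divisibility
\[
c^2\cdot{\rm Char}_{\Lambda(\Gamma^{\rm ac})}(\mathfrak{X}_{K_\infty^{\ac}}^{\bullet,\bullet}(f)_{\rm tors})
\supseteq{\rm Char}_{\Lambda(\Gamma^{\rm ac})}\left(\frac{\mathfrak{Sel}^{\bullet,\bullet}(K,\mathbf{T}^{\rm ac})}
{\Lambda(\Gamma^{\rm ac})\cdot\mathfrak{Z}_c^\bullet}\right)^2
\]
\emph{without} the constant $\delta_{N^+,N^-}/(c_f^2u_K^2)$, and the analytic input comes in via Theorem~\ref{thm:ac-MC} (which is Conjecture~\ref{conj:ac-MC}, the Iwasawa--Greenberg form for $\mathscr{L}_\pp^{\tt BDP}(f/K)$). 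The Introduction is explicit that what is proven is (\ref{eq:intro-HP-IMC}) only as an equality of ideals in $\cO_L[[\Gamma^{\rm ac}]]\otimes_{\bZ_p}\bQ_p$, i.e. \emph{rationally}, not the integral equality with the precise constant stated in Conjecture~\ref{conj:HP}. Your proposal glosses over exactly the point where the paper stops: your last sentence (``the precise constants $c^2$, $\delta_{N^+,N^-}$, $c_f^2u_K^2$ are then obtained by tracking the normalizations\dots'') describes work the paper does not do and does not claim, and it is genuinely the hard part of the integral formula (Heegner parametrization degrees, Gross vs.\ canonical periods, Manin constant). Presenting the above as a proof of the conjecture as stated is therefore a gap, not just in detail but in kind.

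Modulo that misidentification, the rest of your outline is in the right spirit and tracks the paper's actual route. A few local comments. Your ``preliminary step'' claim that the local condition at $\overline{\pp}$ is checked ``by an entirely analogous argument'' to Lemma~\ref{lem:Heeg-Col} is fine as a sketch, but note the paper establishes membership of $\mathfrak{Z}_c^\bullet$ in $\mathfrak{Sel}^{\bullet,\bullet}(K,\mathbf{T}^{\rm ac})$ via Proposition~\ref{prop:KS} (for $m=1$) rather than a standalone check. For the ``$\supseteq$'' direction you propose to build ``signed norm relations'' for $\{\mathfrak{Z}_c[m]^\bullet\}$ by pushing the norm relations through Lemma~\ref{lem:coeffs}; this is unnecessary and likely awkward, because the decomposition matrix $Q_{\alpha,\beta}^{-1}M_{\log,\ac}$ does not intertwine cleanly with the Euler-system relations at inert auxiliary primes. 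The paper's approach (Proposition~\ref{prop:KS}) avoids this entirely: it runs Howard's Kolyvagin-system formalism on the bounded classes $\mathfrak{Z}_c[m]^\bullet$ directly, citing \cite[Thm.~4.14]{cas-wan-ss} for the reduction to the single remaining check --- that the Kolyvagin-derived classes satisfy the signed local condition at the primes above $p$ --- which then follows from Lemma~\ref{lem:Heeg-Col}. Finally, for the ``$\subseteq$'' direction your structure (BDP main conjecture via Theorem~\ref{thm:wan}, $\mathscr{L}_\pp^{\tt BDP}(f/K)^2 \doteq \mathscr{L}_\pp(f/K)|_{\Gamma^{\rm ac}}$, Poitou--Tate) matches the paper's Lemma~\ref{lem:prelim}(2) plus the argument in Theorem~\ref{thm:ac-MC}, but the conclusion one gets is an equality of characteristic ideals up to units and exceptional primes --- again the rational version (\ref{eq:intro-HP-IMC}), not the integral Conjecture~\ref{conj:HP}.
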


\begin{conj}\label{conj:ac-MC}
The module $X_{K_\infty^{\ac}}^{{\rm rel},{\rm str}}(f)$ is $\Lambda(\Gamma^{\rm ac})$-torsion, and
\[
{\rm Char}_{\Lambda(\Gamma^{\rm ac})}(X_{K_\infty^{\ac}}^{{\rm rel},{\rm str}}(f))
=(\mathscr{L}_\pp^{\tt BDP}(f/K)^2)
\]
as ideals in $\Lambda_{\unr}(\Gamma^{\rm ac})$.
\end{conj}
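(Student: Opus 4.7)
The strategy is to prove both divisibilities in the conjectured equality separately: the forward divisibility $\subseteq$ by descent from the two-variable main conjecture (Theorem~\ref{thm:wan}), and the reverse divisibility $\supseteq$ via the Heegner Euler system of Theorem~\ref{thm:Heeg} combined with the explicit reciprocity law of Theorem~\ref{thm:ERL-bdp}. Along the way the companion Conjecture~\ref{conj:HP} will be established as well.

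For the $\subseteq$ direction, I would specialize Theorem~\ref{thm:wan} to the anticyclotomic line. Theorem~\ref{thm:wan} yields
\[
{\rm Char}_{\Lambda(\Gamma_K)}(\mathfrak{X}^{{\rm rel},{\rm str}}_{K_\infty}(f))\subseteq(\mathscr{L}_\pp(f/K))
\]
up to pullbacks of height-one primes from $\Lambda(\Gamma_K^+)$. Using a control theorem in the spirit of Proposition~\ref{prop:control} adapted to the $({\rm rel},{\rm str})$ Selmer structure, together with the identification
\[
\mathscr{L}_\pp(f/K)\big|_{\Gamma^{\rm ac}}\doteq\mathscr{L}_\pp^{\tt BDP}(f/K)^2
\]
(up to a unit, obtained by comparing interpolations: Hecke characters of infinity type $(-m,m)$ give Rankin--Selberg central values that factor as a square of BDP values), one concludes
\[
{\rm Char}_{\Lambda(\Gamma^{\rm ac})}(X^{{\rm rel},{\rm str}}_{K_\infty^{\ac}}(f))\subseteq(\mathscr{L}_\pp^{\tt BDP}(f/K)^2)
\]
in $\Lambda_{\unr}(\Gamma^{\rm ac})$.

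For the $\supseteq$ direction, first establish the corresponding divisibility in Conjecture~\ref{conj:HP} by a Kolyvagin-style argument with the signed Heegner classes $\mathfrak{Z}_c[m]^\bullet$. The norm relations at the unbounded level (Proposition~\ref{prop:Heeg}) combined with the matrix decomposition of Theorem~\ref{thm:Heeg} give, up to the fixed denominator $c$, norm compatibilities for the bounded signed classes suitable for running Howard's descent, with the kernels of the signed Coleman maps ${\rm Col}^\bullet_\qq$ serving as local conditions at $p$; this extends \cite{cas-wan-ss} (which treated $a_p=0$) to the general non-ordinary weight two setting. Then the Poitou--Tate exact sequence
\[
0\to{\rm Sel}^{{\rm str},{\rm str}}(K,\mathbf{T}^{\rm ac})\to\mathfrak{Sel}^{\bullet,{\rm str}}(K,\mathbf{T}^{\rm ac})\xrightarrow{{\rm res}_\pp}H^1_\bullet(K_\pp,\mathbf{T}^{\rm ac})\to X^{{\rm rel},{\rm str}}_{K_\infty^{\ac}}(f)\to X^{\bullet,{\rm str}}_{K_\infty^{\ac}}(f)\to 0
\]
(the anticyclotomic analogue of (\ref{eq:PT1})), combined with Theorem~\ref{thm:ERL-bdp} -- which identifies $({\rm Log}^\bullet_{\pp,{\rm ac}}\circ{\rm res}_\pp)(\mathfrak{Z}_c^\bullet)=c\cdot\mathscr{L}_\pp^{\tt BDP}(f/K)$ up to a unit -- and Proposition~\ref{prop:im-Log} (pseudo-null cokernel of ${\rm Log}^\bullet$), converts the Heegner divisibility into
\[
{\rm Char}_{\Lambda(\Gamma^{\rm ac})}(X^{{\rm rel},{\rm str}}_{K_\infty^{\ac}}(f))\supseteq(\mathscr{L}_\pp^{\tt BDP}(f/K)^2).
\]

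The main obstacle is the Kolyvagin/Howard descent in the general signed non-ordinary setting. In the $a_p=0$ case handled by \cite{cas-wan-ss}, one works with Kobayashi's plus/minus Selmer structures, which enjoy clean control properties; here the kernels of the signed Coleman maps defined via Wach modules (Section~\ref{sec:Col}) are more delicate, so verifying that the bounded signed classes $\mathfrak{Z}_c[m]^\bullet$ lie in the correct local conditions at $p$ and that their Kolyvagin derivatives generate the precise predicted bound on the strict Selmer group is the technical heart of the argument. The bookkeeping of the constants $\delta_{N^+,N^-}$, $c_f$, $u_K$, and the global $c$ through the ERL should work out cleanly since Conjecture~\ref{conj:ac-MC} is only a statement about characteristic ideals in $\Lambda_{\unr}(\Gamma^{\rm ac})$, absorbing units. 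By comparison, the $\subseteq$ direction from Theorem~\ref{thm:wan} is more straightforward, the only subtle point being the precise $({\rm rel},{\rm str})$ control theorem needed to descend from two variables to one.
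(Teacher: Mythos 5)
Your strategy coincides with the paper's: the divisibility $\subseteq$ is descended from Theorem~\ref{thm:wan} to the anticyclotomic line using the factorisation $\mathscr{L}_\pp(f/K)\vert_{\Gamma^{\rm ac}}\doteq\mathscr{L}_\pp^{\tt BDP}(f/K)^2$ and a control theorem, while the divisibility $\supseteq$ comes from the Kolyvagin system attached to the signed Heegner classes (Theorem~\ref{thm:KS-argument}) together with Lemma~\ref{lem:prelim} and the explicit reciprocity law of Theorem~\ref{thm:ERL-bdp}. So there is no disagreement on the architecture. However, your proposal omits two points that the paper's proof treats as essential and which cannot be waved away.

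First, Theorem~\ref{thm:wan} gives the containment only ``up to primes which are pullbacks of height one primes of $\Lambda(\Gamma_K^+)$'' and in $\Lambda_{R_0}(\Gamma_K)\otimes\bQ_p$ (so with $p$-power ambiguity as well); after specialising to $\Gamma^{\rm ac}$, these ambiguities must be eliminated, and the paper does this precisely by invoking the vanishing of the $\mu$-invariant of $\mathscr{L}_\pp^{\tt BDP}(f/K)$ established in Theorem~\ref{thm:bdp} (via Hsieh and Burungale). You acknowledge the control theorem as ``the only subtle point'' in the $\subseteq$ direction, but the $\mu$-invariant input is at least as important, and without it your $\subseteq$ inclusion would hold only up to those indeterminacies, not on the nose.

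Second, Conjecture~\ref{conj:ac-MC} is stated for $X_{K_\infty^{\ac}}^{{\rm rel},{\rm str}}(f)$, i.e.\ the dual of the Selmer group with \emph{strict} conditions at the places in $\Sigma\setminus\{p\}$, whereas Theorem~\ref{thm:wan} and the Euler-system machinery naturally produce statements about $\mathfrak{X}_{K_\infty^{\ac}}^{{\rm rel},{\rm str}}(f)$, i.e.\ the dual of the Selmer group with \emph{unramified} conditions there. The paper proves the equality first for $\mathfrak{X}$ and then transfers it to $X$ by appealing to Pollack--Weston (and \cite[Prop.~2.5]{cas-mult}), which requires the ramification hypothesis on $\bar\rho_f$ at primes dividing $N^-$. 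Your proposal conflates the two Selmer groups throughout (your Poitou--Tate sequence mixes $\mathfrak{Sel}$ and $X$, and its local-condition superscripts should read $({\rm str},{\rm rel})$ and $(\bullet,{\rm rel})$ as in $(\ref{eq:PT1})$, not $({\rm str},{\rm str})$ and $(\bullet,{\rm str})$). A smaller slip: the pseudo-null/finite cokernel input for the anticyclotomic signed logarithm is Proposition~\ref{prop:L-ac}, not the two-variable Proposition~\ref{prop:im-Log}.
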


We record the following auxiliary results for our later use.

\begin{lem}\label{lem:prelim}
Let $\bullet\in\{\sharp,\flat\}$, and assume that $\mathfrak{Sel}^{\bullet,\bullet}(K,\mathbf{T}^{\rm ac})$
has $\Lambda(\Gamma^{\rm ac})$-rank $1$. Then $\mathfrak{X}_{K_\infty^{\ac}}^{\bullet,\bullet}(f)$ has $\Lambda(\Gamma^{\rm ac})$-rank $1$, and the following statements hold:
\begin{enumerate}
\item{} $\mathfrak{X}_{K_\infty^{\ac}}^{\bullet,{\rm str}}(f)$ is $\Lambda(\Gamma^{\rm ac})$-torsion and the inclusion
\[
\mathfrak{Sel}^{\bullet,\bullet}(K,\mathbf{T}^{\rm ac})\subseteq\mathfrak{Sel}^{\bullet,{\rm rel}}(K,\mathbf{T}^{\rm ac})
\]
is an equality.
\item{} $\mathfrak{X}_{K_\infty^{\ac}}^{{\rm rel},{\rm str}}(f)$ is $\Lambda(\Gamma^{\rm ac})$-torsion, and for
any height one prime %$\mathfrak{P}\neq (p)$ 
of $\Lambda(\Gamma^{\rm ac})$, we have %the equalities
\begin{align*}
{\rm length}_{\mathfrak{P}}(\mathfrak{X}_{K_\infty^{\ac}}^{{\rm rel},{\rm str}}(f))
&={\rm length}_{\mathfrak{P}}(\mathfrak{X}_{K_\infty^{\ac}}^{\bullet,\bullet}(f)_{\rm tors})
+2\;{\rm length}_{\mathfrak{P}}({\rm coker}({\rm res}_\pp))
\end{align*}
and
\begin{align*}
{\rm ord}_{\mathfrak{P}}(c\cdot\mathscr{L}^{\tt BDP}_\pp(f/K))&={\rm length}_{\mathfrak{P}}({\rm coker}({\rm res}_\pp))
+{\rm length}_{\mathfrak{P}}\left(\frac{\mathfrak{Sel}^{\bullet,\bullet}(K,\mathbf{T}^{\rm ac})}
{\Lambda(\Gamma^{\rm ac})\cdot\mathfrak{Z}_c^\bullet}\right),
\end{align*}
where ${\rm res}_\pp:\mathfrak{Sel}^{\bullet,\bullet}(K,\mathbf{T}^{\rm ac})\rightarrow H_\bullet^1(K_\pp,\mathbf{T}^{\rm ac})$
is the natural restriction map.
\end{enumerate}
\end{lem}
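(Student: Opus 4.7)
The main ingredients are: the non-torsion Heegner class $\mathfrak{Z}_c^\bullet \in \mathfrak{Sel}^{\bullet,\bullet}(K,\mathbf{T}^{\rm ac})$, whose restriction at $\pp$ is non-torsion by virtue of Theorem~\ref{thm:ERL-bdp} (since ${\rm Log}^\bullet_{\pp,{\rm ac}}({\rm res}_\pp(\mathfrak{Z}_c^\bullet))=c\cdot\mathscr{L}^{\tt BDP}_\pp(f/K)\neq 0$ by Theorem~\ref{thm:bdp}); the torsion-freeness of $H^1(K,\mathbf{T}^{\rm ac})$, which follows from the irreducibility of $\bar\rho_f|_{G_K}$ (\cite[Lem.~2.6]{cas-BF}); the finite/pseudo-null cokernels of the Coleman and logarithm maps (Propositions~\ref{prop:Image-Col} and~\ref{prop:im-Log}); and the self-orthogonality of the signed local conditions under local Tate duality, together with the conjugation symmetry (via $\tau\colon\pp\leftrightarrow\overline{\pp}$) that lets us translate any argument carried out at $\pp$ to the corresponding statement at $\overline{\pp}$.

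For Part~(1), the plan is to feed this non-torsion input into the five-term Poitou--Tate exact sequence
\[
0\to \mathfrak{Sel}^{{\rm str},\bullet}(K,\mathbf{T}^{\rm ac})\to \mathfrak{Sel}^{\bullet,\bullet}(K,\mathbf{T}^{\rm ac})\xrightarrow{{\rm res}_\pp}H^1_\bullet(K_\pp,\mathbf{T}^{\rm ac})\to \mathfrak{X}^{{\rm rel},\bullet}_{K^{\rm ac}_\infty}(f)\to \mathfrak{X}^{\bullet,\bullet}_{K^{\rm ac}_\infty}(f)\to 0.
\]
Combined with ${\rm rank}_\Lambda H^1_\bullet(K_\pp,\mathbf{T}^{\rm ac})=1$, this forces $\mathfrak{Sel}^{{\rm str},\bullet}$ to be $\Lambda(\Gamma^{\rm ac})$-torsion; torsion-freeness of $H^1(K,\mathbf{T}^{\rm ac})$ then gives $\mathfrak{Sel}^{{\rm str},\bullet}=0$, and the symmetric argument at $\overline{\pp}$ yields $\mathfrak{Sel}^{\bullet,{\rm str}}=0$. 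Applying the global Euler--Poincar\'e characteristic formula to the self-dual $(\bullet,\bullet)$ Selmer structure gives ${\rm rank}_\Lambda \mathfrak{X}^{\bullet,\bullet}_{K_\infty^{\rm ac}}(f)={\rm rank}_\Lambda \mathfrak{Sel}^{\bullet,\bullet}(K,\mathbf{T}^{\rm ac})=1$; reinserting these ranks into the Poitou--Tate sequences above and performing an alternating-sum count yields ${\rm rank}_\Lambda \mathfrak{Sel}^{\bullet,{\rm rel}}=1$ and ${\rm rank}_\Lambda \mathfrak{X}^{\bullet,{\rm str}}_{K_\infty^{\rm ac}}(f)=0$. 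The asserted equality $\mathfrak{Sel}^{\bullet,\bullet}=\mathfrak{Sel}^{\bullet,{\rm rel}}$ then follows by noting that the rank-zero quotient embeds, via ${\rm res}_{\overline{\pp}}$ followed by ${\rm Col}^\bullet_{\overline{\pp},{\rm ac}}$, into the torsion-free module $\Lambda_{\hat{\cO}_{F_\infty}}(\Gamma^{\rm ac})$, and so must vanish.

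For Part~(2), I would use $\mathfrak{Sel}^{\bullet,{\rm str}}=0$ to collapse the Poitou--Tate sequence at $\overline{\pp}$ into a short exact sequence $0\to\mathfrak{Sel}^{\bullet,\bullet}\xrightarrow{{\rm res}_{\overline{\pp}}}H^1_\bullet(K_{\overline{\pp}},\mathbf{T}^{\rm ac})\to\mathfrak{X}^{\bullet,{\rm rel}}_{K_\infty^{\rm ac}}(f)\to\mathfrak{X}^{\bullet,\bullet}_{K_\infty^{\rm ac}}(f)\to 0$ whose rank-one free parts cancel, giving
\[
{\rm length}_{\mathfrak{P}}(\mathfrak{X}^{\bullet,{\rm rel}}_{\rm tors})={\rm length}_{\mathfrak{P}}(\mathfrak{X}^{\bullet,\bullet}_{\rm tors})+{\rm length}_{\mathfrak{P}}({\rm coker}({\rm res}_{\overline{\pp}})),
\]
and similarly with $\pp$ and $\overline{\pp}$ interchanged. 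Chaining this with the Poitou--Tate sequence comparing $(\bullet,{\rm rel})$ with $({\rm rel},{\rm rel})$ and $({\rm rel},{\rm rel})$ with $({\rm rel},{\rm str})$, and using the symmetry ${\rm length}_{\mathfrak{P}}({\rm coker}({\rm res}_\pp))={\rm length}_{\mathfrak{P}}({\rm coker}({\rm res}_{\overline{\pp}}))$ (a consequence of complex conjugation), produces both the torsion-ness of $\mathfrak{X}^{{\rm rel},{\rm str}}_{K_\infty^{\rm ac}}(f)$ and the first formula in~(2). The second formula is immediate from the exact sequence
\[
0\to\Lambda(\Gamma^{\rm ac})\cdot\mathfrak{Z}_c^\bullet\to\mathfrak{Sel}^{\bullet,\bullet}(K,\mathbf{T}^{\rm ac})\xrightarrow{{\rm res}_\pp} H^1_\bullet(K_\pp,\mathbf{T}^{\rm ac})\to{\rm coker}({\rm res}_\pp)\to 0
\]
combined with Theorem~\ref{thm:ERL-bdp}: applying the injective-with-pseudo-null-cokernel map ${\rm Log}^\bullet_{\pp,{\rm ac}}$ sends $\mathfrak{Z}_c^\bullet$ to $c\cdot\mathscr{L}^{\tt BDP}_\pp(f/K)$, and length-additivity at $\mathfrak{P}$ (using that pseudo-null cokernels contribute trivially at height-one primes) yields the claimed identity.

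The main obstacle is the rank computation in Part~(1), specifically verifying the self-orthogonality $H^1_\bullet(K_\qq,\mathbf{T}^{\rm ac})^\perp=H^1_\bullet(K_\qq,\mathbf{A}^{\rm ac})$ under local Tate pairing — a non-trivial input coming from the Wach-module/$(\varphi,\Gamma)$-module formalism underlying ${\rm Col}^\bullet$ — and correctly computing the defect term in the global Euler--Poincar\'e formula over the imaginary quadratic field $K$, including the archimedean contribution. Once ${\rm rank}_\Lambda \mathfrak{X}^{\bullet,\bullet}_{K_\infty^{\rm ac}}(f)=1$ is in hand, the remainder of the argument reduces to careful book-keeping of Poitou--Tate exact sequences and length-additivity in short exact sequences of finitely generated $\Lambda(\Gamma^{\rm ac})$-modules.
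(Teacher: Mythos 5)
Your proposal reconstructs what the paper leaves implicit: the published proof is a one-line citation to the analogous arguments in \cite[\S{4.3}]{cas-wan-ss} (with Theorem~\ref{thm:ERL-bdp} substituted for the explicit reciprocity law used there). The structure you lay out --- non-torsion of ${\rm res}_\pp(\mathfrak{Z}_c^\bullet)$ via Theorem~\ref{thm:ERL-bdp}, torsion-freeness of $H^1(K,\mathbf{T}^{\rm ac})$ under irreducibility of $\bar\rho_f|_{G_K}$, self-orthogonality of the $\bullet$ local condition, the chain of Poitou--Tate sequences, and length additivity at height-one primes using the pseudo-null cokernels of ${\rm Col}^\bullet$ and ${\rm Log}^\bullet$ --- is exactly the template underlying the cited argument, and your handling of the second equality in part~(2) correctly hinges on Theorem~\ref{thm:ERL-bdp}, as the paper itself flags.

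Two points deserve tightening. First, you flag the Euler--Poincar\'e/core rank computation ${\rm rank}_\Lambda\mathfrak{X}^{\bullet,\bullet}_{K_\infty^{\rm ac}}(f)={\rm rank}_\Lambda\mathfrak{Sel}^{\bullet,\bullet}(K,\mathbf{T}^{\rm ac})$ as the main obstacle rather than carrying it out; this is not a gap in strategy but a verification that must be done with care in the $\Lambda(\Gamma^{\rm ac})$-adic setting, since the balance between the archimedean defect (a single complex place contributing rank $2$) and the two rank-one local conditions at $\pp,\overline\pp$ is precisely what makes the core rank vanish, and it is not purely formal that the $\bullet$-condition is self-orthogonal (this relies on the Wach-module construction of ${\rm Col}^\bullet$, as in \cite[Lem.~2.4, Rem.~2.5]{hatley-lei}). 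Second, your appeal to the ``symmetric argument at $\overline\pp$'' and conjugation symmetry is the right idea, but observe that Lemma~\ref{lem:Heeg-Col} is stated only at $\pp$; before invoking the non-torsion input at $\overline\pp$ you need the analogous membership ${\rm res}_{\overline\pp}(\mathfrak{Z}_c^\bullet)\in H^1_\bullet(K_{\overline\pp},\mathbf{T}^{\rm ac})$, which follows because the Kummer images of Heegner points lie in the Bloch--Kato finite subspace at every place above $p$, not only $\pp$. With these caveats addressed, your reconstruction matches the intended argument.
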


\begin{proof}
The proof of the corresponding three lemmas in \cite[\S{4.3}]{cas-wan-ss} applies almost verbatim,
using the explicit reciprocity law of Theorem~\ref{thm:ERL-bdp} in place of
[\emph{loc.cit.}, Thm.~4.6] for the proof of the second equality in part (2).
\end{proof}

Fix a finite set of places $\Sigma$ of $K$ containing those dividing $Np\infty$. For each $\bullet\in\{\sharp,\flat\}$, define the Selmer structure
$\mathcal{F}^\bullet$ on $\mathbf{T}^{\rm ac}$ (in the sense of \cite[\S{1.1}]{howard-PhD-I})
by taking the unramified local condition
\[
H^1_{\rm ur}(K_v,\mathbf{T}^{\rm ac}):={\rm ker}\left\{H^1(K_v,\mathbf{T}^{\rm ac})\longrightarrow
H^1(K_v^{\rm ur},\mathbf{T}^{\rm ac})\right\}
\]
at the places in $v\in\Sigma$ not dividing $p$, and the
local condition $H^1_{\bullet}(K_v,\mathbf{T}^{\rm ac})\subset H^1(K_v,\mathbf{T}^{\rm ac})$
at the primes $v$ dividing $p$. (Thus the Selmer group denoted $\mathfrak{Sel}^{\bullet,\bullet}(K,\mathbf{T}^{\rm ac})$ in $\S\ref{sec:Sel}$ corresponds to $H^1_{\mathcal{F}^\bullet}(K,\mathbf{T}^{\rm ac})$ in the notations of \cite{howard-PhD-I}.)

For the statement of the next result, we refer the reader to \cite[\S{1.2}]{howard-PhD-I} for
the definition of the module of Kolyvagin systems $\mathbf{KS}(\mathbf{T}^{\rm ac},\mathcal{F},\mathcal{L})$
attached to a Selmer structure $\mathcal{F}$ on $\mathbf{T}^{\rm ac}$ and a certain set $\mathcal{L}$
of primes inert in $K$.

\begin{prop}\label{prop:KS}
For each $\bullet\in\{\sharp,\flat\}$, there is a Kolyvagin system
$\kappa^\bullet\in\mathbf{KS}(\mathbf{T}^{\rm ac},\mathcal{F}^\bullet,\mathcal{L})$
with $\kappa_1^\bullet=\mathfrak{Z}_c^\bullet$.
\end{prop}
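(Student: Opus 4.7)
The plan is to adapt the argument of \cite[Prop.~4.9]{cas-wan-ss} (which handled the special case $a_p=0$) to the general non-ordinary setting, using our more general signed decomposition from Theorem~\ref{thm:Heeg} in place of the one available in the supersingular case. Concretely, for each squarefree product $n=\ell_1\cdots\ell_k$ of Kolyvagin primes in $\mathcal{L}$ (inert in $K$, prime to $NDp$, and satisfying the usual congruence conditions), the unbounded Heegner classes $\mathfrak{Z}[mn]^\lambda\in H^1(K[mn],\mathbf{T}^{\rm ac})\hat\otimes\mathcal{H}_{L,v_\lambda}(\Gamma^{\rm ac})$ of Proposition~\ref{prop:unb-Heeg} and the signed decomposition of Theorem~\ref{thm:Heeg} yield bounded signed classes $\mathfrak{Z}[mn]^\bullet$, and scaling by the constant $c\in\cO_L$ fixed at the end of $\S\ref{sec:HP}$ produces integral classes $\mathfrak{Z}_c[mn]^\bullet\in H^1(K[mn],\mathbf{T}^{\rm ac})$.

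Next I would apply Kolyvagin's derivative operators $D_n=\prod_i D_{\ell_i}$ over the ring class extensions $K[n]/K$ to the classes $\mathfrak{Z}_c[n]^\bullet$, producing, in the standard manner, classes
\[
\kappa_n^\bullet\in H^1(K,\mathbf{T}^{\rm ac}/I_n\mathbf{T}^{\rm ac})
\]
for $n\in\mathcal{L}$, where $I_n$ is Kolyvagin's ideal. The Euler-system-type norm relations of Proposition~\ref{prop:Heeg} descend to the signed classes $\mathfrak{Z}_c[n]^\bullet$ by construction (the passage to signed classes being a linear operation that commutes with corestriction), so the $\kappa_n^\bullet$ are $G_K$-invariant modulo $I_n$ and satisfy the finite-singular congruences relating $\kappa_n^\bullet$ and $\kappa_{n\ell}^\bullet$ for $\ell\nmid n$ in $\mathcal{L}$.

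The remaining task is to verify the Kolyvagin-system axioms of \cite[Def.~1.2.3]{howard-PhD-I} with respect to the Selmer structure $\mathcal{F}^\bullet$. The conditions at places $v\in\Sigma\smallsetminus\{p\}$ and the transverse condition at $\ell\mid n$ are classical and follow exactly as in the ordinary Heegner-point argument of Howard \cite[$\S{1.7}$]{howard-PhD-II}, since the local representations at these primes are unchanged by passage to signed classes. The only point specific to the non-ordinary setting is that ${\rm loc}_\qq\kappa_n^\bullet\in H_\bullet^1(K_\qq,\mathbf{T}^{\rm ac}/I_n\mathbf{T}^{\rm ac})$ for $\qq\in\{\pp,\overline\pp\}$, which reduces to an extension of Lemma~\ref{lem:Heeg-Col} to arbitrary ring class conductor: the argument in the proof of that lemma used only that Kummer images of Heegner points lie in the kernel of the Bloch--Kato dual exponential, together with the matrix identity $(\ref{eq:Col-ac})$, and both inputs apply uniformly in the ring class conductor $n$.

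The main obstacle is controlling the integrality constant $C$ of Theorem~\ref{thm:Heeg} uniformly in $n$. The last statement of Lemma~\ref{lem:coeffs} is designed precisely to produce a constant depending only on $\lambda$ (and not on the particular distribution being decomposed), so the same $c\in\cO_L$ works for all $n$, and the classes $\mathfrak{Z}_c[n]^\bullet$ are genuinely integral and compatible under the derivative construction. Once this uniformity is secured, the verification of the Kolyvagin-system axioms is a routine adaptation of the now-standard argument, yielding the desired $\kappa^\bullet\in\mathbf{KS}(\mathbf{T}^{\rm ac},\mathcal{F}^\bullet,\mathcal{L})$ with $\kappa_1^\bullet=\mathfrak{Z}_c^\bullet$.
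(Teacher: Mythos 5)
Your proof follows essentially the same route as the paper: reduce the verification of the Kolyvagin-system axioms to the local condition at the primes above $p$, construct $\kappa_n^\bullet$ via Kolyvagin's derivative operators applied to $\mathfrak{Z}_c[n]^\bullet$, and appeal to Lemma~\ref{lem:Heeg-Col} (with the uniformity in $n$ of the integrality constant handled by Theorem~\ref{thm:Heeg} and the final claim of Lemma~\ref{lem:coeffs}, exactly as you flag). One small inaccuracy: you say the argument ``reduces to an extension of Lemma~\ref{lem:Heeg-Col} to arbitrary ring class conductor,'' but the lemma is already stated for all $m$ prime to $NDp$, so it applies directly to Kolyvagin conductors without any further argument; the paper simply cites it as is.
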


\begin{proof}
As in the proof of the corresponding result in \cite[Thm.~4.14]{cas-wan-ss}
(to which we refer the reader for the definition of the ideal $I_m\subset p\cO_L$, noting that $S$ is \emph{loc.cit.}
corresponds to $m$ here), this is reduced to showing that the classes
\[
\kappa_m^\bullet\in H^1(K,\mathbf{T}^{\rm ac}/I_m\mathbf{T}^{\rm ac}),
\]
obtained by applying the `Kolyvagin's derivative' construction in \cite[\S{1.7}]{howard-PhD-I}
to the classes $\mathfrak{Z}[m]^\bullet$ of Theorem~\ref{thm:Heeg}, 
are such that
\begin{equation}\label{eq:key-incl}
{\rm res}_v(\kappa_m^\bullet)\in
{\rm im}\left(H^1_{\bullet}(K_v,\mathbf{T}^{\rm ac})\longrightarrow H^1(K_v,\mathbf{T}^{\rm ac}/I_m\mathbf{T}^{\rm ac})
\right)
\end{equation}
for all $v\mid p$. To show this, let $G(m)=\prod_{\ell}G(\ell)$ be the Galois group of the extension $K[m]/K[1]$, and recall that Kolyvagin's derivative operator $D_m=\prod_{\ell}D_\ell\in\bZ[G(m)]$ is defined by $D_\ell=\sum_{i=1}^\ell i\sigma_\ell^i$, where $\sigma_\ell\in G(\ell)$ is any generator. Setting 
\begin{equation}\label{eq:kol-der-op}
Z[m]^\bullet:=\sum_\sigma\sigma D_m\mathfrak{Z}_c[m]\in H^1(K[m],\Tc)_,
\end{equation}
where the sum is over a complete set of representatives of the quotient of $\mathcal{G}(m):={\rm Gal}(K[m]/K)$ by $G(m)$, one readily checks that the natural image $\bar{Z}[m]^\bullet$ of $Z[m]^\bullet$ in $H^1(K[m],\Tc/I_m\Tc)$ is fixed by $\mathcal{G}(m)$; the derivative class $\kappa_m^\bullet$ is then determined by the condition that
\begin{equation}\label{eq:kol-der}
{\rm res}(\kappa_m^\bullet)=\bar{Z}[m]^\bullet
\end{equation}
under the isomorphism ${\rm res}:H^1(K,\Tc/I_m\Tc)\simeq H^1(K[m],\Tc/I_m\Tc)^{\mathcal{G}(m)}$. 

Now, if $w$ is any place of $K[m]$ above $v$, by Lemma~\ref{lem:Heeg-Col} we have the inclusion 
\[
{\rm res}_w(\mathfrak{Z}_c[m]^\bullet)\in H^1_\bullet(K[m]_w,\Tc),
\] 
and hence from $(\ref{eq:kol-der-op})$ and $(\ref{eq:kol-der})$, the inclusion $(\ref{eq:key-incl})$ follows.
\end{proof}

Equipped with the Kolyvagin system of Proposition~\ref{prop:KS} (whose
nontriviality is guaranteed by Theorem~\ref{thm:ERL-bdp})
the following result towards Conjecture~\ref{conj:HP} follows easily.

\begin{thm}\label{thm:KS-argument}
Let $\bullet\in\{\sharp,\flat\}$, and assume that $\bar{\rho}_f\vert_{G_K}$ is irreducible. 
Then both $\mathfrak{X}_{K_\infty^{\ac}}^{\bullet,\bullet}(f)$
and $\mathfrak{Sel}^{\bullet,\bullet}(K,\mathbf{T}^{\rm ac})$ have $\Lambda(\Gamma^{\rm ac})$-rank $1$, and
we have the divisibility
\[
c^2\cdot{\rm Char}_{\Lambda(\Gamma^{\rm ac})}(\mathfrak{X}_{K_\infty^{\ac}}^{\bullet,\bullet}(f)_{\rm tors})
\supseteq{\rm Char}_{\Lambda(\Gamma^{\rm ac})}\left(\frac{\mathfrak{Sel}^{\bullet,\bullet}(K,\mathbf{T}^{\rm ac})}
{\Lambda(\Gamma^{\rm ac})\cdot\mathfrak{Z}_c^\bullet}\right)^2.
\]
\end{thm}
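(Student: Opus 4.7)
\smallskip

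The plan is to apply the Kolyvagin system machinery of Howard \cite{howard-PhD-I} (as adapted to the signed setting in the case $a_p=0$ in \cite{cas-wan-ss}) to the Kolyvagin system $\kappa^\bullet\in\mathbf{KS}(\mathbf{T}^{\rm ac},\mathcal{F}^\bullet,\mathcal{L})$ produced by Proposition~\ref{prop:KS}. Howard's framework, applied to a Selmer structure $\mathcal{F}$ on $\mathbf{T}^{\rm ac}$ satisfying his running hypotheses together with a nontrivial Kolyvagin system whose bottom class $\kappa_1$ lies in $H^1_{\mathcal{F}}(K,\mathbf{T}^{\rm ac})$, produces precisely the conclusion that both $H^1_{\mathcal{F}}(K,\mathbf{T}^{\rm ac})$ and the Pontrjagin dual of the corresponding discrete Selmer group have $\Lambda(\Gamma^{\rm ac})$-rank one, and gives the divisibility
\[
{\rm Char}_{\Lambda(\Gamma^{\rm ac})}\bigl(H^1_{\mathcal{F}}(K,\mathbf{A}^{\rm ac})^{\vee}_{\rm tors}\bigr)\supseteq{\rm Char}_{\Lambda(\Gamma^{\rm ac})}\biggl(\frac{H^1_{\mathcal{F}}(K,\mathbf{T}^{\rm ac})}{\Lambda(\Gamma^{\rm ac})\cdot\kappa_1}\biggr)^2.
\]
Taking $\mathcal{F}=\mathcal{F}^\bullet$ and $\kappa_1=\mathfrak{Z}_c^\bullet$ yields the asserted divisibility, with the factor $c^2$ recording that $\mathfrak{Z}_c^\bullet=c\cdot\mathfrak{Z}^\bullet$.

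The preliminary step is to check that Howard's hypotheses are met for the Selmer structure $\mathcal{F}^\bullet$. The irreducibility of $\bar{\rho}_f\vert_{G_K}$, together with the standing assumption that $p>2$, yields the needed ``large image'' conditions on the residual representation, and also ensures that the $\Lambda(\Gamma^{\rm ac})$-torsion of $H^1(K,\mathbf{T}^{\rm ac})$ vanishes. The crucial local condition to verify is that $\mathcal{F}^\bullet$ is \emph{self-dual}: under local Tate duality at each prime $v\mid p$, the submodule $H^1_\bullet(K_v,\mathbf{T}^{\rm ac})={\rm ker}({\rm Col}_{v,{\rm ac}}^\bullet)$ must coincide with the orthogonal complement of its analogue for the Pontrjagin dual, so that the $\Lambda(\Gamma^{\rm ac})$-coranks of the local conditions and their duals balance. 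This is the signed analogue of the classical fact that the Bloch--Kato local condition at $p$ is self-dual, and follows from the same kind of orthogonality argument already carried out in \cite{cas-wan-ss} in the case $a_p=0$; the image-and-kernel computations for ${\rm Col}^\bullet_{v,{\rm ac}}$ underlying this verification are essentially a projection of the two-variable statements in \cite[\S{2.3}]{BL-non-ord} to the anticyclotomic line.

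The other crucial input is the \emph{nontriviality} of the Kolyvagin system, which is supplied by the explicit reciprocity law of Theorem~\ref{thm:ERL-bdp}: since ${\rm Log}^\bullet_{\pp,{\rm ac}}({\rm res}_\pp(\mathfrak{Z}_c^\bullet))=c\cdot\mathscr{L}_\pp^{\tt BDP}(f/K)$ up to a unit, and $\mathscr{L}_\pp^{\tt BDP}(f/K)\neq 0$ by Theorem~\ref{thm:bdp}, the class $\mathfrak{Z}_c^\bullet$ is nontorsion in $H^1(K,\mathbf{T}^{\rm ac})$; in particular $\kappa_1^\bullet\neq 0$, and a fortiori the Kolyvagin system $\kappa^\bullet$ itself is nonzero. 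This places us in the nontrivial regime of Howard's theorem.

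The main obstacle I expect is the verification of self-duality of the signed local conditions $H^1_\bullet(K_v,\mathbf{T}^{\rm ac})$ under local Tate duality, which is where the specific structure of the Wach-module construction of the Coleman maps of $\S\ref{sec:Col}$ must be invoked; once this is in hand, the remaining rank and divisibility statements fall out of Howard's general machinery applied verbatim. The factor $c^2$ in the final inequality comes out automatically, since the only effect of replacing $\mathfrak{Z}^\bullet$ by $c\cdot\mathfrak{Z}^\bullet$ at the bottom of the Kolyvagin system is to multiply the index $[\mathfrak{Sel}^{\bullet,\bullet}(K,\mathbf{T}^{\rm ac}):\Lambda(\Gamma^{\rm ac})\cdot\mathfrak{Z}_c^\bullet]$ appearing on the right-hand side of Howard's bound by $c$, squared when passed through the characteristic ideal bound.
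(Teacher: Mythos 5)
Your proposal is correct and follows essentially the same approach as the paper's proof: adapt the Kolyvagin system argument of \cite{howard-PhD-I} via the last paragraph of \cite[\S{4.3}]{cas-wan-ss}, using the Kolyvagin system from Proposition~\ref{prop:KS}, the nontriviality supplied by Theorem~\ref{thm:ERL-bdp}, and the self-duality of the signed local conditions as the one nonroutine input. The only difference is in sourcing the self-duality: the paper simply quotes Lemma~2.4 and Remark~2.5 of \cite{hatley-lei}, whereas you propose to re-derive it from the image/kernel computations of \cite{BL-non-ord} projected to the anticyclotomic line; both routes serve the same purpose, though citing the existing result is of course shorter.
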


\begin{proof}
This follows from a straightforward adaptation of
the arguments in the last paragraph of \cite[\S{4.3}]{cas-wan-ss},
with the self-duality of the
plus/minus local conditions quoted from \cite{kim-parity} in \emph{loc.cit.}
replaced by the corresponding results for the signed local conditions of $\S\ref{sec:Sel}$ above (see e.g. Lemma~2.4 and Remark~2.5 in \cite{hatley-lei}).
\end{proof}

Now we assemble all the pieces, yielding the follow result on Conjecture~\ref{conj:ac-MC}. 

\begin{thm}\label{thm:ac-MC}
	Let $f\in S_2(\Gamma_0(N))$ be a newform, let $p\nmid N$ be an odd prime,
	and let $K/\bQ$ be an imaginary quadratic field in which $p=\pp\overline{\pp}$ splits.
	Assume that $f$ is non-ordinary at $p$, and that
	\begin{itemize}
		\item{} $N$ is square-free,
		\item{} $N^-$ is divisible by an even number of primes,
		\item{} $\bar{\rho}_f$ is ramified at every prime $\ell\mid N$ which is nonsplit in $K$, and there is at least one such prime,
		\item{} $\bar{\rho}_f\vert_{G_K}$ is irreducible.
	\end{itemize}
	Then $X_{K_\infty^{\ac}}^{{\rm rel},{\rm str}}(f)$ is $\Lambda(\Gamma^{\ac})$-cotorsion, and
	\[
	Ch_{\Lambda(\Gamma^{\rm ac})}(X_{K_\infty^{\ac}}^{{\rm rel},{\rm str}}(f))=(\mathscr{L}^{\tt BDP}_\pp(f/K)^2)
	\]
	as ideals in $\Lambda_{\unr}(\Gamma^{\rm ac})$. 
	That is, under the stated hypotheses Conjecture~\ref{conj:ac-MC} holds.
\end{thm}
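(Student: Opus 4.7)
The plan is to establish Conjecture~\ref{conj:ac-MC} by proving the two reciprocal divisibilities separately and then combining them.

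For the containment
\[
\bigl(\mathscr{L}_\pp^{\tt BDP}(f/K)^2\bigr)\;\subseteq\;{\rm Char}_{\Lambda(\Gamma^{\rm ac})}\bigl(X_{K_\infty^{\rm ac}}^{{\rm rel},{\rm str}}(f)\bigr),
\]
I would combine Theorem~\ref{thm:KS-argument} with the two length identities of Lemma~\ref{lem:prelim}(2). For each $\bullet\in\{\sharp,\flat\}$, Theorem~\ref{thm:KS-argument} simultaneously produces the rank-one hypothesis on $\mathfrak{Sel}^{\bullet,\bullet}(K,\mathbf{T}^{\rm ac})$ needed to invoke Lemma~\ref{lem:prelim}, and, at every height-one prime $\mathfrak{P}$ of $\Lambda(\Gamma^{\rm ac})$, the inequality
\[
2\,{\rm ord}_{\mathfrak{P}}(c)+{\rm length}_{\mathfrak{P}}\bigl(\mathfrak{X}_{K_\infty^{\rm ac}}^{\bullet,\bullet}(f)_{\rm tors}\bigr)\;\leq\;2\,{\rm length}_{\mathfrak{P}}\biggl(\frac{\mathfrak{Sel}^{\bullet,\bullet}(K,\mathbf{T}^{\rm ac})}{\Lambda(\Gamma^{\rm ac})\cdot\mathfrak{Z}_c^\bullet}\biggr).
\]
Eliminating ${\rm length}_{\mathfrak{P}}({\rm coker}({\rm res}_\pp))$ between the two identities of Lemma~\ref{lem:prelim}(2) --- the second of which encodes the explicit reciprocity law of Theorem~\ref{thm:ERL-bdp} --- and substituting this inequality, the contributions from $c$ cancel, leaving
\[
{\rm length}_{\mathfrak{P}}\bigl(\mathfrak{X}_{K_\infty^{\rm ac}}^{{\rm rel},{\rm str}}(f)\bigr)\;\leq\;2\,{\rm ord}_{\mathfrak{P}}\bigl(\mathscr{L}_\pp^{\tt BDP}(f/K)\bigr),
\]
as desired. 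This is a direct generalisation of the Castella--Wan strategy carried out in the $a_p=0$ case, now executed for general non-ordinary primes by means of the signed Heegner classes and signed logarithm maps constructed in Sections~\ref{sec:HP} and \ref{sec:HP-ERL}.

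For the reverse containment I would descend the two-variable divisibility of Wan (Theorem~\ref{thm:wan}) along the projection $\Lambda(\Gamma_K)\twoheadrightarrow\Lambda(\Gamma^{\rm ac})$ induced by $\Gamma_K\twoheadrightarrow\Gamma_K^-=\Gamma^{\rm ac}$. Two ingredients are needed here: first, a control-theoretic surjection $\mathfrak{X}_{K_\infty}^{{\rm rel},{\rm str}}(f)/J\cdot\mathfrak{X}_{K_\infty}^{{\rm rel},{\rm str}}(f)\twoheadrightarrow\mathfrak{X}_{K_\infty^{\rm ac}}^{{\rm rel},{\rm str}}(f)$, where $J\subset\Lambda(\Gamma_K)$ is the kernel of the projection of Iwasawa algebras, obtained by a standard analysis of the restriction maps at the places outside $p$ (analogous to Proposition~\ref{prop:control}); and second, the identification of the image of $\mathscr{L}_\pp(f/K)$ under this projection with $\mathscr{L}_\pp^{\tt BDP}(f/K)^2$ up to a unit, which may be verified by comparing the interpolation formula of Theorem~\ref{thm:bdp} against the corresponding interpolation formula for $\mathscr{L}_\pp(f/K)$ recorded in \cite{wan-combined}.

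The main obstacle I anticipate is the careful handling of the ``exceptional primes'' allowed in Theorem~\ref{thm:wan}, namely those pulled back from height-one primes of $\Lambda(\Gamma_K^+)$: under the projection onto $\Lambda(\Gamma^{\rm ac})$ the generators of such primes are annihilated, so a precise book-keeping of supports is required to ensure the divisibility passes cleanly to the anticyclotomic line. Since the height-one primes of $\Lambda(\Gamma^{\rm ac})$ do not arise as specializations of these exceptional primes (they correspond to directions transverse to $\Gamma_K^+$), the descent should go through, but the argument must be made rigorous. Combining the two reciprocal divisibilities will then yield the claimed equality of ideals in $\Lambda_{\unr}(\Gamma^{\rm ac})$.
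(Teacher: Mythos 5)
Your derivation of the inequality $\mathrm{length}_{\mathfrak{P}}(\mathfrak{X}_{K_\infty^{\rm ac}}^{{\rm rel},{\rm str}}(f))\leq 2\,\mathrm{ord}_{\mathfrak{P}}(\mathscr{L}_\pp^{\tt BDP}(f/K))$ by eliminating $\mathrm{length}_{\mathfrak{P}}(\mathrm{coker}(\mathrm{res}_\pp))$ between the two identities of Lemma~\ref{lem:prelim}(2) and feeding in Theorem~\ref{thm:KS-argument} is correct and is exactly the paper's argument; the descent of Theorem~\ref{thm:wan} along $\Lambda(\Gamma_K)\twoheadrightarrow\Lambda(\Gamma^{\rm ac})$, with the control theorem and the comparison of $p$-adic $L$-functions as the two ingredients, is likewise the right skeleton for the reverse containment. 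Two points need repair.

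First, your treatment of the exceptional primes misidentifies the difficulty. A generator $g(\gamma_{\rm cyc}-1)$ of a height-one prime of $\Lambda(\Gamma_K^+)$ is not ``annihilated'' under the projection: it maps to the scalar $g(0)$, which may be a unit but may also be a positive power of the uniformizer. In particular, the height-one prime $(p)$ of $\Lambda(\Gamma^{\rm ac})$ \emph{does} arise from such specializations, contrary to your claim that the exceptional primes are transverse to the descent. Moreover, Theorem~\ref{thm:wan} is stated after tensoring with $\bQ_p$, so a priori the descent only yields $(\mathscr{L}_\pp^{\tt BDP}(f/K)^2)\supseteq\mathrm{Char}_{\Lambda(\Gamma^{\rm ac})}(\mathfrak{X}_{K_\infty^{\rm ac}}^{{\rm rel},{\rm str}}(f))$ up to $p$-power ambiguity. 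What actually removes this ambiguity is the vanishing of the $\mu$-invariant of $\mathscr{L}_\pp^{\tt BDP}(f/K)$ recorded in Theorem~\ref{thm:bdp}: since $(\mathscr{L}_\pp^{\tt BDP}(f/K)^2)$ is then prime to $(p)$, the containment away from $(p)$ already gives the containment of ideals in $\Lambda_{\unr}(\Gamma^{\rm ac})$. You must invoke this; ``book-keeping of supports'' by itself does not close the gap.

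Second, you state your target as a containment for $X_{K_\infty^{\rm ac}}^{{\rm rel},{\rm str}}(f)$, but all the tools you invoke --- Lemma~\ref{lem:prelim}, Theorem~\ref{thm:KS-argument}, the control theorem --- are formulated for the relaxed dual Selmer group $\mathfrak{X}_{K_\infty^{\rm ac}}^{{\rm rel},{\rm str}}(f)$ (unramified rather than trivial local conditions away from $p$), and indeed your derived inequality is for $\mathfrak{X}$. Your argument therefore proves the equality for $\mathfrak{X}$ but leaves a gap to the stated theorem about $X$. Under the hypothesis that $\bar\rho_f$ is ramified at every nonsplit prime dividing $N$, the two dual Selmer modules have the same characteristic ideal (by Pollack--Weston, as the paper cites), and this final comparison must be made explicit to conclude.

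Subject to these two corrections, your plan coincides with the paper's proof.
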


\begin{proof}
We shall adapt the arguments in \cite{cas-wan-ss}, indicating the necessary adjustments. From Theorem~\ref{thm:KS-argument} and Lemma~\ref{lem:prelim}, we know that $\mathfrak{Sel}^{\bullet,\bullet}(K,\mathbf{T}^{\ac})$ and
$\mathfrak{X}_{K_\infty^{\ac}}^{\bullet,\bullet}(f)$ both have $\Lambda(\Gamma^{\ac})$-rank $1$, $\mathfrak{X}_{K_\infty^{\ac}}^{{\rm rel},{\rm str}}(f)$ is $\Lambda(\Gamma^{\ac})$-torsion,
and for any height one prime $\mathfrak{P}$ of $\Lambda(\Gamma^{\ac})$, the inequalities
\begin{equation}\label{eq:1}
{\rm length}_{\mathfrak{P}}(c^2\cdot\mathfrak{X}_{K_\infty^{\ac}}^{\bullet,\bullet}(f)_{\rm tors})
\leqslant2\;{\rm length}_{\mathfrak{P}}
\left(\frac{\mathfrak{Sel}^{\bullet,\bullet}(K,\mathbf{T}^{\rm ac})}{\Lambda(\Gamma^{\rm ac})\cdot\mathfrak{Z}_c^\bullet}\right)
\end{equation}
and
\begin{equation}\label{eq:2}
{\rm length}_{\mathfrak{P}}(\mathfrak{X}_{K_\infty^{\ac}}^{{\rm rel},{\rm str}}(f))
\leqslant 2\;{\rm length}_{\mathfrak{P}}(\mathscr{L}^{\tt BDP}_\pp(f/K))
\end{equation}
hold. On the other hand, the divisibility in Theorem~\ref{thm:wan} combined with
the comparison of $p$-adic $L$-functions in \cite[Cor.~1.12]{cas-wan-ss} and a standard control theorem (as in \cite[Prop.~3.9]{SU} and \cite[\S{3.2}]{wan}) implies that
\begin{equation}\label{eq:wan-div}
{\rm Char}_{\Lambda(\Gamma^{\rm ac})}(\mathfrak{X}_{K_\infty^{\ac}}^{{\rm rel},{\rm str}}(f))%\Lambda^{\rm ac}_{R_0}
\subseteq(\mathscr{L}^{\tt BDP}_\pp(f/K)^2)
\end{equation}
as ideals in $\Lambda_{\unr}(\Gamma^{\rm ac})$, using the fact that by the vanishing of the $\mu$-invariant of $\mathscr{L}^{\tt BDP}_\pp(f/K)$ (see Theorem~\ref{thm:bdp}), the ambiguity by powers of $p$ and pullbacks of height one primes of $\Lambda(\Gamma_K^+)$ in Theorem~\ref{thm:wan} can be removed. Now, $(\ref{eq:wan-div})$ combined with $(\ref{eq:2})$ yields the  equality in Conjecture~\ref{conj:ac-MC} with $\mathfrak{X}_{K_\infty^{\ac}}^{{\rm rel},{\rm str}}(f)$ in place of $X_{K_\infty^{\ac}}^{{\rm rel},{\rm str}}(f)$; since by \cite[\S{3}]{pollack-weston} (see also \cite[Prop.~2.5]{cas-mult}) both Selmer modules have the same characteristic ideal under our ramification hypotheses on $\bar\rho_f$, the result follows.
\end{proof}

In particular, Theorem~\ref{thm:ac-MC} yields the divisibility opposite to Theorem~\ref{thm:wan} on the Iwasawa--Greenberg main conjecture for $\mathscr{L}_\pp(f/K)$: 

\begin{cor}\label{thm:2var-IwGr}
Under the hypotheses of Theorem~\ref{thm:ac-MC}, the module $\mathfrak{X}_{K_\infty}^{{\rm rel},{\rm str}}(f)$ is $\Lambda(\Gamma_K)$-torsion, and we have
\[
{\rm Char}_{\Lambda(\Gamma_K)}(X_{K_\infty}^{{\rm rel},{\rm str}}(f))=(\mathscr{L}_\pp(f/K))
\]
as ideals in $\Lambda_{\unr}(\Gamma_K)$, 
\end{cor}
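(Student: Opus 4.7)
The plan is to upgrade the divisibility recorded in Theorem~\ref{thm:wan} to an equality by descending to the anticyclotomic line, where Theorem~\ref{thm:ac-MC} provides an exact formula.

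First, I would combine Theorem~\ref{thm:wan} with a standard control theorem relating the two-variable and anticyclotomic Selmer groups (as in \cite[Prop.~3.9]{SU} and \cite[\S{3.2}]{wan}), together with the observation recorded in the introduction (cf. \cite[Cor.~1.12]{cas-wan-ss}) that $\mathscr{L}_\pp(f/K)$ specializes under the natural projection $\Lambda(\Gamma_K)\twoheadrightarrow\Lambda(\Gamma^{\ac})$ (killing $\gamma^{\rm cyc}-1$) to $\mathscr{L}_\pp^{\tt BDP}(f/K)^2$ up to a unit. Together with Theorem~\ref{thm:ac-MC}, this yields that both $\text{Char}_{\Lambda(\Gamma_K)}(\mathfrak{X}_{K_\infty}^{{\rm rel},{\rm str}}(f))$ and the ideal $(\mathscr{L}_\pp(f/K))$ specialize to the same ideal $(\mathscr{L}_\pp^{\tt BDP}(f/K)^2)$ on the anticyclotomic line. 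The $\Lambda(\Gamma_K)$-cotorsion claim follows from the divisibility of Theorem~\ref{thm:wan} and the nonvanishing of $\mathscr{L}_\pp(f/K)$, which in turn follows from that of $\mathscr{L}_\pp^{\tt BDP}(f/K)$ in Theorem~\ref{thm:bdp}.

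Next, I would let $F$ generate $\text{Char}_{\Lambda(\Gamma_K)}(\mathfrak{X}_{K_\infty}^{{\rm rel},{\rm str}}(f))$, so that Theorem~\ref{thm:wan} gives $\mathscr{L}_\pp(f/K)=F\cdot H$ in $\Lambda_{\unr}(\Gamma_K)\otimes_{\bZ_p}\bQ_p$ (modulo the stated ambiguity). Projecting to the anticyclotomic line, the comparison from the previous paragraph forces the image of $H$ in $\Lambda_{\unr}(\Gamma^{\ac})\otimes\bQ_p$ to be a unit. Since $\Lambda_{\unr}(\Gamma_K)$ and $\Lambda_{\unr}(\Gamma^{\ac})$ are complete local rings sharing the same residue field, units in the quotient lift to units in the base, and hence $H$ itself is a unit in $\Lambda_{\unr}(\Gamma_K)\otimes\bQ_p$. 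Together with the $\mu$-invariant vanishing of $\mathscr{L}_\pp^{\tt BDP}(f/K)$ from Theorem~\ref{thm:bdp}, which as in the proof of Theorem~\ref{thm:ac-MC} excludes any $p$-power ambiguity, this yields the desired equality in $\Lambda_{\unr}(\Gamma_K)$.

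The main obstacle is the twofold ambiguity in Theorem~\ref{thm:wan}, namely by powers of $p$ and by pullbacks of height one primes from $\Lambda(\Gamma_K^+)$. The $\mu$-invariant vanishing of $\mathscr{L}_\pp^{\tt BDP}(f/K)$ dispatches the $p$-power ambiguity exactly as in the proof of Theorem~\ref{thm:ac-MC}. For the cyclotomic pullback ambiguity, the point is that any such spurious factor absorbed into $H$ would have non-unit image on the anticyclotomic line (either trivially zero, if the pullback contains $\gamma^{\rm cyc}-1$, or $p$-divisible for distinguished polynomials of positive degree), and this would contradict the matching of characteristic ideals established via Theorem~\ref{thm:ac-MC}. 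Thus the anticyclotomic equality, far from being merely a consequence of the two-variable one, is precisely the rigidity statement needed to rule out any spurious factor and promote the divisibility to an equality.
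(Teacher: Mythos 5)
Your overall strategy --- specialize Theorem~\ref{thm:wan} and Theorem~\ref{thm:ac-MC} to the anticyclotomic line via a control theorem and the comparison $\mathscr{L}_\pp(f/K)|_{\rm ac}\doteq\mathscr{L}_\pp^{\tt BDP}(f/K)^2$, then promote the divisibility to an equality by a rigidity argument --- is precisely what the paper does (outsourcing the details to \cite[Thm.~5.2]{cas-wan-ss}). However, there is a direction error in the way you set up the comparison: Theorem~\ref{thm:wan} asserts ${\rm Char}_{\Lambda(\Gamma_K)}(\mathfrak{X}_{K_\infty}^{\rm rel,str}(f))\subseteq(\mathscr{L}_\pp(f/K))$, i.e.\ $\mathscr{L}_\pp(f/K)$ \emph{divides} the characteristic ideal, so the correct decomposition is $F=\mathscr{L}_\pp(f/K)\cdot H$, not $\mathscr{L}_\pp(f/K)=F\cdot H$ as you wrote. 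The conclusion you want (that $H$ is a unit) is the same, but the presentation as written is inverted.

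The ``units in the quotient lift to units in the base'' step is misphrased and, more importantly, is invoked in $\Lambda_{\unr}(\Gamma_K)\otimes_{\bZ_p}\bQ_p$, which is not a local ring (its spectrum is the whole rigid-analytic polydisc), so the residue-field argument does not apply there. The valid form of the observation is: if $H\in\Lambda_{\unr}(\Gamma_K)$ (integrally) maps to a unit in $\Lambda_{\unr}(\Gamma^{\rm ac})$, then $H$ is a unit, because the maximal ideal of $\Lambda_{\unr}(\Gamma_K)$ maps onto that of the quotient. This means you must first establish that $H$ is integral before invoking it. That, in turn, hinges on removing the two ambiguities in Theorem~\ref{thm:wan}, and your discussion of the cyclotomic-pullback ambiguity is not quite tight: showing that a \emph{single} spurious pullback factor of $H$ would have non-unit image on the anticyclotomic line does not by itself rule out a \emph{ratio} of two such factors whose $T_{\rm cyc}=0$ specializations are the same power of $p$. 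The correct argument is that a pullback prime $g$ dividing $\mathscr{L}_\pp(f/K)$ would force $g(0)\mid\mathscr{L}_\pp^{\tt BDP}(f/K)^2$ in $\Lambda_{\unr}(\Gamma^{\rm ac})$ with $g(0)$ a positive power of $p$ (or zero), contradicting the $\mu$-invariant vanishing (or nonvanishing) from Theorem~\ref{thm:bdp}; the symmetric statement for $F$ follows from the equality $(\bar F)=(\bar{\mathscr{L}})$ of Theorem~\ref{thm:ac-MC}. Once both are ruled out, $H\in\Lambda_{\unr}(\Gamma_K)$ and the local-ring argument closes the proof. With these corrections your proof becomes essentially the argument of \cite[Thm.~5.2]{cas-wan-ss} that the paper cites.
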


\begin{proof}
This follows by combining Theorem~\ref{thm:ac-MC} and the divisibility in Theorem~\ref{thm:wan} as in \cite[Thm.~5.2]{cas-wan-ss}.
\end{proof}

We can now conclude the proof of our first main result.

\begin{proof}[Proof of Theorem~B in the indefinite case]
Let $\bullet\in\{\sharp,\flat\}$ be such that 
%the cyclotomic restriction 
$\mathfrak{L}_p^{\bullet,\bullet}(f/K)_{\rm cyc}$ is nonzero, as assumed in the statement.   
%(Note that there is always at least one such $\bullet$ by Corollary~\ref{cor:eq-sign}.) 
By Theorem~\ref{thm:equiv}, the conclusion of Corollary~\ref{thm:2var-IwGr} then implies that  $\mathfrak{X}_{K_\infty}^{\bullet,\bullet}(f)$ is $\Lambda(\Gamma_K)$-torsion and we have the equality
\begin{equation}\label{eq:div-exc}
c\cdot{\rm Char}_{\Lambda(\Gamma_K)}(\mathfrak{X}^{\bullet,\bullet}_{K_\infty}(f))=(\mathfrak{L}_p^{\bullet,\bullet}(f/K))
\end{equation}
as ideals in $\Lambda(\Gamma_K)$, up to exceptional primes. Thus to conclude the proof it suffices to show that neither side in equality $(\ref{eq:div-exc})$ is divisible by exceptional primes. For the right-hand side, this follows from the nonvanishing of    $\mathfrak{L}_p^{\bullet,\bullet}(f/K)_{\rm cyc}$; for the left-hand side, by Proposition~\ref{prop:cyc-comp} the same nonvanishing implies that both $L_p^\bullet(f)$ and $L_p^\bullet(f_K)$ are nonzero, and so both $X_{\bQ_\infty}^\bullet(f)$ and $X_{\bQ_\infty}^\bullet(f_K)$ are $\Lambda(\Gamma^{\rm cyc})$-cotorsion by \cite[Thm.~6.5]{LLZ-AJM} (a consequence of Kato's work \cite{Kato295}). Combined with Proposition~\ref{prop:control}, this shows that ${\rm Char}_{\Lambda(\Gamma_K)}(\mathfrak{X}_{K_\infty}^{\bullet,\bullet}(f))$ has no exceptional prime divisors, concluding the proof of Theorem~B
in the indefinite case.
\end{proof}

\subsection{Definite case}\label{subsec:def}

In this section we complete the proof of Theorem~B in the case in which $N^-$ has an \emph{odd} number of prime factors, and give the proof of Theorem~A, our main result on the signed main conjectures of Lei--Loeffler--Zerbes \cite{LLZ-AJM}. 

Key to both proofs will be the following intermediate result:

\begin{thm}\label{thm:2var-div}
    %Let $f\in S_2(\Gamma_0(N))$ be a newform, let $p\nmid N$ be an odd prime, and let $K/\bQ$ be an imaginary quadratic field in which $p=\pp\overline{\pp}$ splits. Assume that $f$ is non-ordinary at $p$, and that
	%\begin{itemize}
	Assume that:
	\begin{itemize}
		\item{} $N$ is square-free,
		\item{} $N^-$ has an odd number of prime factors,
		\item{} $\bar{\rho}_f$ is ramified at every prime $\ell\mid N$ which is nonsplit in $K$,
		\item{} $\bar{\rho}_f\vert_{G_K}$ is irreducible.
	\end{itemize}
	If $\bullet,\circ\in\{\sharp,\flat\}$ are such that the restriction $\mathfrak{L}_p^{\bullet,\circ}(f/K)_{\rm cyc}$ is nonzero, then the module $\mathfrak{X}_{K_\infty}^{\bullet,\circ}(f)$ is $\Lambda(\Gamma_K)$-torsion, and we have the divisibility
	\[
	c\cdot{\rm Char}_{\Lambda(\Gamma_K)}(\mathfrak{X}^{\bullet,\circ}_{K_\infty}(f))
	\subseteq(\mathfrak{L}_p^{\bullet,\circ}(f/K))
	\]
	as ideals in $\Lambda(\Gamma_K)$.		
\end{thm}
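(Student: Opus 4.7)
The plan is to mimic the proof of Theorem~B given in the indefinite case (Section~\ref{subsec:indef}), with the signed theta elements $\Theta_\infty^\bullet$ of $\S\ref{sec:ac}$ and the Vatsal-type vanishing of their $\mu$-invariants (Proposition~\ref{prop:mu-def}) playing the role of the signed Heegner classes and the anticyclotomic main conjecture (Theorem~\ref{thm:ac-MC}), which are unavailable when $(f,K)$ is definite. As in the indefinite setting, the two central ingredients are Wan's divisibility (Theorem~\ref{thm:wan}) towards the two-variable Iwasawa--Greenberg main conjecture for $\mathscr{L}_\pp(f/K)$, and the equivalence of main conjectures provided by Theorem~\ref{thm:equiv}.

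First I would apply Theorem~\ref{thm:wan} to obtain the divisibility
\[
{\rm Char}_{\Lambda(\Gamma_K)}(\mathfrak{X}^{{\rm rel},{\rm str}}_{K_\infty}(f))
\subseteq (\mathscr{L}_\pp(f/K))
\]
as ideals in $\Lambda_{R_0}(\Gamma_K)\otimes_{\bZ_p}\bQ_p$, up to powers of $p$ and pullbacks of height one primes of $\Lambda(\Gamma_K^+)$. Then, by the implication $(1)\Rightarrow(3)$ in Theorem~\ref{thm:equiv}, this would translate (up to anticyclotomic exceptional primes) into the desired containment
\[
c\cdot{\rm Char}_{\Lambda(\Gamma_K)}(\mathfrak{X}^{\bullet,\circ}_{K_\infty}(f))
\subseteq (\mathfrak{L}_p^{\bullet,\circ}(f/K))
\]
in $\Lambda(\Gamma_K)$. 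The proof would then be completed by separately removing (a) the ambiguity by powers of $p$ and by pullbacks of cyclotomic primes inherited from Theorem~\ref{thm:wan}, and (b) the ambiguity by anticyclotomic exceptional primes introduced by Theorem~\ref{thm:equiv}.

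For (b) I would argue exactly as in the indefinite case: the right-hand side has no anticyclotomic exceptional prime divisor since $\mathfrak{L}_p^{\bullet,\circ}(f/K)_{\rm cyc}$ is assumed nonzero, and the left-hand side has no such divisor either by an analogue of Proposition~\ref{prop:control} combined with Kato's main conjecture \cite[Thm.~6.5]{LLZ-AJM} applied to $L_p^\bullet(f)$ and $L_p^\circ(f_K)$ (which are both nonzero by Proposition~\ref{prop:cyc-comp} and the nonvanishing hypothesis). For (a), which I expect to be the main obstacle, I would restrict to the anticyclotomic line and identify the restriction of $\mathfrak{L}_p^{\bullet,\bullet}(f/K)$ with an appropriate product of signed theta elements $\Theta_\infty^\bullet$, via a variant of the explicit reciprocity law of Theorem~\ref{thm:ERL-pp} specialized to anticyclotomic characters and matched against the interpolation formula $(\ref{eq:Gross})$. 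Proposition~\ref{prop:mu-def} then forces the $\mu$-invariant of this anticyclotomic restriction to vanish, which is enough to eliminate the $p$-power ambiguity; cyclotomic pullback primes, on the other hand, project trivially onto the anticyclotomic line and can be controlled on the cyclotomic side by the same appeal to Kato's work used in (b). The technical heart of the argument is thus the precise matching between the anticyclotomic restriction of $\mathfrak{L}_p^{\bullet,\bullet}(f/K)$ and $(\Theta_\infty^\bullet)^2$ (up to explicit factors), together with the propagation of the resulting $\mu=0$ statement from the anticyclotomic line back to the two-variable setting.
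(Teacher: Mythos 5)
Your plan matches the paper's proof: apply Theorem~\ref{thm:wan} and the $(1)\Rightarrow(3)$ argument of Theorem~\ref{thm:equiv} to get the divisibility up to indeterminacies, then remove the exceptional prime using the nonvanishing of $\mathfrak{L}_p^{\bullet,\circ}(f/K)_{\rm cyc}$, and remove the $p$-power and cyclotomic-pullback ambiguities using Proposition~\ref{prop:mu-def} via the comparison (left implicit in the paper) of the anticyclotomic restriction of $\mathfrak{L}_p^{\bullet,\circ}(f/K)$ with the signed theta elements.

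Two minor points. Since only a divisibility is wanted here (not an equality, as in the indefinite proof of Theorem~B), you need not also check the left-hand side at the exceptional prime: that $\mathfrak{L}_p^{\bullet,\circ}(f/K)$ itself is not divisible by it is already enough. And cyclotomic-pullback primes do \emph{not} ``project trivially onto the anticyclotomic line'': a prime $(g(\gamma_{\rm cyc}-1))$ with $g$ distinguished irreducible and different from $\gamma_{\rm cyc}-1$ specializes under $\gamma_{\rm cyc}\mapsto 1$ to a nonzero non-unit of $\cO_L$, so the separate cyclotomic-side appeal to Kato's work you propose is both unneeded and misdirected --- the $\mu=0$ statement for the anticyclotomic restriction already rules out every such divisor (including $(\gamma_{\rm cyc}-1)$ itself and the uniformizer of $\cO_L$) in one stroke, exactly as your final sentence correctly identifies.
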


\begin{proof}
The arguments in the proof of Theorem~\ref{thm:equiv} show that Theorem~\ref{thm:wan} and the nonvanishing of $\mathfrak{L}_p^{\bullet,\circ}(f/K)$ implies that  $\mathfrak{X}_{K_\infty}^{\bullet,\circ}(f)$ is $\Lambda(\Gamma_K)$-torsion, and that the following divisibility holds:
\[
c\cdot{\rm Char}_{\Lambda(\Gamma_K)}(\mathfrak{X}^{\bullet,\circ}_{K_\infty}(f))
\subseteq(\mathfrak{L}_p^{\bullet,\circ}(f/K))
\]
as ideals in $\Lambda(\Gamma_K)$, up to powers of $P$ and primes which are pullback of height one primes of $\Lambda(\Gamma_K^+)$,  indeterminacies that can be removed thanks to the nonvanishing of the restriction $\mathfrak{L}_p^{\bullet,\circ}(f/K)_{\rm cyc}$ and  Proposition~\ref{prop:mu-def}, respectively.
\end{proof}

The conclusion of the proof of Theorem~B in the definite case will build on Theorem~A, so we begin by proving the latter.

\begin{proof}[Proof of Theorem~A] 
We shall apply Theorem~\ref{thm:2var-div} for a suitable choice of $K$, and descend to the cyclotomic line. %(\emph{cf.} \cite[Thm.~3.29]{SU}).
Indeed, Ribet's level-lowering result \cite[Thm.~1.1]{ribet-eps} forces the residual representation $\bar{\rho}_f$ to be ramified at at least one prime $q$, which we fix. Let $K$ be an imaginary quadratic field such that:
\begin{itemize}
	\item[(a)] $q$ is inert in $K$,
	\item[(b)] every prime dividing $N/q$ splits in $K$,
	\item[(c)] $p$ splits in $K$,
	\item[(d)] $L(f_K,1)\neq 0$.
\end{itemize}
(The existence of such $K$ is guaranteed by \cite[Thm.~B]{FH}.)
By \cite{Edi}, the residual representation $\bar{\rho}_f$ is irreducible, and by \cite[Lem.~2.8.1]{skinner} (using that $\bar\rho_f$ is ramified at the prime $q$), the restriction $\bar{\rho}_f\vert_{G_K}$ remains irreducible. Thus the triple $(f,K,p)$ satisfies the conditions of Theorem~\ref{thm:2var-div}, whose divisibility combined with Proposition~\ref{prop:cyc-comp} and Proposition~\ref{prop:control} yields the divisibility
\begin{equation}\label{eq:props}
(L_p^\bullet(f)\cdot L_p^\bullet(f_K))\supseteq {\rm Char}_{\Lambda(\Gamma^{\rm cyc})}(X_{\bQ_\infty}^\bullet(f))
\cdot {\rm Char}_{\Lambda(\Gamma^{\rm cyc})}(X_{\bQ_\infty}^\bullet(f_K)),
\end{equation} 
where we used the fact that we may replace  $X_{K_\infty}^{\bullet,\bullet}(f)$ by $\mathfrak{X}_{K_\infty}^{\bullet,\bullet}(f)$ in Proposition~\ref{prop:control} (since $\bar{\rho}_f$ is ramified at every prime dividing $N^-=q$). 

On the other hand, the function $L_p^\bullet(f)$ is nonzero by hypothesis, while the nonvanishing of $L_p^\bullet(f_K)$ follows from condition (d) above and the interpolation property of $L_p^\bullet(f_K)$ at the trivial character (see \cite[Prop.~3.28]{LLZ-AJM}). Hence by \cite[Thm.~6.5]{LLZ-AJM}\footnote{Note that Assumption~(A) in \emph{loc.cit.} is only used to guarantee the nonvanishing of the $p$-adic $L$-function.} we have the divisibilities
\[
(L_p^\bullet(f))\subseteq {\rm Char}_{\Lambda(\Gamma^{\rm cyc})}(X^\bullet_{\bQ_\infty}(f))
\quad\textrm{and}\quad
(L_p^\bullet(f_K))\subseteq {\rm Char}_{\Lambda(\Gamma^{\rm cyc})}(X^\bullet_{\bQ_\infty}(f_K)).
\]
Since having strict inclusion in either of these would contradict (\ref{eq:props}), the proof of Theorem~A follows. In particular, in light of \cite[Cor.~6.6]{LLZ-AJM} Kato's main conjecture \cite[Conj.~12.5]{Kato295} holds. 
\end{proof}

\begin{proof}[Proof of Theorem~B in the definite case] 
This now follows immediately from the combination of Theorem~\ref{thm:2var-div} and Theorem~A. Indeed, let $\bullet\in\{\sharp,\flat\}$ be such that $\mathfrak{L}_p^{\bullet,\bullet}(f/K)_{\rm cyc}$ is nonzero,  
%(see Corollary~\ref{cor:eq-sign}), 
and define the ideals $X,  Y\subseteq\Lambda(\Gamma_K)$ by
\[
X:=c\cdot{\rm Char}_{\Lambda(\Gamma_K)}(\mathfrak{X}_{K_\infty}^{\bullet,\bullet}(f)),\quad
Y:=(\mathfrak{L}_p^{\bullet,\bullet}(f/K)),
\]	
and let $I^{\rm ac}=(\gamma_{\ac}-1)$ be the kernel of the projection $\Lambda(\Gamma_K)\twoheadrightarrow\Lambda(\Gamma^{\rm cyc})$. By Theorem~\ref{thm:2var-div} we have the divisibility $X\subseteq Y$, which combined with Proposition~\ref{prop:control} yields the divisibilities
\begin{align*}
c\cdot{\rm Char}_{\Lambda(\Gamma^{\rm cyc})}(X_{\bQ_\infty}^\bullet(f))
\cdot {\rm Char}_{\Lambda(\Gamma^{\rm cyc})}(X_{\bQ_\infty}^\bullet(f_K))
&\subseteq (X\;{\rm mod}\;I^{\rm ac})\\
&\subseteq (Y\;{\rm mod}\;I^{\rm ac})\\
&=(c\cdot L_p^\bullet(f)\cdot L_p^\bullet(f_K)),
\end{align*}
where the last equality is given by Proposition~\ref{prop:cyc-comp}. 
Since the extremes of this string are equal by Theorem~A applied to $f$ and $f_K$, we conclude that $(X\;{\rm mod}\;I^{\rm ac})=(Y\;{\rm mod}\;I^{\rm ac})$, and hence $X=Y$ by \cite[Lem.~3.2]{SU}, finishing the proof of Theorem~B. 
\end{proof}

\subsection{BSD formulae}\label{subsec:BSD}

In this section we deduce the applications of the previous results to the $p$-part of the Birch--Swinnerton-Dyer formula 
%in analytic ranks $0$ and $1$ 
for abelian varieties over $\bQ$ of ${\rm GL}_2$-type.

\begin{proof}[Proof of Theorem~C]
Let $A/\bQ$ be an abelian variety of ${\rm GL}_2$-type as in the statement of Theorem~C. By \cite[Cor.~10.2]{KW-SerreI}, the is a newform $f\in S_2(\Gamma_0(N))$ such that
\[
L(A,s)=\prod _\sigma L(f^\sigma,s), 
\]
where $f^\sigma$ runs over the Galois conjugates of $f$. Let $L$ be the completion of the Hecke field of $f$ at the prime $\mathfrak{P}$ above $p$ inducted by our fixed isomorphism $\bC\simeq\bC_p$; then  $T_f^*:=T_\mathfrak{P}A$, the $\mathfrak{P}$-adic Tate module of $A$ and the residual representation $\bar\rho_f$ is realized in the $\mathfrak{P}$-torsion of $A$. Take a sign $\bullet\in\{\sharp,\flat\}$, and define $H^1_\bullet(\bQ_p,A[\mathfrak{P}^\infty])$ to be the image of $H^1_\bullet(\bQ,\mathbf{T}^{\rm cyc})$ under the natural map 
\[
H^1_\bullet(\bQ_p,\mathbf{T}^{\rm cyc})\longrightarrow 
H^1(\bQ_p,T_f^*)\longrightarrow H^1(\bQ_p,A[\mathfrak{P}^\infty]),
\]
and similarly define the unramified local condition $H^1_{\rm ur}(\bQ_\ell,A[\mathfrak{P}^\infty])$ for primes $\ell\neq p$. Letting ${\rm Sel}^\bullet_{\bQ}(f)\subseteq H^1(\bQ,A[\mathfrak{P}^\infty])$ be defined by the same recipe as ${\rm Sel}^\bullet(\bQ,\mathbf{A}^{\rm cyc})$ (see Remark~\ref{rem:Sel-Q}), we then have 
\[
{\rm Sel}^\bullet_{\bQ}(f)={\rm Sel}_{\mathfrak{P}^\infty}(A/\bQ) 
\]
(see \cite[Prop.~2.14]{hatley-lei}). Since by \cite[Thm.~1.1]{ribet-eps} 
the residual representation $\bar{\rho}_f$ is ramified at some prime $q$, the result then follows from Theorem~A, the interpolation property satisfied by $L_p^\bullet(f)$ at the trivial character 
(see \cite[Prop.~3.28]{LLZ-AJM}), and a variant of \cite[Thm.~4.1]{greenberg-cetraro} for signed Selmer groups (see \cite[\S{4.1}]{sprung-IMC}).	
\end{proof}

\begin{proof}[Proof of Theorem~D]
A straightforward adaptation of the arguments in \cite[\S{7.4}]{JSW}; in fact, in light of Theorem~\ref{thm:ac-MC}, it suffices to adapt the slightly simpler argument in \cite[\S{5}]{cas-mult}. We briefly recall the details for the convenience of the reader. 

Let $A/\bQ$ be an abelian variety of ${\rm GL}_2$-type as in the statement of Theorem~D. Then, just as in the proof of Theorem~C, we have 
\begin{equation}\label{eq:ES}
L(A,s)=\prod_{\sigma} L(f^\sigma,s)
\end{equation}
for some newform $f\in S_2(\Gamma_0(N))$. With notations as in the proof of Theorem~C, %by \cite[Thm.~1.1]{ribet-eps} 
the residual representation $\bar{\rho}_f\simeq A[\mathfrak{P}]$ is ramified at some prime $q$, which we fix, and let $\cO_L$ be the ring of integers of $L$. 

Let $K=\bQ(\sqrt{-D})$ be an imaginary quadratic field satisfying:
\begin{itemize}
	\item[(a)] $q$ is ramified in $K$,
	\item[(b)] every prime factor $\ell\neq q$ of $N$ splits in $K$,
	\item[(c)] $p$ splits in $K$, 
	\item[(d)] $L(f^D,1)\neq 0$.
\end{itemize}

Let $\cO_f$ denote the ring of integers of $K_f$. By $(\ref{eq:ES})$ and (d), we have 
\[
{\rm ord}_{s=1}L(A/K,s)=[K_f:\bQ],
\] 
and hence by the work of Gross--Zagier and Kolyvagin it follows that:
\begin{itemize}
	\item[(i)] {} ${\rm rank}_{\cO_L}(A(K)\otimes_{\cO_f}\cO_L)=1$, and
	\item[(ii)] {} $\Sha(A/K)$ is finite.
\end{itemize} 
As in \cite[Thm.~2.3]{cas-mult}, by the results of \cite[\S{3.3}]{JSW}  conditions (i)-(ii) imply that 
%$X^{{\rm rel},{\rm str}}_{K_\infty^{\rm ac}}(A[\mathfrak{P}^\infty]):=
$X^{{\rm rel},{\rm str}}_{K_\infty^{\rm ac}}(f)$ is $\Lambda(\Gamma^{\rm ac})$-torsion, and letting $f_{\rm ac}(T)\in\Lambda(\Gamma^{\rm ac})\simeq\cO_L[[T]]$ be a generator of its characteristic ideal we have
\begin{align*}
	\#\cO_L/f_{\rm ac}(0)&=\#\Sha(A/K)[\mathfrak{P}^\infty]\cdot
	\biggl(\frac{\#\cO_L/((\frac{1-a_p+p}{p})\log_{\omega_f}P)}{[A(K)\otimes_{\cO_f}\cO_L:\cO_L.P]}\biggr)^2\cdot\prod_{w}c_w(A/K)_{\mathfrak{P}},
\end{align*}
where $P\in A(K)$ is any point of infinite order, and $c_w^{}(A/K)_{\mathfrak{P}}$ is the $\mathfrak{P}$-part of the Tamagawa number of $A/K$ at $w$. Here the product is over primes $w$ dividing $N/q$, but since $\bar{\rho}_f$ is ramified at $q$, by \cite[Lem.~6.3]{zhang-Kolyvagin} there is no contribution from the prime above $q$. Taking for $P$ the Heegner point appearing in \cite[Prop.~5.1.7]{JSW} (which has infinite order by the Gross--Zagier formula \cite{YZZ}), the result follows just as in the aforementioned references, using Theorem~C to descend from $K$ to $\bQ$. 
\end{proof}

\bibliographystyle{amsalpha}
\bibliography{CCSS-refs}

\end{document}